\tikzstyle{chapter} = [rectangle, rounded corners, minimum width=3cm, minimum height=1cm, text centered, draw=black, fill=headerblue]
\tikzstyle{arrow} = [thick, ->, >=Stealth]
\definecolor{rowgray}{gray}{0.95}
\definecolor{headerblue}{rgb}{0.8,0.9,1}
\definecolor{LightCyan}{rgb}{0.88,1,1}
\definecolor{Gray}{gray}{0.9}
\definecolor{LightCyan}{rgb}{0.88,1,1}
\definecolor{LightCyan1}{rgb}{0.80,1,1}
\definecolor{Gray}{gray}{0.9}
\definecolor{Gray1}{gray}{0.97}
\newtheorem{theorem}{Theorem}
\newtheorem{lemma}[theorem]{Lemma}
\newtheorem{proposition}[theorem]{Proposition}
\theoremstyle{definition}
\newtheorem{example}[theorem]{Example}
\newtheorem{remark}[theorem]{Remark}
\newtheorem{definition}[theorem]{Definition}
\newtheorem*{thmA}{Theorem~A}
\newtheorem*{thmB}{Theorem~B}
\newtheorem*{thmC}{Theorem~C}
\newtheorem*{thmD}{Theorem~D}
\newtheorem*{thmE}{Theorem~E}
\newtheorem*{thmF}{Theorem~F}
\newtheorem*{propA}{Proposition~A}
\newcommand{\Spe}{ {\mathrm{Spec} } }
\newcommand{\Z}{\mathbb{Z}}
\newcommand{\Q}{\mathbb{Q}}
\newcommand{\A}{{ \mathrm{Aut} }}
\newcommand{\im}{{ \mathrm{Im} }}
\renewcommand{\O}{{\mathcal{O}}}
\newcommand{\Gal}{{ \mathrm{Gal}}}
\newcommand{\Out}{{ \mathrm{Out}}}
\renewcommand{\mod}{{\;\mathrm{mod}}}
\date{\today}
\title[Descent theory and Equivariant Categories]{An Arithmetic Topology viewpoint on Descent theory and Equivariant Categories}
\author[M. Karakikes]{Miltiadis Karakikes}
\address{Department of Mathematics, National and Kapodistrian  University of Athens
Pane\-pist\-imioupolis, 15784 Athens, Greece}
\email{miltoskar@math.uoa.gr}
\author[S. Karanikolopoulos]{Sotiris Karanikolopoulos}
\address{Department of Mathematics, National and Kapodistrian  University of Athens
Pane\-pist\-imioupolis, 15784 Athens, Greece}
\email{sotiriskaran@gmail.com}
\author[A. Kontogeorgis]{Aristides Kontogeorgis}
\address{Department of Mathematics, National and Kapodistrian  University of Athens
Pane\-pist\-imioupolis, 15784 Athens, Greece}
\email{kontogar@math.uoa.gr}
\author[D. Noulas]{Dimitrios Noulas}
\address{Department of Mathematics, National and Kapodistrian  University of Athens
Pane\-pist\-imioupolis, 15784 Athens, Greece}
\email{dnoulas@math.uoa.gr}
\date \today
\begin{document}

\keywords{Field of moduli, field of definition, mapping class groups, absolute Galois group, equivariant categories, bounded derived categories of coherent sheaves, Weil descent, arithmetic topology, arithmetic fundamental group, Birman-Hilden property, categorical actions, regularity, monodromy, covers, braids, profinite braids}

\subjclass[2020]{%
2G15; 
11G30; 
14A22; 
14A30; 
14F08; 
14H15; 
14H25; 
14H30; 
14H37; 
18F20; 
20F36; 
54B99; 
57K20 
}

\begin{abstract}
We establish a unified group-theoretic framework bridging the arithmetic homotopy exact sequence of a variety and the Birman exact sequence of a surface. Within this framework, we reinterpret classical arithmetic notions - such as the descent of varieties and of covers - and construct their topological analogues. We formalize the parallel setting between closed subgroups of the absolute Galois group and subgroups of the Mapping Class Group of a base space and their actions on fundamental groups. This provides an analogy between arithmetic and topological invariants, allowing us to define the groups of moduli, definition, and invariance in both settings. 

Using this unified perspective, some purely group-theoretic proofs provide results in both settings simultaneously. Applications include a topological analogue of Weil’s Descent Theorem for mapping class groups
and an adaptation of Dèbes and Douai's cohomological obstructions regarding descent of algebraic covers to the topological setting.

Finally, we elevate these results to the categorical level. We demonstrate that the classical Weil cocycle condition is equivalent to the existence of a linearization in the language of equivariant categories. Applying this perspective to the bounded derived category of coherent sheaves $\mathsf{D^b}(X)$, we show that the equivariant derived category $\mathsf{D^b}(X)^G$, under the action induced by a Weil descent datum, recovers the derived category of the descended variety.
\end{abstract}

\maketitle

\tableofcontents

\section{Introduction}

The study of the arithmetic of algebraic curves is intrinsically linked to the symmetries of their fundamental groups. This relationship is most famously encapsulated in Grothendieck's \textit{anabelian geometry} \cite{MR1483107}, where the absolute Galois group $\Gal(\overline{\mathbb{Q}}/\mathbb{Q})$ is studied through its outer action on the profinite completion of the fundamental group of a punctured  variety. Parallel to this, in the realm of low-dimensional topology, the Mapping Class Group $\mathrm{Mod}(S)$ acts similarly by outer automorphisms on the topological fundamental group of the surface $S$, providing a discrete analogue to the Galois actions found in number theory. 
This analogy complements the Mazur-Morishita-Kapranov-Reznikov dictionary \cite{Morishita2011-yw, MR1925911, MazurAlex} of \textit{arithmetic topology}, which is the study between similarities of knots in $3$-dimensional manifolds and primes in rings of integers.
Braids and Galois elements can be considered to be a part of this dictionary: a prime corresponds to a conjugacy class of a Frobenius element in $\Gal(\overline{\mathbb{Q}}/\mathbb{Q})$, while a knot corresponds to a Markov equivalence class of braids in $B_n$ \cite{MR4117575}. This parallel was significantly deepened by Y. Ihara \cite{Ihara1985-it, IharaCruz, MR1159208, MorishitaATIT}, who reinterpreted the absolute Galois group as a \textit{profinite braid} group by exploiting the geometry of $\mathbb{P}^1-\{0,1,\infty\}$. 
Namely, Ihara demonstrated that the elements of the absolute Galois group act as automorphisms of a profinite free group, forming a discrete analogue of Artin's representation of the braid group.

In this article, we pursue this analogy further by transferring classical arithmetic notions to the topological setting. A central object of study in the arithmetic of curves is the ``field of moduli/definition problem''. Our primary objective is to bridge the two perspectives by formulating the arithmetic setup of field descent in a purely group-theoretic language, effectively defining the analogous theory for surfaces. Our approach is guided by the principle that \textbf{fields descend as groups ascend}.

By interpreting the choice of a field extension as the choice of a subgroup of the absolute Galois group—and, by analogy, a subgroup of the Mapping Class Group—we show that many deep results in algebraic geometry regarding group extensions and equivariant structures have natural topological incarnations. Specifically, we propose that the arithmetic homotopy exact sequence \cite{SGA1}:
\begin{equation}\label{intro:1}
1 \to \Pi_{K_s}(B^*) \to \Pi_K(B^*) \to \Gal(K_s/K) \to 1,
\end{equation}
where $B^*$ represents the punctured base variety,
is analogous to the Birman exact sequence \cite{FarbMagalit} in topology:
\begin{equation}\label{intro:2}
    1 \to \pi_1(B^*,b_0) \to \mathrm{Mod}(B^*,b_0) \to \mathrm{Mod}(B^*) \to 1.
\end{equation}

For instance, it is well known that the existence of a $K$-rational point yields the splitting of (\ref{intro:1}). We demonstrate in Proposition \ref{prop:ses split} that this splitting can also be established via a group-theoretic framework that applies with equal rigor to (\ref{intro:2}). Furthermore, we note that the splitting of these sequences is generally obstructed in both settings. On the one hand, the condition that rational points exist is often restrictive; on the other hand, the Birman exact sequence is known not to split for closed surfaces.

A central theme of this work is the reinterpretation of \textit{descent theory}. Historically, the problem of determining the field of definition of an algebraic variety was solved by A. Weil in his seminal 1956 work \cite{Weil56}. Weil introduced the concept of \textit{descent data}—a family of isomorphisms satisfying a specific cocycle condition—which allows one to ``descend'' a variety defined over a field $L$ to a smaller subfield $L_0$. While Weil's theorem is a cornerstone of arithmetic geometry, its topological counterpart—descending a cover to a larger group—has remained less formalized in the language of descent data. We aim to show that the Weil cocycle condition, the standard tool for the descent of varieties, can be more naturally expressed as a linearization in the language of \textit{equivariant categories} (Proposition \ref{prop:equivariantweildescent}).

Equivariant categories arise naturally in the study of sheaves on orbit spaces. This concept traces back to Grothendieck's seminal T\^ohoku paper \cite{GroTo}, where he investigated the relationship between $G$-equivariant sheaves on a topological space $X$ and sheaves on the orbit space $X/G$. This framework was formalized by Mumford and Fogarty in their Geometric Invariant Theory \cite{MR1304906}, establishing the definitive notion of a $G$-equivariant sheaf. The theory has since flourished; notably, Bernstein and Lunts \cite{BerLuntsEquiv} pioneered the definition of equivariant bounded derived categories for $G$-spaces, a framework later extended to the scheme-theoretic setting by Achar \cite{Achar}. Consequently, the study of group actions has been generalized to arbitrary categories (Definition \ref{def:catactions}), giving rise to abstract equivariant categories (Definition \ref{def:equivcats}). These categories are now viewed as non-commutative categorical quotients, forming a distinct and active area of research. For instance, Elagin \cite{elagin2015equivarianttriangulatedcategories} established criteria for the equivariant category of a triangulated category to inherit a canonical triangulated structure, while Beckmann and Oberdieck \cite{MR4589277} determined the equivariant categories of elliptic curves with respect to so-called Calabi–Yau group actions.

A classical algebraic prototype is provided by a finite group acting on a ring $R$: the equivariant category $(\mathrm{Mod-}R)^G$ is equivalent to the category of modules over the skew group algebra $R*G$, a correspondence extensively studied by Reiten and Riedtmann \cite{ReitenRiedtmann} (see Example \ref{exm:quotaffine}). In the geometric setting, it is well known that a $G$-action on a scheme $X$ induces an action on $\mathsf{Coh}(X)$ such that the equivariant category $\mathsf{Coh}^G(X)$ is equivalent to the category of coherent sheaves on the quotient stack $[X/G]$ (see Examples \ref{actiononcoh1}, \ref{actiononcoh2}, \ref{actiononcoh3}).

We push this philosophy further by extending the descent formalism to the \textit{bounded derived category} $\mathsf{D^b}(X)$. Rather than viewing the derived category merely as a homological invariant, we treat it as a geometric object in its own right, subject to Galois actions and descent. We construct a canonical group action on $\mathsf{D^b}(X)$ arising solely from a Weil descent datum on $X$, thereby providing a categorical proof of Weil's theorem (Proposition \ref{prop:equivariant descent theorem}) that naturally encompasses derived invariants.

\subsubsection*{Main Results \& Ideas Employed}
We now outline the primary contributions of this article, organized by the central themes that bridge the arithmetic and topological perspectives.

\subsection*{The Arithmetic-Topology Viewpoint}

The first theme of this work concerns the structural identity between the arithmetic, geometric and topological fundamental groups. Grothendieck's theory of the fundamental group provides the arithmetic short exact sequence:
\begin{equation*}
1 \to \Pi_{K_s}(B^*) \to \Pi_K(B^*) \to \Gal(K_s/K) \to 1.
\end{equation*}
which yields the continuous homomorphism of profinite groups
\[\Gal(K_s/K) \rightarrow \mathrm{Out}(\Pi_{K_s}(B^*)).\]

Similarly, in the topological setting, the Dehn-Nielsen-Baer theorem (Theorem \ref{thm:Dehn-Nielsen-Baer}) in conjunction with the Birman exact sequence yield a homomorphism
\[\mathrm{Mod}(B^*) \rightarrow \mathrm{Out}(\pi_1(B^*)).\]

As mentioned previously, the existence of a $\Gal(K_s/K)$-fixed point leads to the splitting of the arithmetic sequence (\ref{intro:1}). We can apply this principle to the topological setting using purely group-theoretic language:

\begin{propA}(Proposition \ref{prop:ses split}).
\textit{The short exact sequence
\[
1\rightarrow \pi_1(B^*,b_0) \rightarrow \Pi_{\mathrm{Mod}(B^*)} \rightarrow \mathrm{Mod}(B^*)\rightarrow 1.
\]
splits if there is a point $P$ such that all $\sigma \in \mathrm{Mod}(B^*)$ have a homeomorphism representative $\hat{\sigma}$ that fixes $P$.
}
\end{propA}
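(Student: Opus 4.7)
The plan is to adapt the arithmetic splitting argument, in which a $K$-rational point $P \in B^*(K)$ induces a section of the sequence (\ref{intro:1}) through the functorial morphism $\mathrm{Gal}(K_s/K) = \Pi_K(\mathrm{Spec}(K)) \to \Pi_K(B^*)$ attached to $P$. The hypothesis of the proposition gives a topological analogue of a ``rational point'', and the section of the Birman-type sequence should be constructed by choosing, for each $\sigma \in \mathrm{Mod}(B^*)$, a homeomorphism representative fixing $P$ and passing to its class.

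Concretely, I would define a candidate section
\[
s : \mathrm{Mod}(B^*) \longrightarrow \Pi_{\mathrm{Mod}(B^*)}, \qquad \sigma \longmapsto [\hat{\sigma}],
\]
by selecting for each $\sigma \in \mathrm{Mod}(B^*)$ a representative $\hat{\sigma} \in \mathrm{Homeo}(B^*)$ with $\hat{\sigma}(P) = P$, which exists by hypothesis. The crucial algebraic observation is that the set of homeomorphisms of $B^*$ fixing $P$ forms a subgroup of $\mathrm{Homeo}(B^*)$, hence is closed under composition: if $\hat{\sigma}$ and $\hat{\tau}$ both fix $P$, then $\hat{\sigma}\circ\hat{\tau}$ also fixes $P$ and represents $\sigma\tau$. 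One can therefore enforce $\widehat{\sigma\tau} := \hat{\sigma}\circ\hat{\tau}$, with $\hat{1} := \mathrm{id}$, by making choices on a generating set and extending compatibly. This produces a homomorphism $\mathrm{Mod}(B^*) \to \mathrm{Homeo}(B^*, P)$, which descends to the desired homomorphism $s$ under the canonical quotient $\mathrm{Homeo}(B^*, P) \twoheadrightarrow \Pi_{\mathrm{Mod}(B^*)}$. The section property $\pi\circ s = \mathrm{id}_{\mathrm{Mod}(B^*)}$ is then immediate from the fact that $[\hat{\sigma}]$ projects back to $\sigma$ in $\mathrm{Mod}(B^*)$ by construction.

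The hard part will be the well-definedness of $s$ modulo the ambiguity in the choice of representatives. Two homeomorphisms $\hat{\sigma}_1, \hat{\sigma}_2$ that both fix $P$ and represent the same class $\sigma \in \mathrm{Mod}(B^*)$ may still differ by a homeomorphism fixing $P$ which is isotopic to the identity in $B^*$ but not rel $P$: such a difference corresponds to a nontrivial point-pushing element of $\pi_1(B^*, P) \subseteq \Pi_{\mathrm{Mod}(B^*)}$. The core of the proof is therefore to argue that, under the hypothesis, the representatives can be chosen globally coherently—equivalently, that the obstruction class in $H^2\bigl(\mathrm{Mod}(B^*), \pi_1(B^*, P)\bigr)$ measuring the failure of a set-theoretic lift to be a homomorphism vanishes once lifts are constrained to fix $P$. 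This is the topological counterpart of the fact that, in the arithmetic setting, a rational point automatically trivializes the analogous obstruction via functoriality of $\Pi_K$.
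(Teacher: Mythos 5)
Your setup is correct and you have correctly located the difficulty --- the vanishing of the $2$-cocycle measuring the failure of $\sigma\mapsto[\hat\sigma]$ to be multiplicative --- but the proposal stops exactly where the proof has to begin: you assert that this obstruction vanishes ``once lifts are constrained to fix $P$'' without giving an argument, and the one concrete mechanism you offer does not work. Choosing representatives on a generating set of $\mathrm{Mod}(B^*)$ and ``extending compatibly'' to a homomorphism $\mathrm{Mod}(B^*)\to\mathrm{Homeo}_+(B^*,P)$ presupposes that every relation of $\mathrm{Mod}(B^*)$ is satisfied by the chosen representatives on the nose, which is precisely the obstruction you are trying to kill; worse, it would produce a section of $\mathrm{Homeo}_+(B^*)\to\mathrm{Mod}(B^*)$, a far stronger statement than what is needed and one that is not available in general. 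The observation that homeomorphisms fixing $P$ form a subgroup only tells you that $\hat\sigma\hat\tau$ and $\widehat{\sigma\tau}$ both fix $P$; it says nothing about whether they define the same class in $\Pi_{\mathrm{Mod}(B^*)}\cong\mathrm{Mod}(B^*,P)$, i.e.\ whether they differ by a nontrivial point-pushing element, which is exactly the question.

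The missing idea, which is the entire content of the paper's proof, is to rigidify the choices one level up, on the universal cover. Fix $Q\in\pi^{-1}(P)$ in $\widetilde{B^*}$. Each representative $\hat\sigma$ fixing $P$ admits a \emph{unique} lift $\tilde\sigma\colon\widetilde{B^*}\to\widetilde{B^*}$ with $\tilde\sigma(Q)=Q$ and $\pi\circ\tilde\sigma=\hat\sigma\circ\pi$. The discrepancy $c(\sigma,\tau)=\tilde\sigma\,\tilde\tau\,\widetilde{\sigma\tau}^{\,-1}$ then lies in $\mathrm{Deck}(\widetilde{B^*}/B^*)\cong\pi_1(B^*,P)$ and fixes $Q$; since the deck group of the universal cover acts freely (indeed simply transitively) on the fibre $\pi^{-1}(P)$, the element $c(\sigma,\tau)$ is trivial. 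Hence $\widetilde{\sigma\tau}=\tilde\sigma\tilde\tau$, and the induced map $\sigma\mapsto s(\sigma)_*\in\Pi_{\mathrm{Mod}(B^*)}\leq\mathrm{Aut}(\pi_1(B^*,P))$ is a genuine homomorphism splitting the sequence. Note that this argument requires only \emph{one} set-theoretic choice of $P$-fixing representatives: there is no need to establish independence of the choice, only that the single chosen section becomes multiplicative after passing to basepoint-preserving lifts and the induced automorphisms of $\pi_1$. Your closing analogy with the arithmetic case is apt, but functoriality there has actual content (the morphism $\Gal(K_s/K)\to\Pi_K(B^*)$ attached to a rational point); the topological counterpart of that content is precisely the uniqueness of basepoint-preserving lifts to $\widetilde{B^*}$, which your proposal never invokes.
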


This is a special case of Proposition \ref{prop:ses split} whose general statement considers any subgroup of $\mathrm{Mod}(B^*)$.
We observe that a significant number of proofs regarding sequences (\ref{intro:1}), (\ref{intro:2}) and their associated outer automorphisms can be formulated in a unified way that applies to both settings simultaneously.
The power of this unified language extends far beyond the splitting of exact sequences. By abstracting the notion of ``field'' into ``group action'', we recover several deep geometric properties purely through group theory:

\begin{enumerate}
    \item \textbf{Regularity:} The arithmetic condition that a field extension $K(X)/K(B)$ contains no new constants is classically verified by intersecting with the separable closure $K_s$ of $K$. In Proposition \ref{prop:equalityofindices}, we show that this is equivalent to the purely group-theoretic condition of index preservation: $[\Pi_A : H] = [\Pi_1 : R]$. This allows us to define and study the notion of ``regularity'' of covers in topology, where no base field exists.
    
    \item \textbf{Monodromy Actions:} Through the theory of Dèbes et al. \cite{DebesDouai97,DebesEmsalem, MR1671938}, the field extensions $K(X)/K(B)$ correspond to monodromy representations of the arithmetic fundamental group $\Pi_K(B^*)$. We demonstrate that the same correspondence applies to the Birman exact sequence and covers $X\rightarrow B$ ``defined'' over $A\subseteq \mathrm{Mod}(B^*)$ (Definition \ref{def:topological_defined}) correspond to monodromy representations of $\Pi_A$ (Proposition \ref{prop:mereCoverstransTop}). In both settings, the groups $\Gal(K_s/K)$ and $\mathrm{Mod}(B^*)$ act on the respective representations. This allows us to treat arithmetic and topological symmetries on equal footing.
    
    \item \textbf{Cohomological Obstructions:} Finally, the obstruction of descending an arithmetic cover to its field of moduli-also related to the splitting of (\ref{intro:1})-is identified in Section \ref{ch:descend_on_monodromy} as a specific Galois cohomology class, originally by Dèbes and Douai \cite{DebesDouai97}. This group cohomological description (Theorems \ref{thm:DebesDouaiThB}, \ref{thm:DebesDouaiTHC}) applies verbatim to both the arithmetic descent of varieties and the topological group ascent of covers via the idea described next.
\end{enumerate}

\subsection*{Fields Descend as Groups Ascend}
This simple idea follows from the following observation. Given a variety $X$ defined over a field $K$ and a field extension $L/K$, then one can extend it to a variety $X_L \coloneqq X \times_K L $. One could also extend it similarly to a variety $X_{K_s}$, where $K_s$ denotes a separable closure of $K$ that contains $L$. Then we know that $X_{K_s}$ is invariant under the action of the absolute Galois group $\Gal(K_s/K)$. However, if we view $X_{K_s}$ as arising from $X_L$, it is effectively invariant under the subgroup $\Gal(K_s/L)$.
Thus, the smaller the field of definition (i.e., descending from $L$ to $K$), the larger the group of invariance (i.e., ascending from $\Gal(K_s/L)$ to $\Gal(K_s/K)$).

This inverse relationship allows us to translate the arithmetic problem of finding a minimal field of definition into the group-theoretic problem of finding a maximal subgroup of invariance (or definition) within a larger ambient group. We apply this philosophy to the topological setting by replacing the Galois tower with subgroups of the Mapping Class Group $\mathrm{Mod}(B^*)$.

\subsection*{Fields \& Groups of Moduli, Definition \& Invariance}
In Section \ref{sec:field of moduli, definition, invariance}, we refine the notion of symmetry by distinguishing between three levels of group actions, applicable to both arithmetic and topology. For an object $X$ and a subgroup $A$ of either $\mathrm{Mod}(B^*)$ or $\Gal(K_s / K)$:
\begin{enumerate}
    \item The \textit{Group of Moduli} $A_X^{\mathrm{mod}}$ consists of elements $\sigma \in A$ such that ${^\sigma \! X}$ is isomorphic to $X$. In the arithmetic setting, its fixed field is the \textit{field of moduli}.
    \item The \textit{Group of Definition} $A_X^{\mathrm{def}}$ is the subgroup where these isomorphisms satisfy the Weil cocycle condition (or equivalently, admit the structure of a linearization of $X$). This corresponds to the \textit{field of definition}.
    \item The \textit{Group of Invariance} $A_X^{\mathrm{inv}}$ consists of the elements preserving strictly $X$. This corresponds to the {\em field of invariance}.
\end{enumerate}
We clarify the subtle distinction between the field of moduli and the field of definition. While they might coincide in most cases, for example for elliptic curves, we discuss a specific example (Subsection \ref{exm:field of moduli vs field of definition}) where the field of moduli is strictly smaller than the field of definition, a phenomenon detected by the obstruction of the Weil cocycle in the group of definition. 

\subsection*{Weil's Topological Ascent and Definability}
In the topological realm, the absence of an underlying field structure necessitates a purely group-theoretic reconstruction of the notion of ``field of definition.'' We address this by replacing field extensions with subgroups of the Mapping Class Group $\mathrm{Mod}(B^*)$.

We introduce the notation $X \to (B,A)$ to denote a cover defined over a subgroup $A \leq \mathrm{Mod}(B^*)$ 
(Definition \ref{def:topological_defined}). This definition captures the topological essence of a curve being defined over a subfield: the geometric object remains invariant under the symmetries corresponding to that subfield.

A central result of this paper is the formulation of a topological analogue to Weil's descent theorem. In the arithmetic setting, descending a variety from a field $L$ to a subfield $L_0$ requires a \textbf{Weil descent datum}: a family of isomorphisms $\{f_\sigma\colon X \xrightarrow{\simeq} {^\sigma \! X}\}_{\sigma \in \Gal(L/L_0)}$ satisfying the cocycle condition
\[
    f_{\sigma \tau} = {^\sigma \! f_\tau} \circ f_\sigma.
\]
In our topological framework, this transforms into an \textit{ascent} problem: extending the definition of a cover from a group $A$ to a larger group $A'$. We define a \textbf{Mapping Class Group ascent datum} as a family of pairs of homeomorphisms $\{(f_\sigma, \hat{\sigma})\}_{\sigma \in A'/A}$, where $f_\sigma\colon X \to {^\sigma \! X}$ maps the cover to its twist, satisfying the analogous cocycle condition:
\[
    (f_{\sigma \tau}, \widehat{\sigma\tau}) = ({^\sigma \! f_\tau}, {^\sigma \! \hat{\tau}}) \circ (f_\sigma, \hat{\sigma}).
\]
The comparison reveals a striking duality: the arithmetic descent datum allows a geometric object to descend to a smaller field, while the topological ascent datum allows the object to ascend to a larger group of symmetries.

\begin{thmA} (Theorem \ref{thm:Weil topological}).
\textit{Let $A \lhd_{\mathrm{f}} A' \leq \mathrm{Mod}(B^*)$ and let $X \to (B,A)$ be a cover defined over $A$. The cover is definable over the larger group $A'$ if and only if it admits a Mapping Class Group ascent datum with respect to the quotient $A'/A$.
}
\end{thmA}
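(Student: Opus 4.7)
The plan is to mirror Weil's classical descent argument via the dictionary between the arithmetic homotopy exact sequence and the Birman exact sequence developed in the preceding sections, with the finite-index normality $A \lhd_{\mathrm{f}} A'$ playing the role of a finite Galois extension. The two directions of the equivalence reflect the two halves of Weil's theorem: the easy ``functorial'' direction, and the ``effective'' direction in which a cocycle actually builds the extended object.

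The forward direction is essentially formal. Assuming $X \to (B,A')$ is definable over $A'$, the very definition provides, for each $\sigma \in A'$, a canonical twist ${}^\sigma X$ together with a tautological isomorphism $f_\sigma\colon X \to {}^\sigma X$ and a representative homeomorphism $\hat\sigma$. Because all of these arise from a single underlying object definable over $A'$, the composition law in $A'$ directly forces the cocycle identity $(f_{\sigma\tau}, \widehat{\sigma\tau}) = ({}^\sigma f_\tau, {}^\sigma \hat\tau) \circ (f_\sigma, \hat\sigma)$; and since $A$ already fixes $X$ up to canonical isomorphism, the whole datum descends to the finite quotient $A'/A$.

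The interesting direction is the converse. Starting from an ascent datum $\{(f_\sigma, \hat\sigma)\}_{\sigma \in A'/A}$, I would proceed in three steps. First, translate the cover $X \to (B,A)$ into a monodromy representation $\rho\colon \Pi_A \to S_n$ via Proposition \ref{prop:mereCoverstransTop}. Second, for each $\sigma \in A'/A$ choose a lift to $\Pi_{A'}$ and use the pair $(f_\sigma, \hat\sigma)$ to prescribe its action on fibres: $\hat\sigma$ governs the underlying topological twist, while $f_\sigma$ identifies the twisted fibre ${}^\sigma X_b$ with the original fibre $X_b$. Third, assemble these local prescriptions into a homomorphism $\tilde\rho\colon \Pi_{A'} \to S_n$ extending $\rho$, and invoke Proposition \ref{prop:mereCoverstransTop} in the opposite direction to interpret $\tilde\rho$ as a cover defined over $A'$ whose restriction to the $A$-level recovers $X$.

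The principal obstacle is the third step: verifying that the assembled map $\tilde\rho$ is a well-defined homomorphism, independent of the choice of lifts from $A'/A$ into $A'$ and compatible with composition in $\Pi_{A'}$. This is exactly where the cocycle condition on the ascent datum has to do its work, playing the precise role of the Weil cocycle in classical arithmetic descent: it ensures that the locally prescribed actions glue into a coherent global symmetry rather than merely a collection of compatible pieces. By analogy with the D\`ebes--Douai cohomological framework referenced in Section \ref{ch:descend_on_monodromy}, one expects the obstruction to extending $\rho$ to $\tilde\rho$ to live in a non-abelian cohomology set $H^1(A'/A, \mathrm{Aut}(X/B))$ that vanishes precisely when a Mapping Class Group ascent datum exists, thereby closing the equivalence.
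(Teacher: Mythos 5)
Your overall strategy --- translating the cover into a monodromy representation $\rho\colon \Pi_A \to S_n$, prescribing the action of lifts of $A'/A$ on fibres via the pairs $(f_\sigma,\hat\sigma)$, and appealing to Propositions \ref{prop:mereCoverstransTop} and \ref{prop:DebesTop} to convert an extension $\tilde\rho\colon \Pi_{A'}\to S_n$ back into a cover --- is a legitimate alternative to the paper's route, which never passes through $S_n$ but works directly with the subgroups $R \leq \pi_1(B^*,b_0)$ and $H \leq \Pi_A$ of Definition \ref{def:topological_defined}. The problem is that your proposal stops exactly where the proof has to happen. You correctly isolate the crux (that the assembled $\tilde\rho$ is a well-defined homomorphism independent of lifts) and then do not verify it: you assert that the cocycle condition ``has to do its work'' and defer to the vanishing of a non-abelian cohomology set $H^1(A'/A,\mathrm{Aut}(X/B))$ that you neither define precisely nor compute. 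That appeal cannot close the argument; in the D\`ebes--Douai framework of Section \ref{ch:descend_on_monodromy} the obstruction to extending a monodromy representation is genuinely nontrivial in general, and the entire content of the theorem is showing that the ascent datum is exactly the hypothesis that kills it. As written, the converse implication is not proved.

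The missing idea, which the paper supplies, is the reduction of the cocycle condition $(f_{\sigma\tau},\widehat{\sigma\tau}) = ({}^\sigma f_\tau,{}^\sigma\hat\tau)\circ(f_\sigma,\hat\sigma)$ to the multiplicativity $\widehat{\sigma\tau}=\hat\sigma\hat\tau$ of the chosen section on the finitely many coset representatives of $A'/A$. Writing $A'=\coprod_{i=1}^n\sigma_i A$, this multiplicativity is precisely what makes $H'=\coprod_{i=1}^n\hat\sigma_i H$ closed under multiplication, with each $\hat\sigma_i$ normalizing $H$; comparing the exact sequences $1\to R\to H\to A\to 1$ and $1\to R'\to H'\to A'\to 1$ then forces $R'=R$, which is Definition \ref{def:definable_top}. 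Your forward direction is also lighter than it should be: the cocycle identity is not ``directly forced'' by definability but has to be extracted from the extension $H'$ by decomposing $H'=\coprod\hat g_i H$ and using injectivity of $H'/H\to A'/A$ to place $\hat g_i\hat g_j\widehat{g_ig_j}^{-1}$ in $H$. In its current form your argument establishes neither implication.
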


This result, together with the topological formulation of Dèbes' theorems (Theorems \ref{thm:Mere1}, \ref{thm:DebesDouaiThB}, \ref{thm:DebesDouaiTHC}), completes our arithmetic-topology viewpoint, establishing that the cohomological obstructions to enlarging the group of definition in topology are formally identical to the obstructions of descending the field of definition in arithmetic.

\subsection*{Descent on Monodromy}
In the arithmetic setting, the descent theory of covers is governed by the results of Dèbes and Douai \cite{DebesDouai97}. Their \textit{Main Theorem} establishes that for a cover $X \to B$ defined over $K_s$ with field of moduli $K$, the obstruction to $K$ being a field of definition lies in a specific cohomology class in $H^2(K, Z(G))$, where $G$ is the automorphism group of the Galois closure. We adapt this result to the topological setting utilizing purely group-theoretic techniques. A crucial step is the group-theoretic characterization of the ``extension of constants in the Galois closure'' which plays a central role in the obstruction. By replacing the Galois action of $\Gal(K_s/K)$ with the action of a subgroup $A \leq \mathrm{Mod}(B^*)$, we apply the same techniques to the monodromy representations $\Pi_A \to G$ to derive the topological analogues. 

Let $f\colon X\rightarrow B$ be cover defined over $A\leq \mathrm{Mod}(B^*)$ with group of moduli $A^\prime$ such that $A^\prime/A$ is a finite group, and let $G$ be the associated automorphism group of the Galois closure. Let $N,C$ denote the normalizer and centralizer of $G$ in $S_d$, respectively, where $d$ is the degree of the cover.
The first obstruction, which is non-trivial only for non-Galois covers, relates the extension of constants in the Galois closure to a homomorphism $\Lambda \colon A^\prime/A\rightarrow N/G$.

\begin{thmB}[Theorem \ref{thm:Mere1}] Mapping class group formulation of \cite[\textit{Main Theorem} (I)]{DebesDouai97}

Let $f \colon X \to (B,A)$ be a ($G$)-cover with group of moduli $A^\prime$ and $\overline{\phi}\colon \Pi_{A^\prime} \to N/C $ associated to the monodromy representation $\Pi_A\rightarrow N$ of $f$. Then there exists a unique homomorphism $\lambda \colon A^\prime/A \to N/CG$ compatible with $\overline{\phi}$. Each $A^\prime$-model of $f$ yields a lift of $\lambda$ to $\Lambda\colon A^\prime/A\rightarrow N/G$. Thus, the existence of a homomorphism $\Lambda$ extending $\lambda$ is necessary for the cover to be defined over its group of moduli.
\end{thmB}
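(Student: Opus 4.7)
The strategy mirrors the arithmetic argument of Dèbes and Douai \cite{DebesDouai97} verbatim in the mapping class group setting. I first build $\overline{\phi}$ and $\lambda$ canonically from the monodromy data, then show that an $A'$-model of $f$ forces a lift $\Lambda$, which therefore must exist whenever the cover is definable over $A'$.

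I begin by constructing $\overline{\phi}\colon \Pi_{A'} \to N/C$. For each $\sigma \in A'$, the group-of-moduli hypothesis yields ${}^\sigma f \simeq f$, which on monodromies provides an element $n_\sigma \in N$, unique modulo $C$, conjugating ${}^\sigma \phi$ to $\phi$. Choosing a set-theoretic section of $\Pi_{A'} \to A'/A$ and sending lifts of $\sigma$ to $n_\sigma \bmod C$, while extending by $\phi \bmod C$ on $\Pi_A$, gives a well-posed map. Its homomorphism property reduces to the vanishing in $N/C$ of the $2$-cocycle $(\sigma,\tau)\mapsto n_\sigma\cdot({}^\sigma n_\tau)\cdot n_{\sigma\tau}^{-1}$; this expression commutes with the image of $\phi$ and hence lies in $C$, by the uniqueness of $n_{\sigma\tau}$ modulo $C$.

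Next, I construct $\lambda$. Post-composing $\overline{\phi}$ with the canonical projection $N/C \twoheadrightarrow N/CG$ yields a homomorphism $\Pi_{A'} \to N/CG$. It vanishes on $\pi_1(B^*)$ since $\phi(\pi_1(B^*)) = G \subseteq CG$, and it also vanishes on all of $\Pi_A$, since the definition of $f$ being defined over $A$ (Definition \ref{def:topological_defined}) forces $\phi(\Pi_A) \subseteq CG$. Hence the composition factors through $\Pi_{A'}/\Pi_A \cong A'/A$, yielding the desired $\lambda$. Uniqueness is automatic: compatibility with $\overline{\phi}$ through the canonical projection $N/C \to N/CG$ determines $\lambda$ on the whole of $A'/A$.

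Finally, an $A'$-model of $f$ produces a genuine homomorphism $\Phi\colon \Pi_{A'} \to N$ (not merely into $N/C$) extending $\phi$. Post-composing with $N \twoheadrightarrow N/G$ and repeating the argument of the previous paragraph with $G$ in place of $CG$ produces $\Lambda\colon A'/A \to N/G$; the commutative square relating the quotients $N \to N/G$, $N \to N/C$ and their common further quotient $N/CG$ shows that $\Lambda$ lifts $\lambda$, as claimed. The main obstacle is the clean verification that the obstruction cocycle for $\overline{\phi}$ lands in $C$—the group-theoretic shadow of the central cohomological identification of \cite{DebesDouai97}—after which every other step reduces to the functoriality of taking quotients by $G$, $C$, and $CG$ inside $N$.
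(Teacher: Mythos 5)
Your proposal is correct and follows essentially the same route as the paper, which transports the group-theoretic argument of D\`ebes--Douai verbatim: build $\overline{\phi}\colon \Pi_{A'}\to N/C$ from the \textbf{(CMod)} elements $\varphi_U$ (unique mod $C$, with the obstruction cocycle landing in $C$ because it centralizes $\phi(\Pi_A)=G$), then push to $N/CG$ and factor through $A'/A$ to obtain $\lambda$, and push an $A'$-model's monodromy to $N/G$ to obtain $\Lambda$. The paper defines $\varphi_U$ directly for every $U\in\Pi_{A'}$ rather than via a set-theoretic section as you do, which spares the well-posedness check for your extension; this is a cosmetic difference. One step you should make explicit: for $\Lambda$ to lift \emph{the} $\lambda$ determined by $\overline{\phi}$ (and not merely some homomorphism to $N/CG$), you need the $A'$-model's monodromy $\Phi$ to reduce to $\overline{\phi}$ modulo $C$; the commutative square of quotients alone does not give this. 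It follows in one line: for $x\in\Pi_A$ one has $\Phi(U)\phi(x)\Phi(U)^{-1}=\phi(x^U)=\varphi_U\phi(x)\varphi_U^{-1}$, so $\varphi_U^{-1}\Phi(U)\in \mathrm{Cen}_N(\phi(\Pi_A))=C$.
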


Intuitively, this condition requires that the extension to the Galois closure is compatible with the $G$-action. The subsequent theorem, which identifies the \textit{Main Obstruction}, determines whether such a lift $\Lambda\colon A^\prime/A\rightarrow N/G$  arises from an $A^\prime$-model of the cover.

\begin{thmC}[Theorem \ref{thm:DebesDouaiThB}] Mapping class group formulation of \cite[\textit{Main Theorem} (II)]{DebesDouai97}

Let $A^\prime$ be the group of moduli of the ($G$)-cover of $f \colon X \to (B,A)$ relative to $A^\prime/A$.  Assume that the necessary condition of Theorem B is satisfied and fix a lift $\Lambda \colon A^\prime/A \to N/G$ of $\lambda$. The group of moduli $A^\prime$ is a group of definition for $f$ if and only if a specific $2$-cocycle $\Omega_\Lambda$ has its inverse in the image of the coboundary operator $\delta\colon H^1(A^\prime/A, CG/G)\to H^2(A^\prime/A,Z(G))$. That is, there is a $1$-cocycle $\theta$ such that $\delta(\theta)\Omega_\Lambda$ is trivial in $H^2(A^\prime/A,Z(G))$.
\end{thmC}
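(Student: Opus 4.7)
The plan is to mimic the proof strategy of Dèbes and Douai's original argument, but to carry it out entirely within the group-theoretic framework established earlier in the paper, with the subgroup $A \leq \mathrm{Mod}(B^*)$ playing the role of $\Gal(K_s/K)$ and the quotient $A'/A$ playing the role of the finite Galois extension. The first step is to translate the geometric statement into a representation-theoretic lifting problem. By the monodromy correspondence (Proposition \ref{prop:mereCoverstransTop}), the cover $f$ corresponds to a transitive representation $\phi\colon \Pi_A \to G \subseteq S_d$, and the existence of an $A'$-model is equivalent to the extension of $\phi$ to a representation $\tilde{\phi}\colon \Pi_{A'} \to N$ that is compatible with the short exact sequence
\[
1 \to \Pi_A \to \Pi_{A'} \to A'/A \to 1.
\]
Given the necessary condition of Theorem B and the chosen lift $\Lambda\colon A'/A \to N/G$, the descent problem reduces to determining when $\Lambda$ further lifts to an honest homomorphism $\tilde{\Lambda}\colon A'/A \to N$.

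Next, I would construct the obstruction cocycle explicitly. For each $\sigma \in A'/A$, choose a set-theoretic lift $\tilde{\Lambda}(\sigma) \in N$ of $\Lambda(\sigma) \in N/G$, and define
\[
\Omega_\Lambda(\sigma,\tau) \coloneqq \tilde{\Lambda}(\sigma)\,\tilde{\Lambda}(\tau)\,\tilde{\Lambda}(\sigma\tau)^{-1}.
\]
Using the fact that $\Lambda$ is a homomorphism into $N/G$, this element lies in $G$; the additional fact that its image in $N/C$ is provided by $\overline{\phi}$ (and so acts on $G$ by an automorphism that is inner) forces $\Omega_\Lambda(\sigma,\tau) \in Z(G)$. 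A direct verification shows that $\Omega_\Lambda$ is a $2$-cocycle for the induced action of $A'/A$ on $Z(G)$ coming from $\Lambda$, and that changing the set-theoretic lifts $\tilde{\Lambda}(\sigma)$ modifies $\Omega_\Lambda$ by a coboundary; hence the class $[\Omega_\Lambda] \in H^2(A'/A, Z(G))$ is an invariant of $\Lambda$ alone.

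The third step is to analyze how $[\Omega_\Lambda]$ varies as $\Lambda$ ranges over all lifts of the fixed $\lambda\colon A'/A \to N/CG$. Two such lifts differ by a $1$-cochain $\theta\colon A'/A \to CG/G$, and a direct computation shows $\Omega_{\Lambda \cdot \theta} = \delta(\theta)\,\Omega_\Lambda$, where $\delta\colon H^1(A'/A, CG/G) \to H^2(A'/A, Z(G))$ is the connecting homomorphism of the short exact sequence
\[
1 \to Z(G) \to CG \to CG/G \to 1.
\]
Putting all of this together, the cover descends to an $A'$-model if and only if some lift $\Lambda'$ of $\lambda$ admits a genuine homomorphism lift to $N$, which is equivalent to $[\Omega_{\Lambda'}]$ being trivial, and by the above displayed formula this is equivalent to $[\Omega_\Lambda]^{-1}$ lying in the image of $\delta$.

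The main obstacle will be the careful cocycle bookkeeping: verifying that the values $\Omega_\Lambda(\sigma,\tau)$ genuinely land in $Z(G)$ and not merely in $G$, that the $A'/A$-action on $Z(G)$ implicit in the cocycle condition agrees with the one induced by $\Lambda$, and that the connecting homomorphism $\delta$ is precisely the one appearing in the statement. Once these compatibilities are established, the equivalence follows formally, and the argument is visibly parallel to the arithmetic one in \cite{DebesDouai97}, thereby completing the mapping class group adaptation of the Main Theorem (II).
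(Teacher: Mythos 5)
Your overall strategy is the right one (it is the Dèbes--Douai argument transplanted into the group-theoretic setting, which is exactly what the paper does), but the construction of the obstruction cocycle at the heart of your proposal is incorrect, and the error is not cosmetic. You define $\Omega_\Lambda(\sigma,\tau)=\tilde{\Lambda}(\sigma)\tilde{\Lambda}(\tau)\tilde{\Lambda}(\sigma\tau)^{-1}$ from set-theoretic lifts of $\Lambda$ alone and assert that its image in $N/C$ ``being provided by $\overline{\phi}$'' forces it into $Z(G)$. It does not: if the lifts are normalized so that $\tilde{\Lambda}(u)\equiv\overline{\phi}(s(u))\bmod C$ (as in item (1) of the theorem), then modulo $C$ your expression equals $\overline{\phi}\bigl(s(\sigma)s(\tau)s(\sigma\tau)^{-1}\bigr)=\phi_A\bigl(s(\sigma)s(\tau)s(\sigma\tau)^{-1}\bigr)\bmod C$, which is a generally nontrivial element of $G$ measuring the failure of the set-theoretic section $s\colon A'/A\to\Pi_{A'}$ to be a homomorphism. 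The correct cochain is
\[
\Omega_{u,v}=\phi_u\phi_v\phi_{uv}^{-1}\bigl(\phi_A(s(u)s(v)s(uv)^{-1})\bigr)^{-1},
\]
and it is precisely the subtraction of this correction term that places $\Omega_{u,v}$ in $G\cap C=Z(G)$. Omitting it, you lose both the centrality of the cocycle and the actual content of the obstruction, which is the \emph{compatibility} between the group extension $1\to\Pi_A\to\Pi_{A'}\to A'/A\to 1$ and the extension $1\to G\to N\to N/G\to 1$ relative to $\phi_A$ and $\Lambda$ --- not merely the liftability of $\Lambda$.

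Relatedly, your first reduction --- that the existence of an $A'$-model is equivalent to lifting $\Lambda$ to a homomorphism $A'/A\to N$ --- is only valid under the splitting hypothesis \textbf{(Seq/Split)}; that is the content of Theorem D (Main Theorem (III)), not of the present statement. In general an $A'$-model is an extension of $\phi$ to a homomorphism $\Pi_{A'}\to N$, and no homomorphism out of $A'/A$ into $N$ need exist. Finally, a small slip: the connecting map $\delta$ comes from the sequence $1\to Z(G)\to C\to CG/G\to 1$ (the kernel of $C\to CG/G$ is $C\cap G=Z(G)$), not from $1\to Z(G)\to CG\to CG/G\to 1$, whose kernel is all of $G$. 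With the corrected cochain and without the illegitimate reduction, the rest of your bookkeeping (variation of $[\Omega_\Lambda]$ under change of lift by a $1$-cochain $\theta$, and the equivalence via $\delta(\theta)\Omega_\Lambda$) does go through as in the arithmetic case.
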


The next result provides a simplified criterion for the case where the exact sequence $1\to \Pi_A\to\Pi_{A^\prime}\to A^\prime/A\to 1$ splits. In particular, the $A$-model $f\colon X\rightarrow B$ with monodromy representation $\Psi \colon \Pi_A\rightarrow N$ can ascend to the group of moduli $A^\prime$ if and only if there  exists an action of $A^\prime/A$ on $N$ compatible with $\Psi$, meaning it respects the semi-direct product structure $\Pi_{A^\prime} \cong \Pi_A \rtimes A^\prime/A$.

\begin{thmD}[Theorem \ref{thm:DebesDouaiTHC}] Mapping class group formulation of \cite[\textit{Main Theorem} (III)]{DebesDouai97}

Retain the assumptions of Theorem B and assume further that $s\colon A^\prime/A \to \Pi_{A^\prime}$ is a section. The cover is defined over its group of moduli $A^\prime$ if and only if $\overline\phi \circ s \colon A^\prime/A \rightarrow N/C$ admits a lift $\phi:A^\prime/A\rightarrow N$.
\end{thmD}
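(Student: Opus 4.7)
The plan is to leverage the semidirect product decomposition $\Pi_{A^\prime}\cong\Pi_A\rtimes_s A^\prime/A$ afforded by the section $s$, together with the correspondence (Proposition \ref{prop:mereCoverstransTop}) between covers $X\to(B,A^\prime)$ and monodromy representations of $\Pi_{A^\prime}$ into $N$ that extend the given $\Psi\colon\Pi_A\to N$. Recall that by construction $\overline{\phi}$ records the conjugation action of $\Pi_{A^\prime}$ on $G=\Psi(\Pi_A)$, viewed inside $N/C$; since $C$ is by definition the kernel of the conjugation action of $N$ on $G$, two elements of $N$ agree modulo $C$ if and only if they act identically on $G$. This observation is what ultimately allows the abstract lifting condition to be promoted to the existence of an honest extension of $\Psi$.

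The forward implication is essentially a one-liner. If $f$ is defined over $A^\prime$, Proposition \ref{prop:mereCoverstransTop} provides an extension $\Phi\colon\Pi_{A^\prime}\to N$ of $\Psi$, and then $\phi\coloneqq\Phi\circ s\colon A^\prime/A\to N$ is a homomorphism whose reduction modulo $C$ equals $\overline{\phi}\circ s$ by the functorial definition of $\overline{\phi}$.

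For the converse, starting from a lift $\phi\colon A^\prime/A\to N$ of $\overline{\phi}\circ s$, I would define a candidate extension by the semidirect product rule
\[
\Phi(\alpha\cdot s(\sigma))\coloneqq\Psi(\alpha)\,\phi(\sigma),\qquad \alpha\in\Pi_A,\ \sigma\in A^\prime/A.
\]
Verifying that $\Phi$ is a group homomorphism reduces to the conjugation compatibility
\[
\Psi\bigl(s(\sigma)\,\alpha\,s(\sigma)^{-1}\bigr)=\phi(\sigma)\,\Psi(\alpha)\,\phi(\sigma)^{-1}
\]
for all $\alpha\in\Pi_A$ and $\sigma\in A^\prime/A$. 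By the lifting hypothesis, $\phi(\sigma)$ and $\overline{\phi}(s(\sigma))$ coincide modulo $C$ in $N/C$, and $\overline{\phi}(s(\sigma))$ is precisely the element of $N/C$ implementing conjugation by $s(\sigma)$ transported to $G$ via $\Psi$; since both sides of the required identity lie in $G$ and $C$ acts trivially there, the equality follows. Invoking Proposition \ref{prop:mereCoverstransTop} once more, the resulting $\Phi$ is the monodromy representation of an $A^\prime$-model of $f$, so $A^\prime$ is a group of definition.

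The main technical step, and the place where I expect the most care to be needed, is the precise verification that $\overline{\phi}\circ s$ is indeed the map recording conjugation on $G$ in the sense used above, once passed through the various identifications set up in Section \ref{ch:descend_on_monodromy}. A secondary point worth confirming is consistency with Theorem \ref{thm:DebesDouaiThB}: the existence of the lift $\phi$ should force the cocycle $\Omega_\Lambda$ to be trivialized by an appropriate coboundary $\delta(\theta)$, exhibiting Theorem D as a genuine specialization of the general cohomological criterion in the split case.
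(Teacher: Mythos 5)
Your proposal is correct and is essentially the argument the paper relies on: Section \ref{ch:descend_on_monodromy} gives no independent proof but asserts that the group-theoretic proof of D\`ebes--Douai carries over verbatim, and in the split case that proof is exactly your semidirect-product extension $\Phi(\alpha\, s(\sigma))=\Psi(\alpha)\phi(\sigma)$, with the conjugation compatibility holding because $C$ centralizes $G$ so that any two lifts of $\overline{\phi}(s(\sigma))$ act identically on $G$. The only small slip is the citation: the equivalence between definability over $A'$ and extendability of the monodromy homomorphism to $\Pi_{A'}\to N$ is Proposition \ref{prop:DebesTop} (together with Definition \ref{def:Amodel}), not Proposition \ref{prop:mereCoverstransTop}, and one should note as the paper does that replacing $\phi_A$ by a conjugate $\phi_A^a$ replaces $\overline{\phi}$ by $\overline{\phi}^{\,a}$, so the lifting condition is insensitive to this choice.
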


\subsection*{Birman-Hilden Property}
We address the relationship between the mapping class group of a cover and that of the base. The classical \textit{Birman-Hilden property} asks when mapping classes of the covering space surject to mapping classes of the base space. In our framework, the choice of a cover $X \to (B,A)$ defined over $A \leq \mathrm{Mod}(B^*)$ induces a homomorphism $A \to \mathrm{Out}(\pi_1(X))$. We view this as a dual property to the Birman-Hilden one. While the well-known theory of the Birman-Hilden property focuses on compatibility of isotopy representatives when mapping $\mathrm{Mod}(X)$ to $\mathrm{Mod}(B^*)$, we provide a classical reinterpretation in group theoretic terms to accompany the rest of the article. Furthermore, we explore the connection between this property and the action of $\mathrm{Mod}(B^*)$ on the Teichmüller space $T(X)$ of $X$ as a genus $g$ surface.

\subsection*{Equivariant Categories and Weil's Theorem}
The framework of equivariant categories provides the natural setting for our unification of descent theory. 
A core theme of this work is the realization that Weil's classical descent condition—historically formulated via cocycles $\{ f_{\sigma \tau} = {^\sigma \! f_\tau} \circ f_\sigma \}$—is precisely the condition for an object to admit a \textit{linearization} in an equivariant category. 
By identifying either Weil's descent data or its topological counterpart, mapping class group ascent data, with linearizations, we obtain the following categorical equivalences for both arithmetic varieties and topological covers. 

We denote by $\mathsf{Var}_L$ the category of $L$-varieties and by $\mathsf{Cov}_{(B,A)}$ the category of Galois covers $X\to(B,A)$ with branch divisor in $B$ and defined over $A$ (Definition~\ref{def:category of covers}). 

\begin{thmE} (Proposition \ref{prop:equivariantweildescent} \& \ref{prop:equivariantweildescent-top}).
\textit{Let $L/L_0$ a finite Galois field extension and $A \lhd_{\mathrm{f}} A'$ be subgroups of $\mathrm{Mod}(B^*)$. 
The category of varieties $\mathsf{Var}_L$ is equipped with a natural strict Galois $L/L_0$ action and the category of covers $\mathsf{Cov}_{(B,A)}$ is equipped with a natural non-strict action of the mapping class group quotient $A'/A$. An object $X$ of either of the categories is definable over the base ($L_0$ or $A'$, Definitions \ref{def:fieldofdef} and \ref{def:definable_top}, respectively) if and only if it admits a linearization with respect to the natural group action on the category. In particular, we obtain equivalences of categories:
\[
\mathsf{Var}_L^{\Gal(L/L_0)} \simeq \mathsf{Var}_{L_0} \quad \text{and} \quad \mathsf{Cov}_{(B,A)}^{A'/A} \simeq \mathsf{Cov}_{(B,A')}.
\]}
\end{thmE}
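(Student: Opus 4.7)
The plan is to treat the two claims in parallel, since the categorical engine is the same in both settings. In each case I would first spell out the natural group action on the ambient category, then show that a linearization is literally the same data as a Weil descent datum (respectively, a mapping class group ascent datum), and finally invoke the descent theorems already available in the paper to promote this identification into an equivalence of categories.

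\emph{Setting up the actions.} For the arithmetic side, I would define the action of $\Gal(L/L_0)$ on $\mathsf{Var}_L$ by base change: ${}^\sigma X := X \times_{L,\sigma} L$, with the obvious action on morphisms. Because base change is strictly functorial and the identifications $({}^\sigma{}^\tau X) = {}^{\sigma\tau} X$ hold on the nose (they are the same fiber product taken with respect to the same composed structure map), this is a strict action in the sense of Definition \ref{def:catactions}. For the topological side, given a class $[\sigma]\in A'/A$, I would define ${}^\sigma X$ by pulling back the cover $X\to B$ along a chosen homeomorphism representative $\hat\sigma$ of $\sigma$. Since different choices of $\hat\sigma$ yield naturally isomorphic (but not equal) pullbacks, this yields an action which is only non-strict; the coherence $2$-isomorphisms are exactly the natural isomorphisms identifying the pullbacks along composed representatives with the composed pullbacks.

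\emph{Linearizations are descent/ascent data.} The second step is a direct dictionary check. A linearization of $X$ in a $G$-equivariant category is, by definition, a collection of isomorphisms $\{f_\sigma\colon X\xrightarrow{\simeq} {}^\sigma X\}_{\sigma\in G}$ satisfying the cocycle identity $f_{\sigma\tau} = {}^\sigma f_\tau \circ f_\sigma$ (in the non-strict case, modified by the coherence isomorphisms). On the arithmetic side this is precisely Weil's cocycle condition for descent from $L$ to $L_0$. On the topological side, once the action is unpacked in terms of chosen representatives $\hat\sigma$, the coherence-twisted cocycle becomes exactly the identity $(f_{\sigma\tau},\widehat{\sigma\tau}) = ({}^\sigma f_\tau,{}^\sigma\hat\tau)\circ (f_\sigma,\hat\sigma)$ which defines a mapping class group ascent datum (as recalled just before Theorem~A). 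Thus linearizability of $X$ in $\mathsf{Var}_L$ or $\mathsf{Cov}_{(B,A)}$ is equivalent to the existence of a Weil descent datum (respectively, an ascent datum), which by the definitions of field of definition (Definition \ref{def:fieldofdef}) and of definability over $A'$ (Definition \ref{def:definable_top}) is equivalent to $X$ being definable over the base.

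\emph{Assembling the equivalences.} Having matched objects on the two sides, I would construct the functors. Base change defines a functor
\[
\mathsf{Var}_{L_0}\longrightarrow \mathsf{Var}_L^{\Gal(L/L_0)},\qquad X_0\longmapsto (X_0\times_{L_0} L,\ \{\mathrm{id}\times\sigma\}_{\sigma\in \Gal(L/L_0)}),
\]
carrying the canonical descent datum, and similarly $X_0\mapsto X_0\times_B \mathrm{id}$ in the topological setting using the canonical ascent datum of a cover defined over $A'$. A quasi-inverse is given by descent: Weil's classical theorem produces, from any linearized object $(X,\{f_\sigma\})$, an $L_0$-variety $X_0$ together with an equivariant identification $X_0\times_{L_0}L\simeq X$; on the topological side, Theorem~A supplies the corresponding ``descended'' cover over $A'$. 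Morphisms in $\mathsf{Var}_L^{\Gal(L/L_0)}$ are by definition morphisms commuting with the linearizations, so the uniqueness clause in Weil's theorem (respectively in Theorem~A) ensures they descend to morphisms in $\mathsf{Var}_{L_0}$ (respectively in $\mathsf{Cov}_{(B,A')}$), giving full faithfulness; the essential surjectivity is exactly the existence of a descent in either theorem. Composing the two functors in either order yields functors naturally isomorphic to the identity, by the universal property of base change and the uniqueness of descent.

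\emph{Main obstacle.} The genuinely delicate point is the topological half, where the $A'/A$-action is only non-strict. Keeping track of the coherence $2$-isomorphisms when rewriting the linearization cocycle as an ascent-datum cocycle requires that the natural isomorphisms comparing ${}^{\sigma\tau}X$ with ${}^\sigma({}^\tau X)$ are precisely those induced by the equality ${}^\sigma\hat\tau\circ\hat\sigma$ versus $\widehat{\sigma\tau}$; one must check that this coherence is exactly what Theorem~A uses, so that the two cocycle conditions coincide and not merely resemble each other. Once this bookkeeping is carried out, the proof reduces to an application of Theorem~A and Weil's descent theorem, in a uniform categorical packaging.
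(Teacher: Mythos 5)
Your proposal is correct and follows essentially the same route as the paper: define the strict base-change action on $\mathsf{Var}_L$ and the non-strict action on $\mathsf{Cov}_{(B,A)}$ via lifts of chosen representatives $\hat\sigma$ (with coherence isomorphisms $\eta_{\sigma,\tau}=\widetilde{\sigma\tau}\,\tilde\tau^{-1}\tilde\sigma^{-1}$), observe that a linearization is literally a Weil descent datum (resp.\ a mapping class group ascent datum), and then let Weil's theorem and its topological analogue (Theorem \ref{thm:Weil topological}) supply the quasi-inverse functors. The ``main obstacle'' you flag — matching the coherence-twisted cocycle with the ascent-datum cocycle — is exactly the bookkeeping the paper carries out in Subsection \ref{sec:ActionOnCatTop}, so no further changes are needed.
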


This categorical perspective clarifies the definitions provides a different viewpoint to the group of definition.
Note that both parts of this theorem are induced by Weil's arithmetic and topological theorems respectively.

\subsection*{Equivariant Derived Categories and Weil's Descent}
A finite group action on the category of coherent sheaves $\mathsf{Coh}(X)$ naturally induces an action on its bounded derived category $\mathsf{D^b}(X)$. While the definition of an equivariant category is straightforward for abelian categories, identifying the ``correct'' equivariant structure in the triangulated setting is more subtle. In general, the derived category of the equivariant abelian category, $\mathsf{D^b}(\mathcal{A}^G)$, and the equivariant category of the derived category, $\mathsf{D^b}(\mathcal{A})^G$, are distinct. However, in our setting—where a finite group acts with order coprime to the characteristic of the base field—these two perspectives coincide. Specifically, there is a canonical equivalence $\mathsf{D^b}(\mathsf{Coh}^G(X)) \simeq \mathsf{D^b}(X)^G$, allowing us to interpret the equivariant derived category geometrically as the derived category of the quotient stack $[X/G]$. We thus treat the derived category as a geometric invariant in its own right and investigate its behavior under descent.

Our main goal here is to lift the Weil descent datum from the variety $X$ to its derived category. We first show that a Weil descent datum $\{f_\sigma\}$ on $X$ allows us to construct a canonical action of $G=\Gal(L/L_0)$ by automorphisms on $X$, such that the quotient $X/G$ is isomorphic to the descended variety $Y$. This leads to a modern, categorical extension of Weil's theorem:

\begin{thmF}(Propositions \ref{prop: group action by weil descent}, \ref{prop:actiononvariety} \& \ref{prop:equivariant descent theorem}).
\textit{Let $X$ be a variety defined over $L$ equipped with a Weil descent datum relative to a finite Galois extension $L/L_0$, and let $Y$ be the corresponding $L_0$-variety. Assume, also that $\mathrm{char}(L_0) \not| \; \big|G \big|$, where $G = \Gal(L/L_0)$.
\begin{enumerate}
    \item The descent datum induces a free $G$-action on $X$ such that $X/G \simeq Y$.
    \item There is a well-defined $G$-action on $\mathsf{D^b}(X)$ such that the equivariant derived category is isomorphic to the derived category of the base:
    \[ \mathsf{D^b}(X)^G \simeq \mathsf{D^b}(Y), \]
    provided that $|G|$ is invertible in $L_0$.
\end{enumerate}
}
\end{thmF}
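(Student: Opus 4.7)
The plan is to address the two items in sequence, with item (2) built on item (1) by pulling back the $G$-action first to $\mathsf{Coh}(X)$ and then to the derived category.

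\textbf{Stage 1 --- Weil's cocycle as a $G$-action.} I would begin by reinterpreting the descent datum $\{f_\sigma \colon X \to {}^\sigma X\}_{\sigma \in G}$ as a genuine $G$-action on $X$ regarded as an $L_0$-scheme. For each $\sigma \in G$ the twist ${}^\sigma X = X \times_{\mathrm{Spec}(L),\sigma} \mathrm{Spec}(L)$ carries a canonical $L_0$-isomorphism $\pi_\sigma \colon {}^\sigma X \to X$ (the first projection), which is $\sigma$-semilinear on the structure sheaves. The composite $\tilde f_\sigma := \pi_\sigma \circ f_\sigma$ is then an $L_0$-automorphism of $X$, and the Weil cocycle identity $f_{\sigma\tau} = {}^\sigma\! f_\tau \circ f_\sigma$ translates, via the semilinear compatibility of the $\pi$'s, into the group law $\tilde f_{\sigma\tau} = \tilde f_\sigma \circ \tilde f_\tau$. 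This supplies the desired homomorphism $G \to \mathrm{Aut}_{L_0}(X)$; freeness is inherited from the free $G$-action on $\mathrm{Spec}(L)$ over $\mathrm{Spec}(L_0)$, so that $X \to X/G$ is an étale $G$-torsor, and the identification $X/G \simeq Y$ is exactly the content of Weil's descent theorem, already packaged in the equivariant reformulation Proposition \ref{prop:equivariantweildescent}.

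\textbf{Stage 2 --- Induced action on coherent sheaves.} With the action $\tilde f_\bullet$ in hand, pullback along these $L_0$-automorphisms defines an exact $G$-action on $\mathsf{Coh}(X)$ in the sense of Definition \ref{def:catactions}, which extends termwise to $\mathsf{D^b}(X)$. Faithfully flat Galois descent for quasi-coherent sheaves along the étale $G$-torsor $X \to X/G \simeq Y$ yields the abelian-level identification
\[
\mathsf{Coh}^G(X) \;\simeq\; \mathsf{Coh}(Y),
\]
which is a particular case of the recipes recorded in Examples \ref{actiononcoh1}--\ref{actiononcoh3}.

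\textbf{Stage 3 --- Derived equivariance.} The final and most delicate step is the comparison between $\mathsf{D^b}(\mathsf{Coh}^G(X))$ and $\mathsf{D^b}(X)^G$, which a priori need not agree. This is where I expect the main obstacle, and it is exactly where the hypothesis $|G| \in L_0^\times$ enters: semisimplicity of the group algebra $L_0[G]$ supplies the averaging that simultaneously equips $\mathsf{D^b}(X)^G$ with a canonical triangulated structure and makes the natural comparison functor into an equivalence. Under this invertibility assumption, Elagin's theorem \cite{elagin2015equivarianttriangulatedcategories} provides
\[
\mathsf{D^b}\!\bigl(\mathsf{Coh}^G(X)\bigr) \;\simeq\; \mathsf{D^b}(X)^G,
\]
and splicing it with Stage 2 yields the required equivalence $\mathsf{D^b}(X)^G \simeq \mathsf{D^b}(Y)$. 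The remaining work is bookkeeping of left-versus-right conventions in Stage 1, so that the Weil cocycle genuinely produces a group action rather than an anti-action, together with a verification that Elagin's hypotheses apply verbatim in our geometric setting.
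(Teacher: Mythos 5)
Your proposal follows essentially the same route as the paper: the action is the composite of the semilinear projection with $f_\sigma$ (the paper's $\tilde{\sigma}f_\sigma$), freeness is inherited from the $G$-action on $\Spe(L)$, the quotient is identified with $Y$ via the product decomposition $X \simeq Y \times_{L_0} \Spe(L)$, and the derived statement is obtained by combining $\mathsf{Coh}^G(X)\simeq\mathsf{Coh}(X/G)$ with Elagin's equivalence $\mathsf{D^b}(\mathcal{A}^G)\simeq\mathsf{D^b}(\mathcal{A})^G$ under the non-modularity hypothesis. The only discrepancy is the left-versus-right convention you already flag: the paper's computation yields $\widetilde{\sigma\tau}f_{\sigma\tau}=(\tilde{\tau}f_\tau)(\tilde{\sigma}f_\sigma)$, i.e.\ a right action, which is harmless for the quotient and equivariant constructions.
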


This theorem conveys two messages. First, that the Weil descent datum on $X$ is enough to construct a group action on $X$ such that the quotient $X/G$ is the variety defined over the subfield $L_0$. Second, that given the variety $Y$ defined over $L_0$, then the extension of scalars of $Y \times_{L_0} L $ can be viewed as a ``de-equivariantization'' process in this particular setting, see Remark \ref{rem: de-equivariantization}. 

Finally, in Remark \ref{rem:bondal orlov remark}, we address an ambiguity in the seminal work of Bondal and Orlov \cite{MR1818984} regarding the group of autoequivalences $\mathrm{Aut}(X) \ltimes (\mathrm{Pic}(X) \oplus \mathbb{Z})$ of a smooth projective variety with ample (anti-)canonical sheaf. We clarify that the group $\mathrm{Aut}(X)$ must be interpreted as the group of general scheme automorphisms, rather than $k$-automorphisms, contrary to what one would expect.

\subsection*{Braid Group Actions on Invariants of Elliptic Curves}
In \cite{MR1831820} P. Seidel, R. Thomas, showed that there is a faithful action of the Braid group on Calabi-Yau manifolds of dimension $\geq 2$, which was an indication towards the validity of the homological mirror symmetry conjecture. This work was based on interpreting spherical lagrangian manifolds as spherical objects in $\mathsf{D^b}(X)$ and also the generalized Dehn twists in the lagrangian side as autoequivalences in the $\mathsf{D^b}(X)$ side satisfying the braid relations. This article is mainly focused on curves and the notion of Calabi-Yau manifold reduces to the theory of elliptic curves. However, Seidel and Thomas show that in this case there exists a non faithful $B_4$ action on $\mathsf{D^b}(E)$, where $E$ is an elliptic curve. In Subsection \ref{sec:nonfaithfulelliptic}, we examine the action $B_4$ on $\pi_1(E)$, and show that it is also non faithful. We want to point out that the similarity of these two actions is not only the fact that are both non faithful actions of $B_4$ on some invariant of the elliptic curve, but also the fact that both are not induced from an action on the elliptic curve itself.

\subsection*{Section-by-Section Analysis}
All chapters, with the exception of the final two, follow a tripartite structure: an \textit{arithmetic} subsection, a \textit{topological} subsection, and a \textit{comparison} subsection. The purpose of this structure is to highlight the common ground between the two worlds and introduce a unified notation, which is summarized in tables at the end of each comparison section. The logical flow of the paper is as follows:

\[\begin{tikzcd}
            &                        & 3 \arrow[d]            & 6 \arrow[r]                        & 7 \\
1 \arrow[r] & 2 \arrow[ru] \arrow[r] & 4 \arrow[r] \arrow[ru] & 5 \arrow[ru] \arrow[r] \arrow[rd] & 9 \arrow[r] & 10 \\
            &                        &                        &                                   & 8
\end{tikzcd}
\]

In Section \ref{sec:covers}, we lay the foundation by establishing the correspondence between finite extensions of function fields and branched covers of Riemann surfaces. We explicitly distinguish between \textit{mere} covers and \textit{Galois} covers, setting the stage for the parallel study of the arithmetic fundamental group $\Pi_K(B^*)$ and the topological fundamental group $\pi_1(B^*)$. A key output of this section is the construction of a unified ``Dictionary'' of notation, which allows us to treat arithmetic extensions $K(X)/K(B)$ and topological covers $X \to B$ using a single formalism throughout the sequel.

Section \ref{ch:profinite_braids} is devoted to the study of the absolute Galois group $\Gal(K_s/K)$ as a group of profinite braids. We review the Ihara representation and compare it with the classical Artin representation of the Braid group. We also recall the definition of the Mapping Class Group $\mathrm{Mod}(S)$ and formulate the Dehn-Nielsen-Baer Theorem~\ref{thm:Dehn-Nielsen-Baer}, which serves as our primary tool in the topological setting. The main focus is the comparison of Grothendieck's arithmetic homotopy exact sequence and the Birman exact sequence. We prove Proposition~\ref{prop:ses split}, which offers a unified group-theoretic proof for the splitting of the fundamental exact sequences in both settings, relying on the existence of fixed points. This section also includes Subsection \ref{sec:nonfaithfulelliptic}, where we construct the non-faithful action of $B_4$ on the fundamental group of elliptic curves.

In Section \ref{ch:groupactions}, we develop the theory of group actions on curves and varieties. We begin with a short introduction discussing actions by homeomorphisms on ringed spaces, concluding that non-trivial twisted actions are essential for descent theory. We proceed by defining $\Gal(K_s/K)$-actions on varieties and address the technical challenge of defining $\mathrm{Mod}(B^*)$-actions in the topological setting where no function fields exist. We establish the correspondence between actions on a variety $X$ and actions on its monodromy representation $\Psi$. The main result, Proposition \ref{prop:gal_action_monodromy}, demonstrates that the natural Galois action on a variety is equivalent (up to conjugation) to the action on its monodromy representation, providing a coordinate-free way to study descent problems. Another key point is Definition \ref{def:topological_defined}, where we formally define what it means for a topological cover $X \to B$ to be ``defined over'' a subgroup $A \leq \mathrm{Mod}(B^*)$.

Section \ref{sec:field of moduli, definition, invariance} is devoted to defining clearly the groups \textit{moduli}, \textit{definition}, and \textit{invariance} for both the arithmetic and topological settings. We thoroughly investigate these groups in the arithmetic case and provide suitable definitions for the topological one. We reformulate Weil's descent conditions in terms of these groups and recall his Descent Theorem \ref{thm:weil's descent}. We then prove Theorem \ref{thm:Weil topological}, the topological analogue of Weil’s Descent Theorem, showing that a cover defined over a subgroup $A$ extends to a larger group $A'$ (where $A \lhd_\mathrm{f} A'$) if and only if it admits a ``Mapping Class Group ascent datum.'' We conclude by presenting examples illustrating the distinction between the field of moduli and the field of definition in Subsections~\ref{exm: moduli = definition} and~\ref{exm:field of moduli vs field of definition}.

In Section \ref{sec:regularity}, we analyze the \textit{regularity} of covers—the condition that the constant field extension is trivial (i.e., $K_s \cap K(X) = K$). We demonstrate that this arithmetic property can be captured entirely by group indices. Specifically, Proposition \ref{prop:equalityofindices} establishes that a cover is regular if and only if the index of the geometric fundamental group inside the arithmetic group is preserved ($[\Pi_A:H] = [\Pi_1:R]$). 

Section \ref{ch:descend_on_monodromy} applies the descent theory of Dèbes and Douai to the topological setting. We formulate the ``Group of Moduli'' condition \textbf{(CMod)} in Definition \ref{def:groupofModuliCovers} and systematically analyze the obstructions to the existence of models. The main results, Theorems \ref{thm:Mere1} and \ref{thm:DebesDouaiThB}, decompose the obstruction into two parts: a central constraint and a cohomological class lying in $H^2(A'/A, Z(G))$. This provides a rigorous method for constructing covers where the group of moduli is strictly larger than the group of definition.

In Section \ref{ch:Hilden}, we review the classical \textit{Birman-Hilden property}—the surjection of isotopy classes of homeomorphisms from the cover to the base—and provide a group-theoretic interpretation in order to discuss its connections with the other aspects of the article. We also provide an elementary result (Lemma \ref{lemma:extRout}) concerning the interaction of the marked points of $B^*$ and their non-marked fibers in $X$. 
Specifically, we analyze how the correspoding subgroups of $\mathrm{Out}(\pi_1(X))$ and $\mathrm{Out}(\pi_1(B^*))$ interact in the case of Galois covers. We conclude by raising a question regarding the existence of a profinite analogue to this property.

Section \ref{sec:equivCat} introduces the framework of equivariant categories. After providing the necessary definitions and some well-known examples of group action on categories of sheaves and their derived categories -which are needed for the final Section- we show in Proposition \ref{prop:equivariantweildescent} that the category of varieties defined over a field $L_0$ is equivalent to the equivariant category $\mathsf{Var}_L^{\Gal(L/L_0)}$. Finally, we establish the analogous result for topological covers in Proposition \ref{prop:equivariantweildescent-top}, reinterpreting the Weil cocycle condition in both the arithmetic and topological settings as a linearization of the category.

Finally, in Section \ref{sec: descent using derived categories} we construct a canonical $\Gal(L/L_0)$-action from a Weil descent datum on an $L$-variety $X$ (Proposition \ref{prop: group action by weil descent}) and show that the quotient $X/G$ is the descent $L_0$-variety (Proposition \ref{prop:actiononvariety}). We extend these results to the bounded derived category of coherent sheaves, concluding with Proposition \ref{prop:equivariant descent theorem}, which proves that the equivariant derived category $\mathsf{D^b}(X)^G$ recovers the derived category of the descended variety $\mathsf{D^b}(Y)$. Along the way, we clarify Bondal \& Orlov's work on the group of automorphisms (Remark \ref{rem:bondal orlov remark}) and propose an idea regarding de-equivariantization via extension of scalars (Remark \ref{rem: de-equivariantization}).

\subsection*{Notation \& Conventions}
Throughout the article we fix primes $\ell \neq p$, a field $K$ of characteristic $p$, unless otherwise stated. We denote $K_s$ a separable closure of $K$ (which will be fixed later in the article) and $K \subseteq L \subseteq K_s$ a field extension. 
By algebraic variety (defined) over $L$ we mean an integral scheme $X$ with structure morphism $p\colon X \to \Spe(L)$ which is separated and of finite type.
The symbols $X$, $Y$ and $B$ denote varieties defined over some field in this extension, for example usually $X$ will be over defined over $K_s$, $Y$ over $L$ and $B$ over $K$.
If the extension $L/K$ is Galois, we will denote by $\Gal (L/K)$ the Galois group.
The natural map $K \hookrightarrow L$ corresponds (contravariantly) to map $\Spe L \twoheadrightarrow \Spe K$ of affine varieties, thus given a variety $B$ defined over $K$ we extend it by scalars to the variety $B \times_{\Spe K} \Spe L$ defined over $L$, which by abuse of notation we will denote $B \times_K L$.

We use the symbol $<$ (resp. $\leq$) for strict (resp. non-strict) subgroup. Similarly, we write $\subset$ (resp. $\subseteq$) to indicate a strict (resp. non-strict) inclusion of sets—this applies in particular to subsets, open subsets, and subfields.

\subsection*{Acknowledgments}
The research project is implemented in the framework of H.F.R.I Call “Basic research Financing (Horizontal support of all Sciences)” under the National Recovery and Resilience Plan “Greece 2.0” funded by the European Union Next Generation EU(H.F.R.I.  
Project Number: 14907). 

\begin{center}
\includegraphics[scale=0.4]{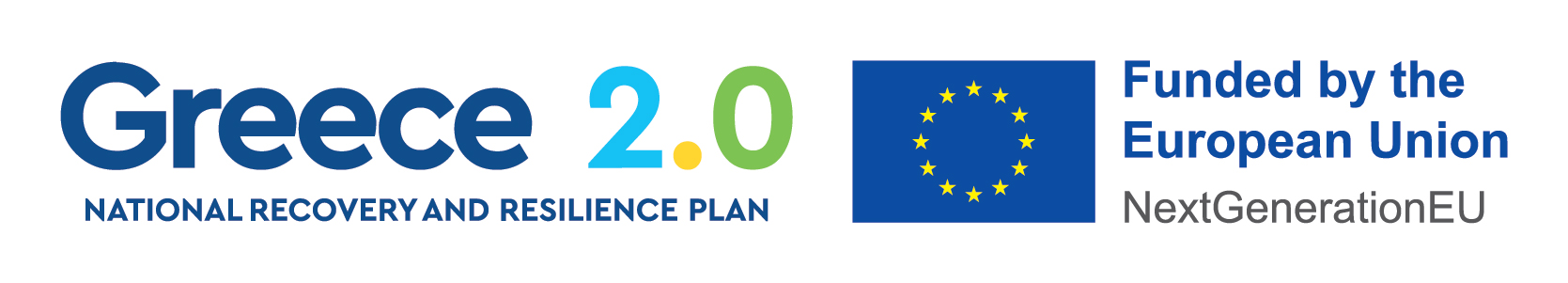}
\hskip 1cm
\includegraphics[scale=0.05]{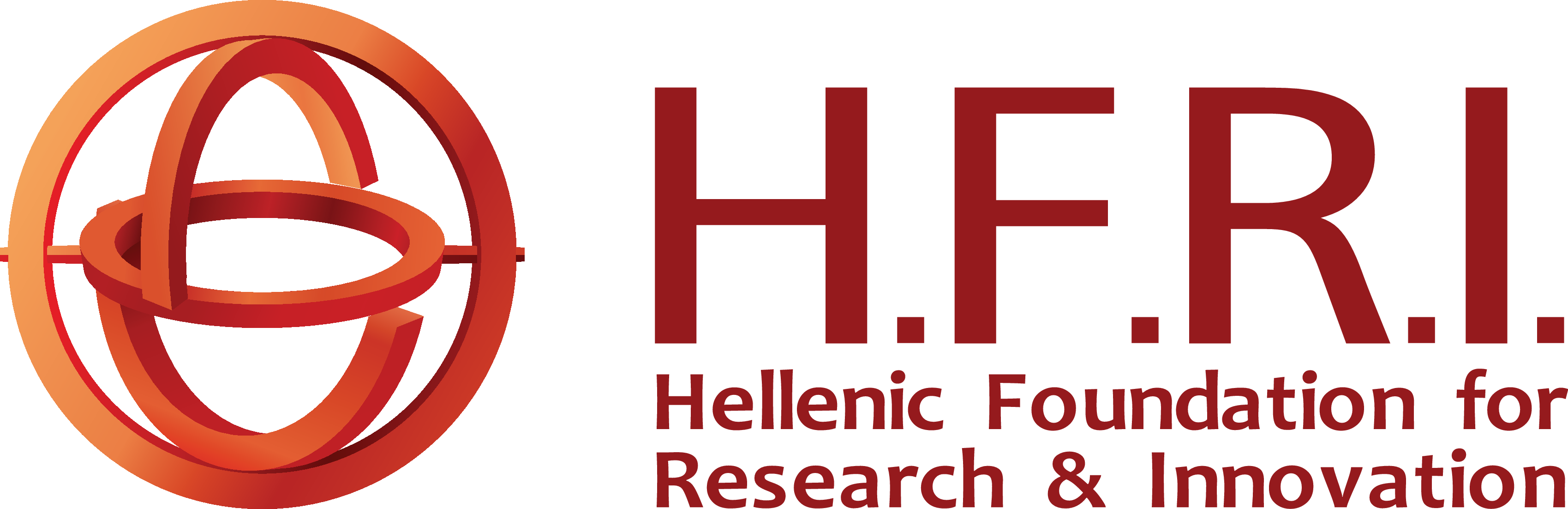}
\end{center}

We also thank Ilias Andreou for inspiring conversations during our time on this project.

\section{Covers}\label{sec:covers}

We begin by establishing the dictionary between arithmetic covers of varieties (viewed as function field extensions) and topological branched covers of complex manifolds. By distinguishing between the topological, geometric, and arithmetic fundamental groups—which classify distinct types of covers—we set the stage for comparing the arithmetic actions of the absolute Galois group $\Gal(K_s/K)$ with the topological actions of the Mapping Class Group. As the Braid group is a special case of the Mapping Class Group, this correspondence naturally motivates the study of the absolute Galois group as a profinite braid group in Chapter \ref{ch:profinite_braids}.

\subsection{Arithmetic Covers}\label{subsec:arcovers}

We follow the approach of \cite{DebesDouai97} in order to define covers in the arithmetic setting.

\begin{definition}
Let $B$ be a regular, projective, geometrically irreducible $K$-variety. 
\begin{itemize}
  \item 
A  {\em mere cover} of $B$ over $K$ is a finite and generically unramified morphism $f\colon X \rightarrow B$  of $K$-varieties, where $X$ is assumed to be normal and geometrically irreducible. 
\item  A {\em Galois cover} is a mere cover $X \to B$ with $ |\Gal (K(X) / K(B)) |= [ K(X) : K(B)]  $, where $K(X)$ and $K(B)$ denote the function fields of the varieties $X$ and $B$, respectively. 
 \item 
{\em $G$-cover} of $B$ is a Galois cover $X \rightarrow B$ over $K$, together with an isomorphism $h\colon G \rightarrow \Gal(K(X)/K(B))$.
\end{itemize}
\end{definition}
Note that regularity of $B$ implies that $K_s \cap K(B) = K$, for any separable closure $K_s$ of $K$. To extend this we introduce the following definition, which we will explore more in Section \ref{sec:regularity}. 
\begin{definition}
  The extension $K(X)/K(B)$ is called {\em regular} if  $K_s \cap K(X)= K$.
\end{definition} 

There is a bijection realized by the function field functor:
\[
\Biggl\{\text{ mere cover } f \colon X \to B \ \Biggl\} \ \ \longleftrightarrow \ \  \Biggl\{ \begin{array}{c}
\text{finite, separable, regular field} \\
\text{extensions } K(X)/K(B)
\end{array} \Biggl\}
\]

When the cover is defined over $K$, it can be viewed as being defined over any intermediate field extension $K \subset L \subset K_s$ by extension of scalars $X\times_K L \to B \times_K L$. Moreover, if $B$ is a $K$-variety and $X$ is an $L$-variety, then the cover $X \to B \times_K L$ is a cover over $L$. By abuse of notation, we denote it also by $X \to B$.

To every mere cover $f \colon X\rightarrow B$ over $K$, and further extended over $K_s$,  we assign the Galois group of the cover, that is the group 
$\Gal(\widehat{K_s(X)}/K_s(B))$, where  $\widehat{K_s(X)}$ denotes the Galois closure of the extension $K_s(X)/K_s(B)$. 
We also assign the {\em ramification locus }  $D \subset B$, which is a divisor with simple components, i.e. each component appears at most once in the formal sum. The ramification divisor of a cover defined over $K$ is also defined over $L$. Throughout the article, we will assume that $D$ is $\Gal (K_s/K)$-invariant. This assumption may be recalled explicitly in later sections as needed. We denote the complement of the ramification divisor by $B^*= B - D$. 

Fix a separable closure $K(B)_s$ of $K(B)$ and let $K_s \subset K(B)_s$ be the separable closure of $K $ inside $K(B)_s$. For every intermediate field $K \subset L \subset K_s$, the {\em arithmetic fundamental group} of $B^*$ over $L$, denoted by $ \Pi_L(B^*)$, is defined as follows.

 Let $\Omega_D \subset K(B)_s$ be the maximal algebraic separable extension of $K(B)$ that is unramified above $B^*$. Then, $\Pi_L(B^*)\coloneqq \Gal(\Omega_D/L(B))$, as shown in the following diagram.
\begin{equation} \label{eq:diagFields}
  \xymatrix{
  & K(B)_s \ar@{-}[d]
  \\
  & \Omega_D \ar@{-}[d]^{\Pi_{L}(B^*)} \ar@{-}@/_1.5pc/[ldd]_{\Pi_K(B^*)} 
  \\
  & L(B) \ar@{-}[d] \ar@{-}[dl]
  \\
  K(B) \ar@{-}[d] & L \ar@{-}[dl]
  \\
  K
  }
\end{equation}

\vspace{0.5cm}

We will focus mostly in the case $L = K_s$, i.e. on the group $\Pi_{K_s}(B^*)$, which is called the \textit{geometric fundamental group}.

\begin{remark}{\label{rem:SGA}}
In the context of the Grothendieck's SGA1 \cite{SGA1} the group $\Pi_K(B^*)$ is denoted as $\pi_1(B^*,\xi)$, where $\xi$ is the geometric generic point of $B$ corresponding to the choice $K(B)\xhookrightarrow{} K(B)_s$, or equivalently to the map $\Spe(K(B)_s)\rightarrow \Spe(K(B))\rightarrow B^*$. 
\end{remark}

\subsection{Topological Covers}

Let us recall some basic notions from algebraic topology, as a reference containing the theory of Riemann surfaces and their branched covers we will rely upon, we refer to \cite{MR1185074}. Let $X,B$ be topological spaces, where $B$ is connected. 
Let $X,B$ be topological spaces, with $B$ assumed to be connected. 

A continuous map $f\colon X\rightarrow B$ is called a {\em topological covering} (or {\em unramified covering}) if, for every point $b \in B$, there exists an open neighborhood $V_b \subset B$ such that $f^{-1}(V_b)$ is a disjoint union of open sets $U_i \subset X$, for some indexing set $I$, 
and the restriction $f|_{U_i} \colon U_i \to V_b$ is a homeomorphism for every $i \in I$. 
The cardinality of the indexing set $I$ is equal to the number of the preimages of $f^{-1}(b)$. Note that this does not depend on $b$. If $\# f^{-1}(b)~=~d~<~\infty$, we say that $f$ is called a {\em finite cover of degree} $d$.

If $X$ and $B$ are complex manifolds and $f \colon X \to B$ is a holomorphic map of degree $d$ (in usual complex analytic sense; e.g. \cite[Def. 1.6]{MR1185074}), we define the {\em ramification locus} $D \subset B$ to be the set of points $b \in B$ such that $\# f^{-1}(b) < d$. 
Let $X^\circ = X-f^{-1}(D)$ and $B^* = B-D$. We say that $f$ is a {\em finite branched cover of degree} $d$ if the restriction $f|_{X^\circ} \colon X^\circ \rightarrow B^*$ is a topological cover of degree $d$, as described above. 
 Recall the following notion:
\begin{definition}
For a topological (resp. branched) cover $X \rightarrow B$, the {\em group of deck transformations} $\operatorname{Deck}(X/B)$ consists of all homeomorphisms (resp. holomorphic maps) $h\colon X \rightarrow X$ which make the following diagram commute:
\[
\xymatrix{ X \ar[rr]^h \ar[dr] &  &  X \ar[dl] \\
& B & }
\]
\end{definition}
For a branched cover $f \colon X\rightarrow B$ and its associated unramified cover 
$f|_{X^\circ} \colon X^\circ \rightarrow B^*$, the groups of deck transformations are the same, that is,  
$\operatorname{Deck}(X/B) = \operatorname{Deck}(X^\circ/B^*)$, see 
\cite[Thm. 8.5]{MR1185074}.

The loops on $B^*$, which are continuously deformable paths with same starting and endpoint $b_0$, form a group (with multiplication given by concatenation of paths) known as the {\em topological fundamental group} $\pi_1^{\operatorname{top}}(B^*,b_0)$.  The choice of base point is irrelevant, but in many cases we will need to keep track of this initial choice, otherwise we will simply write $\pi_1(B^*)$. 
There exists a space $\widetilde{B^*}$ over $B^*$, known as the {\em universal covering space}, which realizes the fundamental group as $\pi_1(B^*) = \operatorname{Deck}(\widetilde{B^*}/B^*)$.
Covering space theory asserts the following statement, which is similar to Galois theory: each (finite) covering $X^\circ \to B^*$ corresponds to a (finite index) subgroup $R \leq \pi_1(B^*)$, where $R = \operatorname{Deck}(\widetilde{B^*}/X^\circ)$. Furthermore, $\pi_1(B^*)$ acts on the fiber $f^{-1}(b)$ of the basepoint. If the action is transitive, we say that $X^\circ \to B^*$ is Galois, which is equivalent to $R$ being a normal in $\pi_1(B^*)$, see \cite[Ch. 2, Prop. 2.2.7]{MR2548205}. In this case, we have an isomorphism $\pi_1(B^*)/R \simeq \operatorname{Deck}(X^\circ/B^*)$. If the subgroup $R$ is not normal, then $\operatorname{Deck}(X^\circ/B^*) \simeq  N_{\pi_1(B^*)}R/R$, where $N_{\pi_1(B^*)}R$ denotes the normalizer of $R$ in $\pi_1(B^*)$, see \cite[Exer. 13.12, P.184]{MR1343250}. Thus, we arrive at the following terminology.

\begin{itemize}
\item A  {\em mere cover} of $B$ is a (possibly branched) cover $f\colon X \rightarrow B$  of topological spaces over $\mathbb{C}$, with associated cover $X^\circ \rightarrow B^*$.
\item  A {\em Galois cover} is a mere cover $X \to B$, where $R$ is a normal subgroup of $\pi_1(B^*)$ corresponding to the associated cover.
\item A {\em $G$-cover} of $B$ is a Galois cover $X \rightarrow B$, together with an isomorphism $h\colon G \rightarrow \operatorname{Deck}(X/B)$.
\end{itemize}

\subsection{Comparison - Common Notation: Covers}

By the previous discussion, we observe that on the one hand we have finite extensions of function fields 
$K_s(X)/K_s(B)$, in which only finitely many places ramify. On the other hand, we have branched covers $X\rightarrow B$, in which ramification occurs over a finite set of points (the branch locus). These two types of geometric covers are classified by the arithmetic fundamental group $\Pi_{K_s}(B^*)$ and the topological fundamental group $\pi_1(B^*)$, respectively.

If the field $K$ has characteristic zero, then by the GAGA theorems, there is a correspondence: covers $K_s(X)/K_s(B)$ correspond to branched covers of analytic spaces $X(\mathbb{C}) \rightarrow B(\mathbb{C})$ unramified outside $D$, which correspond to topological covers $X^\circ \rightarrow B^*$ with the complex topology. In this case, $\Pi_{K_s}(B^*)$ is the profinite completion of $\pi_1(B^*)$. 

\begin{remark}{\label{rem:profComp}}
The same principle holds in other settings where a geometric covering theory exists,
for example, regarding Mumford curves see 
\cite{CKNotices}, \cite{CKK}, \cite{mumford-curves}, or in the case of Berkovich spaces \cite{Berk:90}. In such cases as well, the arithmetic fundamental group $\Pi_{K_s}(B^*)$ is the profinite completion of $\pi_1(B^*)$.
\end{remark}

In practice, one often considers covers of degree a power of $\ell$. We can define $\Omega_{D,\ell}$ to be the $\ell$-part of $\Omega_D$, such that for every finite subextension $L(X)/K(B)$ in $\Omega_{D,\ell}$ we have that $[L(X):K(B)]=\ell^m$, for some $m\geq 1$. Then $\Gal(\Omega_{D,\ell}/K_s(B))$ is the pro-$\ell$ completion of $\pi_1(B^*)$.

{\small
\begin{longtable}[c]{>{\raggedright\arraybackslash}p{5cm} >{\raggedright\arraybackslash}p{4.5cm} >{\raggedright\arraybackslash}p{3.5cm}}

\rowcolor{headerblue}
\textbf{Number Theory} & \textbf{Topology} & \textbf{Common Notation} \\
\endfirsthead

\rowcolor{headerblue}
\textbf{Number Theory} & \textbf{Topology} & \textbf{Common Notation} \\
\endhead

$K(X)/K(B)$ \linebreak regular extension & 
$f\colon X \to B$ \linebreak branched cover & 
$f\colon X \to B$, \linebreak ramification locus $D$, \linebreak $B^* = B - D$ \\

\rowcolor{rowgray}
\begin{tabular}[t]{@{}l@{}}
$\Omega_D =$ Maximal unramified \\
extension of $K(B)$ outside $D$
\end{tabular} &
\begin{tabular}[t]{@{}l@{}}
$\widetilde{B^*} =$ Universal \\
covering space of $B - D$
\end{tabular} &
\\

\begin{tabular}[t]{@{}l@{}}
$\Pi_{K_s}(B^*) =$ Geometric \\
fundamental group
\end{tabular} &
\begin{tabular}[t]{@{}l@{}}
$\pi_1(B^*) =$ Topological \\
fundamental group
\end{tabular} &
\\

\rowcolor{rowgray}
$\Gal(\Omega_D/K_s(X))$ &
$\operatorname{Deck}(\widetilde{B^*}/X)$ &
\begin{tabular}[t]{@{}l@{}}
$R \leq \Pi_{K_s}(B^*)$, or \\
$R \leq \pi_1(B^*)$
\end{tabular}
\\

\end{longtable}
}

\section{Braids and Profinite Braids}\label{ch:profinite_braids}
\subsection{Arithmetic} \label{sec:profinite_braids}
In this section, let $B$ be a regular, projective, geometrically irreducible $K$-variety. This implies that for any field extension $L/K$, we have $L \cap K(B) = K$. Therefore, $\Gal (L(B) / K(B)) \simeq \Gal(L/K)$.
By the diagram of fields shown in (\ref{eq:diagFields}), we have that 
the arithmetic fundamental groups for the extension $L/K$ fit into the following short exact sequence:
\[
  1 \rightarrow \Pi_L(B^*) \rightarrow \Pi_K(B^*) \rightarrow \Gal(L/K) \rightarrow 1
\]

where the quotient map is simply the restriction $\sigma\mapsto \sigma|_{L(B)}$ from $\Omega_D$ to $L$. In particular, for $L=K_s$, we obtain the short exact sequence:
\begin{equation}
\label{ses}
  1 \rightarrow \Pi_{K_s}(B^*) \rightarrow \Pi_K(B^*) \rightarrow \Gal(K_s/K) \rightarrow 1.
\end{equation}

Whenever we have an exact sequence of profinite groups of the form:
\[1\rightarrow N \rightarrow G \rightarrow H\rightarrow 1,\] 
the quotient $H$ acts on $N$ by conjugation via its preimages in $G$. However, this action is only well-defined  up to inner automorphisms of $N$, which gives rise to a continuous homomorphism $H \rightarrow \Out(N)$. 
In our case, this map is: 
\[\phi_K \colon \Gal(K_s/K) \longrightarrow \Out(\Pi_{K_s}(B^*)).\]
Interesting questions arise regarding this homomorphism, such as: what is its image or kernel for various fields $K$ and varieties $B^*$? 

A profound application of this construction is the case $K=\mathbb{Q}$ and $B=\mathbb{P}^1$ with ramification locus $D = \{P_1,\ldots,P_s\}$. 
The topological fundamental group of the corresponding punctured curve, $B^*=\mathbb{P}^1-\{P_1,\ldots,P_s\}$,  is the free group $F_{s-1}$ on $s-1$ free generators, with presentation
\[
F_{s-1} = \langle x_1,x_2,\ldots,x_{s-1},x_s \mid x_1x_2\cdots x_{s-1}x_s\rangle ,
\] 
where each generator $x_i$ represents a homotopy class of loops around the puncture $P_i$. Therefore, $\Pi_{\overline{\mathbb{Q}}}(B^*)$ becomes the free profinite group $\mathfrak{F}_{s-1}$ on $s-1$ generators. 
If we instead replace $\Omega_D$ with $\Omega_{D,\ell}$, then $\Pi_{\overline{\mathbb{Q}}}(B^*)$  becomes the free pro-$\ell$ group on $s-1$ generators. 

In the 1980's several distinguished mathematicians, including Grothendieck \cite{MR1483107}, Deligne \cite{Deligne89}, Belyi \cite{Belyi1} and Ihara \cite{Ihara1985-it}, independently studied these types of large Galois representations. For the purposes of this article we adopt Ihara's viewpoint on profinite braids, as developed in \cite{Ihara1985-it}. For a modern textbook treatment, we refer the reader to \cite[Ch.4]{MR2548205}. 

In the seminal paper \cite{Ihara1985-it}, Ihara considered the case $B^* = \mathbb{P}^1-\{0,1,\infty\}$ in the pro-$\ell$ setting and proved that the image of $\phi_{\mathbb{Q}} \colon \Gal(\overline{\Q}/\Q) \to  \Out(\mathfrak{F}_2)$ is given by
\[ \im (\phi_\Q) = \{\sigma \in \A(\mathfrak{F}_2):  \sigma(x_i)\cong x_i^\alpha, 1\leq i \leq 3, \ \alpha \in \Z_\ell^* \}/\mathrm{Inn}(\mathfrak{F}_2),\]
where the symbol ``$\cong$" denotes conjugation in $ \mathfrak{F}_2$ and $\alpha$ depends only on $\sigma$, not on the choise of generators $x_i$. In fact, $\alpha$ coincides with the $\ell$-cyclotomic character $\Gal(\overline{\Q}/\Q)\rightarrow \Z_\ell^*$. 

More generally, Ihara considered a pro-$\ell$ group $\mathfrak{B}$ and, for elements $x_1, \ldots, x_s$ generating $\mathfrak{B}$, defined the group of {\em profinite braids} as
\[
  \mathrm{Brd}(\mathfrak{B};x_1, \ldots, x_s)=\{\sigma \in \mathrm{Aut} \mathfrak{B}: \sigma(x_i) \cong  x_i^\alpha, 1\leq i \leq s
  \text{ for some } \alpha \in \mathbb{Z}_\ell^*
  \}/\mathrm{Inn} \mathfrak{B},
\] 
which contains the normal subgroup
\[
  \mathrm{Brd}_1(\mathfrak{B};x_1, \ldots, x_s)=\{\sigma \in \mathrm{Aut} \mathfrak{B}: \sigma(x_i) \cong  x_i, 1\leq i \leq s
  \}/\mathrm{Inn} \mathfrak{B}.
\]
In this particular case, one can define a norm map:
\begin{align*}
  N \colon \mathrm{Brd}(\mathfrak{B}; x_1, \ldots, x_s) &\longrightarrow \mathbb{Z}_\ell^* \notag \\
  \sigma &\longmapsto \alpha \quad \text{such that } \sigma(x_i) \cong x_i^\alpha \text{ for all } i.
\end{align*}
Ihara went on to prove that this yields the short exact sequence

\begin{equation}\label{BrdNorm}
 1 \rightarrow 
 \mathrm{Brd}_1(\mathfrak{B};x_1, \ldots, x_s) \longrightarrow 
 \mathrm{Brd}(\mathfrak{B};x_1, \ldots, x_s) 
 \stackrel{N}{\longrightarrow} \Z_\ell^* \longrightarrow  1.
\end{equation}

Once we have discussed mapping class groups, we will be able to view the group $\mathrm{Brd}_1(\mathfrak{B};x_1, \ldots, x_s)$
as a profinite analogue of the unoriented mapping class group of $B^*$. 
In the pro-$\ell$ setting, for $B^* = \mathbb{P}^1-\{P_1,\ldots,P_s\}$, we have that

\begin{align*}
    \phi_K (\Gal(K_s /K))  &\leq \mathrm{Brd}(\mathfrak{F}_{s-1};x_1, \ldots, x_s), \\
    \phi_K (\Gal(K_s /K(\mu_{\ell^\infty})) &\leq \mathrm{Brd}_1(\mathfrak{F}_{s-1};x_1, \ldots, x_s),
\end{align*}
where $K(\mu_{\ell^{\infty}})=\bigcup\limits_{n=1}^\infty K(\mu_{\ell^n})$ and $\mu_{\ell^n}$ denotes the group of $\ell^n$-th roots of unity.

The refinement of the target of $\phi_K$ to $\mathrm{Brd}(\mathfrak{F}_{s-1};x_1, \ldots, x_s)$ is known as {\em Ihara's representation} and it is the map $\mathrm{Ih} \colon \Gal(K_s/K) \to \mathrm{Brd}(\mathfrak{F}_{s-1};x_1, \ldots, x_s)$. We refer the elements in the image of $\mathrm{Ih}$ as ``arithmetic''. 

\begin{remark}
    For $B^* = \mathbb{P}^1-\{0,1,\infty\}$, Ihara and Anderson in \cite{AndersonIhara88} gave a complete description of the kernel and the fixed field $\Omega_{D,\ell}^{\ker \phi_\Q}$. They showed that this fixed field is a non-abelian pro-$\ell$ extension of $\Q(\mu_{\ell^\infty})$, unramified outside $\ell$. Then they posed the question:
    Is $\Omega_{D,\ell}^{\ker \phi_\Q}$ the maximal pro-$\ell$ extension of $\Q(\mu_{\ell^\infty})$ unramified outside $\ell$? 

    In the case where $\ell$ is an odd regular prime, the answer is yes.  This is a deep result that follows as a consequence, by work of Sharifi in \cite{MR1935409} of the Deligne-Ihara Conjecture, proved in the recent years by Brown \cite{MR2993755}. For all primes, the question remains yet open. Furthermore, related to Ihara's question are the torsion points of abelian varieties arising as Jacobians of curves being covers of $\mathbb{P}^1-\{0,1,\infty\}$, in the sense that there are cases of Jacobian varieties $J$ with $\ell$-power torsion points $J[\ell^\infty]$ such that $\mathbb{Q}(J[\ell^n])$ is contained in the maximal pro-$\ell$ extension of $\mathbb{Q}(\mu_{\ell^\infty})$ unramified outside $\ell$. Thus the question can possibly be approached by examples of such curves. See \cite{MR2470396} \cite{MR3592515} for the recent notion of {\em heavenly} abelian varieties $A$ and related conjectures, which satisfy that $K(A[\ell^\infty]) \subset$ maximal extension of $K(\mu_{\ell^\infty})$ unramified outside $\ell$.
\end{remark}

In the profinite setting, one has to consider $\Omega_D$ and $\hat{\Z}$, instead of $\Omega_{D,\ell}$ and $\Z_\ell$, and a profinite group $\mathfrak{B}$, instead of a pro-$\ell$ group, in the construction of the profinite braid group. However, the previous remark for $B^* = \mathbb{P}^1-\{0,1,\infty\}$ translates very differently in the profinite case, due to Belyi's Theorem \cite{Belyi1}. 
This theorem states that every algebraic curve $X$ over $\mathbb{C}$ admits a $\overline{\Q}$-model if and only if there exists a branched covering map $f\colon X\rightarrow \mathbb{P}^1$, branched at three points in $\mathbb{P}^1$. Consequently, the map $\phi_\Q $  becomes injective, i.e. $ \phi_\Q \colon  \Gal( \overline{\Q} / \Q) \hookrightarrow \Out (\mathfrak{F}_2) $, which is very interesting in its own right, as it embeds the absolute Galois group over $\mathbb{Q}$, which originally arises from the study of number fields, into the outer automorphism group of the free profinite group on two generators, which is a purely combinatorial object.

Belyi's Theorem also provides a useful section of the exact sequence \ref{ses}, the so-called {\em Belyi representative}, which Ihara in \cite{Ihara1985-it} employed to study profinite braids, by lifting the image of $\phi_\Q$ to $\A(\mathfrak{F}_2)$. Another similar and important construction construction is that of Deligne's {\em tangential base points}, introduced in \cite{Deligne89}. 
\begin{remark}{\label{rem:seqsplit}}
  In full generality, it is a well-known fact that a choice of a $K$-rational point as a basepoint in $B^*$ makes the exact sequence (\ref{ses}) split. This is proved by embedding $\Omega_D$ inside the completed field $K_s((t))$, where $t$ is a local uniformizer at the $K$-rational point; see \cite[p.220]{MR1352273}.
\end{remark}

In the topological setting, a similar splitting condition, can be formulated in terms of mapping class groups. In the following subsection, we will provide a group-theoretic proof that applies to both the topological and arithmetic contexts, see Proposition~\ref{prop:ses split}.

\subsection{Topological}
\label{sec:braids}

Consider an oriented surface $S$, and let
$\mathrm{Homeo}^+(S)$ be the group of isotopy classes of orientation preserving homeomorphisms of $S$. Let $\mathrm{Homeo}^0(S)$
be the connected component of the identity in the compact-open topology.
The {\em mapping class group} $\mathrm{Mod}(S)$ of $S$ is the quotient:
\[
\mathrm{Mod}(S)=\mathrm{Homeo}^+(S)/\mathrm{Homeo}^0(S).
\]
We also denote by $\mathrm{Mod}^{\pm}(S)$ the extended mapping class group, which is the group of isotopy classes of all homeomorphisms of $S$, including the orientation-reversing ones. 
There is a well-known exact sequence of groups
  \begin{equation}
    \label{eq:sesMod}
    1  \rightarrow \mathrm{Mod}(S) \rightarrow \mathrm{Mod}^{\pm}(S)
    \rightarrow \Z^* \rightarrow 1,
  \end{equation}
where $\Z^*=\{\pm 1\}$ is the group of units of the ring $\Z$; see \cite[ch. 8]{FarbMagalit}. This can be thought of as the discrete analogue of the exact sequence (\ref{BrdNorm}), once we interpret these groups via outer automorphisms of the topological fundamental group and explore their connection with braid groups. 

The {\em Artin braid group} $B_{s-1}$ on $s-1$ strands, is the group generated by $s-2$ generators $\sigma_1,\ldots,\sigma_{s-2}$ subject to the braid relations:
\begin{align*}
    \sigma_i\sigma_j &= \sigma_j \sigma_i, & \textrm{ if } \ |i-j|\geq 2, \\
    \sigma_i \sigma_{i+1}\sigma_i &= \sigma_{i+1}\sigma_i\sigma_{i+1}, & i =1,2,\ldots,s-2.
\end{align*}

To connect with the number-theoretic perspective, we consider its faithful representation as the group of automorphisms of a free group $F_{s-1}$, known as {\em Artin representation}; see \cite[Cor. 1.8.3]{BirmanBraids}. More precisely, there is a group homomorphism
\[
  \mathrm{Ar}\colon B_{s-1} \to \mathrm{Aut}(F_{s-1}),
\]
where, by slight abuse of notation, each braid generator $\sigma_i$ is mapped to an automorphism $\sigma_i$ of $F_{s-1} = \langle x_1,x_2,\ldots,x_{s-1},x_s \mid x_1x_2\cdots x_{s-1}x_s=1\rangle $ satisfying
\[\sigma_i(x_k) = \begin{cases} 
x_i & \textrm{ if } \ k=i+1, \\
x_i x_{i+1}x_i^{-1} & \textrm{ if } \ k=i, \\
x_k & \textrm{ otherwise.}
\end{cases}\] 
In particular, there are $s-2$ generators $\sigma_i$, for all of which holds $\sigma_i(x_1x_2\cdots x_{s-1}) = x_1x_2\cdots x_{s-1}$. There is a normal subgroup $P_{s-1}$ of $B_{s-1}$, the so-called {\em pure braid group} on $s-1$ strands, which, under Artin's representation, is identified with
\[ P_{s-1} = \{ \sigma \in \A (F_{s-1}) : \sigma(x_i) \cong x_i, \  i=1,2,\ldots,s \},\] where, again, ``$\cong$" denotes conjugation. This justifies the ``profinite braids" term for the corresponding arithmetic objects. Although there is no need to quotient by inner automorphisms in this case, a quotient of the braid group will arise as the mapping class group of $\mathbb{P}^1$ minus $s$ points, which will properly align with the arithmetic setting.  

We now explain how braid groups arise topologically as mapping class groups. Let $D_{s-1}$ denote the disc with $s-1$ marked points and let $\overline{D}_{s-1}$ be the disc with $s-1$ open disks removed, creating $s-1$ internal cyclic boundaries. 
For $D_{s-1}$, if we assume that each homeomorphism preserves the set of marked points, then we obtain the full braid group $B_{s-1}$ as $\mathrm{Mod}(D_{s-1})$, see \cite[9.1.3]{FarbMagalit}, \cite[Ch.1]{MR2435235}. 
If we instead assume that the set of marked point is fixed pointwise, then we obtain the pure braid group $P_{s-1}$ as the mapping class group. 
For, $\overline{D}_{s-1}$ if we require that each homeomorphism reduces to the identity on the boundary,
then we obtain as mapping class group the pure framed braid group $\mathcal{H}_{s-1}=\mathbb{Z}^{s-1}
\rtimes ~P_{s-1}$; see \cite{Bodigheimer2012-gr}, \cite[Ch. 2, P. 45]{FarbMagalit}, \cite[Th. 7.6]{Prasolov97}.

\begin{remark}
The spaces $D_{s-1}$ and $\overline{D}_{s-1}$ have isomorphic fundamental groups, but distinct mapping class groups.  Indeed, there is the ``capping map'' obtained by capping  each boundary component by a once-punctured  disk. By \cite[Proposotion~3.19]{FarbMagalit} we have the following exact sequence:
\[
  1 \rightarrow \langle T_b \rangle \rightarrow \mathrm{Mod}(S,\{p_1,\ldots,p_k\})
  \rightarrow 
  \mathrm{Mod}(S',\{p_0,\ldots,p_k\})  \rightarrow 1, 
\]
where $S'$ is the surface obtained by capping the boundary component $b$ with a once-marked disk, which we call $p_0$. Let $\mathrm{Mod}(S,\{p_1,\ldots,p_k\})$ denote the subgroup of $\mathrm{Mod}(S)$ consisting of the elements fixing the punctures $p_1,\ldots,p_k$, $k\geq 1$. The kernel $\langle T_b \rangle$ consists of the Dehn twist corresponding to the boundary component $b$. 
\end{remark}

Recall that $S_{g,n}$ denotes a surface, where $g$ is the genus of the surface and $n$ is the number of punctures. 

\begin{example} \label{rem:sphere}
The mapping class group of $\mathbb{P}^1-\{P_1,\ldots,P_s\}$ is the spherical braid group on $s$ strands, denoted $\mathrm{Mod}(S_{0,s})$. It is the quotient of the braid group $B_{s-1}$ by two additional relations, and has the following presentation \cite[P. 123]{FarbMagalit}:
\begin{equation}
\label{eq:sphMod}
  \mathrm{Mod}(S_{0,s})=
\left\langle \sigma_1,\ldots,\sigma_{s-1} \ \Big| \ 
\begin{array}{l} 
\sigma_i \sigma_j = \sigma_j \sigma_i, \ |i-j|\geq 2,  
\\
\sigma_i \sigma_{i+1} \sigma_i = \sigma_{i+1} \sigma_i \sigma_{i+1}, \ 1\leq i \leq s-2, 
\\
(\sigma_1 \cdots \sigma_{s-1})^s=1,
\\  (\sigma_1 \cdots \sigma_{s-1} \sigma_{s-1}\cdots \sigma_1)=1
\end{array}
\right \rangle.
\end{equation}

It is interesting to consider the situation where one puncture, say $P_s = \infty$, is fixed pointwise, reflecting the kind of fixed behavior one might expect under the action of $\Gal(\overline{\Q}/\Q)$.
Observe that $\mathbb{P}^1-\{\infty\}$ is homeomorphic to the open disk $D^\circ$.
The remaining punctures $\{P_2,\ldots,P_s\} \subset D^\circ $ can be treated as marked points, and if we allow homeomorphisms that permute these points, the mapping class group becomes 
$$\mathrm{Mod}(D^\circ - \{P_2,\ldots,P_s\}) = B_{s-1}/Z(B_{s-1}),$$ 
where $Z(B_n)$ is the center of $B_n$, generated by the Dehn twist $(\sigma_1 \cdots \sigma_{n-1})^n$. This is done by capping the boundary at the $\infty$ puncture from the closed disk $D_{s-1}$. The capping operation yields the exact sequence 
\[
  1\rightarrow Z(B_{s-1}) \rightarrow \mathrm{Mod}(D_{s-1})=B_{s-1} \rightarrow \mathrm{Mod}(S_{0,s}),
\] 
which shows that $B_{s-1}/Z(B_{s-1})$ is isomorphic to a subgroup of $\mathrm{Mod}(S_{0,n})$ consisting of elements that fix the puncture at $\infty$. Moreover, this subgroup if of index $s-1$. 
See \cite[P. 248]{FarbMagalit} for a more detailed discussion. 
This is aligned to the fact that we can pick $x_s \in F_{s-1} = \pi_1(\mathbb{P}^1-\{P_1,\ldots,P_{s-1},\infty\})$ to be the homotopy class of loops around the puncture at $\infty$. Then, since any braid in $B_{s-1}$ fixes $x_1x_2\cdots x_{s-1}$, it also fixes $x_s$.
\end{example}

The algebraic structure of mapping class groups of surfaces $S_{g,n}$, both punctured and unpunctured, is given by the following theorem. We use the notation $S_g=S_{g,0}$ when there are no punctures.
Recall that $S_{g,n}$ is {\em hyperbolic} if $2-2g-n<0$. The group $\mathrm{Out}^*(\pi_1(S_{g,n},b_0))$ denotes the subgroup of $\mathrm{Out}(\pi_1(S_{g,n},b_0))$ consisting of all the elements that preserve the set of conjugacy classes of the simple closed curves surrounding individual punctures.
\begin{theorem}[Dehn-Nielsen-Baer]{\label{thm:Dehn-Nielsen-Baer}
  For $g\geq 1$ and $S_g$ ``closed" surface, that is a compact surface without boundary, there is an isomorphism 
  \[
    \mathrm{Mod}^{\pm}(S_g) \cong \mathrm{Out}(\pi_1(S_g,b_0)).
  \]

  For $g\geq 0$ and $S_{g,n}$ hyperbolic surface, there is a canonical isomorphisms of groups
  \[
    \mathrm{Mod}^{\pm}(S_{g,n}) \cong \mathrm{Out}^*(\pi_1(S_{g,n},b_0)).
  \]
}
\end{theorem}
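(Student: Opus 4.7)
The plan is to construct a natural homomorphism
\[
\Sigma\colon \mathrm{Mod}^{\pm}(S) \longrightarrow \mathrm{Out}(\pi_1(S, b_0))
\]
and then establish bijectivity in both the closed and hyperbolic punctured settings. Given a homeomorphism $\phi\colon S \to S$, I pick an auxiliary path $\gamma$ from $b_0$ to $\phi(b_0)$; this converts $\phi_{\ast}\colon \pi_1(S,b_0) \to \pi_1(S, \phi(b_0))$ into an automorphism of $\pi_1(S,b_0)$, and any two choices of $\gamma$ differ by a loop at $b_0$, so $\Sigma(\phi) \in \mathrm{Out}(\pi_1(S, b_0))$ is well-defined. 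An isotopy between two homeomorphisms gives a path between the basepoints whose track exhibits the induced automorphisms as inner-conjugate, so $\Sigma$ descends from $\mathrm{Homeo}^\pm(S)$ to $\mathrm{Mod}^{\pm}(S)$. In the punctured case, any self-homeomorphism permutes the punctures and hence their peripheral conjugacy classes, which places the image of $\Sigma$ in $\mathrm{Out}^{\ast}(\pi_1(S_{g,n}, b_0))$.

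For surjectivity I would appeal to Fuchsian uniformization. For $g\geq 2$, or more generally for any hyperbolic $S_{g,n}$, the universal cover is $\mathbb{H}^2$ and $\pi_1(S)$ embeds as a discrete subgroup $\Gamma \leq \mathrm{PSL}_2(\mathbb{R})$. Given $\alpha \in \mathrm{Out}(\pi_1(S))$ (respecting peripheral conjugacy classes in the punctured case), Nielsen's argument produces a $\Gamma$-equivariant quasi-isometry $\widetilde{f}\colon \mathbb{H}^2 \to \mathbb{H}^2$ realising $\alpha$; such a map extends continuously to a $\Gamma$-equivariant homeomorphism of the circle at infinity $S^1_\infty$, and the extension descends to a homeomorphism of $S$ inducing $\alpha$. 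The remaining low-complexity cases are handled separately: for the torus $S_1$, the surjection $\mathrm{GL}_2(\mathbb{Z}) \twoheadrightarrow \mathrm{Out}(\mathbb{Z}^2)$ realises every outer automorphism by a linear self-homeomorphism; the hyperbolic $S_{0,n}$ and $S_{1,n}$ fall under the general Fuchsian argument after verifying the peripheral compatibility.

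For injectivity (Baer's half) suppose $\Sigma(\phi)$ is trivial. After post-composing with the homeomorphism represented by a loop based at $b_0$, I may assume $\phi_\ast = \mathrm{id}$ on $\pi_1(S, b_0)$. Lifting to the universal cover yields $\widetilde{\phi}\colon \mathbb{H}^2 \to \mathbb{H}^2$ that commutes with every deck transformation, so $\widetilde{\phi}$ has bounded displacement on every $\Gamma$-orbit; by standard hyperbolic geometry this forces $\widetilde{\phi}$ to extend to the identity on $S^1_\infty$. The geodesic straight-line homotopy $H_t(x)$ from $x$ to $\widetilde{\phi}(x)$ is then $\Gamma$-equivariant and descends to an isotopy on $S$ from $\phi$ to $\mathrm{id}_S$. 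The torus case is treated analogously with $\mathbb{R}^2$ in place of $\mathbb{H}^2$ and Euclidean straight-line homotopies; in the punctured case the peripheral preservation of $\alpha$ ensures the isotopy can be arranged to fix the set of punctures.

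The main obstacle I anticipate is this injectivity step — specifically, arguing that the equivariant isotopy on the universal cover really does descend to a bona fide isotopy on $S$ that respects any marked-point structure, and ensuring that the ``bounded displacement $\Rightarrow$ identity on the boundary'' argument is applied correctly in the punctured hyperbolic cases where cusps complicate the geometry at infinity. The torus case also requires a separate Euclidean treatment, since the hyperbolic machinery driving both directions is unavailable for $g=1$.
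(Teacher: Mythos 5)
The paper does not prove this statement; it simply cites Farb--Margalit (Theorems 8.1 and 8.8), and your sketch is precisely the standard argument given there: the path-conjugation construction of the map to $\mathrm{Out}$, surjectivity via quasi-isometric rigidity of $\mathbb{H}^2$ and extension to the circle at infinity (with the linear-algebra argument for the torus), and injectivity via bounded displacement of an equivariant lift. The one point to watch, which you correctly flag, is that for punctured surfaces $\pi_1(S_{g,n})$ is free and not quasi-isometric to $\mathbb{H}^2$, so the Fuchsian argument must be run on the neutered space (complement of equivariant horoballs), with the peripheral-preservation hypothesis guaranteeing that the induced quasi-isometry respects the horoball pattern.
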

\begin{proof}
  See \cite[Th. 8.1]{FarbMagalit} and \cite[Th. 8.8]{FarbMagalit}.
\end{proof}

Building on the above discussion, we now construct an exact sequence that serves as a topological analogue of the arithmetic short exact sequence (\ref{ses}).

For an element $h$ in $\mathrm{Mod}(B^*)$ we denote by $h_* \in \mathrm{Out}^*(\pi_1(S_{g,n},b_0)) $ its corresponding class of automorphisms. Let $\pi \colon X^\circ \rightarrow B^*$ be a topological cover of hyperbolic surfaces and let $R\leq \pi_1(B^*,b_0)$ be a subgroup such that $\widetilde{B^*}/R = X^\circ$. 
Assume that there is a subgroup $A \leq \mathrm{Mod}(B^*)$ such that for all $a$ in $A$, the induced automorphism $a_*([R]) = [R]$, where $[R]$ denotes the conjugacy class of $R \leq \pi_1(B^*,b_0)$. Associated to this data, i.e. the cover $\pi\colon X^\circ \rightarrow B^*$ corresponding to $R$ and the group $A$, we assign the exact sequence
\begin{equation}
    1\rightarrow \pi_1(B^*,b_0) \rightarrow \Pi_{A,X^\circ} \rightarrow A \rightarrow 1,
\end{equation}
where $\Pi_{A,X^\circ}$ is the subgroup of $\{ \sigma \in \A(\pi_1(B^*,b_0))\colon \ \sigma([R]) = [R] \}$ corresponding to $A$ under the isomorphism of Theorem \ref{thm:Dehn-Nielsen-Baer}, since $\mathrm{Mod}(B^*) \hookrightarrow \mathrm{Mod}^{\pm}(B^*)$. The kernel of $\Pi_{A,X^\circ} \to A$ is the group 
$\mathrm{Inn}(\pi_1(B^*,b_0))$. However, in the hyperbolic case, the group $\pi_1(B^*, b_0)$, or
more generally any Fuchsian group, is centerless. Thus its inner automorphisms coincide with the group itself. 
If we consider the case where $X^{\circ}$ is the universal cover of $B^*$, i.e. when $R=1$, every subgroup of $\mathrm{Mod}(B^*)$ preserves $[R]$. Hence, for any $A \leq \mathrm{Mod}(B^*)$, there exists an automorphism group $\Pi_A \leq \A(\pi_1(B^*,b_0))$ fitting into the exact sequence:
\begin{equation}\label{eq:septp} 
    1\rightarrow \pi_1(B^*,b_0) \rightarrow \Pi_{A} \rightarrow A \rightarrow 1,
\end{equation} 
In particular, for $A = \mathrm{Mod}(B^*)$, we obtain:
\begin{equation}\label{ses:Pi_Mod}
    1\rightarrow \pi_1(B^*,b_0) \rightarrow \Pi_{\mathrm{Mod}(B^*)} \rightarrow \mathrm{Mod}(B^*)\rightarrow 1.
\end{equation}
This sequence mirrors the arithmetic exact sequence \eqref{ses} and highlights the analogy between the role of Galois groups in the arithmetic setting and mapping class groups in the topological setting.

Observe that in the special case where $A=\{1\}$, we obtain that $\Pi_1=\pi_1(B^*)$. 
We now provide a criterion for the splitting of eq. (\ref{eq:septp}) analogue to Remark \ref{rem:seqsplit}, the proof of which adresses both the arithmetic and topological cases.

\begin{proposition}\label{prop:ses split}
The short exact sequence of eq. (\ref{eq:septp}) 
\[
1 \rightarrow \pi_1(B^*) \rightarrow \Pi_A \rightarrow A \rightarrow 1.
\]
splits if there is a point $P$ such that all $\sigma \in A$ have a homeomorphism representative $\hat{\sigma}$ that fixes $P$. 

\end{proposition}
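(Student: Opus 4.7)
The strategy mirrors the arithmetic case of Remark \ref{rem:seqsplit}: the point $P$ plays the role of a $K$-rational point, providing a canonical lift of each element of $A$ to an honest automorphism of the fundamental group. The obstruction to splitting (\ref{eq:septp}) is precisely the fact that $A \leq \mathrm{Mod}(B^*)$ acts on $\pi_1(B^*,b_0)$ only up to inner automorphism; a fixed basepoint should let us promote outer automorphisms to genuine ones.

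The first step would be to change the basepoint from $b_0$ to $P$. Any path from $b_0$ to $P$ induces an isomorphism $\pi_1(B^*,b_0) \xrightarrow{\sim} \pi_1(B^*,P)$, and the corresponding conjugation intertwines the two versions of $\Pi_A$ inside $\A(\pi_1(B^*,\cdot))$ while preserving the extension class. So we may work with the basepoint $P$. For each $\sigma \in A$, the hypothesis supplies a representative $\hat\sigma \in \mathrm{Homeo}(B^*)$ fixing $P$, which therefore induces a well-defined automorphism $\hat\sigma_*\in \A(\pi_1(B^*,P))$. By the Dehn-Nielsen-Baer theorem (Theorem \ref{thm:Dehn-Nielsen-Baer}), the outer class of $\hat\sigma_*$ is exactly $\sigma \in \mathrm{Mod}(B^*) \leq \Out^*(\pi_1(B^*,P))$, so $\hat\sigma_* \in \Pi_A$. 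This gives the candidate section
\[
s\colon A \longrightarrow \Pi_A,\qquad \sigma \longmapsto \hat\sigma_*.
\]

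The decisive step is to verify that $s$ is a group homomorphism. The key observation is that the homeomorphisms of $B^*$ fixing $P$ form a subgroup of $\mathrm{Homeo}(B^*)$: given $\sigma,\tau \in A$, the composition $\hat\sigma \circ \hat\tau$ is itself a representative of $\sigma\tau$ that fixes $P$. Choosing the family of representatives coherently by setting $\widehat{\sigma\tau} := \hat\sigma \circ \hat\tau$ forces
\[
s(\sigma\tau) \;=\; (\hat\sigma \circ \hat\tau)_* \;=\; \hat\sigma_* \circ \hat\tau_* \;=\; s(\sigma)\circ s(\tau),
\]
so $s$ is a homomorphism sectioning $\Pi_A \to A$, yielding the desired splitting.

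The main obstacle is carrying out this coherent choice while keeping control of the point-pushing ambiguity inherent in the Birman picture: two representatives of the same class both fixing $P$ can induce automorphisms of $\pi_1(B^*,P)$ that differ by conjugation by a loop at $P$, so a priori $s$ is sensitive to the choice of $\hat\sigma$. The multiplicative rule $\widehat{\sigma\tau} := \hat\sigma \circ \hat\tau$ resolves the ambiguity by committing, once and for all, to one representative for each element of $A$ and then propagating the choice through the group operation. In the arithmetic counterpart of Remark \ref{rem:seqsplit}, this subtlety disappears entirely, since the Galois action on the completion $K_s((t))$ at a $K$-rational point is canonically defined and no coherence must be imposed by hand.
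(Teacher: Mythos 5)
Your setup—changing the basepoint to $P$, invoking Dehn--Nielsen--Baer to see that $\hat\sigma_*$ is an honest automorphism lying over $\sigma$—matches the paper's, and you correctly identify the point-pushing ambiguity as the crux. But the step meant to resolve it, ``choosing the family of representatives coherently by setting $\widehat{\sigma\tau}:=\hat\sigma\circ\hat\tau$,'' is not a legitimate move, and this is where the proof breaks. An element $\rho\in A$ factors as a product in many ways ($\rho=\rho e=\sigma(\sigma^{-1}\rho)=\cdots$), so this rule does not single out a representative of $\rho$; making it consistent for all pairs simultaneously is exactly the assertion that $\sigma\mapsto\hat\sigma$ can be chosen to be a group homomorphism $A\to\mathrm{Homeo}_+(B^*)$ landing in the stabilizer of $P$. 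That is a realization statement strictly stronger than the hypothesis, which only provides, for each $\sigma$ separately, \emph{some} representative fixing $P$, and it is precisely the hard content of the proposition. Without it, for any fixed family $\{\hat\sigma\}$ the homeomorphisms $\hat\sigma\hat\tau$ and $\widehat{\sigma\tau}$ are merely isotopic (both fixing $P$), hence induce automorphisms of $\pi_1(B^*,P)$ differing by conjugation by the loop traced out by $P$ under the isotopy---the very inner ambiguity you set out to kill. That some additional input is genuinely needed is illustrated by closed hyperbolic surfaces: there every mapping class has a representative fixing any prescribed point, yet the Birman sequence for $S_g$ does not split, so no argument using only ``each $\sigma$ has some representative fixing $P$'' plus formal bookkeeping can close the gap.

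The paper supplies the missing mechanism by passing to the universal cover: each $\hat\sigma$ admits a unique lift $\tilde\sigma$ of $\widetilde{B^*}$ fixing a chosen point $Q\in\pi^{-1}(P)$; the defect $c(\sigma,\sigma')=\tilde\sigma\tilde\sigma'\,\widetilde{\sigma\sigma'}^{-1}$ is identified with an element of $\mathrm{Deck}(\widetilde{B^*}/B^*)\cong\pi_1(B^*)$; and since this element fixes $Q$ while the deck action on the fibre over the unramified point $P$ is free, it must be trivial, which is what forces multiplicativity. You need an argument of this kind---some explicit control of the point-pushing loops, rather than coherence by decree---to complete the proof.
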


\begin{proof}
Let $s \colon A \rightarrow \mathrm{Homeo}_+(B^*)$ be a set-theoretic section and let $P \in B^*$ be a point fixed by $A$ through $s(\sigma) = \hat{\sigma}$, i.e. for all $\sigma \in A$. Denote by $\pi\colon  \widetilde{B^*}\rightarrow B^*$ the projection from the universal covering space. Note that any point in $B^*$ is unramified, thus $\Pi_1$ acts transitively in the fiber $\pi^{-1}(b)$ of any point $b \in B^*$. 
Furthermore, via $s$ the point $P$ is $A$-invariant and $A$ acts on the fiber $\pi^{-1}(P)$. Let $Q$ be a point in the fiber $\pi^{-1}(P)$. By the covering space lifting property \cite[13.5]{MR1343250} there is a unique homeomorphism $\tilde{\sigma}:\widetilde{B^*}\rightarrow \widetilde{B^*}$ such that $\tilde{\sigma}(Q) = Q$ and $\hat\sigma \circ \pi = \pi \circ \tilde\sigma$. The uniqueness follows from the fact that fixing a point $Q$ in the fiber is equivalent to fixing a basepoint in $\widetilde{B^*}$ over the basepoint in $B^*$. In particular, each $\tilde\sigma$ is an element in the normalizer of $\mathrm{Deck}(\widetilde{B^*}/B^*)$ in the orientation-preserving homeomorphism group $\mathrm{Homeo}_+(\widetilde{B^*})$. Consider the cocycle $c(\sigma,\sigma')=\tilde\sigma \tilde{\sigma}^\prime \widetilde{\sigma \sigma}^{\prime -1}$. This is an element of $\Pi_1 \cong \mathrm{Deck}(\widetilde{B^*}/B^*)$ since it restricts to the identity map on $B^*$ through $\pi$. From its definition $c(\sigma,\sigma')$ stabilizes $Q$, however, as $P$ is unramified $c(\sigma,\sigma')$ is trivial due to the the transitivity of the action of $\Pi_1$ on the fibers. Thus, $\widetilde{\sigma \sigma}^\prime = \tilde\sigma \tilde\sigma^\prime$ and from uniqueness $\widehat{\sigma\sigma}^\prime = \hat\sigma \hat\sigma^\prime$, i.e. $s$ is a homomorphism. The result follows as the map $A\rightarrow \Pi_A \leq \mathrm{Aut}(\pi_1(B^*))$ given by $\sigma \mapsto s(\sigma)_*$ is also a homomorphism.

\end{proof}

 There is a topological interpretation of the exact sequence \ref{ses:Pi_Mod} that is due to Birman. If we mark a new point $b_0$ in $B^*$ and consider the group $\mathrm{Mod}(B^*,b_0)$, we can surject it to $\mathrm{Mod}(B^*)$ by forgetting the marked point. The kernel of this map turns out to be $\pi_1(B^*,b_0)$, by considering loops of $b_0$ as isotopies of the point and pushing them to mapping classes. 
 See \cite[Ch. 4.2.1]{FarbMagalit} for the precise details of the construction. This produces the {\em Birman exact sequence}:
 \begin{equation}\label{ses:Birman}
     1\rightarrow \pi_1(B^*,b_0) \rightarrow \mathrm{Mod}(B^*,b_0) \rightarrow \mathrm{Mod}(B^*)\rightarrow 1.
 \end{equation}
 
Let $\mathrm{Aut}^*(\pi_1(B^*,b_0))$ denote the subgroup of automorphisms that corresponds to $\mathrm{Mod}(B^*)$ via the Dehn-Nielsen-Baer theorem. We adapt to our context the construction in \cite[Pg. 235]{FarbMagalit}. Consider the following diagram:
\begin{equation*}
\begin{tikzcd}[column sep=1.4em]
1 \arrow[r] & {\pi_1(B^*,b_0)} \arrow[d, "\cong"'] \arrow[r] & {\mathrm{Mod}(B^*,b_0)} \arrow[d, dashed] \arrow[r] & \mathrm{Mod}(B^*) \arrow[d, "\cong"] \arrow[r] & 1 \\
1 \arrow[r] & {\mathrm{Inn}(\pi_1(B^*,b_0))} \arrow[r]       & {\mathrm{Aut}^*(\pi_1(B^*,b_0))} \arrow[r]          & {\mathrm{Out}^*(\pi_1(B^*,b_0))} \arrow[r]     & 1
\end{tikzcd}
\end{equation*}

If there exists a homomorphism in place of the dashed arrow that makes the diagram commute, then the short $5$-lemma implies that it is an isomorphism. This homomorphism does indeed exist, by specifying that the marked point $b_0$ is the base point of the fundamental group. The fact that we demand $b_0$ to remain fixed as the single marked point ensures that we get a genuine automorphism, rather than an outer automorphism, since conjugations of $\pi_1(B^*,b_0)$ would change the base point. 
More precisely, we have $\mathrm{Mod}(B^*,b_0) \cong \mathrm{Out}^*(\pi_1(B^*-\{b_0\}))$ and for each outer automorphism class $\phi$, there exists a unique representative that fixes the loop around $b_0$ pointwise. 
Moreover, this representative is the desired automorphism of $\pi_1(B^*,b_0)$. 
Therefore, the Birman exact sequence \ref{ses:Birman} and the exact sequence \ref{ses:Pi_Mod} are equivalent.
It is worth noting that the version of the Birman exact sequence for a surface $S_g$ with no punctures does not split, proved also in \cite{FarbMagalit}. 
However, in the punctured case, splitting may occur, although it is rare. 
For instance, in the case of the pure mapping class group of the $s$-times punctured disk, we have the exact sequence of pure braid groups
\[1\rightarrow F_{s-1} \rightarrow P_s \rightarrow P_{s-1} \rightarrow 1,\] 
which splits by embedding $P_{s-1}$ in $P_s$ by adding an extra strand.
However, we cannot replicate this argument for the punctured projective line due to the representation of $\mathrm{Mod}(S_{0,s})$.

This phenomenon mirrors the situation in the arithmetic setting: sections of the arithmetic exact sequence \eqref{ses} correspond to $K$-rational points, which often do not exist. Thus, in both the topological and arithmetic contexts, we have a significant obstruction to the splitting of the exact sequences under consideration.

\subsection{Common Notation} In the previous subsections, we explored the similarities between braids in $B_{s-1}$ and elements in $\Gal(\overline{\mathbb{Q}}/\mathbb{Q})$. 
Namely, both can be viewed as elements in $\mathrm{Aut}(F_{s-1})$ and $\mathrm{Aut}(\mathfrak{F}_{s-1})$, respectively, under a suitable lifts through the Artin and Ihara representations, respectively. The pro-$\ell$ free group $\mathfrak{F}_{s-1}$, along with its automorphism group, is an interpolation of the discrete case, i.e. $F_{s-1}~\hookrightarrow~\mathfrak{F}_{s-1}$. 
This situation is reminiscent of rigid or formal geometry, see \cite{KatoRigid}, \cite[Ch. 9]{Hartshorne:77}. 
This similarity stems from the deeper fact that both the mapping class group and $\Gal(\overline{\mathbb{Q}}/\mathbb{Q})$ act as outer automorphisms on fundamental groups and their profinite completions, respectively.

In more detail, elements of the pure braid group $P_{s-1}$ may be seen as elements in the mapping class group of the punctured disk which is homeomorphic to the projective line minus $s$-points. Elements in the full braid group act like permutations on the set of punctures $\Sigma$. 
These elements also act like as intricate homeomorphisms on the complement $D_{s-1}$ of the $s-1$ points. 
The group $\Gal(\overline{\Q}/\Q)$ acts exactly in the same way, that is, by permutations on the set of the branched $(s-1)$-points, where the point at $\infty$ is $\Gal(\bar{\Q}/\Q)$-invariant with respect to the action of the Galois group $\Gal(L/\Q)$ for some finite extension $L/\Q$. Moreover, $\Gal(\overline{\Q}/\Q)$ acts on the complement $\mathbb{P}^1_{\overline{\mathbb{Q}}}-\Sigma$ in quite ``mysterious" a way. 

In terms of arithmetic topology, that is the study of the analogies between prime numbers and knots, see \cite{Morishita2011-yw}, we provide the following similarity. 
Both knots and primes are conjugacy classes of group elements in either $B_{s-1}$ or $\Gal(\overline{\Q}/\Q)$. 
Indeed, a knot is formally an injection of $S^1$ on a $3$-manifold and a link is a disjoint union of them, which both can be cut to provide a braid by Alexander's theorem \cite[Thm. 2.3]{MR2435235}. 
By Markov's theorem \cite[Thm. 2.8]{MR2435235}, the closure of two braids (strictly) in $B_{s-1}$ induces the same knot or link if and only if the braids are conjugate. In arithmetic terms, pick a prime $p$ over $\overline{\Q}$, which corresponds to a choice $\overline{\Q} \xhookrightarrow{} \overline{\Q}_p$. 
The group $\Gal( \overline{\Q}_p/\Q_p)$ is isomorphic to the decomposition group of $D_p$ of the prime $p$, and up to conjugacy it is embedded into $\Gal(\overline{\Q}/\Q)$. If $I_p$ is the inertia group of $p$, then there is an isomorphism $D_p/I_p \cong \Gal(\overline{\mathbb{F}}_p/\mathbb{F}_p)\cong \hat{\Z}$, where the latter is topologically generated by the Frobenius automorphism of $p$. 
That is, after lifting from the quotient, we can see the prime number $p$ as a conjugacy class in $\Gal(\overline{\Q}/\Q)$. 
\begin{remark}
    The étale fundamental group of $\mathrm{Spec}(\mathbb{F}_p)$ is $\Gal(\overline{\mathbb{F}}_p/\mathbb{F}_p)\cong \hat{\Z}$, while $\pi_1(S^1) = \Z$. This further reflects the analogy between primes and knots, and highlights how profinite completions interpolate between discrete and continuous worlds.
\end{remark}

We summarize the core correspondences from the last two sections in the following table:

{\small
\begin{longtable}[c]{>{\raggedright\arraybackslash}p{6cm}  >{\raggedright\arraybackslash}p{5cm}  >{\raggedright\arraybackslash}p{4.5cm}}
\rowcolor{headerblue}
\textbf{Number Theory} & \textbf{Topology}  \\
\endfirsthead

\rowcolor{headerblue}
\textbf{Number Theory} & \textbf{Topology}  \\
\endhead

\rowcolor{rowgray}
$\Gal(\overline{\mathbb{Q}}/\mathbb{Q})$ 
  & $\mathrm{Mod}(B^*)$ 
  \\

$\Pi_K/\Pi_{K_s} \cong \Gal(K_s/K)$ 
  & $\Pi_{\mathrm{Mod}(B^*)}/\pi_1(B^*) \cong \mathrm{Mod}(B^*)$ 
   \\

\rowcolor{rowgray}
$\phi_K(\Gal(K_s/K)) \leq \mathrm{Out}(\Pi_{K_s}(B^*))$ 
  & $\mathrm{Mod}(B^*) \leq \mathrm{Out}(\pi_1(B^*))$ 
   \\

$\mathrm{Brd}/\mathrm{Brd}_1 \cong \mathbb{Z}_\ell^*$ 
  & $\mathrm{Mod}^\pm(B^*)/\mathrm{Mod}(B^*) \cong \mathbb{Z}^*$ 
   \\

\rowcolor{rowgray}
\begin{tabular}[t]{@{}l@{}}
$\mathrm{Ih} \colon \Gal(K_s/K) \to \mathrm{Brd}(\mathfrak{F}_{s-1})$ \\ 
and basepoint lift to $\A(\mathfrak{F}_{s-1})$
\end{tabular}
  & $\mathrm{Ar} \colon B_{s-1} \to \A(F_{s-1})$ 
   \\

Prime: conjugacy class in $\Gal(\overline{\mathbb{Q}}/\mathbb{Q})$ 
  & Knot: conjugacy class in $B_{s-1}$ 
   \\
\end{longtable}
}

\subsection{Example of Non Faithful Action on Elliptic Curves} \label{sec:nonfaithfulelliptic}
In this example, we study the case of elliptic curves together with the braid group action on their fundamental group. Let $E$ be an elliptic curve over $\mathbb{C}$ with short Weierstrass equation $y^2=x(x-1)(x-\lambda)$. The map $E\rightarrow \mathbb{P}^1_\mathbb{C}$, defined by $(x,y)\rightarrow x$, yields a $\mathbb{Z}/2\mathbb{Z}$-branched cover of the projective line, ramified above the four points $\{0,1,\lambda,\infty\}$. 
The topological fundamental group of $\mathbb{P}^1 \setminus \{0,1,\lambda, \infty\}$ is the free group
\[
  F_3=\langle x_1,x_2,x_3,x_4 : x_1x_2x_3x_4=1 \rangle,
\]
where $x_1, x_2, x_3$ are the loops around the points $0,1,\lambda$ and $x_4=(x_1x_2x_3)^{-1}$ is the loop around the point at $\infty$. Adjusting our notation so that it corresponds to the notation in \cite{MR4117575}, the open unramified cover $E^\circ$ corresponds to the free group $R_2$. Using the Reidemeister-Schreier method \cite{bogoGrp} a set of free generators for $R_2$ is
\[
  \{x_jx_1^{-1}, j=2,3\} \cup    
  \{x_1x_j, j=1,2,3\}, 
\]
see \cite[Lemma 13]{MR4117575}. On the one hand, the braid group $B_3$ acts faithfully on $F_3$ via the Artin representation, keeping $\infty$ invariant. On the other hand, the braid group $B_4$ acts non-faithfully, since the mapping class group of the punctured projective line is the spherical braid group given in eq. (\ref{eq:sphMod}), that is 
\[
  \mathrm{Mod}(S_{0,4})=B_4/
  \langle 
  (\sigma_1 \sigma_2 \sigma_3)^4, 
  (\sigma_1 \sigma_2 \sigma_3^2 \sigma_2 \sigma_1)
  \rangle,
\] see also \cite[Remark 3]{MR4117575}. The elements $\sigma_1, \sigma_2, \sigma_3$, being automorphisms of the group $F_3$, act in the following way:
\begin{align*}
\sigma_1(x_1)=x_1x_2 x_1^{-1},&&
\sigma_1(x_2)=x_1,&&
\sigma_1(x_3)=x_3,&& \sigma_1(x_4)=x_4,
\\ 
\sigma_2(x_1)=x_1,&&
\sigma_2(x_2)=x_2x_3x_2^{-1},&&
\sigma_2(x_3)=x_2,
&& \sigma_2(x_4)=x_4,
\\ 
\sigma_3(x_1)=x_1,&&
\sigma_3(x_2)=x_2,&&
\sigma_3(x_3)=x_3x_4x_3^{-1},&& 
\sigma_3(x_4)=x_3.
\end{align*}
Since $x_4 = x_3^{-1}x_2^{-1}x_1^{-1}$, the action on $x_4$ is determined by the first three columns. In particular, $\sigma_3(x_3) = x_2^{-1}x_1^{-1}x_3^{-1}$.

By straightforward computation, it can be checked that the group $B_4$ acts on the free group $R_2$, that is $\sigma(R_2)=R_2$ for all $\sigma\in B_4$, and that both $(\sigma_1\sigma_2\sigma_3)^4$ and $(\sigma_1 \sigma_2 \sigma_3^2 \sigma_2 \sigma_1) $ act trivilly. For instance, 
\begin{align*}
  \sigma_1(x_2x_1^{-1})&=x_1 x_1 x_2^{-1}x_1^{-1}= (x_1)^2(x_1x_2)^{-1}.
\\
\sigma_2(x_3x_1^{-1})&=x_2 x_1^{-1}.
\end{align*}
Therefore, we have a non-faithful action of $B_4$ on $\pi_1(E^\circ) \simeq R_2\leq \pi_1(\mathbb{P}^1-\{0,1,\lambda,\infty\})$. Furthermore, the action of $B_4$ preserves the normal closure $\langle x_i^2, \ 1\leq i \leq 4 \rangle$ inside $F_3$, as well as $R_2^\prime = [R_2,R_2]$. By compactifying the cusps, we have that
\[\pi_1(E(\mathbb{C})) = H_1(E(\mathbb{C}), \Z) \cong \frac{R_2}{R_2^\prime \cdot\langle x_1^2,x_2^2,x_3^2,x_4^2\rangle}, \] which can be realized as a quotient of $\pi_1(E^\circ)$, thus we have a non-faithful action of $B_4$ on $\pi_1(E(\mathbb{C}))$.

Note that this does not induce an action of $B_4$ on the elliptic curve $E$ itself via some endomorphisms. 
By \cite[pg. 20]{MR4117575}, the two elements representing $\pi_1(E(\mathbb{C}))$ as $\Z\times \Z$ are $x_2x_1$ and $x_3x_1$ in the quotient of $R_2$. The action on this basis yields a representation in $\mathrm{SL}_2(\Z)$,
\[\sigma_1,\sigma_3 \mapsto 
\begin{pmatrix}
1 & 1 \\ 
0 & 1
\end{pmatrix}, \ 
\sigma_2 \mapsto 
\begin{pmatrix}
2 & 1 \\ 
-1 & 0
\end{pmatrix}, \ 
\] 
where, for instance, $\sigma_2(x_2x_1) = x_2x_3x_2^{-1}x_1 = 2 x_2x_1 - x_3x_1 $ where the equality holds modulo $R_2^\prime\langle x_1^2,x_2^2,x_3^2,x_4^2\rangle$. In particular, $\sigma_1\sigma_3^{-1}$ acts trivially.

The obstruction is that neither of these matrices corresponds to multiplication by a complex number. Recall, by \cite[Ch. 4]{Silverman92}, that $E$ is isomorphic to $\mathbb{C}/\Lambda$, where $\Lambda$ is a lattice, say generated by $\omega_1,\omega_2$. By \cite[Prop. 5.6]{Silverman92}, $H_1(E(\mathbb{C}),\Z)$ is isomorphic to $\Lambda$ as an abelian group and by \cite[Thm. 4.1 ]{Silverman92} endomorphisms of $E$ correspond to complex numbers $\alpha$ such that $\alpha \Lambda \subseteq \Lambda$. However, the images $\overline{\sigma}_i$ in $\mathrm{SL}_2(\Z)$ do not satisfy $\overline{\sigma}_i (\omega_1,\omega_2)^T = \alpha (\omega_1,\omega_2)^T$ for any $\alpha \in \mathbb{C}$. Thus, the $B_4$ action in this setting does not arise from endomorphisms.

\begin{remark}\label{rem:action on elliptic curves}
We want to point out the relation to the action considered in Seidel and Thomas' paper \cite{MR1831820}, who showed that there is a faithful action on the bounded derived category of coherent sheaves $\mathsf{D^b}(X)$ of Calabi-Yau manifolds of dimension $\geq 2$. However, there is a non faithful action of $B_4$ on $\mathsf{D^b}(E)$ where $E$ is an elliptic curve, see \cite[Section 3d]{MR1831820}. It is interesting to observe that there is also a non faithful action of $\pi_1(E(\mathbb{C}))$, but not an action on the curve itself. The similarity between both actions is precisely that, i.e. there are non faithful action on invariants of $E$, which are not induced from an action on $E$ itself.
\end{remark}

\section{Group Actions On Curves, Varieties and their Monodromy}\label{ch:groupactions}

WWe begin this section with a lengthy introduction intended to motivate the ideas underlying the question: how should one think of a group action on a (locally ringed) topological space? More precisely, what is the “correct" group action one should study?

So, let us begin with a (locally) ringed space $(X, \mathcal{O}_X)$ and a second topological space denoted $^h\!X$, together with a homeomorphism $h \colon X \rightarrow {^h \! X}$ between them. The notation will become clearer in the following chapters. For now we may think of $^h\!X$ as the topological space obtained by the action of a group element $h$ on $X$. By abuse of notation, we denote the resulting homeomorphism also by $h$.

Our goal is to define a (locally) ringed space structure on $^h \! X$.
To that end, we define the structure sheaf $\mathcal{O}_{^h \! X}$ of ${^h\!X}=h(X)$ as the induced structure sheaf, by pulling back sections along $h$. Specifically, for any open set $U \subseteq {^h\!X}$, we set: 
\[
  \O_{{^h \! X}} (U) = \O_X (h^{-1}U).  
\]
This defines a sheaf of rings on ${^h \! X}$ rendering $(^h \! X, \O_{^h \! X})$ a (locally) ringed space. We denote the corresponding morphism of (locally) ringed spaces by:
\[
\tilde{h} \colon (X, \O_X) 
\to (^h \! X, \O_{^h \! X}).  
\]

Recall that for a sheaf $\mathcal{F}$, we denote by  
$\tilde{h}^{-1}\mathcal{F}$ its inverse image under $\tilde{h}$. By definition this means:
\[
  \tilde{h}^{-1} \mathcal{F}(U)= \mathcal{F}(h(U)) \text{, for all open sets } U\subseteq X. 
\]
Note that there is no need to consider a limit over open sets covering $h(U)$ as in \cite[p.65]{Hartshorne:77}, since $h$ is a homeomorphism and $h(U)$ is a priori open. Observe also that $\tilde{h}^{-1} \O_{^h\!X} = \O_X$. 
Therefore, the map $\tilde{h}\colon X \to {^h \!X}$ is a surjective open immersion in the sense of locally ringed spaces, 
i.e. $h$ is a homeomorphism and $h^{-1} \O_{{^h \!X}} \to \O_X$ is an isomorphism.  
It follows that, in this setting, the pullback functor $\tilde h^*$ coincides with $\tilde h^{-1}$, since the inverse image already produces sheaves of $\O_X$-modules. 

Now suppose $X$ has the structure of a complex manifold and $(U,z_U)$ is a system of holomorphic charts on $X$, with $z_U\colon U \rightarrow \mathbb{C}^n$,  then ${^h\!X}$ also inherits the structure of a complex manifold with charts $(h(U),  z_{U} \circ h^{-1})$,

\[
\begin{tikzcd}
{^h\!X} \arrow[r, "h^{-1}"] & X \\
h(U) \arrow[r, "h^{-1}"] \arrow[u, hook] & U \arrow[r, "z_U"] \arrow[u, hook] & \mathbb{C}^n
\end{tikzcd}
\]

Note that all transition functions in ${^h  \! X}$ are holomorphic, even though $h$ itself need not be.
Indeed, if $h(U_a)\cap h(U_b)\neq \varnothing$, the transition map on the intersection is given by $z_{U_a}\circ h^{-1} \circ (z_{U_b}\circ h^{-1})^{-1} = z_{U_a} \circ z^{-1}_{U_b}$ which is holomorphic since it is a transition map from the original manifold $X$.

In particular, for algebraic curves over $\mathbb{C}$, equivalently for compact Riemann surfaces, this construction respects the local analytic structure. If $z_P$ is a local uniformizer at $P\in X$, then $h(z_P)$ is a local uniformizer at $h(P) \in {^h \! X}$.
This means that the elements $f=\sum_{\nu =0}^\infty a_\nu z_P^\nu\in \widehat{\O}_P$ are mapped to $h(f)=\sum_{\nu =0}^\infty a_\nu h(z_P)^\nu\in \widehat{\O}_{hP}$.

\begin{remark}
The basic idea we wish to capture is that one can ``twist" the space $X$ by acting on it via isomorphisms or homeomorphisms, and then consider the induced structure on the resulting space. However, this alone is not particularly interesting: such a twist yields a space that is trivially isomorphic to the original as a (locally) ringed space.

What we will instead do in the sequel is twist both the topological space $X$ and its structure sheaf. In this case, the resulting (locally) ringed space is no longer a priori isomorphic to the original one. This opens the door to studying nontrivial symmetries or deformations that genuinely affect the structure, rather than simply relabeling it.

\end{remark}

\subsection{Arithmetic Actions}\label{subsec:aractions}

We begin by considering the action on classical algebraic sets (affine case), and  then extend to the more abstract scheme world. 

Recall that $K$ is a field of characteristic $p \geq 0$ and  $K_s$ is a fixed separable closure of $K$. Any $L$-scheme, for $K \subseteq L \subseteq K_s$ can be viewed as a $K_s$-scheme by extension of scalars. Therefore, throughout this chapter we work over $K_s$. Consider the absolute Galois group $\Gal(K_s/K)$.  Each $\sigma \in \Gal(K_s/K)$ defines a bijection 
\begin{align*}
  \widehat{\sigma} \colon K_s^n &\longrightarrow K_s^n \\
    (y_1,\ldots,y_n) & \longmapsto \big( \sigma(y_1),\ldots, \sigma(y_n) \big).
\end{align*}
Moreover, for each polynomial $P\in K_s[z_1,\ldots,z_n]$, we denote by ${^\sigma \!}{P}$ the polynomial obtained by applying $\sigma$ to the coefficients of $P$. 
For every $\sigma \in \Gal(K_s/K)$, we denote by $\hat{\sigma }$ the map $K_s[x_1, \ldots , x_n] \rightarrow K_s[x_1, \ldots , x_n]$ sending the polynomial $P$ to $\hat{\sigma }(P) \coloneqq {^\sigma \! P}$.
The following diagram commutes:
\begin{equation}
\label{eq:commute-eta}
  \xymatrix{
  K_s^n  \ar[r]^{P} \ar[d]_{\widehat{\sigma}} & K_s \ar[d]^{\sigma}\\
  K_s^n \ar[r]^{^\sigma\!P} & K_s
  }
\end{equation}
where $P$ (resp. $^{\sigma}\!P$) denotes the polynomial function induced by $P$ (resp. $^{\sigma}\!P$).
If $X$ is a variety defined as the zero locus of a set of polynomials $P_1,\ldots,P_r$, that is 
\[
  X=\{(y_1,\ldots,y_n) \in K_s^n: P_j(y_1,\ldots,y_n)=0, 1 \leq j \leq r\}, 
\]
then for $\sigma \in \Gal (K_s / K)$ we define the variety
\[
  {^\sigma \! X} =
  \{(y_1,\ldots,y_n) \in K_s^n: {^\sigma \!P}_j(y_1,\ldots,y_n)=0, 1 \leq j \leq r\}.
\]
Using eq. (\ref{eq:commute-eta}) we see that $P \in X $ if and only if $\widehat{\sigma}(P) \in {^\sigma \!X}$.

Now, let $p\colon X \to \Spe(K_s)$ be a $K_s$-scheme. The group $\Gal(K_s/K)$ acts on $X$ as follows.  
The ring homomorphism $\sigma \colon K_s \to K_s$ corresponds (contravariantly) to the map $\tilde{\sigma}\colon \Spe(K_s) \rightarrow \Spe(K_s)$,
i.e. we have that $\widetilde{\sigma \tau}= \tilde{\tau} \tilde{\sigma}$, for $\tau \in \Gal(K_s/K)$. 
The scheme $ {^\sigma \! X}$ is defined as the fiber product
 \({^\sigma \! X}=X \times_{\tilde{\sigma}} \Spe(K_s) \), that is, the unique (up to isomorphism) scheme making the following diagram commute:
\begin{equation}
  \label{eq:schemeActi}
  \xymatrix{
   X  \ar[d]_{p}  &  ^\sigma\!{X} \ar[d]^{\pi_2} \ar[l]_{\pi_1} & \\
   \Spe(K_s)  & \Spe(K_s) \ar[l]^{\tilde{\sigma}} & 
  }
\end{equation}
\begin{remark}\label{rem:non isomorphic K_s-schemes}
At this point, we emphasize that although $X \simeq {^\sigma\!X}$ as schemes, they are not necessarily isomorphic as $K_s$-schemes when the structure morphism of ${^\sigma X}$ is taken to be the twisted one, denoted $\pi_2$.
More precisely, the natural projection $\pi_1 \colon X \to \! ^{\sigma}\!X $ is an isomorphism of schemes, since isomorphisms are stable under base change. 
However, this isomorphism does not, in general, respect the structure of $K_s$-schemes, as we explain below. 
This phenomenon occurs naturally as there are strictly more morphisms of schemes than there are morphisms of $K_s$-schemes.

Recall that the category of $L$-schemes consists of objects that are maps $X~\rightarrow~\Spe L$ and morphisms that are maps $X_1 \to X_2 $ such that the following triangle commutes:
\[
  \xymatrix{
    X_1 
    \ar[dr] 
    \ar[rr] 
    & & 
    X_2 
    \ar[ld] \\
    & \Spe L  &
  }
\]  
\end{remark}
The scheme ${^\sigma \! X} \rightarrow \Spe(K_s)$, admits two natural choices for its structure map, namely \(\tilde{\sigma} \circ \pi_2 \)  and \(\pi _2\),
giving rise to two different elements in the category of $K_s$-schemes, $^\sigma \!X \stackrel{ \tilde{\sigma }\pi_2}{\longrightarrow } \Spe(K_s )$ and $^\sigma \!X \stackrel{ \pi_2}{\longrightarrow } \Spe(K_s )$, respectively. 
\begin{itemize}
    \item Using \(\tilde{\sigma} \circ \pi_2 \) we obtain a trivial isomorphism ${^\sigma \! X} \simeq X$ of $K_s $-schemes.
    \item Using \( \pi_2 \), the projection map $\pi_1$ is still an isomorphism ${ ^\sigma \! X} \rightarrow X$ of schemes but not an isomorphism of $K_s$-schemes.
\end{itemize}
This discrepancy arises because of the twisted $K_s$-algebra structure on $^\sigma \! X$.
To understand this phenomenon, we compare with the affine case described earlier. The diagram from equation \eqref{eq:schemeActi} (with reversed arrows contravariantly) illustrates how the Galois action affects coordinate rings:
  \[
    \xymatrix{
     X  
     \stackrel{\text{corresp.}}\longleftrightarrow K_s[z_1,\ldots,z_n]/\langle P_1,\ldots,P_r \rangle
      \ar[r]^{\hat{\sigma}} &  
     K_s[z_1,\ldots,z_n]/\langle {^\sigma \! P_1},\ldots, {^\sigma \! P}_r \rangle 
     \stackrel{\text{corresp.}}\longleftrightarrow  {^{\sigma}\!X}
       & \\
     K_s  \ar[u] \ar[r]^{\sigma} & K_s \ar[u],
    }
  \]
  where the $K_s$-algebra $K_s[z_1,\ldots,z_n]/\langle {^\sigma \! P_1},\ldots, {^\sigma \! P}_r \rangle$ is equipped with the $K_s$-algebra structure using the composition $\tilde{\sigma}\circ \pi_2$, that is by the following rule:
    \[
     \lambda \odot \hat{\sigma}(\omega) = \sigma(\lambda)\hat{\sigma}(\omega) = \hat{\sigma}(\lambda \omega) 
    \]
    for $\lambda \in K_s$ and $\omega \in K_s[z_1,\ldots,z_n]/\langle P_1,\ldots,P_r \rangle$.

This means that $^{\sigma}\!X$ and $X$ might not be isomorphic as $K_s$-schemes. And if an isomorphism of $K_s$-schemes does exist, it is non-canonical, i.e. the existence of a morphism $f_{\sigma} \colon X \to \! ^{\sigma}\!X$ such that the following diagram is commutative
\begin{equation*}
    \begin{tikzcd}
        X \arrow[rr, dashrightarrow, "f_{\sigma}"]  \arrow[dr, "p"'] &&  ^{\sigma}\!X \arrow[dl, "\pi_2"] \\
         & \Spe(K_s)
    \end{tikzcd}
\end{equation*}    
is not guaranteed.

\begin{definition}{\label{def:schemeactedupon}}
Throughout this article, we use the object \( {^\sigma\!X} \to \Spe(K_s) \), where the structure morphism is taken to be \( \pi_2 \), and we denote it simply by \( {^\sigma\!X} \).

The projection map \( \pi_1 \colon {^\sigma\!X} \to X \), which arises from the base change along the field automorphism \( \sigma \), is denoted (by slight abuse of notation) by \( \tilde{\sigma} \).
\end{definition}

The Galois action described above also extends naturally to morphisms of $K_s$-schemes. Namely, a $K_s$-morphism $f\colon X \to Y$ is mapped to the (uniquely defined) $K_s$-morphism ${^\sigma \! f \colon  {^\sigma \! X} \to {^\sigma Y}}$, which is induced by the universal property of the fiber product.

Moreover, for $\sigma, \tau \in \Gal(K_s/K)$ we have the following commutative diagram of schemes:
\begin{equation*}
    \begin{tikzcd}
        X  \arrow[d] & ^\sigma X \arrow[d] \arrow[l, "\widetilde{\sigma}"'] & ^\tau( ^\sigma \! X) \arrow[d]  \arrow[l, "\widetilde{\tau}"']\\
        \Spe(K_s) & \Spe(K_s) \arrow[l, "\widetilde{\sigma}"']& \Spe(K_s) \arrow[l, "\widetilde{\tau}"'] \arrow[ll, bend left =20, "\widetilde{\sigma\tau}"]
    \end{tikzcd}
\end{equation*}
Since $\widetilde{\sigma \tau} = \widetilde{\tau}\widetilde{\sigma}$, we obtain the following identity of fiber products: 
\begin{equation}\label{eq:strictness of action}
^\tau( ^\sigma \! X) = (X \times_{\tilde{\sigma}} \Spe(K_s) )\times_{\widetilde{\tau}} \Spe(K_s) = X \times_{\widetilde{\sigma\tau}}\Spe(K_s) = { ^{\sigma \tau}\! X} 
\end{equation}

The following proposition tells us that extending by scalars an $L$-variety to a $K_s$-variety is ``harmless".

\begin{proposition} \label{prop:invArith}
Let \( X \) be an \( L \)-variety. Then \( X \) is defined over a subfield \( L_0 \subseteq L \) if and only if
\[
{^\sigma\!X} = X \  \text{ for all } \sigma \in \Gal(L/L_0).
\]
In this case, \( X \) descends to an \( L_0 \)-variety.
\end{proposition}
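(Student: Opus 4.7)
The plan is to prove each direction separately, with the forward implication reducing to a straightforward fiber product manipulation and the backward implication requiring the effectiveness of Galois descent.

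For $(\Rightarrow)$, I would argue by a direct calculation with fiber products. Assuming $X = Y \times_{L_0} L$ for some $L_0$-variety $Y$, and using associativity of fiber products, ${^\sigma X} = (Y \times_{L_0} L) \times_{L, \tilde\sigma} \Spe(L)$. The key point is that $\sigma \in \Gal(L/L_0)$ restricts to the identity on $L_0$, so the composite $L_0 \hookrightarrow L \xrightarrow{\sigma} L$ coincides with the natural inclusion, and the base change along $\tilde\sigma$ collapses back to $Y \times_{L_0} L = X$. This yields the strict equality ${^\sigma X} = X$ of $L$-schemes.

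For $(\Leftarrow)$, the idea is to convert the family of equalities $\{{^\sigma X} = X\}_{\sigma \in \Gal(L/L_0)}$ into a compatible action of the Galois group on $X$. Each such identification, combined with the canonical projection $\pi_1 \colon {^\sigma X} \to X$ (a scheme isomorphism which is \emph{not} an $L$-scheme isomorphism), produces a scheme automorphism $\varphi_\sigma \colon X \to X$ satisfying the semi-linearity condition $p \circ \varphi_\sigma = \tilde\sigma \circ p$. Equation \eqref{eq:strictness of action}, which gives ${^\tau({^\sigma X})} = {^{\sigma\tau} X}$, then ensures that the family $\{\varphi_\sigma\}$ composes consistently and defines a semi-linear action of $\Gal(L/L_0)$ on $X$ lifting the Galois action on $\Spe(L)$.

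The main obstacle is then to convert this semi-linear action into an actual descent of $X$ to an $L_0$-variety. For a finite Galois extension $L/L_0$, I would construct $Y$ as the categorical quotient $X / \Gal(L/L_0)$, whose existence for separated, finite-type varieties follows from the standard effectiveness of Galois descent; here the cocycle condition is automatic because we started from genuine equalities rather than mere isomorphisms. The identification $X \simeq Y \times_{L_0} L$ then follows from the universal property of the fiber product, exhibiting $X \to Y$ as a Galois cover with group $\Gal(L/L_0)$. In the infinite case, one reduces to the finite one by descending the finite-type data of $X$ to a finite Galois subextension of $L/L_0$, over which the same argument applies.
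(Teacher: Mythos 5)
Your proposal is correct and follows essentially the same route as the paper: the forward direction is the identical fiber-product/transitivity computation (the paper phrases the collapse as $L \otimes_\sigma L = L$, you phrase it as the composite $L_0 \hookrightarrow L \xrightarrow{\sigma} L$ agreeing with the inclusion), and the converse is the same appeal to effectiveness of Galois descent with trivial descent datum, which the paper simply cites from SGA1 while you unwind it via the semi-linear action and the quotient $X/\Gal(L/L_0)$.
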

\begin{proof}
Suppose \( X_0 \) is a variety defined over \( L_0 \), and let \( X = X_0 \times_{L_0} L \) be its base change to \( L \). For any \( \sigma \in \Gal(L/L_0) \), we compute:
\[
{^\sigma\!X} = \left(X_0 \times_{L_0} L\right) \times_{\sigma} L = X_0 \times_{L_0} (L \otimes_{\sigma} L).
\]
But \( L \otimes_{\sigma} L = L \), since the left field  $L$ is considered as an $L$-module using the  identity map while the right field $L$  is an $L$-module using the $\sigma \colon L\rightarrow L$ map.

Conversely, if \( {^\sigma\!X} = X \) for all \( \sigma \in \Gal(L/L_0) \), then the descent data is trivial, and \( X \) descends to an \( L_0 \)-variety by standard faithfully flat descent (see \cite[SGA1]{SGA1}).
\end{proof}

\begin{remark}\label{rem:invariantactionofsubgroup}
Hence, if we start with an $L$-scheme $X$ and extend it by scalars to a $K_s$-scheme, then $\Gal(K_s/L)$ leaves $X$ invariant, i.e. $^\sigma \! X = X$ as $K_s$-schemes. In the language of algebraic varieties, this simply reflects the fact that $X$, being defined as the zero locus of polynomials over $L$, does not have this set of polynomials affected by the Galois action on their coefficients. Hence, a large finite index subgroup of $\Gal(K_s/L)$ preserves $X$.

The key point is that the part of the action of the full Galois group $\Gal(K_s/K)$ not preserving $X$ factors through the finite quotient $\Gal(K_s /K) / \Gal(K_s/L) \simeq \Gal(L/K)$.

Concisely, this means that we could have defined a $\Gal(L/K)$-action on an $L$-scheme $X$, following the same idea described above for $K_s$-schemes. An equivalent reformulation is that the action of $\Gal(L/K)$ which is obtained by restricting the action of $\Gal(K_s/K)$, is well defined.
Later on, we will use this finite group $\Gal(L/K)$ to study these actions since it captures the essential information of the action, for example see Section \ref{sec:FieldMODDefinition} or Proposition \ref{prop:equivariantweildescent}.
\end{remark}

The following remark slightly generalizes the above.
\begin{remark}\label{rem:generalizedgroupaction}
    Note that, for some field extension $K \subseteq L_0 \subseteq K_s$, if we have a subgroup $\Gal (K_s / L_0) \leq \Gal (K_s/K)$, then this subgroup also induces an action on $X$.
    Note also that as in the previous Remark \ref{rem:invariantactionofsubgroup}, only a finite subgroup of the form $\Gal(L / L_0) $ acts without preserving $X$.
\end{remark}

We end this section with a final remark, that explains how the action moves the $K_s$-points of the scheme, to cultivate the intuition needed for the topological setting. 

\begin{remark}{\label{rem:schemeaction}}
In the language of schemes a geometric point $P$ corresponds to a map $P \colon \Spe(K_s) \rightarrow X$. Thus we have the following commutative diagram
\begin{equation*} \label{eq:schematicpt }
\begin{tikzcd}
	&   \Spe(K_s) \arrow[ddl, "\tilde{\sigma}^{-1} \times P" description] \arrow[ddr, "P" description] \arrow[dddl, bend right = 70, "\tilde{\sigma}^{-1}"'] \arrow[dddr, bend left = 70, "\mathrm{Id}"] \\
	\\
	{^\sigma \! X} \arrow[d, "\pi_2"] \arrow[rr, "\tilde{\sigma}"] & & X \arrow[d, "p"'] \\
	\Spe(K_s) \arrow[rr, "\tilde{\sigma}"] & & \Spe(K_s)
\end{tikzcd}
\end{equation*}
where the map $\tilde{\sigma}^{-1} \times P \colon \Spe(K_s) \rightarrow  {^\sigma \! X }$ is the one induced by the universal property of the fiber product, applied on the maps $P\colon \Spe(K_s)\rightarrow X$ and $\tilde{\sigma}^{-1} \colon \Spe(K_s)\rightarrow \Spe(K_s)$.
\end{remark}

\subsection{Arithmetic Mondoromy}
Recall from Section \ref{subsec:arcovers} that by a mere cover $X\! \rightarrow\!B$ over $K$, we mean a morphism of $K$-schemes, which we can see over any field $K\subseteq L\subseteq K_s$ via extension of scalars. Also recall that if $X \to B$ is an $L$-cover, $B$ can be a $K$-variety that we extend by scalars to an $L$-variety.

\begin{definition}
Two mere covers $f \colon X \to B$ and $f \colon X^\prime \to B$ over $K$ are said to be isomorphic if there exists an isomorphism $\phi \colon X \to X^\prime$ making the following diagram commutative:
\[
  \xymatrix{
  X \ar[rr]^\phi \ar[dr]_f& & X' \ar[dl]^{f'}
  \\
  & B & 
  }
\]
An isomorphism of $G$-covers is an isomorphism of mere covers that is also compatible with the $G$-action.     
\end{definition}

The action of $\Gal(K_s / K)$ described in the previous section naturally extends to an action on covers $f \colon X \to B$ over $K_s$. That is, an element $\sigma \in \Gal(K_s/K) $ maps a cover $f \colon X \to B$ to $^\sigma \! f \colon  {^\sigma \! X} \to { ^\sigma\! B}$. Hence, there is no reason for the cover ${^\sigma \! f}$ to be isomorphic to $f$ since the base $B$ has changed. To remedy this, we require the base to remain fixed under the action of $\sigma$ i.e. equivalently $^\sigma \! B = B$, or equivalently $B$ is $\Gal(K_s/K)$-invariant. This condition is automatically satisfied when $B$ is defined over $K$, in which case, by Proposition \ref{prop:invArith}, we have that $B$ is $\Gal(K_s / K)$-invariant.

We now begin developing the dictionary between mere covers and monodromy representations arising from the arithmetic fundamental group $\Pi_K(B^*)$. 
Specifically, every finite cover $X\to B$ of degree $d$ corresponds to a representation $\Psi \colon \Pi_K(B^*) \to S_d$, where $S_d$ denotes the symmetric group in $d$ elements, see Proposition \ref{prop:merecovertrasnArith}. Upon restricted to the geometric fundamental group $\Pi_{K_s}$, these representations become profinite analogues of the classical monodromy representations of $\pi_1(B^*)$ in topology.
Once this dictionary is in place, we will examine the actions on the monodromy representations and how they relate to the original actions on the covers. 
As is well-known, the natural action of $\sigma \in \Gal(K_s / K)$ on the cover $X \rightarrow B$, maps it to $^\sigma \! X \rightarrow B$ with corresponding representation $\Psi^\prime$. 
If we denote by ${^\sigma \Psi}$ the action of $\sigma \in \Gal(K_s /K)$ on the monodromy representation, then these representations are conjugate in $S_d$, i.e. ${^\sigma \Psi} = \sigma \Psi' \sigma^{-1}$ for some $h \in S_d$. 
More precicely, ${^\sigma \Psi}$ is the representation of ${^\sigma X } \to B$ up to conjugation. Another way to look at this is that the cover that corresponds to ${^\sigma \Psi}$ is isomorphic to ${^\sigma X \to B}$, but not necessarily equal. This is made precise in Proposition \ref{prop:gal_action_monodromy} where we assume that both $B$ and the ramification divisor $D$ are fixed by action. 
The next proposition is the first step towards establishing the dictionary.

First, recall from Section \ref{sec:covers} that a degree $d$-mere cover corresponds to a finite regular subextension $E/K(B)$ of degree $d$ in $\Omega_D/K(B)$. Moreover, the Galois closure $\hat{E}/K(B)$ of $E$ corresponds to a quotient of $\Pi_K(B^*)$.
\begin{proposition}
  \label{prop:merecovertrasnArith}
Degree $d$ mere covers of $B$ over $K$ with ramification locus in $D$ correspond to transitive representations
\[
  \Psi \colon \Pi_K(B^*) \longrightarrow S_d,
\]
such that the restriction to $\Pi_{K_s}(B^*)$ is also transitive. 
\end{proposition}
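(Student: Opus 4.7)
\medskip

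The plan is to reduce the statement to the Galois correspondence for the (infinite, profinite) Galois extension $\Omega_D/K(B)$, exactly as one does in classical topological covering theory, where finite covers correspond to finite index subgroups of the fundamental group, equivalently to transitive finite permutation representations. The bridge between covers and subgroups was already set up in Section~\ref{sec:covers} via the function field functor: a mere cover $f\colon X\to B$ over $K$ with ramification locus inside $D$ is the same data as a finite separable regular extension $K(X)/K(B)$ unramified outside $D$, and such an extension embeds in $\Omega_D$. The Galois correspondence for $\Omega_D/K(B)$ then identifies these degree $d$ subextensions with open subgroups $H \leq \Pi_K(B^*) = \Gal(\Omega_D/K(B))$ of index $d$.

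Given such an $H$, the plan is to associate the representation $\Psi\colon \Pi_K(B^*)\to S_d$ by letting $\Pi_K(B^*)$ act by left multiplication on the coset space $\Pi_K(B^*)/H$ (of cardinality $d$) and choosing a bijection with $\{1,\ldots,d\}$. This action is automatically transitive, so $\Psi$ is a transitive representation. Different choices of bijection yield conjugate representations in $S_d$, so the correspondence is well defined up to conjugation. Conversely, given a transitive $\Psi\colon \Pi_K(B^*)\to S_d$, the stabilizer $H = \Psi^{-1}(\mathrm{Stab}_{S_d}(1))$ is an open (since $\Psi$ is continuous and $S_d$ is finite) subgroup of index $d$, recovering the subextension. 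Hence step one is to package these two directions into mutually inverse bijections between transitive representations (up to conjugation) and open index-$d$ subgroups.

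The main step, and the one where care is required, is to match the regularity condition with the transitivity of the restriction $\Psi|_{\Pi_{K_s}(B^*)}$. Regularity of $K(X)/K(B)$ means $K(X)\cap K_s = K$ inside $K(B)_s$. Using that $B$ is geometrically irreducible, $K_s$ and $K(B)$ are linearly disjoint over $K$, so $K_s(B)$ is the fixed field of $\Pi_{K_s}(B^*)$ inside $\Omega_D$, and $K_s \subset \Omega_D$ is fixed precisely by $\Pi_{K_s}(B^*)$. Consequently, the subfield $K(X)\cap K_s\subseteq K_s$ is the fixed field of the closed subgroup generated by $H$ and $\Pi_{K_s}(B^*)$. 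Since $\Pi_{K_s}(B^*)$ is normal in $\Pi_K(B^*)$ with quotient $\Gal(K_s/K)$, this generated subgroup is the product $H\cdot \Pi_{K_s}(B^*)$, and the Galois correspondence for $K_s/K$ identifies $K(X)\cap K_s$ with the fixed field of the image of $H$ in $\Gal(K_s/K)$. Therefore
\[
K(X)\cap K_s = K \ \Longleftrightarrow\ H\cdot \Pi_{K_s}(B^*) = \Pi_K(B^*).
\]

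To finish, I would convert the right-hand condition into the transitivity statement. Since $\Pi_{K_s}(B^*)$ is normal, its orbits on the coset space $\Pi_K(B^*)/H$ are indexed by the double coset space $H\backslash \Pi_K(B^*)/\Pi_{K_s}(B^*)$, which has cardinality $[\Pi_K(B^*) : H\cdot \Pi_{K_s}(B^*)]$. Hence $\Pi_{K_s}(B^*)$ acts transitively on $\Pi_K(B^*)/H$ if and only if $H\cdot \Pi_{K_s}(B^*) = \Pi_K(B^*)$, which by the previous paragraph is exactly regularity. Combined with steps one and two, this establishes the claimed bijection. The most delicate point, and the one I expect to need the most care writing out, is justifying the identifications of fixed fields in $\Omega_D$ for the infinite profinite Galois extension and ensuring linear disjointness of $K_s$ and $K(B)$ is really being used to identify $\Pi_K(B^*)/\Pi_{K_s}(B^*) \simeq \Gal(K_s/K)$; everything else is a formal consequence of the Galois correspondence and the action on cosets.
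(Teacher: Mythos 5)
Your proof is correct, but it takes a visibly different route from the paper's. The paper constructs $\Psi$ by first passing to the Galois closure $\hat{E}$ of $K(X)/K(B)$, letting the finite group $G=\Gal(\hat E/K(B))$ act faithfully and transitively on the cosets of $\bar H=\Gal(\hat E/E)$, and then proving transitivity of the restriction to $\Pi_{K_s}$ by using regularity of the cover \emph{and of its Galois closure} to identify $G\cong\Gal(K_s\cdot\hat E/K_s(B))$ and $\bar H\cong\Gal(K_s\cdot\hat E/K_s\cdot E)$; this detour is deliberate, as it introduces the groups $G$, $\bar H$, $H$, $R$ that are reused throughout the rest of the article. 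You instead stay at the level of the profinite group, take $H\le\Pi_K(B^*)$ to be the open index-$d$ subgroup corresponding to $K(X)$, and prove the sharper two-way equivalence: regularity $\Leftrightarrow H\cdot\Pi_{K_s}(B^*)=\Pi_K(B^*) \Leftrightarrow$ transitivity of $\Pi_{K_s}(B^*)$ on $\Pi_K(B^*)/H$, the last step by counting double cosets using normality of $\Pi_{K_s}(B^*)$. This is essentially the ``alternative proof via group indices'' that the paper defers to Section~\ref{sec:regularity} (Proposition~\ref{prop:equalityofindices}), and it has the advantage of handling the converse direction (regularity of the cover attached to a given $\Psi$) automatically, whereas the paper's argument treats it separately. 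One small point of care: the fixed field of the closed subgroup generated by $H$ and $\Pi_{K_s}(B^*)$ is $K(X)\cap K_s(B)=\bigl(K(X)\cap K_s\bigr)\cdot K(B)$, not $K(X)\cap K_s$ itself; you implicitly correct for this by passing to the image of $H$ in $\Gal(K_s/K)$, and the linear disjointness of $K_s$ and $K(B)$ over $K$ (from geometric irreducibility of $B$) is exactly what makes that identification legitimate, so the argument stands.
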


A proof is given in \cite{DebesDouai97}, which we repeat in order to introduce the relation of the groups $\Gal(\Omega_D/\hat{E})$ and $\Gal(\Omega_D/E)$.

\begin{proof}
 The Galois closure $\hat{E}$ corresponds to a normal subgroup $\Gal(\Omega_D/\hat{E})$, or equivalently to a surjective homomorphism 
\[
\Phi \colon \Pi_K(B^*) \rightarrow G=\Gal(\hat{E}/K(B)).
\] 
We have the following tower of fields below in Figure \ref{fig:fieldcurveTower}.
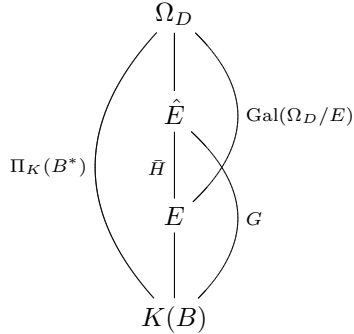
\begin{figure}[ht!]
    \centering
    \[
  \xymatrix{
  \Omega_D  \ar@{-}[d]  \ar@{-}@/_2.5pc/[ddd]_{\Pi_K(B^*)}
  \ar@{-}[d] \ar@{-}@/^2pc/[dd]^{\Gal(\Omega_D/E)} 
  \\
  \hat{E} \ar@{-}[d]_{\bar{H}} \ar@{-}@/^2pc/[dd]^{G} 
  \\
  E \ar@{-}[d] 
  \\
  K(B)  
  }
\]
\caption{Field tower.}
\label{fig:fieldcurveTower}
\end{figure}

It is known that
\begin{equation}
\label{eq:NCgroup}
\Gal(\Omega_D/\hat{E})= \bigcap_{g \in \Pi_K(B^*)} g \Gal(\Omega_D/E) g^{-1},
\end{equation}
see  \cite[Ch. I, Exer. 8, P. 60]{MR1410264}.
Let $\bar{H}=\Gal(\hat{E}/E)$,
there is a faithful action of $G$ on the cosets of $\bar{H}$ in $G$, that is on the set $\bar{H}, g_2\bar{H}, \ldots, g_d \bar{H}$. Indeed, if some $g \in G$ acts trivially on the cosets, then 
\[
  g g_i \bar{H} = g_i \bar{H} \text{, for all } g_i \in \{ g_1\!=\!e, g_2,\ldots,g_d \}.
\]
Thus, we have that
\[
  g \in \bigcap_{\nu=1}^d g_\nu \bar{H} g_\nu^{-1}=\Gal(\hat{E}/\hat{E})=\{1\},
\]
that is, the action is faithful. Therefore, we have a representation $i\colon  G \rightarrow S_d$ and $\Psi= i \circ \Phi$ is the desired transitive representation $\Pi_K(B^*) \rightarrow S_d$. 

Now, we go on to show that the restriction to $\Pi_{K_s} \coloneqq \Pi_{K_s}(B^*)$ is also transitive. Denote by $H$ the group $\Gal(\Omega_D/ E)$ such that
  \[
  \bar{H}=\frac{\Gal(\Omega_D/E)}{\Gal(\Omega_D/\widehat{E})}=
  \frac{H}{\bigcap_{g \in \Pi_K} g H g^{-1}}.
  \]
The restriction of $\Phi$ to $\Pi_{K_s}$ is the quotient map 
\[
\Phi | _{\Pi_{K_s}} \colon \Pi_{K_s} 
\rightarrow 
\frac {\Pi_{K_s} }{\Gal(\Omega_D/K_s\cdot \hat{E})},
\]
since $\Pi_{K_s}\cap \Gal(\Omega_D/\hat{E}) = \Gal(\Omega_D/K_s\cdot \hat{E})$. The mere cover over $K_s$ by extension of scalars corresponds to $H\cap \Pi_{K_s} = \Gal(\Omega_D/K_s\cdot E)$ which we denote by $R$. The regularity assumptions on the mere cover and its Galois closure provide that $G \cong \Gal(K_s\cdot \hat{E}/K_s(B))$ and 
\[\bar{H} \cong  \Gal(K_s\cdot \hat{E}/ K_s \cdot E) = \frac{\Gal(\Omega_D/K_s \cdot E)}{\Gal(\Omega_D/K_s \cdot \widehat{E})}=
  \frac{R}{\bigcap_{g \in \Pi_{K_s}} g R g^{-1}},\] where the isomorphism maps are the restrictions from $K_s\cdot \hat{E}$ to $\hat{E}$.

This proves that the action of $\Pi_{K_S}$ on the cosets of $\bar{H}$ in $G$ is also transitive, by repeating the previous argument used for $\Pi_K$.

Conversely, given a map $\Psi\colon \Pi_K(B^*) \rightarrow S_d$ we can consider the stabilizer of $1$, that is the group 
\begin{equation}
\label{eq:defP}
\Pi_K(1)\coloneqq \{ g\in \Pi_K(B^*): \Psi(g)(1)=1\}
\end{equation}
and then consider the fixed field extension $\Omega_D^{\Pi_K(1)}/K(B)$. This corresponds to a proper mere cover, i.e. it is regular by transitivity of the restriction $\Psi \!\! \mid_{\Pi_{K_s}}$.
\end{proof}

Note that the map $\Psi$ is defined up to a choice of labeling for the cosets of $G/\bar{H}$. However, different labelings  correspond to conjugation by elements of the the normalizer of $G$ in $S_d$. In particular, while $\Psi(x)$ is defined up to conjugation in $S_d$, the image $\Psi(\Pi_K) = G$ is well-defined.

Once the regularity condition has been reformulated in purely group-theoretic terms in Section \ref{sec:regularity}, we will present an alternative proof of the transitivity of the restriction  $\Psi|_{\Pi_{K_s}}$. 
The construction above motivates the following lemma, which will also apply in the topological setting, as it is phrased entirely in group-theoretic terms.
Recall that the normalizer $\mathrm{N}_{S_d}G$ of $G$ in $S_d$ is the group 
\[
  \mathrm{N}_{S_d}G=\{s \in S_d: sGs^{-1}=G\},
\]
and the centralizer $\mathrm{Cen}_{S_d}G$ is the group
\[
\mathrm{Cen}_{S_d}G=\{s \in S_d: sgs^{-1}=g \text{ for all } g\in G\}.
\]
\begin{lemma}
  \label{lemma:conjNorm}
Let $G$ be a group and $\bar{H}$ be a subgroup of $G$ of index $d$. Consider the natural left action of $G$ on cosets $\bar{H}, g_2 \bar{H}, \ldots, g_d \bar H$, which induces a representation
\[
  \Psi_{\bar H} \colon G \longrightarrow S_d. 
\]
For a conjugate subgroup $n\bar{H}n^{-1}$ of $\bar{H}$, the corresponding representation
\[
  \Psi_{n \bar H n^{-1}} \colon G \longrightarrow S_d
\]
is a conjugate of $\Psi_{\bar{H}}$ by an element $\varphi \in \mathrm{N}_{S_d}G$.
\end{lemma}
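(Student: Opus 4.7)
The plan is to construct a canonical $G$-equivariant bijection between the coset spaces $G/\bar{H}$ and $G/(n\bar{H}n^{-1})$, realize it as a permutation $\varphi \in S_d$, and then show that $\varphi$ can be arranged to lie in $N_{S_d}G$.

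First I would define the map $\phi \colon G/\bar{H} \to G/\bar{H}'$ by $g\bar{H} \mapsto gn^{-1}\bar{H}'$, where $\bar{H}' := n\bar{H}n^{-1}$. Well-definedness is immediate: if $g' = gh$ with $h \in \bar{H}$, then
\[
g'n^{-1}\bar{H}' = gn^{-1}(nhn^{-1})\bar{H}' = gn^{-1}\bar{H}',
\]
since $nhn^{-1} \in \bar{H}'$. The map is evidently $G$-equivariant with respect to left multiplication and, being a surjection between finite sets of the same cardinality $d$, is a bijection.

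Next, starting from coset representatives $g_1 = e, g_2, \dots, g_d$ of $\bar{H}$ used to define $\Psi_{\bar{H}}$, I would take $h_i := g_i n^{-1}$ as representatives of the cosets $h_i\bar{H}'$ of $\bar{H}'$. With respect to these canonical labelings $\phi$ is the identity on $\{1, \dots, d\}$, and the $G$-equivariance of $\phi$ translates directly into the identity of representations $\Psi_{\bar{H}'}(g) = \Psi_{\bar{H}}(g)$ for every $g \in G$. Thus in this labeling we may take $\varphi = \mathrm{id} \in N_{S_d}G$, which already proves the statement.

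Finally, any other choice of labeling of $G/\bar{H}'$ differs from the canonical one above by some $\pi \in S_d$, so the corresponding representation becomes $\Psi_{n\bar{H}n^{-1}} = \pi \Psi_{\bar{H}}\pi^{-1}$. Passing to images, $\Psi_{n\bar{H}n^{-1}}(G) = \pi G \pi^{-1}$; hence the labeling preserves the ambient embedding $G \hookrightarrow S_d$ (fixed once and for all by $\Psi_{\bar{H}}$) if and only if $\pi \in N_{S_d}G$. The only substantive bookkeeping is the observation that two transitive $G$-actions of the same degree with conjugate point stabilizers are isomorphic as $G$-sets — realized concretely by $\phi$ — and that relabelings compatible with the fixed embedding of $G$ in $S_d$ are exactly elements of $N_{S_d}G$. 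This step is conceptually the core of the lemma; no estimate or cohomological computation is required.
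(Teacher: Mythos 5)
Your proof is correct, and it takes a genuinely different (and cleaner) route than the paper's. The paper chooses $ng_in^{-1}$ as coset representatives of $n\bar{H}n^{-1}$, so that the point labelled $1$ is the trivial coset with stabilizer $n\bar{H}n^{-1}$, derives $\Psi_{n\bar{H}n^{-1}}(g)=\Psi_{\bar{H}}(n^{-1}gn)$, and reads off the conjugator as the permutation attached to $n$, an element of $\Psi_{\bar{H}}(G)\subseteq \mathrm{N}_{S_d}G$. You instead exhibit the $G$-set isomorphism $g\bar{H}\mapsto gn^{-1}\bar{H}'$ and transport the labelling along it, which makes the two representations literally equal ($\varphi=\mathrm{id}$) and isolates the entire $\mathrm{N}_{S_d}G$ content in the relabelling discussion; this is conceptually the right statement (conjugate stabilizers give isomorphic $G$-sets) and it sidesteps the paper's bookkeeping of the permutation $\tau$ via $n^{-1}g_in=g_{\tau(i)}h$, which as written does not define a permutation of the labels unless $n$ normalizes $\bar{H}$ (the paper's conclusion is nonetheless correct, since $\Psi_{\bar{H}}(n^{-1}gn)=\Psi_{\bar{H}}(n)^{-1}\Psi_{\bar{H}}(g)\Psi_{\bar{H}}(n)$ gives the conjugator directly). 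One point you should make explicit: under your canonical labelling the stabilizer of the label $1$ is $\bar{H}$, not $n\bar{H}n^{-1}$, whereas the normalization used downstream (eq.~(\ref{eq:defP}) and the proof of Proposition~\ref{prop:isomCovers}) recovers the cover from the stabilizer of $1$ and therefore wants $1\leftrightarrow n\bar{H}n^{-1}$. Your last paragraph reduces this to ``the relabelling preserves the embedding iff $\pi\in \mathrm{N}_{S_d}G$'' but does not verify that the standard labelling actually does so. The missing line is short: the relabelling from your labelling to the standard one is left multiplication by $n$ on cosets, i.e.\ the permutation $\Psi_{\bar{H}}(n)$, which lies in the image of $G$ and hence in $\mathrm{N}_{S_d}G$; so for the standard normalization one gets $\varphi=\Psi_{\bar{H}}(n)^{\pm 1}$ rather than the identity. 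With that sentence added your argument is complete and fully compatible with the way the lemma is used in the paper.
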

\begin{proof} By definition, for $g\in G$ we have 
\[
  g g_i =g_{\sigma(g)(i)} h, \ \text{for some } h \in \bar{H}
\]
and $\sigma(g)\in S_d$ is the permutation corresponding to $g$ under the injection $G\xhookrightarrow{}S_d$. 

For $n\bar{H}n^{-1}$, we can choose as coset representatives the elements
$n g_i n^{-1}$, for $i=1,\ldots,d$
 and the corresponding action of $ngn^{-1}$ is given by
\[
  n g  n^{-1} ng_in^{-1}  = n g_{\sigma(g)(i)} n^{-1} nhn^{-1}.
\]
This means that the element $ngn^{-1}$ acts on the cosets of $n\bar{H}n^{-1}$ in the same way that $ g $ acts on the cosets of $\bar{H}$ in $G$. This proves that $\Psi_{\bar H} (g) = \Psi_{n\bar H n^{-1}} (ngn^{-1})$, (up to conjugation by an element in $\mathrm{N}_{S_d}(G)$ for the specific choice of labeling of the cosets of $n\bar{H}n^{-1})$. Replace $g$ by $n^{-1}gn =\colon g^n $, so that $\Psi_{\bar H} (g^n) = \Psi_{n\bar H n^{-1}} (g)$.
The conjugation action by $n$ introduces an element $\tau\in S_d$, by writing
\[
  g_i^n = n^{-1}g_i n = g_{\tau(i)} h, \text{ for some element } h \in \bar{H}. 
\]
In the following diagram, we keep track of the coset representatives of $\bar{H}$ in the inner commutative diagram, whilst keeping track of the permutations in $S_d$ that arise after each operation on the representatives.
{\tiny
\[
\xymatrix{
  (1,\ldots,d) \ar[rrr]^{\tau}
    \ar[ddd]_{\sigma(g)} \ar@{~}[dr]  & & & (\tau(1),\ldots,\tau(d)) \ar[ddd]_{  \sigma(g^n) }
    \\
   &     \{g_1,\ldots,g_d\} 
  \ar[d]^{g_i \rightarrow g\cdot g_i}
  \ar[r]^{g_i\rightarrow g_i^n} & \{g_1^n,\ldots,g_d^n\} 
 \ar[d]^{g_i \rightarrow g^n\cdot g_i}
  \ar@{~}[ur] & 
  \\
    &     \{gg_1,\ldots,gg_d\} \ar[r]_{g_i\rightarrow g_i^n} & \{g^ng_1^n,\ldots,g^ng_d^n\} \ar@{~}[dr]
\\
(\sigma(g)(1),\ldots,\sigma(g)(d)) 
   \ar@{~}[ur]
   \ar[rrr]_\tau
& & & 
\substack{
  \{g_{\tau\sigma(g)(1)},\ldots,g_{\tau\sigma(g)(d)}\} \\
  = \\
   \{g_{\sigma(g^n)\tau(1)},\ldots,g_{\sigma(g^n)\tau(d)}\} 
}
}
\]
}

The commutativity of the above diagram, is interpreted by the relation
\[
  \sigma(g)\tau=\tau   \sigma(g^n),
\]
that is the element $n$ corresponds to  the element $\tau \in \mathrm{N}_{S_d}G$ as required.  
\end{proof}

\begin{proposition}
  \label{prop:isomCovers}
Two mere covers $X \rightarrow B$, $X' \rightarrow B$ over $K$ are isomorphic, as $K$-covers if and only if the 
corresponding representations $\Psi,\Psi'\colon  \Pi_K(B^*) \rightarrow S_d$ are conjugate by an element $\varphi$ of the normalizer $\mathrm{N}_{S_d}(G)$ of the image group $G=\Psi(\Pi_{K_s}) = \Psi'(\Pi_{K_s})$, that is
\[
  \Psi'(x) = \varphi \Psi(x) \varphi^{-1} \text{ for all } x \in \Pi_K(B^*). 
\] 
\end{proposition}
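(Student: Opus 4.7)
The plan is to translate the statement through the Galois correspondence, so that isomorphism of covers becomes conjugacy of subgroups of $\Pi_K(B^*)$, and then recover the normalizer conjugation on representations via Lemma \ref{lemma:conjNorm}. The function field functor identifies isomorphism classes of mere covers $X \to B$ over $K$ (with ramification in $D$) with conjugacy classes of subgroups $H \leq \Pi_K(B^*)$, via $H = \Gal(\Omega_D/K(X))$. Moreover, by the proof of Proposition \ref{prop:merecovertrasnArith}, $H$ is precisely the stabilizer $\Pi_K(1)$ of the symbol $1$ under $\Psi$, cf.\ eq.~(\ref{eq:defP}).

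For the forward direction, I would start with an isomorphism $X \simeq X'$ of $K$-covers, which translates into a conjugation relation $H' = n H n^{-1}$ for some $n \in \Pi_K(B^*)$, where $H,H'$ are the stabilizers associated to $\Psi,\Psi'$. The first observation is that the Galois closures coincide: the subgroup $\bigcap_{g\in \Pi_K} g H g^{-1}$ of eq.~(\ref{eq:NCgroup}) is invariant under replacing $H$ by any of its conjugates, so $\widehat{E}=\widehat{E'}$ and the surjection $\Phi \colon \Pi_K(B^*) \twoheadrightarrow G$ is the same for both covers. Pushing the conjugation down via $\Phi$ yields $\bar{H}' = \bar{n}\bar{H}\bar{n}^{-1}$ with $\bar n = \Phi(n) \in G$. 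Lemma \ref{lemma:conjNorm} then provides an element $\varphi \in \mathrm{N}_{S_d}(G)$ such that the coset representations satisfy $\Psi_{\bar H'} = \varphi \Psi_{\bar H} \varphi^{-1}$, and post-composition with $\Phi$ gives $\Psi' = \varphi\, \Psi\, \varphi^{-1}$.

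For the reverse direction, I would assume $\Psi' = \varphi \Psi \varphi^{-1}$ with $\varphi \in \mathrm{N}_{S_d}(G)$ and recover $H'$ directly from $\Psi'$. Setting $j = \varphi^{-1}(1)$, we compute
\[
H' = \{x \in \Pi_K(B^*) : \Psi'(x)(1) = 1\} = \{x : \Psi(x)(j) = j\}.
\]
Transitivity of $\Psi$, which holds by Proposition \ref{prop:merecovertrasnArith}, yields $g \in \Pi_K(B^*)$ with $\Psi(g)(1) = j$. The relation $\Psi(x)(j) = j$ is then equivalent to $\Psi(g^{-1}xg)(1) = 1$, i.e.\ $g^{-1}xg \in H$, so $H' = gHg^{-1}$. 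Thus $H$ and $H'$ are conjugate in $\Pi_K(B^*)$, and the associated field extensions (hence the covers) are $K(B)$-isomorphic.

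The only delicate point is the matching of the two sides' normalizer conditions: one must verify that the $\varphi$ provided by Lemma \ref{lemma:conjNorm} indeed lies in $\mathrm{N}_{S_d}(G)$ and, conversely, that the hypothesis $\varphi \in \mathrm{N}_{S_d}(G)$ is consistent with both $\Psi,\Psi'$ having image $G$ on $\Pi_{K_s}(B^*)$. The first is exactly the content of Lemma \ref{lemma:conjNorm}, while the second follows automatically from $\Psi' = \varphi \Psi \varphi^{-1}$ applied to $\Pi_{K_s}(B^*)$, giving $\varphi G \varphi^{-1} = G$. No further work is needed, and the equivalence follows.
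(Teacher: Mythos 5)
Your proposal is correct and follows essentially the same route as the paper's own proof: the forward direction passes through the Galois correspondence and the coincidence of Galois closures to invoke Lemma \ref{lemma:conjNorm}, and the converse identifies $H'$ with the $\Psi$-stabilizer of $j=\varphi^{-1}(1)$ and uses transitivity to produce the conjugating element. The closing remark verifying $\varphi G\varphi^{-1}=G$ is a harmless addition; no substantive differences.
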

\begin{proof}
Indeed, if the covers are isomorphic, then their function fields $K(X), K(X')$ correspond to conjugate subgroups
of  $\Gal(\Omega_D/K(B))= \Pi_K(B^*)$, 
that is there is an element $n\in \Pi_K(B^*)$
such that 
$\Gal(\Omega_D/K(X'))= n \Gal(\Omega_D/K(X))n^{-1}$.
Let $\widehat{K(X)}$ and $\widehat{K(X')}$ be the Galois closures of $K(X)$ and $K(X')$ in $\Omega_D$, respectively. 
By eq. (\ref{eq:NCgroup}) it is clear that $\widehat{K(X)}= \widehat{K(X')}$. Let $\bar H = \Gal(\widehat{K(X)}/ K(X))$,  then
we have the following diagram.
\[
  \xymatrix{
  &\Omega_D  \ar@{-}[d]  
   \ar@{-}@/_7.5pc/[ddd]_{\Pi_K(B^*)}
  \\
& \widehat{K(X)}   \ar@{-}[dl]^{\bar{H}}  \ar@{-}[dr]_{n\bar{H}n^{-1}} 
\ar@{-}@/^7pc/[dd]^{G}
  \\
  K(X) \ar@{-}[rd] \ar[rr]_n & & K(X')  \ar@{-}[ld]
  \\
  & K(B)
  }
\]
The existence of $\phi$ follows by Lemma \ref{lemma:conjNorm}.

Conversely, consider the subgroups $H,H^\prime$ of $\Pi_K$, which are the stabilizers of $1$ for $\Psi, \Psi^\prime$, respectively, as in eq. (\ref{eq:defP}). These subgroups correspond to the covers $X\to B, \ X^\prime \rightarrow B$, respectively. The element $\phi \in \mathrm{N}_{S_d}(G)$ determines a labeling of the cosets of $H$ and $H^\prime$ in $\Pi_K$, since relabeling the fiber corresponds to conjugating $\phi$ by another element of $\mathrm{N}_{S_d}(G)$. 
Set $j=\phi^{-1}(1)$.
Under this choice, $H^\prime$ coincides with the stabilizer of $j$ under the image of $\Psi$. Pick any $n \in \Pi_K$ such that $\Psi(n)(j) = 1$, then $nH^\prime n^{-1} = H$, which implies that the covers are isomorphic.
\end{proof}
We aim to define an action of the Galois group $\Gal(K_s/K)$ on the function fields $K(X)$ and $K_s(X)$. Let us denote by  $R=\Gal\big(\Omega_D/K_s(X)\big)$ and $H = \Gal\big(\Omega_D/K(X)\big)$ and let $K\subseteq L \subseteq K_s$ be a field extension. We obtain the following diagram of function fields:

\begin{equation}
\label{eq:diagFF}
\begin{tikzcd}[row sep=large, column sep=large]
\Omega_D \arrow[ddrr, no head, "\Pi_{K_s}" description, crossing over] \arrow[drr, no head, "R"]  \arrow[dd, no head, "H"]
\arrow[dddd, no head, bend right, "\Pi_{K}" description] \\ 
& &  K_s(X) \arrow[d,no head] \arrow[dll,no head] 
\\
K(X) \arrow[dd,no head] &  & K_s(B) \arrow[ddll,no head,bend left, "\Gal(K_s/K)" description] \arrow[dd,no head] \\
& L(B) \arrow[ul,no head] \arrow[ur, no head] \arrow[dl,no head] &\\
K(B) \arrow[d,no head]  &  &  K_s \arrow[dll,no head,"\Gal(K_s/K)"]\\
K & & 
\end{tikzcd}
\end{equation}

Consider the short exact sequence given in eq. (\ref{ses}). 
Elements  $h\in \Gal(K_s/K)$ lift  to elements $\tilde{h} \in \Pi_{K}=\Pi_{K}(B^*)$, but if the short exact sequence does not split, this lift is not canonical and may vary by an element $\tau\in \Pi_{K_s}=\Pi_{K_s}(B^*)$. 

Therefore, there is no canonical way to define an action of $\Gal(K_s/K)$ on $R$ by conjugation, unless we are in one of the following cases:

\begin{itemize}
    \item  $R$ is a normal subgroup of $\Pi_{K_s}$, that is $K_s(X)/K_s(B)$ is Galois. In this case $\tau(K(X))=K(X)$ for all $\tau \in \Pi_{K_s}$.
    \item The short exact sequence in eq. (\ref{ses}) splits. Then there exists a canonical selection of a section $s\colon \Gal(K_s/K) \rightarrow \Pi_{K}$ and the action of $h \in \Gal(K_s/K)$ can be defined as $h(R)=s(h) R s(h)^{-1}$. 
\end{itemize}

On the other hand, observe that the group $\Pi_K$ acts by conjugation on the set of subgroups of $\Gal(\Omega_D/K(B))$, and hence on $H  = \Gal(\Omega_D / K(X))$. Thus, the following set of subgroups is formed:
\begin{align*}
 S_H &= \{ \tau H \tau^{-1} \}_{\tau \in   \Pi_{K}/ \mathrm{N}_{\Pi_{K}}H},\\  
 S_R &= \{ \tau R \tau^{-1} \}_{\tau \in \Pi_{K_s}/ \mathrm{N}_{\Pi_{K_s}}R},
\end{align*}
where by $\mathrm{N}_{\Pi_{K}}H = \{g \in \Pi_{K} : gHg^{-1} = H \}$ (resp. $\mathrm{N}_{\Pi_{K_s}}R$) we denote the normalizer of $H$ (resp. $R$) in $\Pi_{K}$ (resp. $\Pi_{K_s}$). Thus, the group $\Gal(K_s/K)$ acts on $S_R$ and $S_H$ and the action is well-defined and independent of the choice of section. This aligns with the outer automorphism viewpoint we discussed in Subsection \ref{sec:profinite_braids}
and further illustrates that we should focus on the Galois closure of the cover corresponding to $R$, that is, $\cap_{g \in \Pi_{K_s}} gRg^{-1}$. More generally, the correct invariants to track under Galois action are the conjugacy classes of subgroups
$R \subseteq \Pi_{K_s}$ and $ H \subseteq \Pi_{K}$.

A useful way to resolve this ambiguity arising from inner automorphisms of $\Pi_K$ and $\Pi_{K_s}$, is via the Galois action on the monodromy representations. Following \cite{DebesDouai97}, we adopt the terminology $(G)$-cover to refer to either a $G$-cover or a mere cover. In both cases $G$ denotes the group of the cover, which is
\[
 G =
  \begin{cases}
  \Gal(K(X)/K(B)) & \text{for Galois covers} 
  \\
  \Gal(\widehat{K(X)}/K(B)) & \text{for mere covers}
  \end{cases}
\]
We also set
\[
  N=
  \begin{cases}
  G & \text{for Galois covers} 
  \\
  \mathrm{N}_{S_d}G & \text{for mere covers}
  \end{cases}
\]
and
\[
  C=
  \begin{cases}
  Z(G) & \text{for Galois covers} 
  \\
  \mathrm{Cen}_{S_d}G & \text{for mere covers}
  \end{cases}
\]
where $Z(G)$ is the center of $G$, and $\mathrm{N}_{S_d}G$ and $\mathrm{Cen}_{S_d}(G)$ denote the normalizer and centralizer of $G$ in $S_d$, respectively. Given a cover $\phi\colon \Pi_{K_s} \twoheadrightarrow G \leq N$ and $h \in \Gal(K_s/K)$ we define the action $h\cdot \phi$ as follows: 
choose a lift $\hat{h} \in \Pi_K$ of $h$ and let $\rho_{\hat{h}}$ denote the inner automorphism of $\Pi_K$, given by conjugation by $\hat{h}$. Then, the new cover $h\cdot \phi$ is defined by the composition $\phi \circ \rho_{\hat{h}}$, as shown in the following diagram:
\[
\begin{tikzcd}
\Pi_{K_s} \arrow[d, "\rho_{\hat{h}}"'] \arrow[rr, two heads, "\phi \circ \rho_{\hat{h}}"] & & G \\
\Pi_{K_s} \arrow[rru, two heads, bend right, "\phi"'] &
\end{tikzcd}
\]
Explicitly, this means:
\begin{equation}\label{eq:gal_monodromy}
h \cdot \phi(x) = \phi( \hat{h} x \hat{h}^{-1}),
\end{equation}
This definition is independent of the choice of lift $\hat{h}$. Indeed, the image $\Psi(\Pi_K) \leq S_d$ is well-defined up to conjugation, and thus for any $\tau \in \Pi_{K_s}$ we have $\phi(\hat h \tau x \tau^{-1} \hat{h}^{-1}) = \phi( \hat{h} x \hat{h}^{-1}) $ up to conjugation by elements in $N$.

Recall now the natural Galois action on $K_s$-varieties we discussed in the previous subsection and the induced action on the covers 
$X \to B$ to $^h \! X \to B$ for every $h$ in $\Gal(K_s/K)$, assuming that $B$ is a $K$-variety. Let $\phi$ be the corresponding representation of the mere cover $X\rightarrow B$ and denote by $\phi_h$ the one that corresponds to $^h \! X \to B$. 
As we will show in the next proposition, assuming that the ramification $D$ remains invariant, the covers $^h \! X\to B$ and the one corresponding to $h\cdot \phi$, defined by eq. \eqref{eq:gal_monodromy}, are isomorphic. 

\begin{proposition}\label{prop:gal_action_monodromy}
Let $X\rightarrow B$ be a mere cover of degree $d$ defined over $K_s$ and $h \in\Gal(K_s/K)$. Assume that both $B$ and the ramification divisor $D$ are $\Gal(K_s/K)$-invariant. 
Then, the maps $\phi_h$ and $h\cdot \phi$ have conjugate images by a fixed element in $ N_{S_d}G$. In particular, the cover $^h \! X \to B$ and the cover corresponding to $h\cdot \phi$ are isomorphic as $(G)$-covers.
\end{proposition}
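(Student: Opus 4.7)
The plan is to work directly with the dictionary of Proposition~\ref{prop:merecovertrasnArith} and use Lemma~\ref{lemma:conjNorm} to compare the two monodromy representations. The hypothesis that $B$ and $D$ are $\Gal(K_s/K)$-invariant is used right away: it ensures that the twisted cover $^hX\to B$ is again a cover of the same base $B$ with ramification divisor inside the same $D$, so its function field embeds into the common field $\Omega_D$ and yields a monodromy representation with the same target group $\Pi_{K_s}$.

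I would first identify the subgroup of $\Pi_{K_s}$ corresponding to $^hX$. Pick any lift $\hat h\in \Pi_K$ of $h$. Since $\hat h$ is a field automorphism of $\Omega_D$ restricting to $h$ on $K_s$, and since the commutative square~(\ref{eq:commute-eta}) transports polynomial identities to their $h$-twists, the image $\hat h(K_s(X))\subset \Omega_D$ is cut out by the $h$-twisted polynomial relations defining $^hX$. Hence $K_s(^hX) = \hat h(K_s(X))$ as subfields of $\Omega_D$, and the subgroup associated with $^hX$ is
\[
R_h \;=\; \hat h\, R\, \hat h^{-1},
\]
which lies inside $\Pi_{K_s}$ by normality of $\Pi_{K_s}$ in $\Pi_K$.

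Next, I would apply Lemma~\ref{lemma:conjNorm} with ambient group $\Pi_{K_s}$, subgroup $R$, and conjugating element $\hat h$; the lemma is phrased for an element of the ambient group, but its proof is a pure coset-relabeling argument and goes through whenever conjugation by the chosen element preserves the ambient group, which is precisely the present situation. The lemma produces a fixed element $\tau\in N_{S_d}(G)$ such that the monodromy representation $\phi_h$ of $R_h$ coincides with $\phi\circ \rho_{\hat h}$ up to conjugation by $\tau$. Comparing with the defining formula~(\ref{eq:gal_monodromy}), which reads $h\cdot \phi = \phi\circ \rho_{\hat h}$, one obtains the identity
\[
\phi_h \;=\; \tau\,(h\cdot \phi)\,\tau^{-1},\qquad \tau\in N_{S_d}(G),
\]
which is the ``conjugate images'' assertion. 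Proposition~\ref{prop:isomCovers} then instantly upgrades this to an isomorphism of $(G)$-covers between $^hX\to B$ and the cover attached to $h\cdot \phi$.

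The main obstacle is purely book-keeping. Both $R_h = \hat h R\hat h^{-1}$ and the representation $h\cdot \phi = \phi\circ \rho_{\hat h}$ depend on the choice of lift $\hat h$, and one must check that replacing $\hat h$ by $\hat h\gamma$ with $\gamma\in\Pi_{K_s}$ modifies both sides by matching inner conjugations, so that the $N_{S_d}(G)$-conjugacy class of each is well-defined. This is precisely where the conclusion that the conjugating element lies in $N_{S_d}(G)$, rather than in all of $S_d$, is essential; it comes from the fact that $\phi$ maps $\Pi_{K_s}$ onto $G$, whose ambient normalizer in $S_d$ is $N_{S_d}(G)$ by definition. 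With this compatibility verified, the proof is complete.
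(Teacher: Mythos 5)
Your argument is correct and follows essentially the same route as the paper's proof: identify $K_s({^h\!X})$ with the fixed field of $\hat h R\hat h^{-1}$, invoke Lemma~\ref{lemma:conjNorm} to produce the conjugating element in $N_{S_d}(G)$, and conclude via Proposition~\ref{prop:isomCovers}. The only cosmetic difference is that the paper first passes to the finite quotient $G=\Gal(K_s(\hat X)/K_s(B))$ before applying the lemma, whereas you apply it at the level of $\Pi_{K_s}$ with the (correctly flagged) observation that conjugation by $\hat h$ preserves $\Pi_{K_s}$.
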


\begin{proof}
To justify the final assertion, observe that if the maps $\phi_h$ and $h \cdot \phi_h$ have conjugate images by a fixed element in $N$, then the covers are isomorphic by Proposition \ref{prop:isomCovers}. 

For the first assertion, recall that $\Pi_{K_s}(1)= \{x \in \Pi_{K_s}: \phi(x)(1)=1 \}$, that is, the stabilizer of $1$ in $S_d$. Then, the cover $K_s(X)$ is $\Omega_D^{\Pi_{K_s}(1)}$ which means  $\Pi_{K_s}(1) =\Gal(\Omega_D/K_s(X))$. 
Fix a lift $\hat{h} \in \Pi_k$ of $h \in \Gal(K_s /K)$. 
Then the function field $K_s(^h \! X)$ is the fixed field of $\Omega_D$ by the conjugate subgroup:
$$\hat h \Pi_{K_s}(1) \hat h^{-1} = \Gal(\Omega_D/K_s(^h \! X))$$ 
Let $\hat{X}$ be the common Galois closure of the covers $X \to B$ and $ {^h X} \rightarrow B$ and denote by $G$ the group $\Gal(K_s(\hat{X})/K_s(B))$ which is a quotient of $\Pi_{K_s}$.
Under the quotient map $\Pi_{K_s} \twoheadrightarrow G$, the subgroups $H \coloneqq \Pi_{K_s}(1)$ and $ H^\prime \coloneqq \hat h \Pi_{K_s}(1) \hat h^{-1}$ of $\Pi_{K_s}$ descend to subgroups of $G$. Then, we have that $H^\prime  = n H n^{-1}$ where $n \coloneqq \hat{h}|_{K_s(\hat{X})}$ is the class $[\hat{h}]$ under the quotient map.
 The result follows by Lemma \ref{lemma:conjNorm}.
\end{proof}

We now see that there are two equivalent perspectives on the Galois action on $K_s$-varieties: 
the natural action on coefficients, and the induced action on monodromy representations, which is independent of a choice of rational basepoint of $B^*$.
Observe that in eq. \ref{eq:gal_monodromy}) the conjugation by $\hat{h}$ is not a conjugation inside $\Pi_{K_s}$, since $\hat{h}$ does not necessarily belong in $\Pi_{K_s}$, but a lift in $\Pi_K$. This distinction is crucial when interpreting the action on monodromy.
Furthermore, if we start with a $(G)$-cover $X\to B$ defined over $L$ and the corresponding homomorphism $\phi \colon \Pi_L \rightarrow N$ and choose a lift $\hat{h} \in \Pi_L$ of an element $h\in \Gal(K_s/L)$, then we have that 
\[
h \cdot \phi (x) = \phi(\hat{h} )\cdot  \phi(x) \cdot \phi (\hat{h})^{-1}, 
\] 
that is $h\cdot \phi$ and $\phi$ are conjugate in $N$. In other words, the cover corresponding to $h\cdot \phi$ will always remain isomorphic to $X\rightarrow B$ over $L$. To explain this further, if the $(G)$-cover defined over $L$ corresponds to $R \leq \Pi_{K_s}$, then for any element $h \in \Gal(K_s/L)$, the conjugation action of a lift $\hat{h}\in \Pi_L$ preserves the conjugacy class of $R$ in $\Pi_{K_s}$. This roughly translates to the statement that the action of the arithmetic information in $\Gal(K_s/L)$ will keep $R$ invariant, and the lift $\hat{h}$ acts as a deck transformation of the cover. Therefore, even though $X\rightarrow B$ is defined over $L$, the $\Gal(K_s/L)$-action on its monodromy representation will not stabilize the cover itself, but its isomorphism class.

We will use this perspective in the topological setting, where it will manifest itself as a condition of definability and as a requirement on the action of the mapping class group.

To finish this section, its useful to think of definability of a cover over a field $L$ in terms of the monodromy representation, so we state the following as a proposition, although it follows from basic Galois correspondence.

\begin{proposition}
  \label{prop:DebesArith}
Let $f\colon X \rightarrow B$ be a  $(G)$-cover over $K_s$, corresponding to a homomorphism $\phi \colon \Pi_{K_s} \twoheadrightarrow G \leq N$. The $(G)$-cover $f$ is defined over the field $L$ if and only if the homomorphism $\phi$ extends to a homomorphism $\Pi_L \rightarrow N$. 

\end{proposition}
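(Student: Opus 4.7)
The plan is to exploit the Galois correspondence between intermediate fields of $\Omega_D/L(B)$ and closed subgroups of $\Pi_L \coloneqq \Pi_L(B^*)$, mirroring the construction of Proposition \ref{prop:merecovertrasnArith} one level higher in the field tower \eqref{eq:diagFF}. Since the monodromy of a $(G)$-cover depends only on the stabilizer of a chosen letter (together with a fixed labeling of the cosets), and since stabilizers behave well under intersection with $\Pi_{K_s}$, both directions reduce to routine bookkeeping in this diagram.

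For the forward direction, suppose $f$ descends to an $L$-model $f_L\colon X_L \to B$ (in the $G$-cover case, together with a compatible isomorphism $h_L\colon G \xrightarrow{\sim} \Gal(L(X_L)/L(B))$ base-changing to $h$). I apply the construction of Proposition \ref{prop:merecovertrasnArith} to $f_L$: the subgroup $H_L = \Gal(\Omega_D/L(X_L))$ has index $d$ in $\Pi_L$, its core yields a surjection $\Pi_L \twoheadrightarrow \Gal(\widehat{L(X_L)}/L(B))$, and the resulting coset action defines a transitive homomorphism $\tilde{\phi}\colon \Pi_L \to N$. Since base change from $L$ to $K_s$ corresponds at the level of fundamental groups to intersection with $\Pi_{K_s}$, the identity $H_L \cap \Pi_{K_s} = \Pi_{K_s}(1)$ (the stabilizer defining $K_s(X)$) forces $\tilde{\phi}|_{\Pi_{K_s}}$ to agree with $\phi$ under the natural labeling of cosets.

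For the backward direction, given $\tilde{\phi}\colon \Pi_L \to N$ extending $\phi$, I set
\[
H_L = \tilde{\phi}^{-1}\bigl(\mathrm{Stab}_N(1)\bigr) \qquad \bigl(\text{or } H_L = \ker\tilde{\phi} \text{ in the Galois case}\bigr),
\]
and let $M_L \coloneqq \Omega_D^{H_L}$, an intermediate field of $\Omega_D/L(B)$ of degree $d$. Three properties must be verified: (i) separability of $M_L/L(B)$, which is immediate since $\Omega_D/L(B)$ is separable; (ii) the identity $K_s\cdot M_L = K_s(X)$, which follows from the computation
\[
H_L \cap \Pi_{K_s} = \phi^{-1}\bigl(\mathrm{Stab}_N(1) \cap G\bigr) = \phi^{-1}\bigl(\mathrm{Stab}_G(1)\bigr) = \Pi_{K_s}(1);
\]
and (iii) the regularity $M_L \cap K_s = L$, which via Galois theory is equivalent to $\langle H_L, \Pi_{K_s}\rangle = \Pi_L$.

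Item (iii) is the main obstacle. The key observation is that $\tilde{\phi}(\Pi_{K_s}) = G$ acts transitively on $\{1,\ldots,d\}$. Hence, for any $\sigma \in \Pi_L$, the element $\tilde{\phi}(\sigma) \in N$ sends $1$ to some $i \in \{1,\ldots,d\}$, and transitivity yields $\tau \in \Pi_{K_s}$ with $\phi(\tau)(i) = 1$, so that $\tau\sigma \in H_L$. Thus $\Pi_L = \Pi_{K_s}\cdot H_L$, giving regularity and producing an $L$-variety $X_L$ with $L(X_L) = M_L$, whose base change to $K_s$ recovers $f$. In the $G$-cover case, the isomorphism $h$ lifts uniquely to $h_L\colon G \xrightarrow{\sim} \Gal(M_L/L(B))$ via restriction (bijective thanks to regularity), descending the $G$-structure as well.
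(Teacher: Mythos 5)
Your proof is correct and follows exactly the route the paper intends: the paper states this proposition without proof, remarking only that it ``follows from basic Galois correspondence'' via the dictionary of Proposition \ref{prop:merecovertrasnArith}, and your argument is a faithful, complete writing-out of that correspondence (the stabilizer subgroup $H_L$, the identity $H_L \cap \Pi_{K_s} = \Pi_{K_s}(1)$ giving $K_s\cdot M_L = K_s(X)$, and the regularity criterion $H_L\,\Pi_{K_s} = \Pi_L$ obtained from transitivity of $G$ on the fiber). I see no gaps; the only point worth flagging is the one you already acknowledge, namely that ``extends'' must be read up to the labeling ambiguity, i.e.\ up to conjugation in $N$, consistent with the paper's remark after Proposition \ref{prop:merecovertrasnArith}.
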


\subsection{Topological Actions}
In this section, we introduce the classical theory of monodromy representations of topological and branched covers $X\rightarrow B$ of compact Riemann surfaces, following \cite[Ch. 3.4]{MR1326604}. Our goal is to construct the analogues of the previous section in terms of mapping class groups. From now on, we restrict ourselves to algebraic curves in every topological setting. 
That is, since we our approach relies on the punctured version of the Dehn-Nielsen-Baer Theorem \ref{thm:Dehn-Nielsen-Baer}, we assume that the induced topological covers $X^\circ \rightarrow B^*$ occur between hyperbolic surfaces. Keep in mind, as a general guiding principle, that the fundamental groups of varieties of dimension greater than one can be finite or too hard to compute, see e.g. the theorem of Serre that for any finite group $G$ there exists a smooth projective variety over $\mathbb{C}$ with its topological fundamental group being $G$ \cite[Prop. 15]{SerreMexico}. In contrast, the fundamental groups of curves are fully classified by their genus, thus, this restriction remains interesting.

Recall that two topological covers $f\colon X^\circ \rightarrow B^*$ and $ f^\prime \colon Y^\circ \rightarrow B^*$ are isomorphic if there exists a homeomorphism $g \colon X^\circ \rightarrow Y^\circ$ such that $f^\prime \circ g = f$. If the covers correspond, respectively, to subgroups $R, R^\prime \leq \pi_1(B^*,b_0)$, then the covers are isomorphic if and only if $R$ and $R^\prime$ are conjugate in $\pi_1(B^*, b_0)$. This leads to the bijection:

\[
\left\{
\begin{array}{c}
\text{Isomorphism classes}  \\
\text{of connected coverings} \\
f\colon X^\circ \to B^*
\end{array}
\right\}
\; \longleftrightarrow \;
\left\{
\begin{array}{c}
\text{\  Conjugacy classes \ } \\
\text{ of subgroups } \\
R \leq \pi_1(B^*, b_0)
\end{array}
\right\} \hspace{0.6 cm}
\]

\vspace{0.3 cm}
For finite covers, using the action of $\pi_1(B^*,b_0)$ on the fiber of $b_0$, the bijection extends to the monodromy representations. We have the following bijection:

\[
\left\{
\begin{array}{c}
\text{Isomorphism classes}  \\
\text{of connected coverings} \\ 
f \colon X^\circ \rightarrow B^* \\
\text{of degree } d
\end{array}
\right\}
\; \longleftrightarrow \;
\left\{
\begin{array}{c}
\text{Homomorphisms} \\
\text{with transitive image} \\
\Phi \colon \pi_1(B^*,b_0) \rightarrow S_d \\
\text{(up to conjugacy in } S_d \text{)}
\end{array}
\right\}
\]

\vspace{0.3 cm}
If $X\rightarrow B$ is a branched cover between compact Riemann surfaces with ramification divisor $D$, such that $B^* = B - D$, then, under the requirement that the maps are holomorphic and with suitable treatment at the cusps, the bijection becomes:

\[
\left\{
\begin{array}{c}
\text{Isomorphism classes} \\
\text{of holomorphic maps} \\
f \colon X \rightarrow B \text{ of degree } d, \\
\text{with branch points in } D
\end{array}
\right\}
\; \longleftrightarrow \;
\left\{
\begin{array}{c}
\text{Homomorphisms} \\
\text{with transitive image} \\
\Phi \colon \pi_1(B^*,b_0) \rightarrow S_d \\
\text{(up to conjugacy in } S_d \text{)}
\end{array}
\right\}
\]

\vspace{0.3 cm}
See \cite[Ch. 3.4]{MR1326604} for the precise constructions.

In contrast to the arithmetic case, where such a description arises naturally, there is no analogous way to describe a topological action on the algebraic curve via the coefficients. We therefore introduce the following definition. 

\begin{definition} \label{def:topological_defined}
Let $X\rightarrow B$ be a branched cover with associated topological cover $X^\circ \rightarrow B^*$, corresponding to $R\leq \pi_1(B^*,b_0)$. Let $A$ be a subgroup of $\mathrm{Mod}(B^*)$ such that, for every $\alpha $ in $A$, the induced class of automorphisms $\alpha_*$ preserves the conjugacy class of $R$ in $\pi_1(B^*,b_0)$. For a subgroup $H \leq \Pi_A$ consider the following conditions:
\begin{enumerate}
\item $H\cap \pi_1(B^*,b_0) = R$.
\item $H \leq N_{\Pi_A}(R)$.
\item If $H$ is normal in $\Pi_A$, then $R$ is normal in $\pi_1(B^*,b_0)$.
\item $H/R \cong A$ (regularity condition).
\end{enumerate}
If a subgroup $H$ satisfying conditions $(1)-(3)$ exists, we say the cover $X\rightarrow B$ is {\em defined} over $A$, with corresponding group $H\leq \Pi_A$, and we denote this by $X\rightarrow (B,A)$. 
If, in addition, condition $(4)$ holds, we say the cover is regular with respect to $A$, or simply $A$-regular.

\end{definition}

\begin{remark}
These conditions are designed to mirror the arithmetic setting, where a cover $X\rightarrow B$ is defined over $K$, giving rise to the subgroups $H=\Gal(\Omega_D,K(X))$ and $R=\Gal(\Omega_D,K_s(X))$. 
In particular, $(1)$ is about the fact that we can view a $K$-variety $X$ as a $K_s$-variety, $(2)$ is that the extension $K_s(X)/K(X)$ is Galois, $(3)$ mimics the fact that if $K(X)/K(B)$ is a Galois extension, then so is $K_s(X)/K_s(B)$, and $(4)$ is whether the group of this extension is the full group $\Gal(K_s/K)$.

Note that, under the regularity assumption for a Galois cover $X\rightarrow (B,A)$ with deck group $G = \pi_1(B^*,b_0)/R$, holds that $G\cong \Pi_A/H$, which parallels the arithmetic situation $\Gal(K_s(X)/K_s(B)) \cong \Gal(K(X)/K(B))$.
\end{remark}

Therefore, we can extend the notion of $(G)$-covers to include the additional structure given by the group $A$. That is, instead of simply writing $ X\rightarrow B$, we use the notation $X\rightarrow (B,A)$, which reflects the role of $A$ in the topological setting—regardless of whether $R$ and $H$ are normal in $\pi_1(B^*,b_0)$ and $\Pi_A$, respectively. To remain consistent with the framework of \cite{DebesDouai97}, we require that
$(G)$-covers be regular.

Now, we discuss the notion of $G$-cover attached to the mere cover $X\rightarrow (B,A)$. The Galois closure of $X\rightarrow (B,A)$ is well-defined. Indeed, let $H$ and $R$ be the subgroups corresponding to the cover, as before. Denote by $\widehat{H} = \cap_{g \in \Pi_A} gHg^{-1}$ the normal closure of $H$ in $\Pi_A$ and, similarly, denote by $\widehat{R}$ the normal closure of $R$ in $\pi_1(B^*,b_0)$. Then,
\begin{align*}
\widehat{H}\cap \pi_1(B^*,b_0) &= \bigcap\limits_{g \in \Pi_A} g Hg^{-1} \cap \pi_1(B^*,b_0) \\
& = \bigcap\limits_{g\in \Pi_A } g R g^{-1} \\
& = \bigcap\limits_{g \in \pi_1(B^*,b_0)} gRg^{-1} \\ 
&= \widehat{R}
\end{align*} by the normality of $\pi_1(B^*,b_0)$ and the fact that the outer action of $A$ preserves the conjugacy class of $R$ for any lift. The above also proves that $N_{\Pi_A}(\widehat{R}) = \Pi_A$, thus $(2),(3)$ of Definition \ref{def:topological_defined} are automatically satisfied by $\widehat{H}$. Note that the cover corresponding to $\widehat{H}$ is not necessarily regular, as we only get an injection $\widehat{H}/\widehat{R} \xhookrightarrow{} A$. Indeed, observe that $R\cap \widehat{H} = \widehat{R}$, so $\widehat{H}/\widehat{R} = \widehat{H}/\widehat{H}\cap R \cong \widehat{H}R/R$ which embeds into $H/R \cong A$. Thus, the cover is a $G$-cover, i.e. is regular with respect to $A$, if and only if $\widehat{H}R = H$. In the arithmetic setting, this amounts to the Galois closure $\widehat{K(X)}/K(B)$ being a regular extension of $K(X)/K(B)$.

Now we have the necessary tools to extend the monodromy representations with respect to the subgroups of $\mathrm{Mod}(B^*)$, thus the analogue of Proposition \ref{prop:merecovertrasnArith} is the following.

\begin{proposition}{\label{prop:mereCoverstransTop}}
  Degree $d$ mere covers $X \rightarrow (B,A)$, $A \leq \mathrm{Mod}(B^*)$ with ramification locus in $D$ correspond to transitive representations 
  \[
    \Psi \colon \Pi_A \longrightarrow S_d,
  \]
  such that the restriction to $\pi_1(B^*,b_0)$ is also transitive.
\end{proposition}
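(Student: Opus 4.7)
The plan is to mirror Proposition \ref{prop:merecovertrasnArith} step by step, replacing the field-theoretic constructions in the tower $K(B)\subseteq K_s(B)\subseteq \Omega_D$ with group-theoretic constructions inside the short exact sequence $1\to \pi_1(B^*,b_0)\to \Pi_A\to A\to 1$. Throughout I use the correspondence established in Definition~\ref{def:topological_defined}: a mere cover $X\to(B,A)$ is determined by a pair of subgroups $R\leq \pi_1(B^*,b_0)$ and $H\leq \Pi_A$ with $R=H\cap \pi_1(B^*,b_0)$ and $H/R\cong A$.

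For the forward direction, given a degree $d$ mere cover $X\to(B,A)$ corresponding to $H\leq\Pi_A$, I would let $\widehat{H}=\bigcap_{g\in\Pi_A}gHg^{-1}$ be the normal core of $H$, which plays the role of the Galois closure of the cover. Setting $G=\Pi_A/\widehat{H}$ and $\bar H=H/\widehat{H}$, I obtain $[G:\bar H]=[\Pi_A:H]=d$, the last equality following from the regularity condition $H/R\cong A$ together with $\pi_1(B^*,b_0)/R$ having index $d$. The natural left action of $G$ on $G/\bar H$ is faithful by the same computation as in the arithmetic case (an element acting trivially lies in $\bigcap_{g\in G}g\bar Hg^{-1}=\{1\}$), yielding an embedding $i\colon G\hookrightarrow S_d$. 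I would then set $\Psi=i\circ\Phi$, where $\Phi\colon\Pi_A\twoheadrightarrow G$ is the quotient map. Transitivity of $\Psi$ is automatic from the coset action.

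For transitivity of $\Psi|_{\pi_1(B^*,b_0)}$, I would observe that the $\pi_1(B^*,b_0)$-orbits on $\Pi_A/H$ correspond to the double cosets $\pi_1(B^*,b_0)\backslash \Pi_A/H$, so transitivity is equivalent to $\pi_1(B^*,b_0)\cdot H=\Pi_A$. Using the exact sequence and the regularity condition $H/R\cong A$, the restriction of the quotient map $\Pi_A\to A$ to $H$ is surjective, which gives precisely this equality. For the converse direction, I would set $H=\Psi^{-1}(\mathrm{Stab}_{S_d}(1))$ and $R=H\cap \pi_1(B^*,b_0)$; the subgroup $R$ determines a topological cover $X^\circ\to B^*$ by classical covering space theory, which extends uniquely to a branched cover $X\to B$ with ramification in $D$ (cf.\ \cite[Ch.~3.4]{MR1326604}). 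The four conditions of Definition~\ref{def:topological_defined} then follow: (1) is definitional, (2) follows since $\pi_1(B^*,b_0)\triangleleft \Pi_A$ implies $hRh^{-1}\subseteq H\cap\pi_1(B^*,b_0)=R$ for any $h\in H$, (3) is immediate from the same normality, and (4) is equivalent to the transitivity of $\Psi|_{\pi_1(B^*,b_0)}$.

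The main obstacle I anticipate is not any single step but the bookkeeping needed to upgrade the set-theoretic bijection above to a bijection on \emph{isomorphism classes}. As in the arithmetic setting, $\Psi$ is only determined up to a labeling of the cosets of $\bar H$ in $G$, which introduces an ambiguity by conjugation in the normalizer $\mathrm{N}_{S_d}(G)$; this is controlled by Lemma~\ref{lemma:conjNorm} and Proposition~\ref{prop:isomCovers}, both of which are purely group-theoretic and apply verbatim in the topological setting. A secondary subtle point is justifying the tacit assumption that ``mere cover'' here carries the regularity requirement (4) of Definition~\ref{def:topological_defined}, as remarked by the authors just before the statement, since without it the degree $d$ and the index $[\Pi_A:H]$ need not coincide.
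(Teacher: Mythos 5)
Your proposal is correct and follows essentially the same route as the paper: the forward direction transports the arithmetic argument via the normal core $\widehat{H}$ and the coset action of $G=\Pi_A/\widehat{H}$, and the converse takes $H$ to be the stabilizer of $1$ and verifies conditions (2)--(4) of Definition~\ref{def:topological_defined}, with regularity extracted from the transitivity of the restriction exactly as in the paper's coset-counting argument. Your phrasing of the restriction's transitivity as $\pi_1(B^*,b_0)\cdot H=\Pi_A$ via surjectivity of $H\to A$, and of condition (3) as normality of an intersection of normal subgroups, is slightly more streamlined than the paper's, but the underlying argument is the same.
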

\begin{proof}
Let $R \leq \pi_1(B^*,b_0)$ and $H\leq \Pi_A$ be the subgroups corresponding to the cover $X\rightarrow (B,A)$ and let $\widehat{H}$ be the normal closure of $H$ as above. Then, analogously to the arithmetic construction in the proof of Proposition \ref{prop:merecovertrasnArith}, we have $G = \Pi_A/\widehat{H}$ and we get the transitive representation by the action of $G$ on the cosets of $H/\widehat{H}$, after fixing a labeling such that $1$ corresponds to $H/\widehat{H}$. The restriction to $\pi_1(B^*,b_0)$ factors through the quotient map $\pi_1(B^*,b_0) \rightarrow \pi_1(B^*,b_0)/\widehat{R} \cong G$ and the transitivity follows similarly, from classical monodromy theory.

Conversely, given such a homomorphism $\Psi \colon \Pi_A \to S_d$, denote by $\Pi_A(i)$ the stabilizer of $i$ in $S_d$, and by $\pi_1(i)$ the stabilizer of $\Psi|_{\pi_1(B^*,b_0)}$. Let $H=\Pi_A(1)$ and set $R=H\cap \pi_1(B^*,b_0) = \pi_1(1)$. The goal is to prove that conditions (2)–(4) of Definition~\ref{def:topological_defined} hold for $H$ with respect to $R$. 

Condition (2):  
For any $h$ in $H$, it is clear that $hRh^{-1}$ stabilizes $1$ via its image under $\Psi$. Moreover, $hRh^{-1} \subset \pi_1(B^*,b_0)$ since the outer action of $A$ preserves the conjugacy class of $R$ inside $\pi_1(B^*,b_0)$. It follows that $hRh^{-1} = R$. 

Condition (3):
Assume that $H$ is normal in $\Pi_A$. Observe that for any $g$ in $\Pi_A$ the group $gHg^{-1}$ stabilizes
$\Psi(g)(1)$. Since $H$ is normal, this implies $\Psi(g)(1)=1$ and, in particular, this holds for all $g$ in $\pi_1(B^*,b_0)$, which implies that $gRg^{-1} = R$ for all $g$ in $\pi_1(B^*,b_0)$. Hence, $R$ is normal if $H$ is normal.

Condition (4):
Proving regularity amounts to showing that $H \pi_1(B^*,b_0) = \Pi_A$, since 
$$H/R = H/H\cap \pi_1(B^*,b_0) \cong H\pi_1(B^*,b_0)/\pi_1(B^*,b_0)$$ 
embeds into $\Pi_A/\pi_1(B^*,b_0) \cong A$. By transitivity of $\Psi$ and its restriction to $\pi_1(B^*,b_0)$, we have that $[\Pi_A:H] = [\pi_1(B^*,b_0):R] = d$.
Let $\pi_1(B^*,b_0) = \sqcup_{i=1}^d g_iR$ be a decomposition into cosets with representatives $g_i$ in $\pi_1(B^*,b_0)$ for all $i=1,\ldots,d$. Then, $g_i H = H$ if and only if $g_i \in H\cap \pi_1 = R$, which implies that $g_i=1$. By a similar argument, $g_iH = g_j H$ if and only if $g_i=g_j$. Therefore, $g_i$ are coset representatives of $\Pi_A/H$ even though they lie in $\pi_1(B^*,b_0)$.
\end{proof}
Similarly to the arithmetic case, the $(G)$-cover $X\rightarrow B$ is defined over $A \leq \mathrm{Mod}(B^*)$ if and only if the associated representation $\Psi \colon \pi_1(B^*,b_0) \rightarrow S_d$ extends to a representation $\Pi_A \rightarrow S_d$, since this implies the existence of the pair $R,H$ described in Definition \ref{def:topological_defined}. As definability regarding $A$ implies definability regarding each subgroup $A^\prime \leq A$, we state the above statement formally as a proposition analogous to Proposition \ref{prop:DebesArith}.

\begin{proposition}
  \label{prop:DebesTop}
Let the $(G)$-cover $X \rightarrow (B,A^\prime)$ correspond to a homomorphism $\Psi: \Pi_{A^\prime}\twoheadrightarrow G \leq N$. The $(G)$-cover is defined over the group $A$, where $A^\prime \leq A \leq \mathrm{Mod}(B^*)$ if and only if the homomorphism $\Psi$ extends to a homomorphism $\Pi_{A} \rightarrow N$.
\end{proposition}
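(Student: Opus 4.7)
The plan is to deduce both directions from the bijection established in Proposition \ref{prop:mereCoverstransTop}, which identifies $(G)$-covers defined over a subgroup $A \leq \mathrm{Mod}(B^*)$ with transitive homomorphisms $\Pi_A \to S_d$ whose restriction to $\pi_1(B^*,b_0)$ is still transitive. Under this dictionary, enlarging the group of definition from $A'$ to $A$ should correspond precisely to extending the monodromy representation from $\Pi_{A'}$ to $\Pi_A$. This is the arithmetic-topology analogue of Proposition \ref{prop:DebesArith}, and the proof follows the same structural pattern.

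For the forward direction, assume $X \to B$ is defined over $A$, so there exists $H \leq \Pi_A$ satisfying the conditions of Definition \ref{def:topological_defined} with $H \cap \pi_1(B^*,b_0) = R$. I would apply the construction of Proposition \ref{prop:mereCoverstransTop} to $H \leq \Pi_A$: let $\widehat{H}$ be the normal closure of $H$ in $\Pi_A$, set $G = \Pi_A/\widehat{H}$, and define $\tilde{\Psi} \colon \Pi_A \to S_d$ as the action on the cosets of $H/\widehat{H}$ with a labeling sending $1$ to the trivial coset. This takes values in $N$ since its image is $G$. Restricting to $\Pi_{A'}$ yields the action of $G = \Pi_{A'}/(\widehat{H} \cap \Pi_{A'})$ on the same set of cosets, which agrees with the original $\Psi$ up to conjugation by an element of $N$, by Lemma \ref{lemma:conjNorm} applied to the choice of labeling.

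For the backward direction, suppose $\tilde{\Psi} \colon \Pi_A \to N$ extends $\Psi$. Transitivity of $\tilde{\Psi}$ and of its restriction $\tilde{\Psi}|_{\pi_1(B^*,b_0)}$ are both inherited from the transitivity of $\Psi$ (whose image already equals $G$, acting transitively on $\{1,\dots,d\}$). Proposition \ref{prop:mereCoverstransTop} then produces a $(G)$-cover $Y \to (B,A)$ whose associated stabilizer is $H = \{g \in \Pi_A : \tilde{\Psi}(g)(1) = 1\}$. Since $\tilde{\Psi}$ extends $\Psi$, we have $H \cap \Pi_{A'} = \{g \in \Pi_{A'} : \Psi(g)(1) = 1\}$, which is the stabilizer subgroup of the given cover $X \to B$; in particular $H \cap \pi_1(B^*,b_0) = R$, so $Y \to B$ and $X \to B$ correspond to the same conjugacy class of subgroups of $\pi_1(B^*,b_0)$ and are isomorphic as covers.

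The only genuine subtlety is bookkeeping of the labeling of the fiber: both $\Psi$ and $\tilde{\Psi}$ are defined only up to conjugation in $N$, and one must fix compatible labelings so that the extension and restriction statements are literal equalities rather than statements up to conjugation. This is handled uniformly by Lemma \ref{lemma:conjNorm}, exactly as in the arithmetic counterpart, and I expect no new obstacle beyond what was already resolved in the proof of Proposition \ref{prop:mereCoverstransTop}.
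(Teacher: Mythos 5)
Your proposal is correct and follows essentially the same route as the paper, which states Proposition \ref{prop:DebesTop} without a separate proof precisely because it regards it as an immediate consequence of the correspondence in Proposition \ref{prop:mereCoverstransTop} (an extension of $\Psi$ to $\Pi_A$ produces the pair $R,H$ of Definition \ref{def:topological_defined}, and conversely a cover defined over $A$ yields a representation of $\Pi_A$ restricting to $\Psi$ up to conjugation in $N$). Your additional bookkeeping via Lemma \ref{lemma:conjNorm} to turn ``agreement up to conjugation in $N$'' into a literal extension is exactly the right way to make the paper's implicit argument precise, and introduces no new content beyond it.
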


Correspondingly with the arithmetic case, we also have the action of $\mathrm{Mod}(B^*)$ to these representations in the following way. Given an element $h\in \mathrm{Mod}(B^*)$, choose a lift $\hat{h}\in \Pi_{\mathrm{Mod}(B^*)}$. Let $\rho_{\hat{h}}$ denote the inner automorphism of $\Pi_{\mathrm{Mod}(B^*)}$ determined by the conjugation by $\hat{h}$. The new representation $h\cdot \Psi$ is defined to be the composition $\Psi \circ \rho_{\hat{h}}$ as shown in the following diagram.

\[
\begin{tikzcd}
\pi_1(B^*, b_0) \arrow[d, "\rho_{\hat{h}}"'] \arrow[rr, two heads, "\Psi \circ \rho_{\hat{h}}"] & & G \\
\pi_1(B^*, b_0) \arrow[rru, two heads, bend right, "\Psi"'] & & 
\end{tikzcd}
\]

Explicitly, this means:
\begin{equation}\label{eq:top_monodromy}
^h \! \Psi \coloneqq h\cdot \Psi(x) = \Psi( \hat{h} x \hat{h}^{-1}),
\end{equation}
This construction is independent of the choice of lift $\hat{h}$, since the image of the representation in $S_d$ is well-defined up to conjugation. More precicely, for any $\tau \in \pi_1(B^*,b_0)$ we have $\Psi(\hat h \tau x \tau^{-1} \hat{h}^{-1}) = \Psi( \hat{h} x \hat{h}^{-1}) $ up to conjugation by elements in the normalizer of $G$ in $S_d$.

In order to compare the representations $\Psi, ^h \! \Psi$, we first need to extend the notion of isomorphism between covers $X\to B$ to the setting of $(G)$-covers defined over $A \leq \mathrm{Mod}(B^*)$. In the arithmetic setting, if two covers $X\to B$ and $X^\prime \rightarrow B$ are isomorphic over $K$ by an isomorphism $\chi \colon X \to X'$, then the extensions $K(X)/K(B)$ and $K(X^\prime)/K(B)$ are conjugate by an element $g_\chi$ in $\Pi_K(B^*)$. That is, $g_\chi\Gal(\Omega_D/K(X))g_\chi^{-1} = \Gal(\Omega_D/K(X^\prime))$.

This leads us to the following definition:
\begin{definition}
Let $X \to (B,A)$ and $X^\prime\to (B,A)$ be two covers defined over $A\leq \mathrm{Mod}(B^*)$. We say that the covers are isomorphic over $A$, or simply $A$-isomorphic, if there exists a homeomorphism $\chi \colon X \to X^\prime$ such that the corresponding subgroups $H, H^\prime \leq \Pi_A$ are conjugate; that is $H^\prime = g_\chi H g_\chi^{-1}$ for an element $g_\chi \in \Pi_A$.
\end{definition}

Note that the homeomorphism $\chi$ induces an element $g_\chi \in \Pi_A \leq \Pi_{\mathrm{Mod}(B^*)}$ by considering its isotopy class in $\mathrm{Mod}(B^*,b_0)$, the basepoint $b_0$ is now considered marked, and the isomorphism of $\Pi_{\mathrm{Mod}(B^*)} \cong \mathrm{Mod}(B^*,b_0)$ arising from the Birman exact sequence (\ref{ses:Birman}). Additionally, the definition of $A$-isomorphic covers is compatible with isomorphic topological covers; that is if $\chi$ is an $A$-isomorphism, then it is an isomorphism of topological covers.

The topological analogue of Proposition \ref{prop:isomCovers} is the following: 
\begin{proposition}{\label{prop:isomCoversTop}}
  Two mere covers $X \rightarrow (B,A)$, $X^\prime \rightarrow (B,A)$ are isomorphic over $A$ if and only if the corresponding representations $\Psi,\Psi': \Pi_A \rightarrow S_d$ are conjugate by an element $\varphi$ in the normalizer $N_{S_d}(G)$ of the image group $G=\Psi(\Pi_A) = \Psi'(\Pi_A)$, that is
  \[
  \Psi'(x) = \varphi \Psi(x) \varphi^{-1} \text{ for all } x \in \Pi_A. 
\]

\end{proposition}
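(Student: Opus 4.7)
The plan is to mirror the arithmetic proof of Proposition~\ref{prop:isomCovers} by passing to the common Galois closure of the two covers and then applying Lemma~\ref{lemma:conjNorm}, which is stated in purely group-theoretic language and therefore applies unchanged.

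For the forward direction, suppose $\chi\colon X\to X'$ is an $A$-isomorphism with associated element $g_\chi\in\Pi_A$ satisfying $H'=g_\chi H g_\chi^{-1}$, where $H,H'\leq\Pi_A$ are the subgroups attached to the two covers. Taking normal closures in $\Pi_A$ yields $\widehat{H'}=g_\chi\widehat{H}g_\chi^{-1}=\widehat{H}$, so both covers share the same Galois closure and the quotient $G=\Pi_A/\widehat{H}$ is the same for both. Writing $\bar H,\bar{H'}$ for the images of $H,H'$ in $G$ and $n$ for the class of $g_\chi$, we have $\bar{H'}=n\bar H n^{-1}$. Lemma~\ref{lemma:conjNorm} then produces an element $\varphi\in N_{S_d}(G)$ such that the permutation representation of $G$ on the cosets of $\bar{H'}$ is obtained from the one on cosets of $\bar H$ by conjugation by $\varphi$. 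Precomposing with the quotient $\Pi_A\twoheadrightarrow G$ gives exactly $\Psi'(x)=\varphi\Psi(x)\varphi^{-1}$ for all $x\in\Pi_A$.

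For the converse, assume $\Psi'=\varphi\Psi\varphi^{-1}$ with $\varphi\in N_{S_d}(G)$. The subgroups attached to the two covers are the stabilizers of $1$, namely $H=\{x\in\Pi_A:\Psi(x)(1)=1\}$ and $H'=\{x\in\Pi_A:\Psi'(x)(1)=1\}$. Setting $j=\varphi^{-1}(1)$ and using $\Psi'(x)(1)=\varphi\Psi(x)(j)$, we see that $H'$ is the stabilizer of $j$ under $\Psi$. By transitivity of $\Psi$ (Proposition~\ref{prop:mereCoverstransTop}) choose $g_\chi\in\Pi_A$ with $\Psi(g_\chi)(j)=1$; a direct computation then gives $g_\chi H' g_\chi^{-1}=H$. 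Intersecting with $\pi_1(B^*,b_0)$ and using that $\pi_1(B^*,b_0)$ is normal in $\Pi_A$, we obtain $R=g_\chi R' g_\chi^{-1}$ at the level of the topological fundamental groups, and conditions (2)-(4) of Definition~\ref{def:topological_defined} for $H'$ follow from those for $H$ by conjugation.

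It remains to produce the geometric isomorphism $\chi\colon X\to X'$ from the algebraic datum $g_\chi$. Via the Birman exact sequence~(\ref{ses:Birman}) together with the Dehn-Nielsen-Baer isomorphism (Theorem~\ref{thm:Dehn-Nielsen-Baer}), the element $g_\chi\in\Pi_A\leq\Pi_{\mathrm{Mod}(B^*)}$ corresponds to an isotopy class in $\mathrm{Mod}(B^*,b_0)$, i.e.\ to a homeomorphism of $B^*$ that fixes $b_0$ up to a prescribed loop. The relation $g_\chi R' g_\chi^{-1}=R$ in $\pi_1(B^*,b_0)$ together with the standard lifting criterion of covering space theory produces a unique homeomorphism $\widetilde{B^*}\to\widetilde{B^*}$ that descends to an isomorphism $X^\circ\to X'^\circ$; by the usual extension across the ramification divisor (performed in Section~\ref{sec:covers}), this extends to a homeomorphism $\chi\colon X\to X'$ over $B$, which is an $A$-isomorphism by construction. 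The main technical point to verify carefully is this last passage from the conjugating element $g_\chi\in\Pi_A$ to an honest homeomorphism of covers; once this is in place the rest is a routine translation of the arithmetic argument.
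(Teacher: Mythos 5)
Your proof is correct and follows essentially the same route as the paper, whose own proof simply declares the argument ``identical to the proof of Proposition~\ref{prop:isomCovers}, under proper adjustments for the subgroups $R,H$'': you pass to the common Galois closure, invoke Lemma~\ref{lemma:conjNorm} for the forward direction, and use stabilizers together with $j=\varphi^{-1}(1)$ for the converse, exactly as in the arithmetic case. The only addition is that you explicitly carry out the passage from the conjugating element $g_\chi$ to an honest homeomorphism of covers via the lifting criterion, a step the paper leaves implicit; this is a welcome clarification rather than a deviation.
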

\begin{proof}
  Identical to the proof of Proposition \ref{prop:isomCovers}, under proper adjustments for the subgroups $R,H$.
\end{proof}
Therefore, for a $(G)$-cover $X\rightarrow (B,A)$, corresponding to the representation $\Psi:\Pi_A\rightarrow G\leq N$, the action of and element $h \in A$ on $\Psi$ yields the representation:
\[ 
^h  \Psi (x) = \Psi(\hat{h}) \Psi(x) \Psi(\hat{h}^{-1}) = \Psi_h \cdot \Psi(x) \cdot \Psi_h^{-1},  \quad  \Psi_h \in N_{S_d}(G)
\] 
where $\hat{h} \in \Pi_A$ is a lift of $h$. This shows that the cover corresponding to $^h \Psi$ is isomorphic over $A$ to the cover $X\rightarrow (B,A)$.  

\begin{remark} Notice that for a $(G)$-cover $X\rightarrow (B,A)$, with $A\leq \mathrm{Mod}(B^*)$ the corresponding exact sequence
\[1 \rightarrow R \rightarrow H \rightarrow A \rightarrow 1,\] yields a homomorphism $ \phi: A\rightarrow \mathrm{Out}(R)$, by the outer conjugation action of $A$ on $R$. Since, in the punctured case, we require the mapping class group to preserve the conjugacy classes of the loops corresponding to punctures of $B^*$, the action of $A$ on $R$ will have to preserve the loops corresponding to the ramified points of $X$, which are exactly the punctures of $X^\circ$. This implies that the image of the previous homomorphism actually lies in $\mathrm{Out}^*(R)$, that is
\[ \phi: A \rightarrow \mathrm{Out}^*(R) \cong \mathrm{Mod}(X^\circ) \leq \mathrm{Out}(R).\] 
This is interesting in its own right, as it provides a map from subgroup of $\mathrm{Mod}(B^*)$ into the mapping class group $\mathrm{Mod}(X^\circ)$. This naturally raises questions such as: 
\begin{itemize}
\item What are the kernel and image of this homomorphism? How do they vary over different choices of the cover $X\rightarrow (B,A)$?  These questions resemble the arithmetic questions regarding $\phi_K$ in Chapter \ref{ch:profinite_braids} for various $B^*$. 
\item Is this related to the Birman-Hilden property? See the survey \cite{MR4275077} and the later Chapter \ref{ch:Hilden}.  Specifically, since $A/\ker\phi$ is isomorphic to a subgroup $S \leq \mathrm{Mod}(X^\circ)$, can we think of this isomorphism as dual to the Birman–Hilden property? While the latter addresses when mapping classes on the cover descend to the base, our perspective here addresses when mapping classes on the base can be mapped into the mapping class group of the cover.
\end{itemize}
\end{remark}

\subsection{Non-isomorphic covers}
So far in this section, we have considered the case where the action of the mapping class group preserves the isomorphism class of an initial cover $X\rightarrow B$. To obtain a cover non-isomorphic to $X$, it suffices to require that $h_* [R] \neq [R]$, for some element $h$ in $\mathrm{Mod}(B^*)$. This means for any lift $\hat{h}$ of $h$, the image $\hat{h}_*(R)$ is not conjugate to $R$, and thus does not correspond to an isomorphic topological cover.

More precisely, any lift $\hat{h}$ of $h \in \mathrm{Mod}(B^*)$ is a homeomorphism $\hat{h} \colon B^* \to B^*$ which, by the uniqueness of path-lifting, see \cite[Prop. 13.5]{MR1343250}, induces another homeomorphism $\tilde{h} \colon X^\circ \rightarrow Y^\circ$, where $Y^\circ = \widetilde{B^*}/\hat{h}(R)$, if and only if $\hat h_*(R)$ is conjugate to $R$. Note that the underlying topological space remains the same, but there are different covering maps corresponding to $R$ and $\hat{h}_*(R)$.

In the case of algebraic curves, once we complete $B^*$ to $B$ by adding a holomorphic structure, there is a unique way to compactify the topological cover $X^\circ\rightarrow B^*$, e.g. by \cite[Prop. 19.9, Ex. 19.10]{MR1343250}, to a holomorphic branched cover $X\rightarrow B$. Furthermore, each subgroup $R$ and $\hat{h}_*(R)$ gets a unique compactification to a curve $X$ and ${^{\hat{h}} \! X}$ respectively, by assuming that the underlying base $B^*$ has the same holomomorphic structure in both cases. This implies that in the latter case we don not choose the holomorphic structure translated by $\hat{h}$ but rather the initial one. Compare this with the arithmetic setting; recall the same structure dilemma in eq. (\ref{eq:schemeActi}).

Moreover, in the hyperbolic case we are interested in, $\pi_1(B^*,b_0)$ is non-abelian and finitely generated. It is well-known, see for instance \cite[Thm. 29.10]{bogoGrp}, that any finitely generated group has only a finite number of subgroups of a given index $d$. Consequently, there are only finitely many coverings of degree $d$ corresponding to subgroups $R$, and in particular, the conjugacy class $[R]$ is also finite.

We would like to be able to define an action of $h$ in $\mathrm{Mod}(B^*)$ on the set of finite branched covers $X\rightarrow B$. However, choosing different representatives in the class of $h$, which differ by inner automorphisms under the Dehn-Nielsen-Baer, complicates this in general, since the subgroup $\hat{h}(R)$ varies over the elements in $[R]$, corresponding to different covers. By the previous discussion, this orbit is finite. In fact, $ {^h \! X}$ is well-defined if the cover $X\rightarrow B$ is Galois. However, for a mere cover $X\rightarrow B$, we have that ${^h \! X}$ is not well-defined, as there is a finite choice of curves it can land on for every element in $[R]$. Thus it could become well-defined but non-canonically.

Nevertheless, the covering isomorphism class of ${^h \! X}$, for any non-canonical choice, is well-defined, consisting of all such possible choices corresponding to elements in $[\hat{h}(R)]$, for any lift $\hat{h} $ of $h$.
We can avoid the ambiguity by using the action of $h$ in $\mathrm{Mod}(B^*)$ on the monodromy representation $\Psi\colon \pi_1(B^*,b_0)\rightarrow S_d$, yielding ${^h\Psi}$ as we defined it previously. Even though $^hX$ is not well-defined, as there is a finite orbit set $\{ ^{\hat{h}\! }X  :  \hat{h} \textrm{ lift of } h \in \mathrm{Mod}(B^*)\}$, the cover corresponding to the representation $^h \Psi$, will be inside this orbit and thus isomorphic to $^{\hat{h}}\! X$ for all possible lifts. This is as close as saying $^hX$ and the cover corresponding to $^h\Psi$ are isomorphic as $(G)$-covers.

We have already discussed the action of a group $A\leq \mathrm{Mod}(B^*)$ on the representation $\Psi\colon \Pi_A\rightarrow S_d$ of a $(G)$-cover $X\rightarrow (B,A)$, we can however act on this by the whole mapping class group, yielding a new representation 
\[^h\Psi \colon \Pi_{hAh^{-1}} \rightarrow S_d,\] defined by
\[^h\Psi(x) \coloneqq \Psi(\hat{h}x\hat{h}^{-1}),\] corresponding to a new cover $X^\prime\rightarrow (B,hAh^{-1})$.

\subsection{Comparison}
We have discussed a common framework for both the group actions of $\Gal(K_s/K)$ and $\mathrm{Mod}(B^*)$ on the $(G)$-covers $X\rightarrow B$, realized via outer automorphisms of the (geometric) fundamental groups. The ambiguity of inner automorphisms is dealt with by considering the actions on the monodromy representations, which we summarize in the table at the end of this section.

In the arithmetic setting, the situation can be somewhat simpler, as the group $\Gal(K_s/K)\cong \Gal(K_s(X)/K(X))$ acts naturally on $K_s(X)$ via field automorphisms. In the next step, this situation aligns with the topological setting when inner automorphisms of $\Pi_{K_s}$ are taken into account. These correspond to possible lifts of field automorphisms from $K_s(X)$ to $\Omega_D$ when an element $\sigma \in \Gal(K_s/K)$ is lifted to $\hat{\sigma} \in \Pi_K$.

Using the more concrete language of function fields has lot of advantages in the arithmetic setting. However, this language does not 
allow us to handle the topological setting, where there is no analogue of the notion of field. 
To address this, we adopt the viewpoint that in the absence of fields, we can work with the corresponding group action data instead. This serves as the common ground between the arithmetic and topological contexts.
In the following we wish to impose the idea, 
that in order to handle the absence of fields in the topological setting, we will refer to their corresponding group action data. As the common ground 

Suppose we have a $K_s$-variety $X \rightarrow \Spe K_s$, which is defined over $K$, that is $X={^\sigma \! X}$ for all $\sigma \in \Gal(K_s/K)$, see Proposition \ref{prop:invArith}. For any intermediate field $K \subset L \subset K_s$, we can consider the base changes:
 $X \times _{K}  K_s \rightarrow  K_s$, $X \times _{ K}  L \rightarrow  L$.

In the diagram of eq. (\ref{diag:fields}), the tower of fields on the left hand side corresponds to group actions on the right hand side of the diagram. By the notation $(X,G)$, we mean that the variety $X$ is equipped with a $G$-action. In terms of $(G)$-covers, a $(G)$-cover $X\rightarrow B$ over $K$ is simply denoted as $X\rightarrow (B,\Gal(K_s/K))$.

We also denote by $\widetilde{B^*}$ we denote the universal 
covering of $B^*$, which corresponds to $\Omega_D$ as an analogue for the universal cover of fields. In this analogy, $\Omega_D$ plays the role of a "universal cover" of fields, in the sense that $\Pi_K(B^*)$ acts transitively on the fibers of every unramified place in the extension $\Omega_D/K(B)$ - similarly to how $\pi_1(X)$ acts on the fibers in the covering $X^\circ \to B^*$.

\newsavebox{\ssss}
\sbox{\ssss}
{
\tiny{
  \xymatrix@R=.7pc{
  X\times_{K}\Spe(K_s) \ar[d] 
  \\ 
  \Spe(K_s)
  }}
}
\newsavebox{\ssssa}
\sbox{\ssssa}
{
\tiny{
  \xymatrix@R=.7pc{
  X\times_{K}\Spe(L) \ar[d] 
  \\ 
  \Spe(L)
  }}
}
\newsavebox{\ssssb}
\sbox{\ssssb}
{
\tiny{
  \xymatrix@R=.7pc{
  X\times_{K}\Spe(K) \ar[d] 
  \\ 
  \Spe(K)
  }}
}
\begin{equation}
  \label{diag:fields}
  \xymatrix{
  & \Omega_D \ar@{-}[d]      &  &  & \widetilde{B^*} \ar@{-}[d]
  \\ 
  *!<0pt,-3pt>{
  \usebox{\ssss} }   & K_s(X)  \ar@{-}[d] &  K_s \ar@{-}[d]& \{1\} \ar@{-}[d] & X\rightarrow (B,\Gal(K_s /K))\ar@{-}[d] 
  \\
  *!<0pt,-3pt>{
 \usebox{\ssssa}} & L(X)    \ar@{-}[d]&  L \ar@{-}[d] & \Gal(K_s /L) \ar@{-}[d] & X\rightarrow (B,  \Gal(Ks / L) )\ar@{-}[d] 
 \\ 
 *!<0pt,-3pt>{
 \usebox{\ssssb} }&  K(X)      &  K &   \Gal(K_s /K) & X \rightarrow (B,\{1\})   
  \\
      & K(B) \ar@{-}[u]  & &  & B\ar@{-}[u] }
\end{equation}
In the diagram above, the second column consists of a sequence of function fields, each invariant under the action of the groups in the forth column. These are subgroups of  $\Gal ( \Omega_D/K(B) )$. 

\vspace{0.25 cm}

\textbf{Moto: Descent on Fields - Ascent on Groups} \newline
This language can be used for both the arithmetic and the topological setting. Indeed, a {\em descent datum} of the variety $X$ in terms of fields 
$K_s \supseteq L \supseteq K$ corresponds to an {\em ascent datum} $\{(X,\Gal(K_s/L))\}_{K_s \supseteq L \supseteq K}$. 
This perspective transfers naturally to the topological setting (without involving fields) by considering an ascent datum $\{(X,H)\}_{G\geq H}$, where $G$ is a group of isotopy classes of homeomorphisms. This justifies the notation $X\rightarrow (B,A)$, where $A\leq \mathrm{Mod}(B^*)$.

{\small
\begin{longtable}[c]{>{\raggedright\arraybackslash}p{4.9cm}  >{\raggedright\arraybackslash}p{4.3cm}  >{\raggedright\arraybackslash}p{3.1cm}}
\rowcolor{headerblue}
\textbf{Number Theory} & \textbf{Topology} & \textbf{Common Notation} \\
\endfirsthead

\rowcolor{headerblue}
\textbf{Number Theory} & \textbf{Topology} & \textbf{Common Notation} \\
\endhead

$K_s(X)/K_s(B)$ & $X\rightarrow B$ & $X\rightarrow B$  \\

\rowcolor{rowgray}
$\Psi \colon \Pi_{K_s} \rightarrow S_d$ 
  & $\Psi \colon \pi_1(B^*,b_0) \rightarrow S_d$ 
  & \\
  
$K(X)/K(B)$ 
  & $X\rightarrow (B,A)$ 
  & $X\rightarrow (B,A), \ A\leq \mathfrak{\Gamma}$
  \\

\rowcolor{rowgray}
$\Psi\colon \Pi_K\rightarrow S_d$
& $\Psi \colon \Pi_A \rightarrow S_d$ 
& $\Psi \colon \Pi_A \rightarrow S_d, \ A\leq \mathfrak{\Gamma}$
\\

$^\sigma \! X$, $\sigma \in \Gal(K_s/K)$,
  & $^h \! X$, $h \in \mathrm{Mod}(B^*)$ 
  & \\
  
where $X$ is a $K_s$-variety 
& where $X\rightarrow B$ is Galois 
&
\\

\rowcolor{rowgray}
 $^\sigma\Psi \colon \Pi_{K_s}\rightarrow S_d$
  & $^h\Psi \colon \pi_1(B^*,b_0) \rightarrow S_d$
  & \\
\end{longtable}
}

\section{Field \& Group of Moduli - Definition - Invariance}\label{sec:field of moduli, definition, invariance}
In this section, we introduce several refinements of classical notions, namely the concepts of fields of moduli, definition and invariance, which correspond respectively to the notions of groups of moduli, definition and invariance. We begin by formulating these ideas in the arithmetic setting in order to illustrate the fundamental principle that descent on fields corresponds to ascent on groups. This perspective is the core motivation for our definitions and extends naturally to the topological setting, where no underlying fields are present. We then recall the classical descent theorem of Weil (Theorem~\ref{thm:weil's descent}) and establish an analogue in the topological context (Theorem~\ref{thm:Weil topological}). Finally, after comparing the arithmetic and topological notions, we present examples illustrating both situations: cases in which the field of moduli coincides with the field of definition, and cases in which the two differ. These are discussed in Subsections~\ref{exm: moduli = definition} and~\ref{exm:field of moduli vs field of definition}, respectively.

\subsection{Arithmetic Field of Moduli / Definition} \label{sec:FieldMODDefinition}

Recall from Remark \ref{rem:non isomorphic K_s-schemes} that, as $K_s$-schemes, $X$ and $^\sigma \! X$ might not be isomorphic. Thus, it is worth exploring conditions under which they become isomorphic. In this section, we explore the different ways that $X$ and its images via the $\Gal(K_s /K)$-action are isomorphic and use the extra structure that appears, in order to descend to the field over which $X$ (or something isomorphic to $X$) is defined. We define everything in the greatest possible generality, i.e. in terms of any subgroup $A$ of $\mathfrak{\Gamma} = \Gal(K_s/K)$. When $A$ is closed, it corresponds to an intermediate field extension
$K= K_s^{\mathfrak{\Gamma}} \subseteq K_s^A \subseteq K_s$. We then specialize to the case $A=\mathfrak{\Gamma}$, which is the most interesting case. Recall that, by Remark \ref{rem:generalizedgroupaction}, the closed subgroup $A = \Gal(K_s / K_s^A)$ also acts on $X$.

\begin{definition}
  \label{def:groupModuliArAA}
For an algebraic $K_s$-variety $X$ and a closed subgroup $A \leq \mathfrak{\Gamma}$ define the {\em relative group of moduli of $X$ over $A$}
\[
  A_X^{\mathrm{mod}} = \{ 
\sigma \in A : \exists f_{\sigma} \colon X \xrightarrow[]{\simeq}   {^\sigma \! X} \} ,
\]
where the isomorphism is defined over  $K_s$, and is an isomorphism of $K_s$-schemes. 
The {\em absolute group of moduli of $X$ (over $\mathfrak{\Gamma}$)} is the group $\mathfrak{\Gamma}_X^{\mathrm{mod}}$. \newline
The {\em relative field of moduli of $X$ over $A$} is defined to be the fixed field $F=K_s^{A_X^{\mathrm{mod}}}$. 
The {\em absolute field of moduli of $X$ (over $\mathfrak{\Gamma}$)} is defined to be the fixed field $F=K_s^{\mathfrak{\Gamma}_X^{\mathrm{mod}}}$.
\end{definition}

Note that, by \cite[Proposition 2.1]{DebesEmsalem} and the discussion below that, the field of moduli is a closed subgroup of $A$ and thus the fixed field $K_s^{A_X^{\mathrm{mod}}}$ makes sense.

\begin{remark}[Moduli interpretation]
 Note that the isomorphism between $X$ and ${^\sigma \! X}$ in the definition of the group of moduli need not be induced by the projection map $\tilde{\sigma}$ of Definition \ref{def:schemeactedupon}.
 It is known that the 
 field of moduli of $F=K_s^{A_X^{\mathrm{mod}}}$ (resp. $F=K_s^{\mathfrak{\Gamma}_X^{\mathrm{mod}}}$) is a finite extension of $K_s^A$. (resp. of $K$).
The field of moduli $F$ is the smallest field $F \subseteq K_s$ such that each $\sigma \in A_X^{\mathrm{mod}}$ (resp. $\sigma \in \mathfrak{\Gamma}_X^{\mathrm{mod}}$) carries $X$ to an isomorphic copy of itself.  
In some sense, if there is a space $\mathcal{M}$, such that the points of $\mathcal{M}$ correspond to isomorphism classes of varieties, then the point $[X]$ corresponding to the isomorphism class of $X$ is $A$-invariant (resp. $\mathfrak{\Gamma}$-invariant).
Moreover, if $[X]$ is seen as a closed point in $\mathcal{M}$, then the residue field $K([X])$ corresponding to the point $[X]$ is equal to the field of moduli \cite{MR0162799}.
\end{remark}

We remark that the family of isomorphisms $f_\sigma \colon  X \stackrel{\cong}{\longrightarrow}  {^\sigma \! X}$ in the definition of the group of moduli has no extra structure; that is, we do not require any relation between the isomorphisms 
$f_\sigma\colon X  \to {^\sigma \! X}$, 
$f_\tau \colon X \to  {^\tau \! X}$ 
and 
$f_{\sigma\tau} \colon X \to { ^{\sigma\tau} \!X}$.
Later, we will need this extra structure to define the group of definition.

\begin{remark}
  \label{rem:finiteExtArith}
Recall that a $K_s$-variety is defined over $L$ if its corresponding ideal $I(X)$ can be generated by a finite collection of polynomials with coefficients in $L$. 
Every variety is defined over a finite extension of $K$, since it is generated as the zero locus of a finite set of polynomials, and each one has a finite number of coefficients. The field generated by all these finite coefficients in $K_s$ is a finite extension of $K$.
\end{remark}

Recall from Proposition \ref{prop:invArith} that, a $K_s$-variety is defined over $L$ if and only if it is $\Gal(K_s/L)$-invariant.
Motivated by this, we give the following  definition: 
\begin{definition}
  For a $K_s$-variety $X$ and a closed subgroup $A \leq \mathfrak{\Gamma}$, we define the {\em relative group of invariance of $X$ over $A$} 
\[
  A_X^{\mathrm{inv}}= \{\sigma \in A : {^\sigma \! X} =X\}.
\]
and {\em the absolute group of invariance of $X$ (over $\mathfrak{\Gamma}$)} to be the group $\mathfrak{\Gamma}_X^{\mathrm{inv}}$.
\end{definition}

Equivalently, the group of invariance over $A$ is the largest subgroup $Z \leq A$ such that $X$ is defined over its fixed field $K_s ^{Z}$. It is closed by Remark \ref{rem:finiteExtArith}, hence $A_X^{\mathrm{inv}}$ corresponds to the minimal field extension of $K_s^{A}$ over which $X$ is defined, that is, $K_s ^{A_X^{\mathrm{inv}}}$. We have that $K \subseteq K_s^A \subseteq K_s ^{A_X^{\mathrm{inv}}} \subseteq K_s$. Moreover, assuming $A = \mathfrak{\Gamma}$, we can always think of a variety defined over the minimal field over $K$, in the above sense.

Obviously $A_X^{\mathrm{inv}} \leq A_X^\mathrm{mod}$ as subgroups of $A$ since we require that the isomorphisms $f\colon X \to {^\sigma \!X}$ are identities. Now we work our way towards defining what the group of definition (of $X$ over $A $) is, and which actually sits in between these two groups. We begin with the following definition to which the field of definition corresponds.

\begin{definition}{\label{def:fieldofdef}}
Let $X$ be an $L$-variety and let there be the following field extensions $K \subseteq L_0 \subseteq L  \subseteq K_s$. 
We say that the variety $X$ is
{\em definable} over $L_0$ with respect to the Galois extension $L/L_0$, if there is an algebraic variety $Y_0$ defined over $L_0$ and a birational isomorphism $R\colon X \rightarrow Y_0 
\times_{\Spe (L_0)} \Spe (L)$ defined over $L$. 

The smallest field $L_0$ such that $X$ is definable over $L_0$ is called the {\em absolute field of definition} of X.
\end{definition}

Note that here we assume that $X$ is defined over $L$ and not over $K_s$ as usual. We emphasize this point, because we will need the fact that the field extension $L/K$ has to be finite for the following construction to work - and this is holds always by Remark \ref{rem:finiteExtArith}.

Assume that $X$ is a variety defined over $L$, and that $X$ is also  definable over $L_0 \subseteq L$, then by definition  there is a birational  isomorphism $R\colon X \rightarrow Y $
defined over $L$, where $Y \coloneqq Y_0\times_{\Spe (L_0)} \Spe (L)$ and $Y_0$ is defined over $L_0$. There are birational isomorphisms 
${^\sigma\! R} \colon {^\sigma \!Y} \xrightarrow[]{} {^\sigma \! X} $,
such that the maps
\[
  \{ f_\sigma=({^\sigma\! R})^{-1}\circ
    (\mathrm{Id}_Y \times \sigma) \circ
  R \colon X \rightarrow {^\sigma \! X}\}_{ \sigma \in \Gal(L/L_0)}
\]
satisfy the condition: 
\[
  f_{ \sigma \tau }= { ^\sigma \!f}_\tau \circ 
  f_\sigma \text{ for all } \sigma, \tau \in \Gal(L/L_0). 
\]
Indeed, we have the diagram:

\begin{equation*}
\begin{tikzcd}
  && {^\tau \! Y}  \arrow[rr, "(^{\tau}R)^{-1}"] 
  & & {^{\tau}\! X} 
  \\
 {^{\sigma \tau} \! Y } \arrow[rrrrr,  bend left = 60, "({^{ \sigma \tau} \! R})^{-1}" description]
 &&  Y 
  \arrow[u,"\mathrm{Id}_Y \times \tau"] 
  \arrow[d,"\mathrm{Id}_Y \times \sigma"']
  \arrow[ll,"\mathrm{Id}_Y \times  \sigma \tau"']
  & X \arrow[l, "R" description] \arrow[rr, "f{\sigma \tau}" description] \arrow[ur, "f{\tau}" description] \arrow[dr, "f{\sigma}" description] & &  {^{ \sigma \tau}\! X} 
  \\
  && {^\sigma \! Y} \arrow[ull, bend left, "^{\sigma}(\mathrm{Id}_Y\times \tau)"] \arrow[rr, "(^{\sigma}R)^{-1}"]
  & & {^{\sigma}\! X} \arrow[ur, bend right =  20, "{ ^\sigma \!f}\tau"']
\end{tikzcd}
\end{equation*}

which is commutative by the following observation:
\begin{align*}
  f_{\sigma \tau }
  &= ({^{ \sigma \tau} \! R})^{-1} \circ
  (\mathrm{Id}_Y \times \sigma\tau) \circ  R
   \\ 
   &= ( ^{\sigma}\! ( {^\tau \!  R}) )^{-1} \circ (\mathrm{Id}_Y \times \sigma\tau) \circ R \\
   &= \ ^{\sigma} \! \left( (^{\tau}R)^{-1} \right) \circ(\mathrm{Id}_Y \times \sigma\tau) \circ R 
   \\   
   &=  
  \ ^{\sigma}\!\!\left( ( ^\tau \!R)^{-1})  \circ (\mathrm{Id}_Y \times \tau) \circ R \circ
   \big( (\mathrm{Id}_Y \times \tau) \circ R\big)^{-1} \right) \circ
   (\mathrm{Id}_Y \times \sigma\tau) \circ
   R 
  \\
   &= \  ^{\sigma} \!\big(f_{\tau} \circ R^{-1} \circ (\mathrm{Id}_Y \times \tau)^{-1} \big) \circ (\mathrm{Id}_Y \times \sigma\tau) \circ R 
   \\
   &=   {^{\sigma} \!f}_\tau  \circ  ({^{\sigma}}\!R)^{-1} \circ ({^{\sigma}}\!(\mathrm{Id}_Y \times \tau))^{-1} \circ (\mathrm{Id}_Y \times \sigma\tau) \circ R 
   \\
   & ={^{\sigma} \!f}_\tau f_\sigma. 
\end{align*} 
Notice that in the last equality we have used the identity 
\[
  (\mathrm{Id}_Y \times \sigma\tau)= {^\sigma \!(\mathrm{Id}_Y \times \tau) } \circ (\mathrm{Id}_Y \times \sigma).
\]
The family $\{f_\sigma\}_{\sigma \in \mathrm{Gal(L/L_0)}}$ is called a {\em Galois descent datum of $X$ with respect to} (the finite extension) $L/L_0$.
A. Weil in \cite{Weil56} proved the Galois Descent Theorem, which is inverse to the above construction.
\begin{theorem}[Weil's Descent Theorem]\label{thm:weil's descent}

Let $L/L_0$ be a Galois extension, with $K \subseteq L_0 \subseteq L \subseteq K_s$, and let $X$ be a variety defined over $L$.
\begin{enumerate}
  \item 
 If $X$ admits a Galois descent datum $\{f_\sigma\}_{\sigma \in \Gal(L/L_0)}$ with respect to $L/L_0$, then there exist an algebraic variety $Y$ defined over $L_0$ and an isomorphism $R:X \rightarrow Y$ defined over $L$, such that $R= {^\sigma \! R}\circ f_\sigma$, for every $\sigma \in \Gal(L/L_0)$. 
 \item If there exists another variety $\widehat{Y}$ defined over $L_0$ together with an isomorphism $\widehat{R}\colon X \rightarrow \widehat{Y}$ defined over $L$, such that $\widehat{R}= {^\sigma\!\widehat{R}} \circ f_\sigma$, for every $\sigma \in \Gal(L/L_0)$, then there exist an isomorphism $J:Y \rightarrow \widehat{Y}$ defined over $L_0$, such that $\widehat{R}=J \circ R$.
 \end{enumerate} 
\end{theorem}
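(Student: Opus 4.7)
The plan is to recast the descent datum as a semilinear Galois action on $X$ and then take the quotient to obtain $Y$; uniqueness will follow by a transport-of-structure argument. Writing $G = \Gal(L/L_0)$, I would first compose each isomorphism $f_\sigma \colon X \to {^\sigma X}$ with the canonical projection $\tilde\sigma \colon {^\sigma X} \to X$ of Definition \ref{def:schemeactedupon} to obtain a scheme endomorphism $\psi_\sigma$ of $X$ (not an $L$-morphism) lying over $\tilde\sigma$ on $\Spe(L)$. Using the identification ${^\tau}({^\sigma X}) = {^{\sigma\tau}X}$ from equation (\ref{eq:strictness of action}), together with the contravariant identity $\widetilde{\sigma\tau} = \tilde\tau \circ \tilde\sigma$, a short diagram chase converts the cocycle relation $f_{\sigma\tau} = {^\sigma f_\tau} \circ f_\sigma$ into a genuine group law for the family $\{\psi_\sigma\}$. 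Thus the descent datum is equivalent to a semilinear action of $G$ on $X$ lifting its natural action on $\Spe(L)$.

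Next I would construct $Y$ as the quotient $X/G$. Because $L/L_0$ is finite and $X$ is separated of finite type, every $G$-orbit is finite and therefore contained in an affine open; averaging over $G$ then produces a cover of $X$ by $G$-stable affine opens. Locally the problem reduces to the algebraic statement that, for any $\sigma$-semilinear $G$-action on an $L$-algebra $A$, the canonical map $A^G \otimes_{L_0} L \to A$ is an isomorphism. This is classical Galois descent for modules, equivalent to the triviality of $H^1(G, GL_n(L))$ (generalized Hilbert 90). Gluing the affine quotients $\Spe(A^G)$ yields the $L_0$-variety $Y$, and the local isomorphisms $A^G \otimes_{L_0} L \cong A$ assemble into an $L$-isomorphism $R \colon X \to Y \times_{L_0} L$. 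By construction $R$ intertwines $\psi_\sigma$ with the natural action of $G$ on $Y \times_{L_0} L$ through its second factor, which is exactly the identity $R = {^\sigma R} \circ f_\sigma$.

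For part (2), given a second pair $(\widehat Y, \widehat R)$ satisfying the same compatibility, the composition $\widehat R \circ R^{-1}$ is an $L$-isomorphism $Y \times_{L_0} L \to \widehat Y \times_{L_0} L$ that is $G$-equivariant for the natural actions on both sides, since the two compatibilities with the $f_\sigma$ cancel out. Faithfully flat descent along $L/L_0$, or equivalently taking $G$-invariants on a common affine cover, then produces a unique $L_0$-isomorphism $J \colon Y \to \widehat Y$ whose base change is $\widehat R \circ R^{-1}$, yielding $\widehat R = J \circ R$ as required.

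The main technical hurdle is the isomorphism $A^G \otimes_{L_0} L \cong A$: this is the substantive content of Weil descent and is what makes the whole construction non-vacuous. The auxiliary issue of producing $G$-stable affine covers is standard when $X$ is quasi-projective, but requires extra care in greater generality; once these two points are granted, the rest of the argument is essentially bookkeeping.
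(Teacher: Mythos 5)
Your proposal is correct, but it takes a different route from the paper, which in fact does not prove this theorem where it is stated: it cites Weil's original 1956 article and only returns to the construction in Section \ref{sec: descent using derived categories}. There, Proposition \ref{prop: group action by weil descent} builds exactly the same semilinear action you do — the composite $\tilde\sigma \circ f_\sigma$ made into a right $G$-action via the cocycle condition and $\widetilde{\sigma\tau}=\tilde\tau\tilde\sigma$ — but Proposition \ref{prop:actiononvariety} then identifies $X/G$ with $Y$ by \emph{assuming} the existence of $Y$ from Weil's theorem (it writes $X \simeq Y \times_{L_0}\Spe(L)$ and applies the product formula for quotient stacks). Your argument is more self-contained: you prove existence directly by covering $X$ with $G$-stable affine opens and invoking the affine-local statement $A^G\otimes_{L_0}L \xrightarrow{\ \sim\ } A$ for a semilinear action, i.e.\ classical Galois descent for modules via the vanishing of $H^1(G,\mathrm{GL}_n(L))$. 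That is the standard modern proof and it correctly isolates the substantive input; your treatment of uniqueness by descending the $G$-equivariant isomorphism $\widehat R\circ R^{-1}$ is likewise sound. Two caveats are worth making explicit. First, the existence of $G$-stable affine covers genuinely requires that every $G$-orbit lie in an affine open (automatic for quasi-projective $X$, which is the setting of Weil's theorem and of the paper's Example \ref{actiononcoh2}, but false for general separated finite-type schemes); you flag this, which is appropriate, but the word ``averaging'' is misleading — the correct operation is intersecting the finitely many translates $\psi_\sigma(U)$ of an affine open containing the orbit, which is affine because $X$ is separated. Second, your phrase ``scheme endomorphism'' for $\psi_\sigma$ should be ``scheme automorphism'': as in the paper's proof of Proposition \ref{prop: group action by weil descent}, invertibility follows from the cocycle condition applied to $\sigma$ and $\sigma^{-1}$. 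Neither point is a gap, only a matter of precision.
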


We are ready to define the group of definition.

\begin{definition}
  \label{def:groupdefArith}
For a $K_s$-variety $X$ and a closed subgroup $A\leq \mathfrak{\Gamma}$, we define {\em the relative group of definition of $X$ over $A$} as 
\[
A_X^{\mathrm{def}}=\{\sigma \in A_X^{\mathrm{mod}}: \ \exists f_\sigma\colon X \xrightarrow{\simeq} {^\sigma\! X} 
\text{ with } f_{\sigma\tau}={^\sigma \! f_{\tau} \circ f_\sigma} \text{ for all } \sigma,\tau \in A_X^{\mathrm{mod}} \}.
\]
and {\em the absolute group of definition of $X$ (over $\mathfrak{\Gamma}$)} to be the group $\mathfrak{\Gamma}_X^{\mathrm{def}}$.
\end{definition}

The group of definition of $X$ over $A \leq \mathfrak{\Gamma}$ is the largest subgroup $Z\leq A_X^{\mathrm{mod}}$ such that there exists a family $\{ f_\sigma \}_{\sigma \in Z}$ of isomorphisms $f_\sigma\colon X \xrightarrow{\simeq} {^\sigma\! X} $, which satisfies the Weil descent condition. It is a closed subgroup by light adjustment of the proof \cite[Proposition 2.1]{DebesEmsalem}, where one would have to additionally consider that $f_{\sigma} = f_{\tilde{\sigma}}$ in their notation.

Notice that the group satisfying Weil's Galois descent datum is the quotient group $A_X^{\mathrm{def}} / A_X^{\mathrm{inv}}$, which is the Galois group of the extension $L/L_0$, for $L=K_s^{A_X^{\mathrm{inv}}}$ and $L_0=K_s^{A_X^{\mathrm{def}}}$, as in the notation of Weil's descent Theorem \ref{thm:weil's descent}. The quotient is well-defined since $A_X^{\mathrm{inv}}$ is always normal in $A$.

\begin{remark}
Closed subgroups of $A_X^{\mathrm{inv}}$ correspond to various fields $K_s^A \subseteq L $ over which $X$ is definable, while $A_X^{\mathrm{def}}$ corresponds to the minimal such field. For $A=\mathfrak{\Gamma}$, that is the absolute field  of definition of $X$, see Definition \ref{def:fieldofdef}.
\end{remark}

Observe that the groups we introduced fit in the following {\em ascending} tower of subgroups 
\[
\{1\} \leq A_X^{\mathrm{inv}} \leq A_X^{\mathrm{def}} \leq A_X^{\mathrm{mod}} \leq A \leq \mathfrak{\Gamma} = \Gal(K_s/K),
\]
while their respective fields of invariants satisfy the normal inclusion-reversing relation:
\[
K_s \supseteq K_s^{A_X^{\mathrm{inv}}} \supseteq K_s^{A_X^{\mathrm{def}}} \supseteq K_s^{A_X^{\mathrm{mod}}} \supseteq K_s^A \supseteq K
\]
It is worth noticing that, by Remark \ref{rem:finiteExtArith}, the extension $ K_s^{A_X^{\mathrm{inv}}} / K$ is finite and consequently every other intermediate extension. So Weil's theorem always applies the for extension $ K_s^{A_X^{\mathrm{inv}}} / K_s^{A_X^{\mathrm{def}}}$, i.e. for a variety defined over $K_s^{A_X^{\mathrm{inv}}}$ which is definable over $K_s^{A_X^{\mathrm{def}}}$.

Later in this section, we shall present two well-known examples: one in which the field of moduli $K_s^{\mathfrak{\Gamma}_X^{\mathrm{mod}}}$ coincides with the field of definition $K_s^{\mathfrak{\Gamma}_X^{\mathrm{def}}}$ (see Subsection~\ref{exm: moduli = definition}), and another in which the two fields differ (see Subsection~\ref{exm:field of moduli vs field of definition}).

\subsection{Topological}\label{sec:Top_Weil}

As we mentioned in the prologue of this section, we seek to introduce a notion in the topological setting that is analogous to the arithmetic ``group of moduli'', ``group of invariance'' and ``group of definition''. A fundamental difficulty that arises is the absence of a canonical analogue for the notion of a field and especially its separable closure. In Section \ref{sec:covers} we remedied this by thinking of the extension $K_s/K$ as the extension $K_s(B)/K(B)$, so that $\Gal(K_s/K)$ was replaced by the mapping class group $\mathrm{Mod}(B^*)$ of a base space $B$ minus the branch points. Our aim with this approach is to provide a reasonable topological analogue for curves of Weil's descent Theorem \ref{thm:weil's descent} from the arithmetic setting. We propose that this is Theorem \ref{thm:Weil topological} and we argue that it correctly matches our intuition of Weil's theorem on the moduli space, as it can be realized inside the broader setting of the Teichmüller space.

As our setting for this section, we restrict attention to covers of hyperbolic curves and we work under one of the following assumptions: either the cover $X\rightarrow (B,A)$ is Galois, or there is a section in the exact sequence (\ref{ses:Pi_Mod}). Both assumptions ensure that ${^\sigma \! X}$ is well-defined for all $\sigma \in \mathrm{Mod}(B^*)$. That is, ${^\sigma \! X}$ denotes the compactified algebraic curve over $\mathbb{C}$ corresponding to $\sigma_*(R)$, where $X\rightarrow B$ corresponds to the subgroup $R$. 

Unless stated otherwise, we restrict to the Galois case, i.e. $R$ is a normal subgroup. Throughout this section, we mainly deal with the case $\sigma_*(R) \neq R$ as subgroups of $\pi_1(B^*,b_0)$, even though they can be isomorphic as abstract groups. The study of group ascent on the mere subcovers of $X$ which satisfy $\sigma_*(R) = R$ will be undertaken in Chapter \ref{ch:descend_on_monodromy}. 

We briefly recall some details about the topological constructions. The subgroup $R$ of $\pi_1(B^*,b_0)$ corresponds to a cover $p\colon X^\circ \rightarrow B^*$ together with a choice of a basepoint $x_0$ of $X^\circ$ such that $p(x_0) = b_0$ and $R=p_*(\pi_1(X^\circ,x_0))$. 
By the Galois correspondence, $\sigma_*(R)$ corresponds to a cover ${^\sigma p} \colon {^\sigma \! X}^\circ \rightarrow B^*$ and a choice of a basepoint ${^\sigma \! x_0}$, such that ${^\sigma \! p}({^\sigma \! x_0}) = b_0$. 
Furthermore, by the lifting property \cite[13.5]{MR1343250}, for any lift $\hat{\sigma}$ of $\sigma \in \mathrm{Mod}(B^*)$ such that $\hat{\sigma}(b_0) = b_0$, there exists a unique lift $\tilde{\sigma} \colon X^\circ \rightarrow {^\sigma \! X}^\circ$ that preserves the basepoint choices and makes the following diagram commute:

\begin{equation} \label{eq:sigma_tilde_top}
    \begin{tikzcd}
X^\circ \arrow[r, "\tilde{\sigma}", dashed] \arrow[d, "p"'] & {^\sigma \! X}^\circ \arrow[d, "{^\sigma \! p}"] \\
B^* \arrow[r, "\hat{\sigma}"] & B^*  
\end{tikzcd}
\end{equation}
That is, $\tilde{\sigma}(x_0) = {^\sigma \! x_0}$. The lift $\tilde{\sigma}$ is unique up to the choice of basepoint: there may exist homeomorphisms $f_\sigma \colon X^\circ \rightarrow {^\sigma \! X}^\circ $ for which the above diagram commutes, but with $f_\sigma(x_0) \neq {^\sigma x_0}$.

To define an action of $\mathrm{Mod}(B^*)$ on Galois covers and their morphisms, we assume that exists a set-theoretic section $ \hat{} : \mathrm{Mod}(B^*) \rightarrow \mathrm{Homeo}_+(B^*)$ that preserves the basepoint $b_0$. Each element $\sigma\mapsto \hat\sigma$ admits a unique lift to a homeomorphism $\tilde\sigma \colon X^\circ \rightarrow {^\sigma \! X}^\circ$. We formulate this set-theoretic section as generally as possible so as to accommodate various geometric settings: for instance, these homeomorphisms could be considered to be geodesic loops of $b_0$ around handles, or have some specific interactions with the markings inside a Teichmüller space \cite[Ch.1]{MR2006093}.

An action of $\sigma \in \mathrm{Mod}(B^*)$ means that we have the following commutative diagrams for a cover $X \to (B,A)$ and corresponding group $R$:
\begin{equation*}
    \begin{tikzcd}
        X^\circ  \arrow[d]\arrow[r, "\tilde{\sigma}"] & ^\sigma \! X^{\circ} \arrow[d] & &
        X  \arrow[d]\arrow[r, "\tilde{\sigma}"] & ^\sigma \! X \arrow[d]\\
        (B^*,A) \arrow[r, "\hat{\sigma}"] & (B^*, {\sigma  A \sigma^{-1}}) & & (B,A) \arrow[r, "\hat{\sigma}"] & (B, {\sigma  A \sigma^{-1}})
    \end{tikzcd}
\end{equation*}
where the left diagram corresponds to the open cover and the right to the compactified one. 
Moreover, $X^\circ  = \widetilde{B^*}/R$ and $ Y^{\circ} = \widetilde{B^*}/\sigma_*(R) $. 

We define the category of covers over $B$ defined over $A$ and work over this category to distinguish the new structure of morphisms of covers needed in this section. To that end we provide the following definition:

\begin{definition}\label{def:category of covers}
Let $\mathsf{Cov}_{(B,A)}$ denote the category whose objects are Galois covers $X\rightarrow B$ with common branched divisor in $B$, that is the underlying topological space of the associated topological cover is fixed, namely it is $B^*$, and the cover is defined over $A$ (recall Definition~\ref{def:topological_defined}). The morpshisms of $\mathsf{Cov}_{(B,A)}$ are of the form $(f,g) \colon \big( X \to (B,A) \big) \xrightarrow[]{} \big( Y \to (B,A) \big)$ where $f$ and $g$ are orientation-preserving continuous maps rendering the following diagram commutative:
\begin{equation*}
    \begin{tikzcd}
        X \arrow[r, "f"] \arrow[d] & Y \arrow[d] \\
        (B,A) \arrow[r, "g"'] & (B,A)
    \end{tikzcd}
\end{equation*}

In particular for $A=1$, we denote this category by $\mathsf{Cov}_{B}$ and its morphisms are pairs of continuous maps $(f,g)\colon (X \to B) \to (Y \to B)$ that make the following diagram commute:
\begin{equation*}
    \begin{tikzcd}
        X \arrow[r, "f"] \arrow[d] & Y \arrow[d] \\
        B \arrow[r, "g"'] & B
    \end{tikzcd}
\end{equation*}
\end{definition}

The idea is that we do not require the bottom arrow to be identity, as required for the deck transformations of a cover. Two covers are {\em isomorphic} if both $f$ and $g$ are homeomorphisms for which we use the symbol “$\simeq $".

We note that $\mathsf{Cov}_{(B,A)}$ is a full subcategory of $\mathsf{Cov}_B$ since the morphisms between objects of the former are precisely the morphisms of the latter between the same objects, by forgetting the “defined over $A$" structure. In particular, the only difference of between these two categories is that we require objects to be defined over a non trivial subgroup of $\mathrm{Mod}(B^*)$.

To stay in parallel with the arithmetic case of descent as in Section \ref{sec:FieldMODDefinition}, where we work over $\Spe(K_s)$, we assume for the rest of the section that $A=1$, which is the correct analogue. However, we will revisit the category $\mathsf{Cov}_{(B,A)}$ in Section \ref{sec:ActionOnCatArith}.

An element of $\sigma \in \mathrm{Mod}(B^*)$ yields the following diagram:
\begin{equation*}
    \begin{tikzcd}
        X \arrow[r, "f"] \arrow[rrr, bend left=40, "\tilde{\sigma}"] \arrow[d] & Y \arrow[d] \arrow[rrr, bend left=40, "\tilde{\sigma}"] & & {^\sigma \! X} \arrow[r, "{^\sigma \!f}"] \arrow[d] & {^\sigma \! Y} \arrow[d]\\
        B \arrow[r, "g"'] \arrow[rrr, bend right=40, "\hat{\sigma}"']& B \arrow[rrr, bend right=40, "\hat{\sigma}"'] & & B \arrow[r, "{^\sigma \!g}"'] & B
    \end{tikzcd}
\end{equation*}
where ${^\sigma \! f} = \tilde{\sigma} f \tilde{\sigma}^{-1}$ and ${^\sigma \! g} = \hat{\sigma} g \hat{\sigma}^{-1}$ which render the right square commutative. In particular, the pair $({^\sigma \! f} , {^\sigma \! g})$ is a morphism of covers in $\mathsf{Cov}_B$. Moreover, this yields a bijection of morphisms $(f,g)$ and twisted morphisms $({^\sigma \! f} , {^\sigma \! g})$.
Observe that for an isomorphism $(f,g) \colon (X \to B) \to ({^\sigma \! X} \to B)$, i.e. a pair of homeomorphisms $(f,g)$ then $g = \hat{\sigma}$ since the section $ \hat{} : \mathrm{Mod}(B^*) \rightarrow \mathrm{Homeo}_+(B^*)$ is an epimoprhism. 

We now formulate the topological analogue of Weil's descent datum and the corresponding topological groups.

\begin{definition}
Let $A \leq \mathrm{Mod}(B^*) = \mathfrak{\Gamma}$ be a subgroup and let $(f_\sigma , \hat{\sigma})_{\sigma \in A}$ be a family of pairs of homeomorphisms $(f_\sigma, \hat{\sigma}) \colon (X \to B) \to ( {^\sigma \! X} \to B)$ is a {\em Mapping class group ascent datum relative to $A$} if the following diagram commutes: 

\begin{equation*}
    \begin{tikzcd}
        X \arrow[rr, bend left = 30, "f_{\sigma \tau}"] \arrow[r, "f_{\sigma}"] \arrow[d] & {^\sigma \! X} \arrow[r, "{^\sigma \! f_{\tau}}"] \arrow[d] & {^\tau \! ({^\sigma \! X})} \arrow[d] \\
        B \arrow[rr, bend right = 30, "\widehat{\sigma \tau}"']\arrow[r, "\hat{\sigma}"] & B \arrow[r, "{^\sigma \! \hat{\tau}}"] & B
    \end{tikzcd}
\end{equation*}
We write $(f_{\sigma \tau} , \widehat{\sigma \tau}) = ({^\sigma f_\tau} , {^\sigma \hat{\tau}}) \circ (f_\sigma , \hat{\sigma}) $ for $\sigma, \tau \in A$ when the above diagram commutes, which we call the cocycle condition for $\sigma$ and $\tau$. 
We also define the following groups:
\begin{align*}
    A^{\mathrm{mod}}_X &= \{ \sigma \in A \  \colon \exists (f_{\sigma}, \hat{\sigma}) \colon (X \to B) \xrightarrow[]{\simeq}   ({^\sigma \! X}  \to B) \}, \\
    A_X^{\mathrm{def}} &= \{\sigma \in A \ : \ \exists (f_{\sigma}, \hat{\sigma}) \colon (X \to B) \xrightarrow[]{\simeq}   ({^\sigma \! X}  \to B) , \text{ with } (f_{\sigma \tau} , \widehat{\sigma \tau}) = ({^\sigma f_\tau} , {^\sigma \hat{\tau}}) \circ (f_\sigma , \hat{\sigma}) \text{ for all } \sigma,\tau \in A \}, \\
    A^{\mathrm{inv}}_X &= \{ \sigma \in A  \ : \  {^\sigma \! X}= X \}    
\end{align*} corresponding respectively to the relative groups of moduli, definition and invariance. 
\end{definition}

\begin{remark}
It is obvious that $A^{\mathrm{inv}}_X \leq A_X^{\mathrm{def}}\leq A^{\mathrm{mod}}_X \leq A$. Note also that $\mathfrak{\Gamma}^{\mathrm{inv}}_X$ is the largest subgroup of $\mathfrak{\Gamma}$ over which the cover $X\to B$ is defined. By the previous discussion, we also observe the following equalities:
\begin{align*}
    A^{\mathrm{mod}}_X &= \{ \sigma \in A \  : \ \sigma_*(R) \simeq R \  \textrm{ (as abstract groups)}\} = A , \\
    A^{\mathrm{inv}}_X &= \{ \sigma \in A \ : \  \sigma_*(R) = R \}    
\end{align*}
where the first equality always holds by definition of the action since the group isomorphism $\sigma_*(R) \simeq R$ is equivalent to $\hat{\sigma}$ being a homeomoprhism and thus the lift $\tilde{\sigma}$ is also a homeomorphism. In other words, in the topological setting, the group of moduli $A^{\mathrm{mod}}_X$ is the whole group and thus not very interesting. 
However, this is not true for the group of definition $A_X^{\mathrm{def}}$. Indeed, the condition $\widehat{\sigma \tau} =  {^\sigma \! \hat{\tau}} \hat{\sigma}$ does not a priori hold for any choice of section $ \hat{} : \mathrm{Mod}(B^*) \rightarrow \mathrm{Homeo}_+(B^*)$. From our conventions, this condition amounts to the nmap $\hat{}$\ being a homomorphism on $\sigma,\tau$. Recall also that the Birman exact sequence (\ref{ses:Birman}) does not split in general, thus we are making “fair" assumptions so far. Therefore, a Weil descent datum need not (1) necessarily exist and (2) have $f_\sigma = \tilde{\sigma}$ as the only choice, since there might exist different pairs of homeomorphisms $(f_\sigma, \hat{\sigma} ) \colon (X \to B) \to ({^\sigma\! X} \to B)$, as we explained in diagram (\ref{eq:sigma_tilde_top}), satisfying the ascent/descent relation without preserving the basepoints.
\end{remark}

We give the following definition, which is the topological analogue of Definition \ref{def:fieldofdef}. Recall that $\mathrm{N}_{\mathrm{Mod}(B^*)}A$ denotes the normalizer of $A$ in $\mathrm{Mod}(B^*)$.

\begin{definition}\label{def:definable_top}
    Let $X \to (B,A)$ be a cover defined over $A$ and let there be a subgroup $A' \leq \mathrm{Mod}(B^*)$ such that $A \lhd A'$. We say that the cover $X \to (B,A)$ is {\em definable} over $A'$ with respect to $A'/A$ if there is a cover $Y \to (B,A')$ and an element $\sigma \in \mathrm{N}_{\mathrm{Mod}(B^*)}A$ such that $\sigma_*(R_X) = R_Y$, where $R_X,R_Y$ are the subgroups of $\pi_1(B^*,b_0)$ corresponding to the topological covers $X^\circ,Y^\circ$ respectively.

    The largest such group $A'$ is called the {\em absolute group of definition of $X$}. 
\end{definition}

Note that, the normalizer condition is the topological equivalent of being $A$-invariant. Then, $\sigma_*(R_X) = R_Y$ implies that $\sigma $ induces an isomorphism between the covers $X\to B$ and $Y\rightarrow B$.
We are ready to formulate Weil's ascent theorem for the topological case:

\begin{theorem} \label{thm:Weil topological}
    Let $A \lhd_{\mathrm{f}} A^{\prime}  \leq \mathrm{Mod}(B^*) $ and let $X \to (B,A)$ be a cover defined over $A$. Then $X$ is definable over $A^\prime$ if and only if $X\to B$ admits a mapping class group ascent datum $\{ f_\sigma \} _{\sigma \in A^\prime /A }$ with respect to the finite group quotient $A^\prime /A $.
\end{theorem}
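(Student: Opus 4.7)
The plan is to mirror Weil's classical proof (Theorem~\ref{thm:weil's descent}), replacing field extensions and Galois descent data by subgroups of $\Pi_{A'}$ and mapping class group ascent data. Throughout I would use the correspondence between covers and subgroups (Definition~\ref{def:topological_defined}) together with the identification $\Pi_{\mathrm{Mod}(B^*)}\cong \mathrm{Mod}(B^*,b_0)$ arising from the Birman exact sequence \eqref{ses:Birman} and the Dehn-Nielsen-Baer Theorem~\ref{thm:Dehn-Nielsen-Baer}.

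For the forward direction, I would start from a cover $Y\to(B,A')$ together with $\sigma\in \mathrm{N}_{\mathrm{Mod}(B^*)}A$ satisfying $\sigma_*(R_X)=R_Y$, whose lift $\tilde{\sigma}\colon X\to {}^{\sigma}\!X$ identifies $X$ with $Y$ as topological covers. By Definition~\ref{def:topological_defined} the cover $Y$ corresponds to a subgroup $H_Y\leq \Pi_{A'}$ with $H_Y\cap \pi_1(B^*,b_0)=R_Y$, whose image in $A'$ is all of $A'$ under the regularity hypothesis. For each coset $\tau\in A'/A$ I would choose a representative $\phi_\tau\in H_Y$ lifting it; the identity $\phi_\sigma\phi_\tau\equiv \phi_{\sigma\tau}\pmod{R_Y}$ then yields a strict cocycle in the quotient $H_Y/R_Y$. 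Through the Birman/Dehn-Nielsen-Baer identification and the basepoint-lifting argument already used in Proposition~\ref{prop:ses split}, each $\phi_\tau$ is realised as a pair $(g_\tau,\hat{\tau})\colon (Y\to B)\to ({}^{\tau}\!Y\to B)$, and the cocycle property of the $\phi_\tau$'s translates into the ascent cocycle condition for $\{(g_\tau,\hat{\tau})\}$. Conjugating by $\tilde{\sigma}$ would finally transport this ascent datum to $X$.

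For the backward direction, I would assume an ascent datum $\{(f_\tau,\hat{\tau})\}_{\tau\in A'/A}$ is given. Each pair $(f_\tau,\hat{\tau})$ determines an element $\phi_\tau\in \Pi_{A'}$ whose conjugation action on $\pi_1(B^*,b_0)$ normalises $R$, precisely because $f_\tau\colon X\to {}^\tau\!X$ is a homeomorphism of covers over $\hat{\tau}$. The cocycle condition translates into the statement that $\tau\mapsto \phi_\tau R$ is a well-defined group homomorphism $A'/A\to \mathrm{N}_{\Pi_{A'}}(R)/R$. Taking the subgroup of $\Pi_{A'}$ generated by $H$ (the defining subgroup of $X\to (B,A)$) together with the chosen lifts $\phi_\tau$ yields a subgroup $H'\leq \Pi_{A'}$ satisfying $H'\cap \pi_1(B^*,b_0)=R$ and $H'\leq \mathrm{N}_{\Pi_{A'}}(R)$. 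By Definition~\ref{def:definable_top}, the cover associated to $H'$, possibly after conjugation by a suitable element of $\mathrm{N}_{\mathrm{Mod}(B^*)}A$, provides the cover $Y\to(B,A')$ witnessing that $X$ is definable over $A'$.

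The principal obstacle I anticipate is reconciling two independent ambiguities: first, the lifts $\hat{\tau}\in \mathrm{Homeo}_+(B^*)$ of $\tau\in\mathrm{Mod}(B^*)$ need not assemble into a homomorphism since the Birman sequence \eqref{ses:Birman} does not split in general; second, the choice of basepoint in the fiber $p^{-1}(b_0)$ needed to promote each $\hat{\tau}$ to a genuine homeomorphism $X\to {}^\tau\!X$. The finiteness assumption $A\lhd_{\mathrm{f}} A'$ is crucial here because it reduces the cocycle check to a finite-index computation, and because the resulting obstruction becomes a genuine $2$-cocycle class on the finite quotient $A'/A$, in the spirit of the cohomological obstructions appearing in Theorems~\ref{thm:DebesDouaiThB} and \ref{thm:DebesDouaiTHC}.
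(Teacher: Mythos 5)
Your proposal is correct and follows essentially the same route as the paper: both directions come down to the equivalence between the cocycle condition on chosen coset representatives and the fact that $\coprod_{i}\hat{\sigma}_i H$ is a subgroup $H'$ of $\Pi_{A'}$ fitting into an extension $1\to R\to H'\to A'\to 1$, which is exactly Definition~\ref{def:definable_top}. One minor imprecision: your representatives $\phi_\tau\in H_Y$ a priori satisfy $\phi_\sigma\phi_\tau\equiv\phi_{\sigma\tau}$ only modulo $H$ (the preimage of $A$), not modulo $R_Y$; the strict identity $\hat{g}_i\hat{g}_j=\widehat{g_ig_j}$ is recovered, as the paper makes explicit, from the injectivity of $H'/H\to A'/A$.
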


\begin{proof}
    First, note that the cocycle condition $(f_{\sigma \tau} , \widehat{\sigma \tau}) = ({^\sigma f_\tau} , {^\sigma \hat{\tau}}) \circ (f_\sigma , \hat{\sigma})$ is equivalent to $\widehat{\sigma\tau} = \hat \sigma \hat \tau$. Indeed, this follows from the definitions on the twist by $\sigma$ using conjugation; this is exactly what the second coordinate of the cocycle implies. Then, there are many compatible choices of $f_\sigma$ and $f_\tau$ as we explained in the discussion after diagram \ref{eq:sigma_tilde_top}. 
    Thus, for a suitable choice of (finite) elements indexed by $A^\prime/A$, we have that the section $ \hat{} : \mathrm{Mod}(B^*) \rightarrow \mathrm{Homeo}_+(B^*)$ behaves like) a homomorphism. Therefore, we treat the mapping class group ascent datum accordingly.

    Given a mapping class group ascent datum indexed by the finite $A^\prime/A$, write $A^\prime = \coprod_{i=1}^n \sigma_i A$. Then the cocycle condition on the elements $\hat\sigma_i$ implies that $\coprod_{i=1}^n \hat{\sigma_i}H$ is a well-defined group and that each $\hat{\sigma_i}$ normalizes $H$. There is a natural projection $\coprod_{i=1}^n \hat{\sigma}_i H \rightarrow  \coprod_{i=1}^n \sigma_i A$ and assume $R^\prime$ to be its kernel. We have the following commutative diagram of exact sequences of groups:
\begin{equation*}
    \begin{tikzcd}
1 \arrow[r] & R \arrow[r] \arrow[d] & H \arrow[r] \arrow[d, hook]                       & A \arrow[r] \arrow[d, hook]                 & 1 \\
1 \arrow[r] & R^\prime \arrow[r]    & \coprod\limits_{i=1}^n \hat{\sigma_i} H \arrow[r] & \coprod\limits_{i=1}^n \sigma_i A \arrow[r] & 1,
\end{tikzcd} 
\end{equation*} which implies that $R=R^\prime$. This topological construction asserts that $Y = X$ in the definition \ref{def:definable_top}. 

Conversely, given any exact sequence $1\rightarrow \sigma_*(R) \rightarrow H^\prime \rightarrow A^\prime \rightarrow 1$, write $A^\prime = \coprod_{i=1}^ng_i A$ and since $R$ is contained in $H$, $\sigma_*(R)$ is contained in $\hat{\sigma}H\hat{\sigma}^{-1}$. We can thus write $H^\prime = \coprod_{i=1}^n \hat{g_i} \hat{\sigma} H \hat{\sigma}^{-1} =  \coprod_{i=1}^n \hat{g_i} H$ since $\sigma$ normalizes $A$ and thus $\hat{\sigma}$ normalizes $H$. We will prove now that the finite elements $g_i$ provide a proper mapping class group ascent datum with respect to $A^\prime/A$. Observe that $\hat{g_i}\hat{g_j}\widehat{g_ig_j}^{-1}$ is in $H$ since its image is in $A$ regarding the map $H^\prime/H \rightarrow A^\prime/A$. However, this map is injective from the exact sequence below
\[1\rightarrow \sigma_*(R)/\hat{\sigma}R\hat{\sigma}^{-1} \rightarrow H^\prime/\hat{\sigma}H\hat{\sigma}^{-1} \rightarrow A^\prime/ \sigma A \sigma^{-1}\rightarrow 1, \]
since $\sigma_*(R) = \hat{\sigma}R\hat{\sigma}^{-1}, \hat{\sigma}H\hat{\sigma}^{-1} = H, \sigma A \sigma^{-1} = A$. Thus $\hat{g_i}\hat{g_j}=\widehat{g_ig_j}$. 
\end{proof}

\begin{remark}
In the topological analogue of Weil’s descent theorem, the curve $X$ itself ascends to a curve defined over $A^\prime$. In contrast with the arithmetic situation, the underlying model does not change: the cover $X \to (B,A)$ ascends directly to $X \to (B,A^\prime)$. This phenomenon may be traced to Definition~\ref{def:topological_defined}, where we require each class of automorphism preserves the conjugacy class of $R$ in $\pi(B^*, b_0)$. If this condition could be relaxed in a meaningful way, one might instead obtain a distinct cover $Y \to (B, A^\prime)$ which in some sense, after restriction of groups, is isomorphic to the cover $X \to (B,A)$.
\end{remark}

We now reinterpret the preceding result in terms of Teichmüller theory in order to provide additional intuition. For background on the material reviewed below, we refer to \cite[Ch.~1]{MR2006093} and \cite[Pt.~2]{FarbMagalit}. Throughout, we fix an integer $g \geq 2$. Let $S_g$ be a reference surface of genus $g$ and let $m_X \colon S_g \rightarrow X$ denote denote a marking of the cover $X \to B$, that is, a diffeomorphism.
The Teichmüller space $T(S_g)$ is the set of marked Riemann surfaces $(X,m_X)$ modulo the equivalence relation that identifies two marked surfaces whenever there exists a biholomorphism between them that respects the markings.

A classical result in the theory of Teichmüller spaces asserts that $\mathrm{Mod}(S_g)$ acts on $T(S_g)$ via 
\[
\sigma \cdot   [(X,m_X)]  = [(X,m_X \tilde{\sigma}^{-1})], 
\] 
for any homeomorphisms $\tilde{\sigma}$ that lifts $\sigma$. The equivalence relation ensures that the action well-defined and independent of $\tilde{\sigma}$. Moreover, a second classical result identifies the quotient
\[
T(S_g)/\mathrm{Mod}(S_g) \cong M_g,
\]

where $M_g$ denotes the moduli space of curves of genus $g$. Observe that this action on $T(S_g)$ is always trivially fiber-preserving, that is the curve $X$ is never replaced by some conjugate curve $Y$.

However, that is not the case for the action via $\mathrm{Mod}(B^*)$ for which the presence of the Galois cover $X\rightarrow B$ plays a central role. On the one hand, $\sigma \in\mathrm{Mod}(B^*)$ maps $X$ to ${^\sigma \! X}$. On the other hand, a mapping class $\sigma$ of $B^*$ lifts to a mapping class of $\mathrm{Mod}(S_g)$ via the Birman-Hilden property (see Chapter \ref{ch:Hilden}) up to isotopy classes of deck transformations of the cover. Conveniently, this deals with the distinction between mapping classes of the punctured surface $B^*$ and the closed unpunctured surface $S_g$. 

The ambiguity regarding classes of deck transformations is then resolved by our choice of section \linebreak 
$ \hat{} : \mathrm{Mod}(B^*) \rightarrow \mathrm{Homeo}_+(B^*)$ which allows one to lift the quotient $\mathrm{LMod}(B^*)$ to $\mathrm{SMod}(S_g)$, as discussed in Chapter~\ref{ch:Hilden}.
Consequently, a mapping class $\sigma = [\hat{\sigma}]$ of $\mathrm{Mod}(B^*)$ lifts to a mapping class $[\tilde{\sigma}]$ of $\mathrm{Mod}(S_g)$. Recall also the induced homeomorphism $\tilde{\sigma}_X \colon X\rightarrow {^\sigma X}$. Then, the action of $\mathrm{Mod}(B^*)$ on $T(S_g) $ is given by
\[
\sigma \cdot   [(X,m_X)]  = [({^\sigma \! X}, \tilde{\sigma}_X m_X \tilde{\sigma}^{-1})]. 
\]

This defines a genuine group action, even though it need not satisfy $\widetilde{\sigma \tau} = \tilde\sigma \tilde\tau$ strictly; the relevant maps are homotopic, and thus no ambiguity arises at the level of Teichmüller space.

In this scenario the curve $X$ does not necessarily remain fixed under the action of $\mathrm{Mod}(B^*)$ as a point in the moduli space $M_g$. However, given that this point remains fixed under the action of a subgroup $A\subseteq \mathrm{Mod}(B^*)$, then the mapping class group ascent datum with respect to $A^\prime/A$ precisely guarantees that the point represented by $X$ is also fixed under the action of $A^\prime$. This interpretation aligns closely with the arithmetic intuition underlying fields of moduli and fields of definition.

\subsection{Common Notation}

Note the following similarity. In the arithmetic case, given a variety $X$ defined over $L$, then for $A= \Gal(K_s/L)$ we have that $A_X^{\mathrm{inv}} = A$. Similarly, for a cover $X \to (B,A) $, we have that $A_X^{\mathrm{inv}}=A$. In both variants, it is meaningful to pick a group greater than $A$ in order to descent/ascent, otherwise we would obtain trivial results. For instance, in arithmetic case, if $X$ is defined over $L$ and if $A \leq \Gal(K_s/L)$ closed subgroup, then by definitions we view $X$ as $K_s$-variety and then can always descent to a subfield $K_s^A$ over which $X$ is defined. In particular, this variety is the extension of scalars of $X$, i.e. $X \times _{K_s^A} L  $. Of course, if we use $A=\mathfrak{\Gamma}$ then this always makes sense.

\begin{remark}
    One difference is that, for the arithmetic case, we require that the group $A$, and consequently its subgroups $A_X^{\mathrm{mod}}, A_X^{\mathrm{def}}, \text{\ and \ } A_X^{\mathrm{inv}}$, be closed, since we needed that each corresponds to an appropriate fixed field. In general these definitions make sense for any subgroup, but then one would have to take its closure to obtain some fixed field.
    
\end{remark}

\pagebreak

{\small
\begin{longtable}[c]{>{\raggedright\arraybackslash}p{5.5cm}  >{\raggedright\arraybackslash}p{4cm}  >{\raggedright\arraybackslash}p{4.5cm}}
\rowcolor{headerblue}
\textbf{Number Theory} & \textbf{Topology} & \textbf{Common Notation} \\
\endfirsthead

\rowcolor{headerblue}
\textbf{Number Theory} & \textbf{Topology} & \textbf{Common Notation} \\
\endhead

$\Gal(K_s/K)$ & $\mathrm{Mod}(B^*)$ 
    & $\mathfrak{\Gamma}$  \\

\rowcolor{rowgray}
\begin{tabular}{l}
      closed subgroup $A \leq \Gal(K_s/K)$ \\ 
      fixed field $K_s^A$
\end{tabular}  & $A \leq \mathrm{Mod}(B^*)$ & $ A \leq \mathfrak{\Gamma}$  \\
acts on $X \to \Spe(K_s) $ & acts on $X \to (B,\{ 1 \} )$ & \\

\rowcolor{rowgray}
$A_X^{\mathrm{mod}} \leftrightsquigarrow K_s^{A} \subseteq K_s^{A_X^{\mathrm{mod}}} $   & $A_X^{\mathrm{mod}}$ & $\{\sigma \in A : \exists f_{\sigma} \colon X \xrightarrow[]{\simeq}{^{\sigma}\!X} \}$
  \\ 

$A_X^{\mathrm{def}} \leftrightsquigarrow K_s^{A} \subseteq K_s^{A_X^{\mathrm{def}}}$ minimal \linebreak over which $X$ is definable &  $A_X^{\mathrm{def}}$
&
$\{ \sigma \in A_X^{\mathrm{mod}} :$ the family  $f_\sigma$ is a  descent datum$\}$
\\

\rowcolor{rowgray}
$A_X^{\mathrm{inv}} \leftrightsquigarrow K_s^{A} \subseteq K_s^{A_X^{\mathrm{inv}}}$ minimal \linebreak over which $X$ is defined & $A_X^{\mathrm{inv}}$ & $\{ \sigma \in A : {^\sigma \! X} = X \}$ \\

\end{longtable}
}

\subsection{Example of Field of Moduli $=$ Field of Definition}\label{exm: moduli = definition}

Let us consider the case of elliptic curves. We know from \cite[III.1]{MR2514094} that over the algebraically closed field $\bar{k}$ two elliptic curves 
\[
  E:y^2 =x^3+ Ax+ B, \text{ and } E':y^2= x^3 + A'x +B'
\]
are isomorphic if and only if their $j$-invariants are the same that is
\[
  \frac{1728}{16} \frac{ (4A)^3}{4A^3+27 B^2} =  \frac{1728}{16}\frac{ (4A')^3}{4A'^3+27 B'^2}.
\]
The above equation implies that if $AB\neq 0 \neq A' B'$ then the  following map is an isomorphism 
\begin{align*}
f \colon E' & \longrightarrow E \\
(x',y') & \longmapsto (x,y)=(u^2x', u^3 y')
\end{align*}
where $u^4 A=A'$, $u^6 B=B'$ (assuming that $j \neq 0, 1728$).

Let us now consider the elliptic curve  
\[
  E: y^2 = x^3 + \sqrt[3]{2} x + i,
\]
which is initially defined over $K=\mathbb{Q}(\sqrt[3]2,i)$.

The Galois group $\Gal(K/\mathbb{Q})$ is isomorphic to a dihedral group $D_6$ of order $6$ generated by the elements $\sigma$ of order $3$ and $\tau$ of order $2$
so that 
\[
  \sigma(\sqrt[3]{2}) = \zeta_3 \sqrt[3]{2}, \sigma(i)=i, \tau(\sqrt[3]{2})=\sqrt[3]{2}, \tau(i)=-i.
\]
We have the following $6$ isomorphic models of the elliptic curve $E$:
\begin{align*}
E: \ y^2 &= x^3 + (\sqrt[3]{2}) \cdot x + i,\\
^{\sigma}\!E: \ y^2 &=   x^3 + (\zeta_3\sqrt[3]{2}) \cdot x + i,\\
^{\sigma^2}\!E: \  y^2 &= x^3 + (\zeta_3^2 \sqrt[3]{2}) \cdot x + i,\\
^{\tau}\!E: \ y^2 &= x^3 + (\sqrt[3]{2}) \cdot x - i,\\
^{\tau\sigma}\!E: \ y^2 &= x^3 +  (\zeta_3^2 \sqrt[3]{2}) \cdot x - i,\\
^{\tau\sigma^2}\!E: \ y^2 &= x^3 +  (\zeta_3 \sqrt[3]{2}) \cdot x - i,
\end{align*}
The isomorphism $f_g \colon E \rightarrow ^{g}\!\!E$ for $g\in D_6$ are given by:
\begin{align*}
  E & \longrightarrow ^{g}\!\!E \\
  (x,y) & \longmapsto (u_g^3 x, u_g^2 y),
\end{align*}
where
\[
u_e=1, \quad
  u_\sigma=\zeta_6^5, \quad 
  u_{\sigma^2}=\zeta_6, \quad 
  u_\tau=\zeta_{12}^3, \quad 
  u_{\tau\sigma}=\zeta_{12}^5,  \quad 
  u_{\tau\sigma^2}=\zeta_{12}
\]
We can define an action of $\Gal(\bar{\mathbb{Q}}/ \mathbb{Q})$ on $E$. Notice, that the group 
$H=\Gal(\bar{\mathbb{Q}}/K)$ acts trivially on the model of $E$, that is $^h \! E=E$ for all $h\in H$. Therefore, there is a well-defined action of $h$ on points of $E$, i.e. 
\begin{align*}
\tilde{h}\colon E & \longrightarrow E \\
P & \longmapsto \tilde{h}P
\end{align*}
A general element $\gamma \in \Gal(\bar{\mathbb{Q}}/\mathbb{Q})$, falls in one of the six cosets
\[
  H, \sigma H, \sigma^2 H, \tau H, \tau \sigma H, \tau \sigma^2 H
\]
and defines an action on $E$ as follows: Write $\gamma= g \cdot h$ for some $h\in H$ and some $g\in D_6$ and define the action on $E$ by letting $g$ acting on $E$ by $\tilde{g} f_g$ so the action of $\gamma$
is given by  
\[
\xymatrix@R6\partial {
E \ar[r]^{f_\gamma}    &
^{\gamma} \! E  
\ar[r]^{\tilde{\gamma}}
&
E \\
(x,y) \ar@{|->}[r] & 
(u_\gamma^3 x, u_\gamma^2 y)
\ar@{|->}[r] 
&
(
  \gamma(u_\gamma^3 x), \gamma(u_\gamma^2 y)
)
}
\]
\begin{remark}
Notice that, since $E$ has $j$-invariant $j(E)=-13824/19$, the curve is isomorphic over $\bar{\mathbb{Q}}$ to the curve with equation
\[
 E'\colon \ 
 y^2 + xy =x^3 - \frac{36}{j(E)-1728} \cdot x - \frac{1}{j(E) - 1728} 
\]
which has short Weierstrass form
\[
 E''\colon  \ y^2 = x^3 -1934917632/361 \cdot x + 60183678025728/6859 
\]
The isomorphism between $E,E''$ is given by
\[
(x,y) \longmapsto 
  \left(\frac{x}{-\frac{19}{62208} i \cdot 2^{\frac{2}{3}}}, \frac{y}{-\frac{1}{1492992} \, \left(-\frac{19}{3} i\right)^{\frac{3}{2}}}\right)
\]
\end{remark}

\subsection{Example of Field of Moduli $\neq$ Field of Definition}\label{exm:field of moduli vs field of definition}

This example is due to the third author, see \cite{Kont-Bord}. Let $K=\mathbb R$ and let $c$ denote complex conjugation in $\Gal(\mathbb C/ \mathbb R)$. Assume $n,m,q\in \Z_{>1}$ with $m$ odd, $2q<2mn$ and $q\mid 2m$. Let $X$ be the curve defined by
\[
X/\mathbb{C}\colon \quad y^q = \prod\limits_{1\leq i \leq m} \left(x^n-a_i\right)\left(x^n+\frac{1}{c(a_i)}\right),
\]
where $a_i = (1+i)\zeta_m^i$ and $\zeta_m$ denotes a primitive $m$-th root of unity. The curve $X$ is a branched cover of the projective line, by projection to the $x$-coordinate, with $\Gal(\mathbb{C}(X)/\mathbb{R}(\mathbb{P}^1))$ being cyclic of order $q$. The conjugate curve 
\[
{^c \! X}/\mathbb{C}\colon \quad y^q = \prod\limits_{1\leq i \leq m} \left(x^n-c(a_i)\right)\left(x^n+\frac{1}{a_i}\right),
\]
is isomorphic to $X$, via $\mu : X\rightarrow {^c \! X}$ which is defined by
\[
\mu (x,y) = \left(\frac{1}{\omega x}, \frac{\omega^\prime y}{x^{2mn/q}}\right),
\]
where $\omega^n = -1$ and $(\omega^\prime)^q = -1$. This implies that the field of moduli of $X$ is $\mathbb{R}$. 

The automorphism group $\mathrm{Aut}(X)$ of the curve $X$ is the product of cyclic groups $C_q\times C_n$ generated respectively by $\gamma(x,y) = (x,\zeta_q y)$ and $\nu(x,y)=(\zeta_nx,y)$, where $C_q \cong \Gal(\mathbb{C}(X)/\mathbb{R}(\mathbb{P}^1))$. 
Therefore, any isomorphism $f_c \colon X \rightarrow {^c \! X}$ is given by $f_c = \mu \gamma^i \nu^j$ for some $0 \leq i < q$ and $0\leq j < n$. For any such $f_c$, we have ${^c \!f}_c \circ f_c \neq 1$, by \cite[Prop. 5.1]{Kont-Bord}. Thus, Weil's cocycle condition fails and consequently $X$ does not admit an $\mathbb{R}$-model.

\section{Regularity}\label{sec:regularity}
In this section, we provide a group-theoretic description of the regularity condition on the covers. We will work using the common notation of the previous sections. In particular, let $\mathfrak{\Gamma}$ denote either $\Gal(K_s/K)$ or $\mathrm{Mod}(B^*)$. 
Let $X\rightarrow (B,A)$ denote a cover either of varieties or of curves (i.e. such that $X^\circ \rightarrow B^*$ are hyperbolic) defined over the (closed) subgroup $A \leq \mathfrak{\Gamma}$. Denote also by $\Pi_1$ either of the groups $\Pi_{K_s}$ or $\pi_1(B^*,b_0)$. In the arithmetic case, $\Pi_A$ represents $\Pi_{K_s^A}(B^*)$. We have the following proposition which is formulated for both cases.

\begin{proposition}
\label{prop:equalityofindices}
If the cover $X \to (B,A)$ is regular, then \[[\Pi_A:H] = [\Pi_1:R].\]
\end{proposition}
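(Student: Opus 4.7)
The plan is to reduce the statement to a standard coset-counting argument, once regularity has been rephrased in purely group-theoretic terms. Let me first unpack the data that is common to both settings. We have the short exact sequence
\[
1 \longrightarrow \Pi_1 \longrightarrow \Pi_A \longrightarrow A \longrightarrow 1,
\]
a subgroup $H\leq \Pi_A$ attached to the cover, and a subgroup $R\leq \Pi_1$ corresponding to the associated cover over the geometric/topological base. In both settings $R=H\cap\Pi_1$: topologically this is condition~(1) in Definition~\ref{def:topological_defined}, while arithmetically it comes from $K_s\cdot K(X)=K_s(X)$.

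Next I would rephrase the regularity hypothesis as the single group-theoretic identity $H\Pi_1=\Pi_A$. In the topological setting this is immediate from the second isomorphism theorem:
\[
H/R \;=\; H/(H\cap\Pi_1) \;\cong\; H\Pi_1/\Pi_1 \;\leq\; \Pi_A/\Pi_1 \;=\; A,
\]
and regularity (condition~(4)) states that $H/R\cong A$, forcing $H\Pi_1=\Pi_A$. In the arithmetic setting, the classical regularity hypothesis $K_s\cap K(X)=K$ is equivalent (via linear disjointness of $K_s$ and $K(X)$ over $K$) to the assertion that restriction induces an isomorphism $\Gal(K_s\cdot K(X)/K(X)) \xrightarrow{\sim} \Gal(K_s/K)$; translating this back through the Galois correspondence of diagram~\eqref{eq:diagFF} gives exactly $H\Pi_1=\Pi_A$.

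With both $H\cap\Pi_1=R$ and $H\Pi_1=\Pi_A$ in hand, I would finish by exhibiting the obvious bijection on cosets. Define
\[
\Phi\colon \Pi_1/R \longrightarrow \Pi_A/H, \qquad gR \longmapsto gH.
\]
This map is well-defined because $R\subseteq H$; it is surjective because every $x\in\Pi_A=H\Pi_1$ can be written as $x=hg$ with $h\in H$, $g\in\Pi_1$, so that $xH=gH$; and it is injective because $g_1H=g_2H$ with $g_1,g_2\in\Pi_1$ forces $g_1^{-1}g_2\in H\cap\Pi_1=R$. Taking cardinalities yields $[\Pi_A:H]=[\Pi_1:R]$.

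The only non-formal step is the second one: verifying that the classical regularity condition on function fields translates to $H\Pi_1=\Pi_A$. This is the main substantive content, but it is a direct consequence of the dictionary between subfields of $\Omega_D$ and closed subgroups of $\Pi_K(B^*)$ already set up in Section~\ref{sec:covers}. Once that translation is recorded, the rest of the argument is a two-line application of the second isomorphism theorem and is identical in both the arithmetic and topological settings, which is the whole point of working in the common-notation framework.
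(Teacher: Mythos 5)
Your proof is correct and follows essentially the same route as the paper's: both establish a bijection $\Pi_1/R \to \Pi_A/H$, with injectivity coming from $H\cap\Pi_1=R$ and surjectivity from regularity. Your reformulation of regularity as $H\Pi_1=\Pi_A$ via the second isomorphism theorem is precisely the content of the paper's Lemma~\ref{lemma:coset-map}, which reaches the same conclusion by an explicit lifting argument.
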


\begin{proof}

We consider the following short exact sequences of groups

\[
\begin{tikzcd}[column sep=normal]
1 \ar[r] & \Pi_1 \ar[r,"i"] & \Pi_A \ar[r,"p_1"] & A \ar[r] & 1 \\
1 \ar[r] & R \ar[r,"i"] \ar[hookrightarrow,u] & H \ar[r,"p_2"] \ar[hookrightarrow,u,"\phi"] & A \ar[r] \ar[u, equal] & 1
\end{tikzcd}
\]

Let $aR$ be a coset in $ \Pi_1/R$. The coset $i(a)H$ is a well-defined coset in $\Pi_A/H$. Two cosets $aR$ and $bR$ give rise to equal cosets $i(a)H$, $i(b)H$ if and only if $i(a)^{-1}i(b) \in H$. We will prove that $i(\Pi_1) \cap H=i(R)$ and therefore $i(a)H=i(b)H$ if and only if $aR=bR$.

It is clear that $i(R) \subseteq H \cap i(\Pi_1)$. Let $h\in H \cap i(\Pi_1)$. If $h \not \in i(R)$ then $p_2(h)\neq 0$. On the other hand $\phi(h) \in  i(\Pi_1)$ and $p_1(\phi(h))=0$, which means that $\phi(h) \in i(R)$, a contradiction. Thus $H \cap i(\Pi_1)=i(R)$.
This proves that the map $aR \mapsto i(a)H$ is 1-1. 

To finish the proof we need the following lemma which we interject here:

\begin{lemma}
  \label{lemma:coset-map}
  If the cover $X \rightarrow (B,A)$ is regular, then for every coset $xH$  in $\Pi_A/H$ we can select the representative $x \in \Pi_A$.
\end{lemma}
\begin{proof}
We have $p_1(x)=\gamma \in A$. There is an element $h \in H$ such that $p_2(h)=\gamma$. Then $xH= x h^{-1}H$ and $p_1(xh^{-1})=1$. This means that $xh^{-1} \in i(\Pi_1)$.
\end{proof}

The lemma \ref{lemma:coset-map} implies that $xH=i(a)H$ for some $aR \in \Pi_1/R$. This proves that the indices $[\Pi_A:H]$ and $[\Pi_1:R]$ are equal.
\end{proof}

We now relate the group of invariance $A_X^{\mathrm{inv}}$ with the regularity condition. Consider the following commutative diagram.

\begin{equation}
\label{eq:defH-R}
\begin{tikzcd}[column sep=normal,row sep=normal]
1 \ar[r] & R \ar[r] & N_{\Pi_A}R \ar[r,"\pi"] & \dfrac{N_{\Pi_A}R}{R} \ar[r] & 1 \\
1 \ar[r] & R \ar[u, equal] \ar[r] \ar[hookrightarrow,d] & H \ar[r] \ar[hookrightarrow,u] \ar[hookrightarrow,d] & A_X^{\mathrm{inv}} \ar[r] \ar[hookrightarrow,u] \ar[hookrightarrow,d] & 1 \\[7.5pt]
1 \ar[r] & \Pi_1 \ar[r] & \Pi_A \ar[r] & A \ar[r] & 1
\end{tikzcd}
\end{equation}

The lowest row in the above diagram is the short exact sequence of eq. (\ref{eq:septp}). 
Moreover, the injection $A_X^{\mathrm{inv}} \hookrightarrow N_{\Pi_A}R/R$ exists, simply by the usual group isomorphism theorems, however, in a broader sense the group $N_{\Pi_A}R/R$ contains all automorphisms; number-theoretic (resp. topological) and geometric ones, implying the injection arises naturally.  The cover $X\rightarrow (B,A)$ is regular if and only if $A_X^{\mathrm{inv}} = A$.

\section{Descent on Monodromy}\label{ch:descend_on_monodromy}

This section concerns the theory of descent—or equivalently, group ascent—for covers and their monodromy representations, as developed in the arithmetic setting by Dèbes and Douai in \cite{DebesDouai97}. In particular, they provide cohomological criteria guaranteeing when the group of moduli {\em with respect to a cover} is actually a field of definition {\em for the cover}. By adopting a unified language and notation for both the arithmetic and topological settings, we emphasize that their results, formulated in terms of group cohomology, carry over verbatim to the topological context.

Recall that in the arithmetic setting $\mathfrak{\Gamma}$ is $\Gal(K_s/K)$ and by $A^\prime/A$ we denote the group $\Gal(L/K)\cong \Gal(K_s/K)/\Gal(K_s/L)$ of a finite Galois extension $L/K$. The objects are varieties over a field $K\subseteq L \subseteq K_s$ and a $(G)$-cover $X\rightarrow (B,A)$ of degree $d$ corresponds to the regular function field extension $K(X)/K(B)$ determined by a monodromy representation $\Psi\colon \Pi_A\rightarrow G\leq N$, where $N$ is the normalizer of $G$ in $S_d$. 
In the topological setting, $\mathfrak{\Gamma}$ is $\mathrm{Mod}(B^*)$, the objects are algebraic curves over $\mathbb{C}$, such that for each branched cover $X\rightarrow B$, $X$ is of genus $ g\geq 2$ and the corresponding topological cover $X^\circ \rightarrow B^*$ is between hyperbolic surfaces. Everything else in the topological setting is denoted analogously, with the Birman exact sequence (\ref{ses:Birman}) playing the role of the arithmetic exact sequence (\ref{ses}).

In particular, we will need the following hypothesis which we adopt from \cite{DebesDouai97}:

\begin{align*}
    \textbf{(Seq/Split)} \ \ \ \ & 1\to\Pi_{A} \to \Pi_{A^\prime}\to A^\prime/A\to 1 \ \  \textrm{ is split.}
\end{align*}
\vspace{0.01cm}

As a special case for $A=1, A^\prime = \mathfrak{\Gamma}$ this is the condition of splitting of both (\ref{ses}) and (\ref{ses:Birman}) in the arithmetic and topological settings, respectively.

The direct way to attack the problem of descent is to work with the possible group extensions $H$ which keep track of the definability of the cover with (geometric) fundamental group $R$ and are classified by some pointed set $H^2(\mathfrak{\Gamma},R)$ in non-abelian cohomology, as seen on the top row of the diagram below
\[\begin{tikzcd}
1 \arrow[r] & R \arrow[r] \arrow[d, hook] & H \arrow[r] \arrow[d, hook]       & \mathfrak{\Gamma} \arrow[r] \arrow[d, no head, Rightarrow] & 1 \\
1 \arrow[r] & \Pi_1 \arrow[r]             & \Pi_{\mathfrak{\Gamma}} \arrow[r] & \mathfrak{\Gamma} \arrow[r]                                & 1.
\end{tikzcd}\]  
The brilliant idea used in \cite{DebesDouai97} is that rather than doing that, 
they exploited the finite group “$G$" in the $(G)$-cover, as well as the exact sequences
\[1\rightarrow Z(G)\rightarrow C\rightarrow CG/G\rightarrow 1 \ \text{ and } \  1 \rightarrow CG/G\rightarrow N/G \rightarrow N/CG\rightarrow 1,\]
involving the centralizer $C$ and normalizer $N$ of $G$ in $S_d$ and the center $Z(G)$. These allow the descent problem to be transferred to (abelian) group cohomology, since the outer action of $\mathfrak{\Gamma}$ on $R$ corresponds to a conjugation action on $G$ through the monodromy representation of the cover.

It is noteworthy that the problem was later revisited in \cite{MR1671938} using non-abelian cohomology tools and the theory of gerbes developed by Giraud in \cite{MR344253}.

\begin{remark}
    The “of the cover" part is important here; in this section the group of moduli is defined with respect to isomorphisms of covers. In previous sections, the group of moduli is defined with respect to isomorphisms as $K_s$-schemes. Here we simply demand that the (geometric) fundamental groups are conjugate and equal in the case of Galois covers. In the previous topological setting, we allowed $\sigma_*(R)$ not to be necessarily equal to $R$, but the present framework demands equality. 
    Furthermore, in the arithmetic case of curves and schemes of finite-type, the group of moduli and the group of moduli of the covers are connected by the canonical quotient map $X\rightarrow X/\mathrm{Aut}(X)$, which is also a covering, as shown in \cite[Thm. 3.1]{DebesEmsalem}. The reduction to covers in that theorem is based on Weil’s cocycle condition, further highlighting the connection between the present topological framework and Section \ref{sec:Top_Weil}.
\end{remark}

We now develop the theory of group ascent on covers. Assume that for a (closed) subgroup $A\leq \mathfrak{\Gamma}$ we have the homomorphism
\[
  \phi\colon \Pi_A \rightarrow  G < N,
\]
corresponding to the $(G)$-cover $f\colon X \rightarrow (B,A)$.

\begin{definition}\label{def:groupofModuliCovers}
Let $1 \leq A \leq A' \leq \mathfrak{\Gamma}$ be groups with $A \lhd A'$. 
The group $A'$ is called {\em the group of moduli of the $(G)$-cover $f$ relative to $A \leq A'$} if and only if the ramification divisor is $A'$-invariant and the following {\em group of moduli of the cover condition} \textbf{(CMod)} holds: 

\textbf{(CMod)}: For every $U \in \Pi_{A'}$ there exists $\varphi_U\in N$ such that
\[
\phi(x^U) = \phi(x)^{\varphi_U} \text{ for all } x \in \Pi_A.  
\]
\end{definition}
Note that the condition about the ramification divisor affects only the arithmetic setting. It is automatically satisfied in the topological setting as the mapping class group fixes the marked ramified branched points by definition.

The elements $\varphi_U$ are well defined up to elements in $C=\mathrm{Cen}_NG$.
This way, we obtain a map $\bar{\varphi}\colon \Pi_{A'} \rightarrow  N/C$ defined by $U \mapsto \varphi_U \mod C$. 
Note that, for $U \in \Pi_A \leq \Pi_{A'}$, we have $\varphi_U= \phi(U) \mod C$, since
\[
  \phi(x^U)= \phi(U x U^{-1}) = \phi(U) \phi(x) \phi(U)^{-1} = \phi(x)^{\phi(U)} \mod C.
\] 
The map $\phi_U$ is not necessarily a homomorphism, but satisfies
\[
  \varphi_{\eta_1} \varphi_{\eta_2} \phi(x) \varphi_{\eta_2}^{-1} \varphi_{\eta_1}^{-1}  = \varphi_{\eta_1\eta_2} \phi(x) \varphi_{\eta_1 \eta_2}^{-1}, \ \text{for all } x \in \Pi_{A},
\]
which implies
\[
  \varphi_{\eta_1} \varphi_{\eta_2}  \varphi_{\eta_1 \eta_2}^{-1} =c(\eta_1, \eta_2) \in C. 
\]
This map is a $2$-cocycle. Moreover, it is trivial if the sequence 
\[
  1 \rightarrow C \rightarrow N \rightarrow N/C \rightarrow 1
\]
splits.
In this case, the homomorphism  $\bar{\varphi}$ can be extended to a homomorphism $\Pi_{A'} \rightarrow N$. 
If such a lift exists, then the cover can be defined over its group of moduli $A^\prime$ and we say that an $A^\prime$-model exists. This terminology is well-understood in the arithmetic case, but we make formal in the next definition for both settings.

\begin{definition}{\label{def:Amodel}}
Let  $f\colon X \rightarrow (B,A)$ be $(G)$-cover with corresponding monodormy $\phi \colon \Pi_A \rightarrow G < N$.
  An {\em $A^\prime$-model of the $(G)$-cover} $f$ is a cover $f_A^{\prime} \colon X_{A^\prime}\rightarrow (B,A^\prime)$, corresponding to a homomorphism $\phi_{A^\prime}\colon\Pi_{A'} \rightarrow N$, such that $\phi$ and $\phi_{A^\prime}$ are conjugate when restricted to $\Pi_1$.
\end{definition}

Assume that there is such a homomorphism  $\phi_{A'}\colon \Pi_{A'} \rightarrow N$
that extends $\phi$, and consider the following diagram 
\[
\begin{tikzcd}
& 1 \arrow[d] & 1 \arrow[d] & 1 \arrow[d] \\
1 \arrow[r] 
  & \ker(\phi) \arrow[d] \arrow[r] 
  & \ker(\phi_{A'}) \arrow[d] \arrow[r] 
  & \ker(\Lambda) \arrow[d] \arrow[r] 
  & 1 \\
1 \arrow[r] 
  & \Pi_A \arrow[d, "\phi"'] \arrow[r] 
  & \Pi_{A'} \arrow[d, "\phi_{A'}"'] \arrow[r] 
  & A'/A \arrow[d, "\Lambda"] \arrow[r] 
  & 1 \\
1 \arrow[r] 
  & G \arrow[r] 
  & N \arrow[r] 
  & N/G \arrow[r] 
  & 1
\end{tikzcd}
\]

The existence of the homomorphism $\Lambda$ follows using $\phi(\Pi_A)= \phi_{A'}(\Pi_{A}^\prime) = G$, and the uniqueness follows becasuse the map $\Pi_{A'} \rightarrow  {A'}/A$ is surjective.

For this paragraph, we shift solely to the arithmetic setting, to gain some insight applicable for both contexts. Suppose that $A^\prime/A = \Gal(L/K)$, then the group $\mathrm{ker}(\Lambda)$ is the Galois group $\Gal(L/\hat{K})$, where $\hat{K}=\widehat{K(X_{A^\prime})} \cap L$ is the extension of constants in the Galois closure. Moreover, $\Gal(\hat{K}/K)=\mathrm{Im}(\Lambda)$, see \cite[sec. 2.8]{DebesDouai97}. 
The tower of field extensions $K = K_s^{A'} \subseteq  \hat{K} = K_s^{\hat{A}} \subseteq L=K_s^A$ corresponds to the chain of closed Galois groups $ A \leq   \hat{A} \leq A' \leq \Gal(K_s/K)$, which is a construction we can carry over on the topological setting.

Returning to the common notation and treating both settings simultaneously, the kernels $\ker(\phi)$ and $\ker(\phi_{A^\prime})$ correspond to Galois closures of the respective covers. Their quotient yields a subgroup $\ker (\Lambda) \cong \hat{A}/A \leq A^\prime/A$, and a sequence of (closed) subgroups $A\leq \hat{A} \leq A^\prime \leq \mathfrak{\Gamma}$. The subgroup $\hat{A}$ encapsulates a notion of “extension of constants in the Galois closure" that is fully analogous to the arithmetic one.

\begin{remark}{\label{rem:GcoversL}}
For a $G$-cover $f\colon X\rightarrow (B,A)$, we have $N=G$ and therefore $\hat{A}=A$.
\end{remark}

\subsection{Mere cover extensions}
\label{sec:mercovers}
 
We now turn to the case of mere covers. We recall the two theorems introduced in \cite{DebesDouai97} and consider their topological interpretation. The original proofs are of group theoretic nature and carries over verbatim to the topological case.  

Assume that $A'$ is the group of moduli of the $(G)$-cover $f\colon X \rightarrow (B,A)$, where $A \lhd A'$. Suppose that the group of moduli condition \textbf{(CMod)}, given in definition \ref{def:groupofModuliCovers} holds. 
Let $\bar{\phi}\colon \Pi_{A'}  \rightarrow N/C$ be the representation of $\phi$ modulo $C$, which exists by the group of moduli condition. 

As in the arithmetic setting, studied by \cite{DebesDouai97}, there might exist several $A'$-models of the $(G)$-cover \linebreak $f\colon X \rightarrow~(B,A)$. An invariant of such cover is the group $\widehat{A}$, interpreted earlier as the extension of constants in the Galois closure. The following theorem is the analog of ``first obstruction'' of \cite[Main theorem I]{DebesDouai97}.

\begin{theorem}{\label{thm:Mere1}}
Let $f\colon X\rightarrow (B,A)$ be a $(G)$-cover with group of moduli $A^\prime$. Let $\bar\phi \colon  \Pi_{A^\prime} \rightarrow N/C$ be the representation of $\Pi_A$ modulo $C$ given by the group of moduli condition.

  \begin{enumerate}
    \item
There is a unique homomorphism $\lambda$ making the  diagram commute
\[
  \xymatrix{
    \Pi_{A'} \ar[r] \ar[d]_{\bar{\phi}} & A'/A \ar[d]^{\lambda} \\ 
    N/C \ar[r] & N/CG
  }
\]
\item For each $A^\prime$-model, the associated map $\Lambda\colon A'/A \rightarrow N/G$ is a lift of $\lambda$. 
\item ($\lambda$/lift)-condition: The existence of a lift $\Lambda$ making the following diagram
commute
\[
  \xymatrix{
   && A'/A \ar[d]_{\Lambda} \ar [dr]^{\lambda} & \\
   1 \ar[r]& CG/G \ar[r] &  N/G \ar[r] & N/CG \ar[r] & 1
  }
\]
is necessary for the group $A'$ to be a group  of definition of the $(G)$-cover.  
  \end{enumerate}
\end{theorem}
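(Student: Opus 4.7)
The plan is to chase the natural commutative diagram, exploiting the fact already recorded in the discussion preceding the theorem that $\bar\phi\colon \Pi_{A'}\to N/C$ is a genuine group homomorphism. This is because the $2$-cocycle $c(\eta_1,\eta_2)=\varphi_{\eta_1}\varphi_{\eta_2}\varphi_{\eta_1\eta_2}^{-1}$ produced by the (CMod) condition takes values in $C$ and therefore vanishes modulo $C$, regardless of which witnesses $\varphi_U\in N$ are chosen.

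For part (1), I would verify that $\Pi_A$ lies in the kernel of the composite
\[
\Pi_{A'}\xrightarrow{\ \bar\phi\ } N/C \twoheadrightarrow N/CG.
\]
Indeed, for $x\in\Pi_A$ the condition (CMod) is satisfied by the tautological choice $\varphi_x=\phi(x)\in G$, whose image in $N/CG$ is trivial. The universal property of $\Pi_{A'}/\Pi_A\simeq A'/A$ then produces the unique homomorphism $\lambda\colon A'/A\to N/CG$ completing the square; uniqueness follows at once from the surjectivity of $\Pi_{A'}\twoheadrightarrow A'/A$.

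For part (2), given an $A'$-model $\phi_{A'}\colon \Pi_{A'}\to N$ extending $\phi$ up to conjugation in $N$, I would take $\varphi_U := \phi_{A'}(U)$ as the witness in (CMod); this choice satisfies the condition tautologically because $\phi(x^U)=\phi_{A'}(UxU^{-1})=\phi(x)^{\phi_{A'}(U)}$ for all $x\in\Pi_A$. The induced map $\Lambda\colon A'/A\to N/G$ from the diagram preceding the theorem is well-defined since $\phi_{A'}(\Pi_A)=\phi(\Pi_A)=G$, and reducing $\Lambda$ modulo $CG/G$ returns precisely the $\lambda$ of part (1) by the uniqueness established there. Part (3) is then immediate: if $A'$ is a group of definition, some $A'$-model exists, and part (2) provides the required lift of $\lambda$ through $N/G\twoheadrightarrow N/CG$.

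The main subtlety, and the step I would handle with most care, is reconciling the non-uniqueness of the individual witnesses $\varphi_U$ with the canonicity of $\lambda$: two valid choices differ by elements of $C$ and hence agree in $N/C$, so $\bar\phi$—and consequently $\lambda$—is intrinsic, independent of all selections. It is also worth emphasizing that no splitting assumption enters this ``first obstruction'': only the (CMod) condition and the bare definition of an $A'$-model are used, as the genuine cohomological content will surface only in the ``second obstruction'' (Theorem~\ref{thm:DebesDouaiThB}), where the $2$-cocycle $\Omega_\Lambda\in H^2(A'/A,Z(G))$ measures the failure of a chosen lift $\Lambda$ to be realized by an actual $A'$-model.
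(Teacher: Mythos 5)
Your argument is correct and is essentially the proof the paper intends: the paper itself gives no separate proof of this theorem, deferring to the group-theoretic discussion preceding it (where $\bar\phi$ is shown to be a homomorphism because the cocycle $c(\eta_1,\eta_2)$ lands in $C$, where $\varphi_U=\phi(U)\bmod C$ for $U\in\Pi_A$, and where $\Lambda$ is obtained from the diagram of short exact sequences) and to the original Dèbes--Douai argument, which is exactly the kernel-chase you carry out. The only point handled slightly loosely on both sides is that an $A'$-model restricts to $\phi$ only up to conjugation by some $a\in N$, which is absorbed by replacing $\phi$ with $\phi^a$ and $\lambda$ with $\lambda^{\bar a}$, as the paper notes after the theorem.
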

The map $\lambda\colon \Pi_{A'} \rightarrow N/C$ is uniquely determined by the representation $\phi_{A} \colon \Pi_{A} \rightarrow G \leq N$ associated to the $(G)$-cover $f\colon X  \rightarrow (B,A)$. If $\phi_{A}$ is replaced by its conjugate $\phi_{A}^a$ for some $a\in N$, then $\lambda$ must be replaced by $\lambda^{\bar{a}}$, where $\bar{a}$ is the coset of $a$ modulo $CG$.  

In the special case of $G$-covers, we have that $N/CG=N/G=\{1\}$, so $\lambda$ is the trivial map modulo $C$. Thus, the trivial map $A^\prime/A \rightarrow N/C$ is the only possible constant extension lifting $\lambda$. Therefore, the ($\lambda$/lift)-condition holds automatically.    

Following \cite[3.2]{DebesDouai97}, we define the actions $L,L_\Lambda,L_\Lambda^*$ and the group cohomology operator $\delta^1$. The notation $L$ no longer denotes a field, as in the arithmetic setting; instead denotes the action defined as follows:
\begin{itemize}
    \item $L$ is the action of $A^\prime/A$ on $Z(G)$ obtained by composing $\lambda\colon A'/A \rightarrow  N/CG$ with the conjugation action of $N/CG$ on $Z(G)$. 
    \item $L_\Lambda$ is the action of $A^\prime/A$ on $C$ obtained by composing a lift $\Lambda\colon A'/A \rightarrow N/G$ of $\lambda$ with the conjugation action of $N/G$ on $C$.
    \item  $L_\Lambda^*$ is the action $A^\prime/A$ on $CG/G$ by composing $\Lambda \colon A'/A \rightarrow N/G$ with the conjugation action of $N/G$ on $CG/G$
\end{itemize}

For a fixed lift $\Lambda\colon A'/A \rightarrow N/G$, we consider the short exact sequence:
\[
  1 \rightarrow Z(G) \rightarrow  C \rightarrow CG/G \rightarrow 1.
\] 
It has an abelian kernel $Z(G) \leq Z(C)$, therefore, the group cohomology operator 
\[
\delta_1\colon H^1(A'/A,CG/G, L_\Lambda^*) \rightarrow 
H^2(A'/A,Z(G),L)
\]
is defined. 
\begin{theorem}{\label{thm:DebesDouaiThB}}
Let $A$ be the group of moduli of the $(G)$-cover $f\colon X \rightarrow (B,A)$ relative to $A'/A$ and assume that the condition $(\lambda/\textrm{lift})$ of Theorem \ref{thm:Mere1} holds. Fix a lift $\Lambda \colon A'/A \rightarrow N/G$ of $\lambda$. 
\begin{enumerate}
    \item Let $s\colon A^\prime/A \rightarrow \Pi_{A'}$ be a set-theoretic section of the map $\Pi_{A'} \rightarrow A'/A$, and let $\bar{\phi} \colon \Pi_{A'} \rightarrow N/C$ be the representation of $\Pi_{A'}$ modulo $C$ given by the group of moduli condition. For each $u \in A'/A$, there exists an element $\phi_u \in N$, unique modulo $Z(G)$, such that
    \[
      \phi_u=\Lambda(u) \mod G \ \text{and} \  \phi_u= \bar{\phi} \circ s(u) \mod C.
    \]
    \item Consider the $2$-cochain $\Omega_{u,v}$, where $u,v \in A^\prime/A$, defined by 
    \[
      \Omega_{u,v} = \phi_u \phi_v \phi_{uv}^{-1}
      \big(
        \phi_{A}(s(u),s(v),s(uv)^{-1})
         \big)^{-1}.
    \] The $2$-cochain $\left(\Omega_{u,v}\right)_{u,v\in A^\prime/A}$ induces a $2$-cocycle $\Omega_\Lambda \in H^2(A'/A,Z(G),L)$, independent of the choice of $\phi_u \in N$ modulo $Z(G)$ and of the set-theoretic section $s$.
    
    \item Every lift $\Lambda'$  of $\lambda\colon A'/A \rightarrow N/CG$ is of the form $\Lambda'= \tilde{\theta} \cdot \Lambda$, where $\tilde{\theta}\colon A'/A \rightarrow Z(G)$ is any $1$-cochain in $Z^1(A'/A,CG/G,L^*_\Lambda)$.
    
    \item Let $\tilde{\theta} \in Z^1(A'/A,CG/G,L^*_\Lambda)$ and let $\theta$ be the corresponding $1$-cocycle in the cohomology group $H^1(A'/A,CG/G,L^*_\Lambda)$. The following are equivalent: 
    \begin{enumerate}
        \item $\Omega_\Lambda^{-1}= \delta_1(\theta)$
        \item There exists an $A'$-model of the $(G)$-cover $f$ with $\lambda\colon A'/A \rightarrow N/CG$ equal to the map $\tilde{\theta} \cdot \Lambda \colon A'/A \rightarrow N/G$.
    \end{enumerate}
    \item The following conditions are equivalent 
    \begin{enumerate}
        \item $\Omega_\Lambda^{-1} \in \delta_1(H^1(A'/A,CG/G,L^*_\Lambda))$
        \item The group of moduli $A'$ is a group of definition for the $(G)$-cover $f$.
    \end{enumerate}
    
\end{enumerate}
\end{theorem}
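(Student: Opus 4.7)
My plan is to adapt the proof of \cite[Main Theorem (II)]{DebesDouai97} verbatim, exploiting the fact that its arguments are purely group-cohomological and therefore apply uniformly to both the arithmetic case ($\mathfrak{\Gamma}=\Gal(K_s/K)$) and the topological case ($\mathfrak{\Gamma}=\mathrm{Mod}(B^*)$), once the common notation of Sections \ref{ch:groupactions}--\ref{ch:descend_on_monodromy} has been set up. I will treat the five items in order, since each one provides the input for the next.

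For (1), I would argue as follows. The element $\bar\phi(s(u))\in N/C$ and $\Lambda(u)\in N/G$ have a common image in $N/CG$ by the commutative diagram of Theorem \ref{thm:Mere1}. Hence I may lift $\Lambda(u)$ to some $\phi_u\in N$ and then, adjusting by an element of $C$ (which fixes the class in $N/G$ since $C\cap G=Z(G)\subseteq G$), force $\phi_u\equiv \bar\phi\circ s(u)\pmod C$. The remaining ambiguity lies in $C\cap G = Z(G)$, which gives uniqueness modulo $Z(G)$. Then for (2), I would verify that $\Omega_{u,v}$ lies in $Z(G)$ by a two-step reduction: modulo $G$, the product $\phi_u\phi_v\phi_{uv}^{-1}$ is trivial since $\Lambda$ is a homomorphism, and the correction term $\phi_A(s(u)s(v)s(uv)^{-1})^{-1}$ lies in $G=\phi(\Pi_A)$; modulo $C$, the product $\phi_u\phi_v\phi_{uv}^{-1}$ matches $\bar\phi(s(u)s(v)s(uv)^{-1})$, so the correction makes the whole expression trivial modulo $C$. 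Hence $\Omega_{u,v}\in G\cap C = Z(G)$. The $2$-cocycle identity $L(u)(\Omega_{v,w})\,\Omega_{u,vw}=\Omega_{uv,w}\,\Omega_{u,v}$ then follows by a direct, if tedious, rearrangement using associativity of multiplication in $N$ and the definition of the action $L$ (note that $L$ is well-defined on $Z(G)$ because $Z(G)\subseteq Z(C)$). Independence of $\Omega_\Lambda\in H^2(A'/A,Z(G),L)$ from the choice of the lifts $\phi_u$ and from the set-theoretic section $s$ is obtained by showing that any two choices differ by a $1$-coboundary in $Z(G)$.

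For (3), any other lift $\Lambda'$ of $\lambda$ satisfies $\Lambda'(u)\Lambda(u)^{-1}\in CG/G$ because both project to $\lambda(u)\in N/CG$; this defines $\tilde\theta\colon A'/A\to CG/G$, and the condition that $\Lambda'$ is a homomorphism translates, via a short computation using the conjugation action $L^*_\Lambda$, into the $1$-cocycle identity $\tilde\theta(uv)=\tilde\theta(u)\cdot L^*_\Lambda(u)(\tilde\theta(v))$. Conversely, any such $\tilde\theta$ produces a homomorphic lift $\Lambda'=\tilde\theta\cdot\Lambda$. For (4), the key computation is that replacing $\Lambda$ by $\Lambda'=\tilde\theta\cdot\Lambda$ changes the cocycle by the coboundary of $\tilde\theta$: one checks that if $\phi_u$ realizes $\Lambda(u)$ as in (1) and $\phi'_u=z_u\phi_u$ (with $z_u\in C$ lifting $\tilde\theta(u)\in CG/G$) realizes $\Lambda'(u)$, then $\Omega_{\Lambda'}=\delta_1(\theta)\cdot\Omega_\Lambda$ in $H^2(A'/A,Z(G),L)$. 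An $A'$-model with associated $\Lambda'$ exists exactly when $\phi_{A'}\colon\Pi_{A'}\to N$ extending $\phi$ can be constructed, which by the recipe in (1) amounts to choosing lifts $\phi_u$ so that $\Omega_{\Lambda'}$ is trivial; this is $\delta_1(\theta)\cdot\Omega_\Lambda = 1$, i.e.\ $\Omega_\Lambda^{-1}=\delta_1(\theta)$. Finally (5) is just the union of (4) over all $\theta$, since the group of moduli $A'$ is a group of definition of $f$ exactly when some $A'$-model exists.

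The main obstacle, and the part where I expect most of the bookkeeping effort, will be step (4): one must verify precisely that any homomorphic extension $\phi_{A'}$ of $\phi$ gives rise to a trivial cocycle for its own $\Lambda$, and conversely that a trivial cocycle allows one to coherently glue the pointwise choices $\phi_u$ into a genuine homomorphism $\Pi_{A'}\to N$, rather than merely a set-theoretic map. This requires carefully using the semi-direct/crossed structure of $\Pi_{A'}$ over $\Pi_A$ (where the hypothesis \textbf{(Seq/Split)} simplifies matters when available) and tracking how the correction term $\phi_A(s(u)s(v)s(uv)^{-1})^{-1}$ encodes the failure of $s$ to be a homomorphism. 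Once this is done, the identical formalism applies in the topological setting because all ingredients---the groups $\Pi_A,\Pi_{A'},A'/A,N,G,C,Z(G)$ and the induced actions---are defined intrinsically from the short exact sequence and the monodromy representation $\Psi\colon\Pi_A\to N$, with no recourse to any field structure.
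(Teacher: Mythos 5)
Your proposal is correct and takes essentially the same approach as the paper: the paper gives no independent proof of Theorem \ref{thm:DebesDouaiThB}, relying instead on the observation that the original argument of D\`ebes--Douai is purely group-theoretic and carries over verbatim once the common notation ($\Pi_A$, $\Pi_{A'}$, $N$, $C$, $Z(G)$, the actions $L$, $L^*_\Lambda$) is in place. Your more detailed sketch of the internal steps --- the reduction $\Omega_{u,v}\in G\cap C=Z(G)$, the coboundary comparison of lifts of $\lambda$, and the equivalence between triviality of the cocycle and the existence of a homomorphic extension $\phi_{A'}\colon\Pi_{A'}\to N$ --- faithfully reproduces that argument.
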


If, moreover, the \textbf{(Seq/Split)} condition holds, then the above theorem simplifies as follows. Let $s\colon A^\prime/A \rightarrow \Pi_{A^\prime}$ be a group-theoretic section of the map $\Pi_{A^\prime} \rightarrow A^\prime/A$. Then, following Dèbes and Douai \cite{DebesDouai97}, ascending the group of definition from $A$ to $A^\prime$ consists of constructing an action $A^\prime/A\rightarrow N\leq S_d$ that is compatible with the given $A$-model $\phi_A\colon \Pi_A\rightarrow G\leq N$, in the sense that it respects the semi-direct product $\Pi_{A^\prime} \cong \Pi_A \rtimes_s A^\prime/A$ induced by $s$. Under these hypotheses, the theorem takes the following simpler form.

\begin{theorem}{\label{thm:DebesDouaiTHC}}
  Assume that $A'$ is the group of moduli of the $(G)$-cover $f \colon X \rightarrow (B,A)$ relative to $A'/A$ and that \textbf{(Seq/Split)} condition holds. Fix a section $s\colon A'/A \rightarrow \Pi_{A'}$, and let $\bar{\phi} \colon \Pi_{A'} \rightarrow N/C$ be the representation of $\Pi_{A'}$ modulo $C$ given by the group of moduli condition.
  \begin{enumerate}
      \item For each lift $\Lambda \colon A'/A \rightarrow N/G$ of $\lambda \colon A'/A \rightarrow N/CG$, the $2$-cocycle $\Omega_\Lambda \in H^2(A'/A,Z(G),L)$ is trivial if and only if there exists a homomorphism $\phi\colon A'/A \rightarrow N$ that lifts the homomorphism $\bar{\phi} \circ s$ and induces $\Lambda$ modulo $G$. 
      
      \item In particular, the group of moduli $A'$ is a group of definition of the $(G)$-cover if and only if the homomorphism $\bar{\phi} \circ s \colon A'/A \rightarrow N/C$ admits a lift $\phi\colon A'/A \rightarrow N$. 
      
      \item To each lift $\phi \colon A'/A \rightarrow N$ of the map $\bar{\phi} \circ s$ corresponds an $A^\prime$-model of the $(G)$-cover $f$. The corresponding $A^\prime$-model is precisely the one associated with the extension of $\phi_A \colon \Pi_A\rightarrow G\leq N$ to $\Pi_{A^\prime} \cong \Pi_A \rtimes_s A^\prime/A$ and this extension coincides with $\phi$ on the $A^\prime/A$ component, respecting the semi-direct product structure.

  \end{enumerate}
  
\end{theorem}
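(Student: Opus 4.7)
The plan is to reduce Theorem~\ref{thm:DebesDouaiTHC} directly to Theorem~\ref{thm:DebesDouaiThB}, using the fact that \textbf{(Seq/Split)} dramatically simplifies the obstruction cocycle. Since the chosen section $s\colon A'/A \to \Pi_{A'}$ is now a genuine homomorphism, the correction term $\phi_A(s(u)s(v)s(uv)^{-1})$ appearing in the definition of $\Omega_{u,v}$ in Theorem~\ref{thm:DebesDouaiThB}(2) reduces to $\phi_A(1)=1$, so that
\[
\Omega_{u,v} \;=\; \phi_u\, \phi_v\, \phi_{uv}^{-1}.
\]
In other words, the 2-cocycle $\Omega_\Lambda$ measures precisely the failure of the assignment $u \mapsto \phi_u$ to be a group homomorphism $A'/A \to N$; this reformulation is the main idea of the whole proof.

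For part (1), I would argue as follows. Suppose first that $\Omega_\Lambda$ is trivial in $H^2(A'/A, Z(G), L)$. Then there exists a $1$-cochain $z\colon A'/A \to Z(G)$ whose coboundary (with respect to the action $L$) equals $\Omega_\Lambda^{-1}$. Define $\phi(u) := \phi_u \cdot z_u$; the fact that $z$ is a cochain trivializing $\Omega_\Lambda$ is exactly the cocycle identity $\phi(u)\phi(v) = \phi(uv)$, using that $z_u \in Z(G) \leq C$ commutes appropriately. Since $z_u \in Z(G) \leq G \cap C$, we still have $\phi(u) \equiv \Lambda(u) \pmod G$ and $\phi(u) \equiv \bar\phi\circ s(u) \pmod C$, so $\phi$ is a homomorphism of the required type. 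Conversely, given such a lift $\phi$, the elements $\phi_u := \phi(u)$ are an allowable choice in Theorem~\ref{thm:DebesDouaiThB}(1), and then $\Omega_{u,v} = \phi(u)\phi(v)\phi(uv)^{-1} = 1$, so $\Omega_\Lambda$ is cohomologically trivial.

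Part (2) is now an immediate corollary: by Theorem~\ref{thm:DebesDouaiThB}(5), $A'$ is a group of definition if and only if some adjustment $\tilde\theta \cdot \Lambda$ of a lift yields $\Omega_{\tilde\theta\cdot\Lambda}^{-1} \in \delta_1(H^1(\cdots))$; combining with part (1) this is equivalent to the existence of a homomorphism $\phi\colon A'/A \to N$ lifting $\bar\phi \circ s$. For part (3), given such a lift $\phi$, I would define $\phi_{A'}\colon \Pi_{A'} \to N$ on the semi-direct product $\Pi_{A'}\cong \Pi_A \rtimes_s A'/A$ by
\[
\phi_{A'}\bigl( x \cdot s(u) \bigr) \;:=\; \phi_A(x)\cdot \phi(u),
\]
and verify that this is a group homomorphism: the multiplicativity check reduces to the assertion that the conjugation action of $s(u)$ on $\Pi_A$ intertwines via $\phi_A$ with the conjugation action of $\phi(u)$ on $G$, which is exactly the group of moduli condition \textbf{(CMod)} applied to elements of the form $U = s(u)$. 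By Proposition~\ref{prop:DebesArith} (arithmetic) or Proposition~\ref{prop:DebesTop} (topological), the homomorphism $\phi_{A'}$ corresponds to the desired $A'$-model.

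The main technical point to be careful about is the bookkeeping of the action $L$ of $A'/A$ on $Z(G)$ and the placement of the cochain $z_u$ (left versus right multiplication), so that the coboundary formula in the twisted cohomology really matches the multiplicative defect $\phi_u\phi_v\phi_{uv}^{-1}$; this is purely a matter of aligning conventions with \cite{DebesDouai97}. Everything else is formal once the splitting collapses the cocycle and the semi-direct product structure is used to patch $\phi_A$ and $\phi$ into $\phi_{A'}$.
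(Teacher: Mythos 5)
Your proof is correct and is essentially the argument the paper intends: the paper gives no proof of this theorem, deferring to D\`ebes--Douai with the remark that their group-theoretic proofs carry over verbatim, and your reduction to Theorem~\ref{thm:DebesDouaiThB} via the vanishing of the correction term $\big(\phi_A(s(u)s(v)s(uv)^{-1})\big)^{-1}$ under \textbf{(Seq/Split)} --- so that $\Omega_\Lambda$ becomes exactly the obstruction to straightening $u \mapsto \phi_u$ into a homomorphism --- is precisely that standard argument. The one point worth making explicit in part (3) is that $\phi(u)$ and $\varphi_{s(u)}$ agree only modulo $C$, but $C$ centralizes $G=\mathrm{im}\,\phi_A$, so conjugation by $\phi(u)$ on $G$ still realizes \textbf{(CMod)} for $U=s(u)$ and the multiplicativity check goes through.
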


\section{The Birman-Hilden Property}\label{ch:Hilden}
 
In this section, we discuss the Birman-Hilden property and its connections with our work. The articles of Birman-Hilden 
\cite{Birman1971-qj},
\cite{Birman1972-pg},
\cite{Birman1973-ea}  represented a major breakthrough in the study of mapping class groups and the Birman-Hilden property (BH for short) is a central result in their theory.
For a recent comprehensive survey of the subject and its developments, we refer to \cite{MR4275077}, which we will mainly follow in this section.

The starting point of Birman and Hilden’s work was the relationship between mapping classes under branched coverings.
In particular, they considered the two-fold branched cover $S_2\rightarrow S_{0,6}$, where the six branch points are regarded as marked. 
Their goal was to compute $\mathrm{Mod}(S_2)$ by mapping it to $\mathrm{Mod}(S_{0,6})$, the latter being well understood because of its connection with the braid group, see equation~(\ref{eq:sphMod}). 
The deck transformation of the cover is the hyperelliptic involution, denoted by $\iota$, and Birman and Hilden proved that $\mathrm{Mod}(S_2)/\langle \iota \rangle \cong \mathrm{Mod}(S_{0,6})$. The main difficulty they faced is that elements of $\mathrm{Mod}(S_2)$ are defined only up to isotopy, and isotopies need not respect the involution $\iota$.Therefore, there is no reason a priori to expect that the surjection $\mathrm{Mod}(S_2) \rightarrow \mathrm{Mod}(S_{0,6})$ is well-defined. However, Birman and Hilden showed that one can always pick isotopies that respect the involution, thereby resolving this issue.

In general, for a (possibly branched) finite cover $p\colon X\rightarrow B$ of hyperbolic surfaces, the BH-property aims to relate $\mathrm{Mod}(X)$ with $\mathrm{Mod}(B^*)$. Note that the branched points of $B$ are considered marked, while the ramified points of $X$ are neither removed nor marked.
Let $\mathrm{SMod}(X)$ be the elements $\sigma$ of $\mathrm{Mod}(X)$ that are fiber-preserving, i.e. elements $\sigma$ satisfying $\sigma(p^{-1}(b)) = p^{-1}(b^\prime)$ for $b,b^\prime$ in $B^*$. 
These are precicely the lifts of liftable homeomorphisms of $B$ that preserve the branched points. To regard $\mathrm{SMod}(X)$ as a subgroup of $\mathrm{Mod}(X)$, we have to consider its elements up to isotopies, not necessarily fiber-preserving. 
Let $\mathrm{LMod}(B^*)$ be the subgroup of $\mathrm{Mod}(B^*)$ defined by the property that each element has a representative that lifts to a homeomorphism of $X$.
Let $G$ denote the group of deck transformations of the cover and $G^*$ its faithful embedding in $\mathrm{SMod(X)}$; deck transformation are trivially fiber-preserving and represent non-trivial mapping classes \cite{MR4275077}\cite[Prop. 7.7]{FarbMagalit}.
  
The Birman-Hilden holds if for any two fiber-preserving homeomorphisms of $X$ that are isotopic, a fiber-preserving isotopy can be chosen. In that case, by \cite[Prop 3.1]{MR4275077}, the BH-property is equivalent to the following sequence being well-defined and exact:
  \begin{equation}\label{eq:BH-property}
    1 \rightarrow G^* \rightarrow \mathrm{SMod}(X) \rightarrow \mathrm{LMod}(B^*) \rightarrow 1.
  \end{equation}
We now state some notable results of the Birman-Hilden theory for a finite branched cover $X\rightarrow B$ of hyperbolic surfaces:
\begin{itemize}
    \item (Birman-Hilden \cite[Prop. 3.1]{MR4275077}) For Galois branched covers the BH-property holds if and only if $N_{\mathrm{Mod}(X)}(G^*)/G^* \cong \mathrm{LMod}(B^*)$.
    \item (Winarski \cite{MR3370031}) Fully ramified covers satisfy the BH-property.
    \item (Birman-Hilden \cite{Birman1972-pg}, \cite{Birman1973-ea}) Unbranched covers and covers that have solvable deck transformation group satisfy the BH-property.
    \item (Maclachlan-Harvey \cite{MR374414}) Any Galois cover satisfies the BH-property.
\end{itemize}
  Note that Winarski's result implies the last two results, although they were proved independently and earlier. Moreover, M. Sato \cite[Prop. 1.2]{SATO_2009} characterized 
  $\mathrm{LMod}(B^*)$ as the stabilizer of the action of $\mathrm{Mod}(B^*)$ on the monodromy representation $\Psi \colon \pi_1(B^*,b_0)\rightarrow S_d$ corresponding to $X\rightarrow B$.
  
  We now turn our attention to covers $X\rightarrow B$ of algebraic curves over $\mathbb{C}$ such that $X^\circ\rightarrow B^*$ are hyperbolic. Recall that in the case of curves, the corresponding function field extension $\mathbb{C}(X)/\mathbb{C}(B)$ does not detect the punctures; therefore $X\rightarrow B$ is Galois if and only if $X^\circ \rightarrow B^*$ is Galois, if and only if the extension $\mathbb{C}(X)/\mathbb{C}(B)$ is Galois.

The Birman-Hilden property can be interpreted in terms of group theory as follows. Let $R < \pi_1(B^*,b_0)$ be the fundamental group of the open curve $ X^{\circ}$, which is generated by $2g+s-1$ elements, freely if $s\geq 1$, admitting a presentation
\[
    R= \langle \alpha_1, \beta_1, \ldots, \alpha_g, \beta_g, \gamma_1, \ldots, \gamma_s | \gamma_1 \gamma_2 \cdots \gamma_s[\alpha_1,\beta_1]\ldots [\alpha_g,\beta_g]=1 \rangle.
\]
Since the cover $X^\circ \rightarrow B^*$ is unbranched, it satisfies the BH-property. The groups in the exact sequence (\ref{eq:BH-property}) are the following:
\begin{align*}
    G^* &= \mathrm{N}_{\pi_1(B^*,b_0)(R)}/R, \\
    \mathrm{SMod}(X^\circ) &= \{ [f] \in \mathrm{Out}_+^*(R) : \exists\, \hat f \in \mathrm{Aut}_+^*(\pi_1(B^*,b_0)),\ \hat f|_R = f \text{ modulo } \mathrm{Inn}(R) \}, \\
    \mathrm{LMod}(B^*) &= \{\sigma \in  \mathrm{Out}_+^*(\pi_1(B^*,b_0)) : \ \sigma([R]) = [R]\}.
\end{align*}

Here the symbols “$+$" and “$*$" denote, respectively, the orientation-preserving automorphisms and those preserving
the conjugacy classes of the loops around the punctures, as in Theorem \ref{thm:Dehn-Nielsen-Baer}. 
In the above presentation of $R$, these are the parabolic elements $\gamma_i$. 
The group $G^*$ embeds into $\mathrm{SMod}(X^\circ)$ via the outer conjugation action on $R$. The characterization of $\mathrm{SMod}(X^\circ)$ follows from the fact that each fiber-preserving self-homeomorphism of $X^\circ$ yields a unique self-homeomorphism of $B^*$ such that the following diagram commutes: 
\[\begin{tikzcd}
X^\circ \arrow[r] \arrow[d] & X^\circ \arrow[d] \\
B^* \arrow[r]               & B^*  .            
\end{tikzcd}\]

This argument, used previously in Section~\ref{sec:Top_Weil}, also appears in \cite{SATO_2009}.
In the Galois case, where we do not need to keep track of inner automorphisms of $\pi_1(B^*,b_0)$ preserving $R$, we obtain
\[
\mathrm{SMod}(X^\circ) = \{ \sigma \in \mathrm{Out}_+^*(R) : \ \exists \ \hat \sigma \in \mathrm{Out}_+^*(\pi_1(B^*,b_0)), \ \hat \sigma|_R = \sigma \}.
\]
    
Assume now that the compact curve $X$ has genus $g\geq 2$, therefore it is hyperbolic and it can be uniformized as a quotient $\mathbb{H}/\Gamma$, where $\Gamma$ admits a presentation 
\begin{equation} \label{eq:Gam}
    \Gamma= \langle \alpha_1, \beta_1, \ldots, \alpha_g, \beta_g  : [\alpha_1,\beta_1]\ldots [\alpha_g,\beta_g]=1 \rangle.
\end{equation}

Let $\langle \gamma_1, \ldots, \gamma_s \rangle$ be the normal closure in $\pi_1(B^*,b_0)$ of the subgroup generated by the homotopy classes of loops around the punctures $\gamma_1, \ldots, \gamma_s$. This means that the group $\Gamma$ can be recovered from $R$, as the quotient 
\begin{equation} 
\label{eq:RG}
\Gamma=
\frac{
R\langle \gamma_1, \ldots, \gamma_s\rangle}
{ \langle \gamma_1, \ldots, \gamma_s \rangle},
\end{equation}
by compactification from $X^\circ$ to $X$, see \cite[sec. 5]{MR4117575} and \cite{MR4186523}. 
The mapping class group of $X$ can be realized as $\mathrm{Out}_+(\Gamma)$, by the Dehn–Nielsen–Baer theorem \ref{thm:Dehn-Nielsen-Baer}. 

In particular, if the branched cover $X\rightarrow B$ is Galois, so that it also satisfies the BH-property, then we can replace $\mathrm{SMod}(X^\circ)$ by $\mathrm{SMod}(X)$ which consists of the elements $\sigma \in \mathrm{Out}_+(\Gamma)$ that arise in the following way: if $\hat \sigma \in \mathrm{Out}_+^*(\pi_1(B^*,b_0))$ preserves $R$, then it induces an outer automorphism $\tilde \sigma$ on $\pi_1(B^*,b_0)/\langle \gamma_1,\cdots,\gamma_s\rangle$ and we set $\sigma \coloneqq \tilde \sigma|_R$.

Indeed, this substitution can also be done without invoking the BH-property in the following elementary way.

\begin{lemma}{\label{lemma:extRout}}
  Let $h \in \mathrm{Aut}(\pi_1(B^*,b_0))$ and let $\Gamma$ be given by eq. (\ref{eq:Gam}). 
Assume that $h \langle \gamma_1, \ldots, \gamma_s \rangle = \langle \gamma_1, \ldots, \gamma_s\rangle$ and 
$h(R)=R$. 
Then, 
the automorphism $h$ induces a map $\bar{h} \in \mathrm{Aut}(\Gamma)$.
\end{lemma}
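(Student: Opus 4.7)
The plan is to verify directly that the hypotheses on $h$ are exactly what is needed for $h$ to descend to a well-defined automorphism of the quotient in eq.~(\ref{eq:RG}). First I would observe that $\langle \gamma_1,\ldots,\gamma_s\rangle$ is, by definition, a normal subgroup of $\pi_1(B^*,b_0)$, and in particular it is normal in the subgroup $R\langle \gamma_1,\ldots,\gamma_s\rangle$; hence the quotient defining $\Gamma$ makes sense as a group.

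Next I would check that $h$ restricts to an automorphism of $R\langle \gamma_1,\ldots,\gamma_s\rangle$. This follows immediately from the two hypotheses: since $h$ is a group homomorphism,
\[
h\bigl(R\langle \gamma_1,\ldots,\gamma_s\rangle\bigr)=h(R)\cdot h\bigl(\langle \gamma_1,\ldots,\gamma_s\rangle\bigr)=R\langle \gamma_1,\ldots,\gamma_s\rangle,
\]
and because $h$ is injective on $\pi_1(B^*,b_0)$, the restriction is an injective endomorphism of $R\langle\gamma_1,\ldots,\gamma_s\rangle$ with image equal to the whole subgroup, hence an automorphism.

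Then I would pass to the quotient. Since $h$ preserves the normal subgroup $\langle \gamma_1,\ldots,\gamma_s\rangle$ setwise, the composition
\[
R\langle \gamma_1,\ldots,\gamma_s\rangle \xrightarrow{\,h\,} R\langle \gamma_1,\ldots,\gamma_s\rangle \twoheadrightarrow \Gamma
\]
vanishes on $\langle \gamma_1,\ldots,\gamma_s\rangle$ and therefore factors through the quotient, yielding a well-defined homomorphism $\bar h\colon \Gamma \to \Gamma$.

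Finally, I would show $\bar h$ is bijective by applying the same construction to $h^{-1}$. The hypotheses transfer verbatim: $h^{-1}(R)=R$ since $h(R)=R$, and analogously $h^{-1}\langle \gamma_1,\ldots,\gamma_s\rangle=\langle \gamma_1,\ldots,\gamma_s\rangle$. Thus $\overline{h^{-1}}$ exists, and by functoriality of the quotient construction it is a two-sided inverse of $\bar h$. The only mildly delicate point — and the closest thing to an obstacle — is keeping track of the two distinct roles of $\langle \gamma_1,\ldots,\gamma_s\rangle$: as the normal subgroup of $\pi_1(B^*,b_0)$ which must be stabilised by $h$, and as the kernel of the surjection onto $\Gamma$ after intersecting with $R\langle\gamma_1,\ldots,\gamma_s\rangle$; but since it is already normal in the ambient group, no additional verification is required.
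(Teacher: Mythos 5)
Your argument is correct and is essentially the paper's own proof: both verify that $h$ preserves the product subgroup $R\langle\gamma_1,\ldots,\gamma_s\rangle$ and the normal subgroup $\langle\gamma_1,\ldots,\gamma_s\rangle$, descend to the quotient $\Gamma$, and obtain the inverse by repeating the construction for $h^{-1}$ (the paper merely packages this in a commutative diagram of exact sequences). No gap to report.
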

\begin{proof}
Consider the following diagram of groups, which has exact rows and $h$ is an automorphism that keeps  $\langle \gamma_1, \ldots, \gamma_s \rangle$ and $R$ invariant: 
  \[
    \xymatrix{
      1 \ar[r] & \langle \gamma_1, \ldots, \gamma_s \rangle \ar[r] \ar[d]^h & R \langle \gamma_1, \ldots, \gamma_s \rangle \ar[r] \ar[d]^{h} & 
      \frac{ R \langle \gamma_1, \ldots, \gamma_s \rangle}{\langle \gamma_1, \ldots, \gamma_s \rangle} \ar[r] \ar[d]_{\bar{h}}& 1
      \\
      1 \ar[r] & \langle \gamma_1, \ldots, \gamma_s \rangle \ar[r] & R \langle \gamma_1, \ldots, \gamma_s \rangle \ar[r] & 
      \frac{ R \langle \gamma_1, \ldots, \gamma_s \rangle}{\langle \gamma_1, \ldots, \gamma_s \rangle} \ar[r] & 1
    }
  \]
It is clear that $h$ induces a group homomorphism $\bar{h}\colon \Gamma \rightarrow  \Gamma$. By repeating the same construction for $h^{-1}$, we obtain that $\bar{h}^{-1}$ is the inverse of $\bar{h}$, thus $\bar{h}$ is an automorphism.
\end{proof}

Note that, the reverse construction follows from short $5$-lemma. Since $R$ is normal, $R\langle\gamma_1,\ldots,\gamma_s\rangle = R$ and both the previous lemma and its reverse construction, can be adjusted to map $\mathrm{Out}_+^*(R)$ to $\mathrm{Out}_+(\Gamma)$. 
Additionally, $G^*$ can be interpreted as subgroup of $\mathrm{Out}(\Gamma)$: we have that
\[
G^* = \frac{\pi_1(B^*,b_0)}{R}= 
\frac
{
  \frac{\pi_1(B^*,b_0)}{  \langle \gamma_1, \ldots, \gamma_s \rangle} 
}
{ 
  \frac{R}
{ \langle \gamma_1, \ldots, \gamma_s \rangle}
}
=
\frac{\frac{\pi_1(B^*,b_0)}{  \langle \gamma_1, \ldots, \gamma_s \rangle}}{\Gamma}.
\]
This implies that $G^*$ acts by outer conjugation on $\frac{\pi_1(B^*,b_0)}{  \langle \gamma_1, \ldots, \gamma_s \rangle}/\Gamma$ inducing an injection $G^*\hookrightarrow \mathrm{Out}_+(\Gamma)$.

To link the BH-property with the previous sections, notice that a $(G)$-cover $X\rightarrow (B,A)$ defined over a subgroup $A\leq \mathrm{Mod} (B^*)$ induces a homomorphism $A\rightarrow \mathrm{Out}_+^*(R)$, which can be viewed as the dual operation to the BH-property of lifting mapping classes. To conclude this section and connect it with the arithmetic, we pose the following question.

\textbf{Question.}
Does there exist a profinite or pro-$\ell$ analogue of the Birman-Hilden property? That is, upon replacing $R$ by its profinite or pro-$\ell$ completion, $\langle \gamma_1,\ldots,\gamma_s\rangle$ by its topological closure, and fundamental groups of curves by their étale fundamental groups, can one formulate a relation between the corresponding outer automorphism groups? This would have to carefully account for the inertia subgroups at the branch points.
Moreover, can such a relation be made functorial with respect to the classical BH-problem?

\section{Finite Group Actions on Categories}
\label{sec:equivCat}

In this section, we shift gears and recall a few notions regarding group actions on categories and equivariant categories. We use this language for this and the final chapter. The reader is referred to \cite{elagin2015equivarianttriangulatedcategories}, \cite{MR3979084}, \cite{MR2353249}, \cite{MR4589277}, \cite{MR3334458}  and \cite{karakikes2025equivariantrecollementssingularequivalences}
for a more extensive and complete exposition. 

As a prologue to the content of the final two chapters of this paper and a link with the previous chapters, we mention that a finite subgroup $G$ of automorphisms of a variety $X$ yields the quotient variety $X/G$ (under certain assumptions).
This group action on the variety gives rise to a group action on the category of sheaves of the variety. In particular, there is an induced group action on the category $\mathsf{Coh}(X)$ of coherent sheaves of the variety $X$ which has been extensively studied. Note that $\mathsf{Coh}(X)$ is known to be an invariant of the algebraic variety, a result known as Gabriel's Reconstruction Theorem, see \cite{MR232821}. It is natural to consider a ``categorical quotient action'' analogue for the category $\mathsf{Coh}(X/G)$. This is exactly the equivariant category $\mathsf{Coh}^G(X)$ and it is canonically equivalent to $\mathsf{Coh}(X/G)$. We also upgrade this equivalence in the derived category setting that is our main object of interest in the final chapter.

We observed that the theory of equivariant categories is also related to Weil's descent datum, both arithmetic and topological (see section \ref{sec:field of moduli, definition, invariance}) and the goal of this section is to clarify how these objects are related. In fact, in Subsections \ref{sec:ActionOnCatArith}, we show that, for finite Galois extension $L_0/L$, then the $\Gal (L_0 /L) $-equivariant category of varieties defined over $L_0$ is canonically equivalent to the category of varieties defined over $L$. Similarly, using the topological analogue of Weil's Theorem \ref{thm:Weil topological}, in Subsection \ref{sec:ActionOnCatTop}, we show the topological analogue for suitable categories of covers defined over $A \lhd_{\mathrm{f}} A^\prime$.

\subsection{Actions on Additive \& Abelian Categories}

\begin{definition}{\label{def:catactions}}
Let $G$ be a finite group and $\mathcal{A}$ an additive category.
A right group action of $G$ on $\mathcal{A}$ consists of the following data:
\begin{itemize}
\item[(i)] every $g \in G$ corresponds to an auto-equivalence also denoted by $g \colon \mathcal{A}~\to~\mathcal{A}$, for the sake of simplicity,
\item[(ii)] a family of natural isomorphisms $\theta_{g,h} \colon g \circ h \xrightarrow{\simeq} (gh)$ to compose the auto-equivalences coherently,
\item[(iii)] given three auto-equivalences $g,h,k$ the following diagram commutes:
\begin{equation*}\label{cocycle}
\begin{tikzcd}
g \circ h \circ k \arrow[r] \arrow[d] & g\circ (hk) \arrow[d] \\
(gh)\circ k \arrow[r] & (ghk)
\end{tikzcd}
 \end{equation*}
\end{itemize}
Condition (iii) is called the \textit{2-cocycle condition}.
\end{definition}

Note that similarly we can define a left group action with compositions reversed, i.e. $\theta_{g,h} \colon g \circ h \xrightarrow{\simeq} (hg)$. A categorical group action is called {\em strict} if all $\theta_{g,h}$ are identities. Then notice that the 2-cocycle condition holds automatically and the identity element of $e \in G$ corresponds to $\mathrm{Id}_{\mathcal{A}}$.

\begin{example}\label{actiononcoh1}
 Let $G$ be a finite subgroup of automorphisms of a variety (or scheme) $X$. Then $G$ induces a left action on $\mathsf{Coh}(X)$ by pullbacks. Indeed, every automorphism $g$ induces an auto-equivalence $g^*\colon \mathsf{Coh}(X) \to \mathsf{Coh}(X)$ and moreover there is a canonical isomorphism $\theta_{g,h} \colon g^* \circ h^* \xrightarrow{\simeq} (hg)^*$. The 2-cocycle condition also holds. This is a left group action. Similarly, by using pushforwards $g_*$ instead of pullbacks, one can define a right group action.
\end{example}

One can consider the ``naive'' category of invariants of the action, but this category does not behave well. Namely, given an abelian category $\mathcal{A}$ the category of invariants is not always abelian. The reason for that is that (co-)kernels may themselves not be invariant. The correct notion is that of the equivariant category:

\begin{definition}{\label{def:equivcats}}
    Given a finite group $G$ acting on an additive category $\mathcal{A}$, the equivariant category $\mathcal{A}^G$ has objects $(E, \phi)$ where $E$ is an object of $\mathcal{A}$ and $\phi$ is a family of isomorphisms $\{ \phi_g : E \xrightarrow[]{\sim} gE \}_{g\in G}$, called the linearization of $E$, satisfying the following commutative diagram $\forall g,h \in G$: 
    \begin{equation*}
        \begin{tikzcd}
            E \arrow[r, "\phi_g"] \arrow[rrr, bend right=20, "\phi_{gh}"'] & gE \arrow[r, "g\phi_h"] & ghE \arrow[r, "\theta_{g,h}"] & (gh)E
        \end{tikzcd}
    \end{equation*}
   and morphisms $f\colon (E,\phi) \rightarrow (E', \phi')$ are morphisms of $\mathcal{A}$ making the following diagram commute for all $ g \in G$:
    \begin{equation*}
        \begin{tikzcd}
            E \arrow[r, "f"] \arrow[d,"\phi_g"'] & E' \arrow[d, "\phi'_g"] \\
            gE \arrow[r, "g f"] & gE'
        \end{tikzcd}
    \end{equation*}    
\end{definition}
Note that equivariant objects $(E,\phi)$ are clearly $G$-invariant objects of $\mathcal{A}$ but with extra structure. This idea of using linearizations comes from Geometric Invariant Theory, see 
\cite[Ch.1]{MR1304906}. Note also that the hom-sets of the equivariant category $\mathcal{A}$ are subsets of the hom-sets of $\mathcal{A}$. Especially, the equivariant hom-sets are the fixed points of the $G$-action on hom-sets induced by $f \mapsto (\phi'_g)^{-1} \circ(gf) \circ \phi_g$ for $f \colon E \to E' $ when $E$ and $E$ admit linearizations $\phi$ and $\phi'$, respectively. Finally, the equivariant category of an abelian (resp. additive) category is abelian (resp. additive).

Given a finite subgroup of automorphisms of a variety $X$ (or a scheme), the quotient variety $X/G$ makes sense if and only if each $G$-orbit is contained in some affine open, see \cite[Expos\'{e} V, Proposition 1.8]{SGA1}.
Note that the condition that each $G$-orbit is contained in some affine open holds automatically when $X$ is quasi-projective, see \cite[Chapter II, Paragraph 7, Remark]{MumfordAbelian}. This is the case for all our algebraic varieties which we'll deal with in the following chapters.

\begin{example}\label{actiononcoh2}
Building on the previous Example \ref{actiononcoh1} we have that there exists an equivariant category $\mathsf{Coh}^G(X)$. This is indeed the correct notion of categorical quotient since, when each $G$-orbit is contained in some affine open and $G$ acts freely, there is an equivalence of categories
\[
\mathsf{Coh}^G(X) \xrightarrow{\simeq} \mathsf{Coh}(X/G)
\]
given by pullbacks of sheaves along the quotient map $\pi \colon X \to X/G$. This equivalence, in the case of free group actions, was first established in \cite[Proposition 2, Chapter II, Paragraph 7]{MumfordAbelian}. Note that the condition that each $G$-orbit is contained in some affine open holds automatically when $X$ is quasi-projective, see \cite[Chapter II, Paragraph 7, Remark]{MumfordAbelian}. In case the action is not free or some $G$-orbit is not contained in an affine open, the same thing holds for the stacky quotient $[X/G]$ and thus the equivariant category can be thought of as a non commutative quotient category.
\end{example}

\begin{example}\label{exm:quotaffine}
    The affine case of the previous examples is a particularly well behaved scenario which is very well known. Namely, given an affine scheme $\Spe R$ and a finite group action $G$ on $R$ we have an induced action on $\Spe R$ such that
\begin{equation*}
(\Spe R)/G  = \Spe (R^G).
\end{equation*}
For a proof see \cite[Proposition. 4.8]{EdixhovenGeerMoonen_AV} for the general case and \cite[Chapter II, Paragraph 7]{MumfordAbelian} for the case of varieties. 
Moreover, if the action is free, we have the following equivalences of categories:
$$ (\mathsf{mod-}R)^G \simeq \mathsf{Coh}^G(\Spe R) \simeq  \mathsf{Coh}(\Spe R /G) \simeq \mathsf{Coh}(\Spe (R^G)) \simeq \mathsf{mod-}R^G $$
Note that there is also another way to interpret this action. Namely a group action on a ring $R$ induces a group action on its category of modules such that $(\mathsf{mod-}R)^G \simeq \mathsf{mod}-RG$, where $RG$ is the skew group algebra of $R$ by $G$ (it is the free $R$-module with basis the elements of $G$ but multiplication defined by the rule $rg = gr^g$ where $r^g$ denotes the action of $g$ on $r$). Hence, we can extend the above row of equivalences by adding this one.  

This tells us that equivariant coherent sheaves can be thought of as sheafified modules over the skew group algebra. This interpretation is also well known, see for example Section 2 in \cite{MR4130074}.
 For a complete and recent exposition on this we refer the reader to  Section 2.4 of 
\cite{karakikes2025equivariantrecollementssingularequivalences}.
\end{example}

\subsection{Actions on Derived Categories}\label{subsec:action on derived cats}
Another interesting and well studied invariant of an algebraic variety $X$ is the bounded derived category of coherent sheaves on $X$ denoted by $\mathsf{D^b}(X)$. For the subject of bounded derived categories of coherent sheaves on $X$ we refer  to \cite{MR2244106}. Thus, it is reasonable to want to extend the group action to the derived category level and investigate the relations that arise.

Let $G$ be a finite group acting on an abelian category $\mathcal{A}$. There is a well defined action on $\mathsf{D^b}(\mathcal{A})$.
Indeed, since every $g$ (seen as an auto-equivalence of $\mathcal{A}$) is an exact functor, it induces a derived functor $\mathsf{D^b}(g)$ which acts on complexes componentwise, i.e. given a complex $E^{\bullet}$ of $\mathsf{D^b}(\mathcal{A})$, then $\mathsf{D^b}(g) (E^{\bullet}) = (gE)^{\bullet}$ (of course $g$ acts also on differentials of the complex componentwise). The composition natural isomorphisms $\theta_{g,h}$ lift to compositions of the derived functors and the 2-cocycle condition also holds. This gives rise to the data of a group action on the additive category $\mathsf{D^b}(\mathcal{A})$. Hence one can consider the equivariant category $\mathsf{D^b}(\mathcal{A})^G$.

Moreover, since the equivariant category $\mathcal{A}^G$ is abelian, there exists its derived category $\mathsf{D^b}(\mathcal{A}^G)$.
As it turns out (see  \cite[Th. 7.1]{elagin2015equivarianttriangulatedcategories} or \cite[Prop. 13]{MR3334458}) there is an equivalence of categories $\mathsf{D^b}(\mathcal{A}^G) \xrightarrow{\simeq} \mathsf{D^b}(\mathcal{A})^G$ realised by the functor which sends a complex $(E, \phi)^{\bullet}$ to $(E^{\bullet} , \phi^{\bullet})$. Note that, for this equivalence, we need to assume a non-modularity condition, namely that $|G|$ is invertible in the category $\mathcal{A}$, which means that for each morphism $f$ is uniquely divisible by $|G|$. Note also, that although there is no reason for the equivariant category $\mathsf{D^b}(\mathcal{A})^G$ to be a priori triangulated this equivalence yields its triangulated structure.

\begin{example}\label{actiononcoh3}
Building more on Examples \ref{actiononcoh1} and \ref{actiononcoh2} we have the following. Assume that $X$ is a variety over a field of characteristic prime to the order of the group $G$ (this implies the non-modularity condition) of automorphisms of $X$. Then we have the following equivalences:
\[
\mathsf{D^b}(X)^G \simeq \mathsf{D^b}(\mathsf{Coh}^G(X)) \simeq \mathsf{D^b}([X/G]).
\]
We used the stacky quotient $[X/G]$ for the general case. Recall that when $X$ is quasi-projective then we have that the quotient variety $X/G$ exists and if we assume that the action is free, then we can use this instead of the stacky quotient in the above equivalence.
\end{example}

\subsection{Action on Categories of Varieties and Weil's Descent Theorem}\label{sec:ActionOnCatArith}
In the following example, we canonically construct a $\Gal(L/K)$-equivariant category of $L$-varieties. Then we show that the said equivariant category captures precisely the information of the varieties that satisfy the Weil descent condition.

\begin{example}\label{Galoisactiononcategoryofvarieties}
Let $\mathsf{Var_{L}}$ denote the category of varieties defined over $L$. For field extensions $K \subseteq L_0 \subseteq L \subseteq K_s$ we obtain natural inclusions of full subcategories $\mathsf{Var}_{L_0} \hookrightarrow \mathsf{Var}_L \hookrightarrow \mathsf{Var}_{K_s}$, by extension of scalars as in Section \ref{subsec:aractions}. 
Given a finite Galois field extension $L/L_0$ and its corresponding Galois group $\Gal(L/L_0)$, we obtain a natural $\Gal(L/L_0)$-action on $\mathsf{Var}_L$. Indeed, the action described in Section~\ref{subsec:aractions}, using eq.~\ref{eq:strictness of action} and Remarks \ref{rem:invariantactionofsubgroup} and \ref{rem:generalizedgroupaction}, yields a strict $\Gal(L/L_0)$-action on the category of $L$-varieties. Simply observe that each $\sigma \in \Gal(L/L_0)$ induces an autoequivalence $\widetilde{\sigma}$ of $\mathsf{Var}_L$.

Therefore, we can study the equivariant category $\mathsf{Var}_L^{\Gal(L/L_0)}$. Its objects are of the form 
$\big(X, \{f_\sigma\}_{\sigma \in \Gal(L/L_0)} \big)$, where $X$ is an $L$-variety and $\{f_{\sigma}\}_{\sigma \in \Gal(L/L_0)}$ is a linearization. 

Notice that $X$ admits a descent datum if and only if $X$ admits a linearization. Morphisms of equivariant objects are morphisms that descend to morphisms of $L_0$-varieties.
This means that the equivariant category $\mathsf{Var}_L^{\Gal(L/L_0)}$ is precisely the category of $L$-varieties that are definable over $L_0$.

In fact, we have realized an equivalence of categories:
\[
\mathsf{Var}_L^{\Gal(L/L_0)} \simeq \mathsf{Var}_{L_0}
\]
The functor from $L_0$-algebraic varieties maps $Y_0$ to $(Y_0\times \Spe(L) , \{\mathrm{Id}\times \sigma \}_{\sigma \in \Gal (L/L_0)})$ while the inverse functor is induced by Weil's Theorem \ref{thm:weil's descent}.
\end{example}

Note that by Remark \ref{rem:finiteExtArith} the group $\Gal(L/L_0)$ is finite and, even if the variety is defined over $K_s$, we can find a finite field extension $L_0 \subseteq L$ such that $X$ is defined over it and apply Weil's theorem for the quotient group $\Gal(K_s /L_0) / \Gal(K_s / L) \simeq \Gal(L/L_0)$, by Remarks \ref{rem:invariantactionofsubgroup} and \ref{rem:generalizedgroupaction}. 

We can also do this more abstractly using the language we introduced in Section \ref{sec:FieldMODDefinition}, for any closed subgroup $A \leq \mathfrak{\Gamma} = \Gal(K_s/L_0)$. Namely, that of the group of invariance and the corresponding field over which $X$ (a fixed variety we are interested in for some reason) is defined and the group of definition the corresponding field of definition, i.e. using the action of the quotient group 
\[
\Gal(K_s ^{A_X^{\mathrm{inv}}}/K_s^{A_X^{\mathrm{def}}}) \simeq \Gal(K_s/K_s^{A_X^{\mathrm{def}}}) / \Gal(K_s / K_s ^{A_X^{\mathrm{inv}}}) = A_X^{\mathrm{def}} / A_X^{\mathrm{inv}}
\]
and obtain that $\mathsf{Var}_{K_s^{A_X^{\mathrm{inv}}}}^{A_X^{\mathrm{def}} / A_X^{\mathrm{inv}}} \simeq \mathsf{Var}_{A_X^{\mathrm{def}}}$.
We have actually proved the following:
\begin{proposition}\label{prop:equivariantweildescent}
Let $X$ be an $L$-variety. It is definable over $L_0$ if and only if it admits a linearization with respect to the natural $\Gal(L/L_0)$-action on the category $\mathsf{Var}_{L}$. 
    For any closed subgroup $ \Gal(K_s/L) \leq A \leq \Gal(K_s/ K)$, the group of definition $A_X^{\mathrm{def}}$ of $X$ over $A$ is the largest subgroup of $\Gal(K_s/K_s^A)$ such that $X$ admits a linearization with respect to the natural action of the quotient group $A_X^{\mathrm{def}}/A_X^{\mathrm{inv}}$.
    The group of definition $\mathfrak{\Gamma}^{\mathrm{def}}_X$ of $X$ is the largest subgroup of $\Gal(K_s/K)$ such that $X$ admits a linearization with respect to the natural action of the quotient group $\mathfrak{\Gamma}_X^{\mathrm{def}} / \mathfrak{\Gamma}_X^{\mathrm{inv}}$.
\end{proposition}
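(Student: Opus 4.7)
\emph{Proof proposal.} The plan is to reduce all three assertions to Weil's Descent Theorem~\ref{thm:weil's descent} via the observation made at the end of Example~\ref{Galoisactiononcategoryofvarieties}: after unwinding Definition~\ref{def:equivcats}, a linearization of $X$ for the natural $\Gal(L/L_0)$-action on $\mathsf{Var}_L$ is literally a Galois descent datum on $X$.

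First, I would unpack the linearization condition. Since the action of $\Gal(L/L_0)$ on $\mathsf{Var}_L$ is strict by equation~\eqref{eq:strictness of action}, all comparison isomorphisms $\theta_{\sigma,\tau}$ are identities, so a linearization of $X$ is a family of $L$-isomorphisms $\{f_\sigma\colon X \xrightarrow{\simeq} {^\sigma\!X}\}_{\sigma \in \Gal(L/L_0)}$ satisfying $f_{\sigma\tau} = {^\sigma\!f_\tau} \circ f_\sigma$. This is precisely the cocycle condition defining a Galois descent datum in Section~\ref{sec:FieldMODDefinition}. For the forward direction of the first assertion, if $X$ is definable over $L_0$ with birational $L$-isomorphism $R\colon X \to Y_0 \times_{L_0} L$, the explicit formula $f_\sigma = ({^\sigma\!R})^{-1} \circ (\mathrm{Id}_{Y_0} \times \sigma) \circ R$ already computed in Section~\ref{sec:FieldMODDefinition} produces the required linearization. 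Conversely, given a linearization, Weil's Descent Theorem~\ref{thm:weil's descent}(1) yields an $L_0$-variety $Y_0$ and an $L$-isomorphism $X \simeq Y_0 \times_{L_0} L$, so $X$ is definable over $L_0$. Uniqueness up to $L_0$-isomorphism (Theorem~\ref{thm:weil's descent}(2)) moreover identifies the resulting correspondence with the equivalence $\mathsf{Var}_L^{\Gal(L/L_0)} \simeq \mathsf{Var}_{L_0}$ of Example~\ref{Galoisactiononcategoryofvarieties}.

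For the second and third parts, I would reduce to the first part via the finite Galois subquotient $A_X^{\mathrm{def}}/A_X^{\mathrm{inv}}$. By Remark~\ref{rem:finiteExtArith}, $X$ is defined over a finite extension of $K$, so the tower $K_s^A \subseteq K_s^{A_X^{\mathrm{def}}} \subseteq K_s^{A_X^{\mathrm{inv}}}$ consists of finite extensions; normality of $A_X^{\mathrm{inv}}$ in $A_X^{\mathrm{def}}$ (noted after Definition~\ref{def:groupdefArith}) makes $K_s^{A_X^{\mathrm{inv}}}/K_s^{A_X^{\mathrm{def}}}$ Galois with group $A_X^{\mathrm{def}}/A_X^{\mathrm{inv}}$. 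By Remarks~\ref{rem:invariantactionofsubgroup} and~\ref{rem:generalizedgroupaction}, $X$ may be viewed as an $L$-variety for $L := K_s^{A_X^{\mathrm{inv}}}$, and the abstractly constructed action of $A_X^{\mathrm{def}}/A_X^{\mathrm{inv}}$ agrees with the natural $\Gal(L/K_s^{A_X^{\mathrm{def}}})$-action on $\mathsf{Var}_L$. The first assertion then says that $X$ is definable over $K_s^{A_X^{\mathrm{def}}}$ if and only if it admits a linearization for this action; maximality and the remaining claims follow directly from Definition~\ref{def:groupdefArith}, and specializing to $A = \mathfrak{\Gamma}$ gives the absolute statement.

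The main obstacle I expect is bookkeeping rather than mathematical depth: one must carefully align the two cocycle conventions, identify the strict $\Gal(L/L_0)$-action on $\mathsf{Var}_L$ obtained from equation~\eqref{eq:strictness of action} with the action implicit in Weil's setup, and ensure that the passage between $X$ and ${^\sigma\!X}$ (Definition~\ref{def:schemeactedupon}, with its twisted $K_s$-structure) introduces no spurious associators. Once these identifications are made rigorous, the proposition is a direct categorical repackaging of Theorem~\ref{thm:weil's descent}.
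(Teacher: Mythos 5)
Your proposal is correct and follows essentially the same route as the paper: the paper's argument (contained in Example~\ref{Galoisactiononcategoryofvarieties} and the paragraph following it) likewise identifies linearizations for the strict $\Gal(L/L_0)$-action with Weil descent data, invokes Theorem~\ref{thm:weil's descent} for both directions and for the equivalence $\mathsf{Var}_L^{\Gal(L/L_0)} \simeq \mathsf{Var}_{L_0}$, and handles the relative and absolute statements by passing to the finite Galois quotient $A_X^{\mathrm{def}}/A_X^{\mathrm{inv}} \simeq \Gal(K_s^{A_X^{\mathrm{inv}}}/K_s^{A_X^{\mathrm{def}}})$. Your closing caveat about aligning the cocycle conventions and the identification ${^\sigma(^\tau\!X)}$ versus ${^{\sigma\tau}\!X}$ is the only point the paper glosses over as well, so nothing is missing relative to the paper's own treatment.
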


\subsection{Action on Categories of Covers and Ascent Theorem}\label{sec:ActionOnCatTop}
In the following example, we canonically construct a $A'/A$-equivariant category of covers. 

Recall from Definition \ref{def:category of covers} that $\mathsf{Cov}_{(B,A)}$ denotes the category of covers $X \to (B,A)$ with morphisms of the form $(f,g)\colon \big(X\to (B,A)\big) \to \big(Y \to (B,A)\big)$. For groups $\{1\} \leq A \leq A' \leq \mathrm{Mod}(B^*)$ we obtain natural inclusions of full subcategories $\mathsf{Cov}_{(B,A)} \hookrightarrow \mathsf{Cov}_{(B,A ^\prime)} \hookrightarrow \mathsf{Cov}_{(B,\{1\})}$, as explained below Definition \ref{def:category of covers}, thus we can simplify $(B,A)$ to $B$ if it makes the notation less heavy. Given a finite group quotient $A^\prime/A$ we obtain a left non-strict $A'/A $-action on $\mathsf{Cov}_{(B,A)}$ induced by the action of $\mathrm{Mod}(B^*)$ as described in Subsection \ref{sec:Top_Weil}.

Indeed, notice that for $\sigma \in A'/A$ we obtain a functor, also denoted $\sigma$ by abuse of notation, by mapping covers $X \to (B,A)$ to ${^\sigma \! X} \to (B,A)$ (induced by the action of $\mathrm{Mod}(B^*)$) and morphisms of covers $(f,g)$ to $({^\sigma \! f} , {^\sigma \! g})$ as described in Subsection \ref{sec:Top_Weil}. Note that for $\sigma \in A$ the cover ${^\sigma \! X} \to (B,A)$ is exactly $X \to (B,A)$ since the cover is defined over $A$ (also discussed in the said subsection) and thus the action is well-defined for elements $\sigma \in A' /A$ (compare also with Remark \ref{rem:invariantactionofsubgroup}). Moreover, we have that $\sigma $ preserves the identity morphism $(\mathrm{Id}_X , \mathrm{Id}_B)$ of any cover $X \to (B,A)$ and preserves the composition of morphisms $\big( {^\sigma (f f')} , \ {^\sigma (gg')} \big) = ({^\sigma \! f} , {^\sigma \! g}) \circ ({^\sigma \! f'} , {^\sigma \! g'})$, since, for $ff'$ and similarly for $gg'$ we have that
\[
{^\sigma \! (f f')} = \tilde{\sigma} ff' \tilde{\sigma}^{-1} = \tilde{\sigma} f \tilde{\sigma}^{-1} \tilde{\sigma} f' \tilde{\sigma}^{-1} = {^\sigma \! f} {^\sigma \! f'}
\]
Note that ${^\sigma \! ({^{\sigma^{-1}} \! X})} = X $, which is induced by $\sigma_*(\sigma^{-1}_*(R))=R$, hence $\sigma$ is surjective. Also $\sigma$ induces a bijection on the Hom-sets also discussed in Subsection \ref{sec:Top_Weil}. Hence, the functor $\sigma$ is an equivalence of categories.

Moreover, $\sigma \circ \tau \simeq (\sigma \tau)$, by which we mean that the composition of the induced functors $\sigma$ and $\tau$ is naturally isomorphic to the composition of the functor induced by the group element $\sigma\tau$. Indeed, we have a family of isomorphisms $\eta^X_{\sigma, \tau} \colon ({^\sigma \! ({^\tau \! X})} \to B) \to  ({^{\sigma\tau} \! X} \to B) $ by defining $\eta^X_{\sigma, \tau} = \widetilde{\sigma\tau} \tilde{\tau}^{-1} \tilde{\sigma}^{-1}$, which for any $(f,g) \colon (X \to B) \to (Y \to B)$ yields the following commutative diagram:

\begin{equation*}
    \begin{tikzcd}
        ({^\sigma \! ({^\tau \! X})} \to B) \arrow[d, "{^\sigma ({^\tau \! (f,g)})}"'] \arrow[r, "\eta^X_{\sigma, \tau}"] & ({^{\sigma\tau} \! X} \to B) \arrow[d, "{^{\sigma\tau} \! (f,g)}"]\\
        ({^\sigma \! ({^\tau \! Y})} \to B) \arrow[r, "\eta^Y_{\sigma, \tau}"'] & ({^{\sigma\tau} \! Y} \to B)
    \end{tikzcd}
\end{equation*}
The commutativity is checked in the following equality:
\[
{^{\sigma\tau} \! (f,g)} \eta^X_{\sigma,\tau } = \widetilde{\sigma \tau} (f,g) \widetilde{\sigma \tau}^{-1} \widetilde{\sigma \tau} \tilde{\tau} \tilde{\sigma} = \widetilde{\sigma \tau} \tilde{\tau}^{-1} \tilde{\sigma}^{-1} \tilde{\sigma} \tilde{\tau} (f,g) \tilde{\tau}^{-1} \tilde {\sigma}^{-1} = \eta^Y_{\sigma,\tau}{^\sigma ({^\tau \! (f,g)})}
\]
Thus $\eta_{\sigma,\tau} \colon \sigma \circ \tau \to \sigma\tau$ is a natural isomorphism. In addition we have the 2-cocycle condition for the natural isomorphism $\eta$ (see Definition \ref{def:catactions}) which is easily checked be the definition of $\eta$.

We have showed that there is non-strict $A'/A$-action on $\mathsf{Cov}_{(B,A)}$ and hence we can consider the equivariant category $\mathsf{Cov}_{(B,A)}^{A'/A}$.
As the arithmetic case suggests, we have an equivalence of categories $\mathsf{Cov}_{(B,A)}^{A'/A} \simeq \mathrm{Cov}_{(B,A')}$. Thus the equivariant category carries the data of the covers $X \to (B,A)$ that ascend to covers $X \to (B,A')$ and morphisms between such covers that accordingly ascend. In more detail, an equivariant object which consists of a cover $X \to (B,A)$ along with its linearization $(f_\sigma, \hat{\sigma})_{\sigma \in A'/A}$ which a Mapping class group ascent datum relative to $A'/A$ corresponds to a cover $X \to (B,A')$ by Theorem \ref{thm:Weil topological}. The inverse functor is given by extending the cover $X \to(B,A')$ to $X \to (B,A)$ with linearizations naturally given by the construction in the proof of the said theorem.

We can thus formulate the analogue of Proposition \ref{prop:equivariantweildescent}.

\begin{proposition}\label{prop:equivariantweildescent-top}
Let $X \to (B,A)$ be an cover defined over $A$ and $A \lhd_{\mathrm{f}} A^{\prime}$. It is definable over $A'$ if and only if it admits a linearization with respect to the natural $A'/A$-action on the category $\mathsf{Cov}_{(B,A)}$. 
For any subgroup $A \lhd_{\mathrm{f}} A' \leq \mathrm{Mod}(B^*)$, the group of definition ${A'}_X^{\mathrm{def}}$ of $X \to (B,A)$ over $A'$ is the largest subgroup of $A'$ such that $X$ admits a linearization with respect to the natural action of the quotient group $A_X^{\mathrm{def}}/A_X^{\mathrm{inv}}$. 
The group of definition $\mathfrak{\Gamma}^{\mathrm{def}}_X$ of $X$ is the largest subgroup of $\mathrm{Mod}(B^*)$ such that $X$ admits a linearization with respect to the natural action of the quotient group $\mathfrak{\Gamma}_X^{\mathrm{def}} / \mathfrak{\Gamma}_X^{\mathrm{inv}}$.
\end{proposition}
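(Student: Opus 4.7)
My plan is to follow the strategy already sketched in the paragraph preceding the proposition and reduce every claim to Theorem \ref{thm:Weil topological}, since the categorical formalism has been set up precisely so that linearizations correspond to ascent data.

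First, I will unpack what a linearization of an object $X \to (B,A)$ in $\mathsf{Cov}_{(B,A)}$ really is, under the non-strict $A'/A$-action constructed above the statement. An equivariant structure is a family of isomorphisms $(f_\sigma, \hat\sigma) \colon (X \to B) \xrightarrow{\simeq} ({^\sigma \! X} \to B)$, indexed by $\sigma \in A'/A$, satisfying the coherence square of Definition \ref{def:equivcats} against the natural isomorphism $\eta_{\sigma,\tau} = \widetilde{\sigma\tau}\,\tilde\tau^{-1}\tilde\sigma^{-1}$. Writing out this square and using the explicit form of $\eta_{\sigma,\tau}$, it collapses to the identity
\[
(f_{\sigma\tau},\widehat{\sigma\tau}) = ({^\sigma\! f_\tau}, {^\sigma\!\hat\tau}) \circ (f_\sigma, \hat\sigma),
\]
which is exactly the cocycle condition defining a Mapping Class Group ascent datum relative to $A'/A$. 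This identification is the core computation of the proof; it is routine but must be checked carefully to ensure that the second components $\hat\sigma$ match the splitting data used in Subsection \ref{sec:Top_Weil}.

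Next, for the equivalence of categories $\mathsf{Cov}_{(B,A)}^{A'/A} \simeq \mathsf{Cov}_{(B,A')}$, I will define the two functors. Going right-to-left, a cover $X \to (B,A')$ is sent to its restriction-of-definition $X \to (B,A)$ together with the canonical linearization whose existence follows from the construction in the proof of Theorem \ref{thm:Weil topological} (the family of lifts $\hat\sigma$ provided by the ascent datum constructed there). Going left-to-right, an equivariant object $(X,(f_\sigma,\hat\sigma))$ is sent to the cover $X \to (B,A')$ produced by Theorem \ref{thm:Weil topological}, where the group $\coprod_i \hat\sigma_i H$ extends $H$ and gives the defining subgroup over $A'$. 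Functoriality on morphisms is automatic: an equivariant morphism of covers is, by Definition \ref{def:equivcats}, a morphism that commutes with the linearizations, which is precisely a morphism that descends (ascends) to a morphism in $\mathsf{Cov}_{(B,A')}$. That the two functors are quasi-inverses reduces to the uniqueness part of the construction in Theorem \ref{thm:Weil topological}. This gives the first assertion.

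For the second and third assertions, I will argue purely formally from the first one combined with the definitions of $A_X^{\mathrm{mod}}$, $A_X^{\mathrm{def}}$, $A_X^{\mathrm{inv}}$ given in Section \ref{sec:field of moduli, definition, invariance}. By the first part, the subgroups $A''$ of $A'$ for which $X$ admits a linearization with respect to the induced $A''/A$-action are exactly those over which $X$ is definable; by definition of the group of definition, ${A'}_X^{\mathrm{def}}$ is the largest such subgroup. The quotient $A_X^{\mathrm{def}}/A_X^{\mathrm{inv}}$ appearing in the statement is then the correct one because the action factors through $A'/A_X^{\mathrm{inv}}$ (elements of $A_X^{\mathrm{inv}}$ act trivially on $X$ by definition). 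The absolute statement with $\mathfrak{\Gamma}$ is the special case $A' = \mathfrak{\Gamma}$.

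The main subtlety I expect is keeping track of the difference between strict and non-strict actions and making sure that the natural isomorphism $\eta_{\sigma,\tau}$ collapses exactly to the cocycle condition used in the topological Weil theorem --- in particular verifying that the second coordinate of the cocycle encodes precisely the failure of the set-theoretic section $\hat{\,}\,\colon \mathrm{Mod}(B^*) \to \mathrm{Homeo}_+(B^*)$ to be a homomorphism. Once this translation is pinned down, the rest is a routine application of Theorem \ref{thm:Weil topological} and the definitions from Section \ref{sec:field of moduli, definition, invariance}.
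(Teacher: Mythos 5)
Your proposal is correct and follows essentially the same route as the paper: the paper's argument (given in the discussion preceding the proposition) likewise identifies linearizations for the non-strict $A'/A$-action with Mapping Class Group ascent data, invokes Theorem \ref{thm:Weil topological} for both directions of the equivalence $\mathsf{Cov}_{(B,A)}^{A'/A}\simeq\mathsf{Cov}_{(B,A')}$, and deduces the statements about ${A'}_X^{\mathrm{def}}$ and $\mathfrak{\Gamma}_X^{\mathrm{def}}$ formally from the definitions. Your explicit flagging of the coherence square against $\eta_{\sigma,\tau}=\widetilde{\sigma\tau}\,\tilde\tau^{-1}\tilde\sigma^{-1}$ as the point requiring verification is, if anything, slightly more careful than the paper's treatment.
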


\section{Actions by Isomorphisms and Descent using Derived Categories}\label{sec: descent using derived categories}

In this section we work with an $L$-variety $X$ where $K \subseteq L_0 \subseteq L$ is a finite extension. We aim to construct the Weil descent variety $Y$ defined over $L_0$ using a suitable $\Gal(L /L_0)$-action on $X$ which depends on the Weil-descent datum. This provides a modern proof on Weil's theorem. Moreover, we extend this action and the respective quotient to bounded derived categories of coherent sheaves of $X$.

\subsection{Weil's Descent Variety}
Recall by Section \ref{subsec:aractions} and in particular Remark \ref{rem:non isomorphic K_s-schemes}, that each $\sigma \in G = \Gal(L/L_0)$ induces an isomorphism of schemes 
$\tilde{\sigma} \colon {^\sigma \! X} \rightarrow X$ which does not preserve the underlying $L$-scheme structure, i.e. it is not compatible with the local $L$-algebra structure of the respective structure sheaves.

\begin{proposition}\label{prop: group action by weil descent}
Let $X$ be an algebraic variety defined over $L$. If there exists a family $\{f_\sigma\}_{\sigma \in G} $, 
which  satisfies the Weil cocycle/descent condition 
\[
  f_{\sigma\tau}={^\sigma \! f_{\tau} \circ f_\sigma} \text{ for all } \sigma,\tau \in G
\]
then the map
\begin{align*}
X & \longrightarrow X  
\\
P & \longmapsto \tilde{\sigma} f_\sigma(P)
\end{align*}
is a right group action of $G$ on $X$ by automorphisms of the underlying variety $X$.
\end{proposition}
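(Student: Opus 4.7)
The plan is to verify the three defining conditions of a right action of $G$ on $X$ by scheme-automorphisms: that each map $a_\sigma := \tilde\sigma \circ f_\sigma$ is an automorphism of the underlying variety, that $a_e = \mathrm{Id}_X$, and that $a_\tau \circ a_\sigma = a_{\sigma\tau}$ for all $\sigma,\tau \in G$. The first claim is immediate from Section~\ref{subsec:aractions}: by hypothesis $f_\sigma \colon X \to {^\sigma\!X}$ is an isomorphism of $L$-schemes, while the projection $\tilde\sigma \colon {^\sigma\!X} \to X$ from Definition~\ref{def:schemeactedupon} is an isomorphism of the underlying schemes (not of $L$-schemes), so their composition is a scheme-automorphism of $X$. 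For the identity axiom I would specialise the Weil cocycle at $\sigma = \tau = e$ to obtain $f_e = f_e \circ f_e$, which forces $f_e = \mathrm{Id}_X$; combined with $\tilde e = \mathrm{Id}$, this gives $a_e = \mathrm{Id}_X$.

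The main calculation is the compatibility $a_\tau\circ a_\sigma = a_{\sigma\tau}$. Expanding both sides,
\[
a_\tau\circ a_\sigma \;=\; \tilde\tau\circ f_\tau \circ \tilde\sigma\circ f_\sigma,\qquad
a_{\sigma\tau} \;=\; \widetilde{\sigma\tau}\circ f_{\sigma\tau} \;=\; \widetilde{\sigma\tau}\circ {^\sigma\!f_\tau}\circ f_\sigma,
\]
where the rightmost equality uses the Weil cocycle. These will coincide once one inserts two structural identities from Section~\ref{subsec:aractions}: first, the iterated-base-change decomposition $\widetilde{\sigma\tau} = \tilde\tau\circ\tilde\sigma$ arising from eq.~\eqref{eq:strictness of action}; and second, the naturality square
\[
\begin{array}{ccc}
{^\sigma\!X} & \xrightarrow{\;{^\sigma\!f_\tau}\;} & {^\sigma({^\tau\!X})}\\
\downarrow \tilde\sigma & & \downarrow \tilde\sigma\\
X & \xrightarrow{\;f_\tau\;} & {^\tau\!X}
\end{array}
\]
which expresses that ${^\sigma\!f_\tau}$ is obtained from $f_\tau$ by base change along $\tilde\sigma\colon \Spe L \to \Spe L$. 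Substituting $f_\tau\circ\tilde\sigma = \tilde\sigma\circ{^\sigma\!f_\tau}$ into the first display and then applying $\tilde\tau\circ\tilde\sigma = \widetilde{\sigma\tau}$ matches the two sides, finishing the argument.

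The main obstacle I anticipate is keeping the Galois conventions straight: one has to reconcile the identification $^\sigma({^\tau\!X}) = {^{\sigma\tau}\!X}$ implicit in writing the Weil cocycle with the iterated pullback identification of eq.~\eqref{eq:strictness of action}, so that the order of composition in $\widetilde{\sigma\tau}$ is the same on both sides of the target equality. Once these bookkeeping conventions are fixed consistently throughout, everything else reduces to formal functoriality of fiber products and the cocycle hypothesis on $\{f_\sigma\}$.
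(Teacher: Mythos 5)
Your proposal is correct and follows essentially the same route as the paper: the key identities $\widetilde{\sigma\tau}=\tilde\tau\circ\tilde\sigma$ and $\tilde\sigma\circ{^\sigma\!f_\tau}=f_\tau\circ\tilde\sigma$ are exactly what the paper's computation uses. Your verification of the identity axiom (deriving $f_e=\mathrm{Id}_X$ from the cocycle) and your more direct justification that each $\tilde\sigma f_\sigma$ is an automorphism (composition of isomorphisms, rather than exhibiting the inverse $\widetilde{\sigma^{-1}}f_{\sigma^{-1}}$ via the cocycle as the paper does) are only cosmetic variations.
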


\begin{proof}
By considering the following commutative  diagram 
\[
  \xymatrix{
  X   \ar[r]^{f_\tau}   &  {^\tau \! X}  
  \\
  {^\sigma \!X} \ar[r]^{ {^\sigma \! f}_\tau}
  \ar[u]^{\tilde{\sigma}}  &
   {^{ \sigma \tau} \! X}
   \ar[u]_{\tilde{\sigma}}
  }
\]
we observe that $ {^\sigma \! f}_\tau = \tilde{\sigma}^{-1} f_\tau \tilde{\sigma}$.

Recall the relation $\widetilde{\sigma\tau} = \widetilde{\tau}\cdot \widetilde{\sigma}$ from Section \ref{subsec:aractions} and  compute:
\[
  ( \widetilde{\sigma \tau } ) f_{  \sigma \tau}
  = \widetilde{\tau} \widetilde{\sigma} \ {^\sigma \! f}_{\tau} f_\sigma 
  = 
  \widetilde{\tau}\widetilde{\sigma} \widetilde{\sigma}^{-1} f_\tau \widetilde{\sigma} f_\sigma 
  = (\widetilde{\tau} f_\tau)(\widetilde{\sigma} f_\sigma).
\]
which is the group law for the right action.
Fianlly,  $\tilde{\sigma}f_\sigma$ is an automorphism of $X$ because 
\[ 
(\widetilde{\sigma^{-1}}f_\sigma)( \tilde{\sigma}f_\sigma) = \widetilde{\sigma \sigma^{-1}}f_{\sigma \sigma^{-1}} = \tilde{e}f_e = \mathrm{id}_X
\]
using the cocycle condition and the identity $\tilde{e} = f_e = \mathrm{Id}_X$, where $e \in G$ is the identity element of the group.
\end{proof}

\begin{remark}
We record two technical observations.
\begin{itemize}
\item[\textnormal{(i)}] The computation of $ ( \widetilde{\sigma \tau } ) f_{  \sigma \tau} = (\widetilde{\tau} f_\tau)(\widetilde{\sigma} f_\sigma)$ is further reflected on the diagram: 
    \[
\begin{tikzcd}[row sep=2.5em, column sep=0.3em]
&&
{^\sigma\!X} 
 \arrow[ddd,swap,"{^\sigma \!p}" description,near start ] 
\arrow[dr] 
\arrow[dll,"\tilde{\sigma }",swap]
&&&
X   
 \arrow[lll,teal,swap,"f_\sigma"]  
 \arrow[dr,"f_\sigma  ", teal] 
 \arrow[ddd,"p"]
 \arrow[dll,"f_{\sigma \tau }" description, teal]
\\
 X 
 \arrow[rrr,<-,crossing over,"\widetilde{\sigma \tau }" description, near start] 
 \arrow[rd,<-,crossing over,"\tilde{\tau }"' ] 
 &&&
{^{\sigma\tau} \!X }  \ar[dd]
&&&  
{^\sigma \!X} 
  \arrow[ddd,"{^\sigma \!p}"] 
 \arrow[lll,teal, crossing over,"{^\sigma \!f_{\tau}}" description, swap]
 \arrow[dll,crossing over, "\tilde{\sigma}" description, near end]
\\
& {^\tau \! X} 
\arrow[rrr,<-,teal,"f_\tau" description,crossing over] 
\arrow[urr, <-, "\tilde{\sigma }" description, crossing over, near start]
& &  \;&  
X
\\ 
&& \Spe(K_s ) 
 \arrow[dr] 
 \arrow[dll,"\tilde{\sigma }" description,near start]
& \; && 
\Spe(K_s ) 
 \arrow[lll,"\mathrm{Id} ",teal] 
 \arrow[dr,"\mathrm{Id}",teal] 
 \arrow[dll,"\mathrm{Id}",teal,near start ]
\\
 \Spe(K_s  ) 
  \arrow[rrr,<-,"\widetilde{\sigma \tau}" description] 
  \arrow[uuu,<-,crossing over,"{p}" ]
  \arrow[dr,<-,"\tilde{\tau }"']   
&&& 
\Spe(K_s ) 
 \arrow[uu,<-,"{^{\sigma\tau} \!p} ",crossing over,near end] 
&&& 
 \Spe(K_s)
   \arrow[lll,"\mathrm{Id}",teal]
\\
&
\Spe(K_s ) 
 \arrow[uuu,<-,"{^{\tau} \!p} ",crossing over,near end]
 \arrow[urr,"\tilde{\sigma }" description,<-]
&&&
\Spe(K_s ) 
 \arrow[uuu,<-,"{p} ",crossing over,near end,swap]
 \arrow[urr,<-,"\tilde{\sigma }"']
 \arrow[lll,swap,teal,"\mathrm{Id}"']
\end{tikzcd}
\]
where the teal arrows represent morphisms of $\Spe(K_s)$-schemes and $\tilde{\sigma }$,  $\tilde{\tau}$ and ${\widetilde{\sigma \tau}}$ are merely isomorphisms of schemes. If they were isomorphisms of $K_s$-schemes for example, all the compositions of bottom arrows would have to be identities.

\item[\textnormal{(ii)}] The composition $\tilde{\sigma}f_\sigma$ is used to define a non trivial automorphism of $X$. The non trivial part comes from $f_\sigma$ being isomorphisms of $X$. Thus, ${^\sigma \!X}$ are, in general, different from $\tilde{\sigma}^{-1}$. Of course, it may be the case that $f_\sigma=\tilde{\sigma}^{-1}$, for all $\sigma$, but this would imply that the chosen descent datum is trivial, i.e. $X$ is actually defined over $L_0$, hence the descent is trivial. 
\end{itemize}
\end{remark}

In the following Remark we wish to clear some ambiguity (at least the authors found this ambiguous) regarding the automorphism group of $X$ in the celebrated reconstruction theorem of Bondal and Orlov.

\begin{remark}\label{rem:bondal orlov remark}
  A. Bondal  and D. Orlov, in their seminal article \cite{MR1818984}, studied the group of exact autoequivalences of a smooth projective variety $X$ defined over a field $k$ with (anti-)ample canonical sheaf, and proved that this group includes the group 
  \[
    \mathrm{Aut}(X) \ltimes (\mathrm{Pic}(X) \oplus \mathbb{Z} ),  
  \]
where $\mathrm{Pic}(X)$ is the Picard group of the variety $X$, the group $\mathbb{Z}$ corresponds to the shift functor, and $\mathrm{Aut}(X)$ is the automorphism group of $X$. 
If moreover the canonical or the anticanonical sheaf is ample, then this is the whole group of exact autoequivalences.  

Notice, that there is some ambiguity here of what the authors mean by the group $\mathrm{Aut}(X)$. When we are dealing with varieties defined over an algebraically closed field $K$, by automorphisms of $X$, we mean automorphisms in the category of $L$-schemes, that is automorphisms $f$ that make the following diagram commutative:
\[
  \xymatrix{
 X \ar[rr]^{f} \ar[dr] &  & X \ar[dl]
 \\
 & \mathrm{Spec} k & 
  }
\]
However in our setting we allow maps that are not $k$-automorphisms. 

Let us repeat the proof of \cite[Th. 3.1]{MR1818984} supposing that the canonical sheaf is ample. In the case of curves this means that the genus $g$ of the curve $X$ is $g \geq 2$. First an exact autoequivalence should preserve the invertible objects, that is invertible scheaves up to translations. Therefore, after composing with an element in $\mathrm{Pic}(X) \oplus \mathbb{Z}$, we can assume that the autoequivalence $F$ preserves $\mathcal{O}_X$ and also any tensor power $\omega_X^{\otimes i}$, since it commutes with the Serre functor. This means that $F$ induces a (twisted) automorphism of the graded coordinate algebra $\bigoplus_{i=0}^\infty H^0(X, \omega_X^{\otimes i})$ of the canonical sheaf. So in terms of the short exact sequence of the canonical embedding 
\[
0 \rightarrow  I_X \longrightarrow  \mathrm{Sym} H^0(X, \omega_X ) \longrightarrow 
\bigoplus_{i=0}^\infty H^0(X, \omega_X^{\otimes i}) 
\rightarrow 0  
\]  
$F$ acts as a twisted automorphism on all three terms involved, 
that is the algebras in the second and third position are twisted $\Gamma$-algebras, and the ideal $I_X$ has a compatible twisted action. 
Therefore the group $\mathrm{Aut}(X)$ is defined to be the elements

that keep the ideal $I_X$ invariant. 

We have already explained in Example \ref{actiononcoh3} how an isomorphism induces an action on $\mathsf{D^b}(X)$.
Briefly, an isomorphism $\sigma \colon X \rightarrow X$ induces, by functoriality, a map $\sigma^* \colon\mathsf{Coh}(X) \rightarrow \mathsf{Coh}(X)$, such that  $\sigma^*\mathcal{O}_X = \mathcal{O}_X$. This autoequivalence  $\sigma^*$ in the category $\mathsf{Coh}(X)$ induces an autoequivalence on $\mathsf{D^b}(X)$. Moreover, it keeps $I_X$ invariant and thus belongs in $\A (X)$.
\end{remark}

Observe that the quotient $X/G$, if it exists as a scheme, for the action of Proposition \ref{prop: group action by weil descent}, it is a variety defined over $L_0$. Indeed, the group $G=\Gal(L/L_0)$ acts trivially on $X/G$ and, by Proposition \ref{prop:invArith}, we reach this conclusion. In the following proposition we show that this quotient is a way to construct $Y$ - which exists since $X$ admits a Weil descent datum.
Note that the action of Proposition \ref{prop: group action by weil descent} does not affect Mumford's quotient $X/G$ although he assumes he is working over $K$-algebraic varieties (see his proof). Basically Mumford's claim is that $A^G$ is a $k$-algebra, thus $\Spe A^G = \Spe A /G$ is a $k$-scheme but in our work we do not need this extra structure.

\begin{proposition}\label{prop:actiononvariety}
    Let $X$ be an algebraic variety defined over $L$, $\{ f_\sigma \}_{\sigma \in G}$ is a Weil descent datum on $X$, where $G=\Gal(L/L_0)$, and $Y$ is the $L_0$-variety over which $X$ is definable. Consider also the action of isomorphism of $G$ on $X$ of Prosposition \ref{prop: group action by weil descent}. This action is free and $X/G \simeq Y$.  
\end{proposition}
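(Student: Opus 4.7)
The plan is to use Weil's descent Theorem~\ref{thm:weil's descent} to transport this $G$-action on $X$ to the canonical Galois action on a base change, where both freeness and the computation of the quotient are standard facts about Galois torsors.

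First, I would apply Theorem~\ref{thm:weil's descent} to the Galois descent datum $\{f_\sigma\}_{\sigma \in G}$ to obtain an $L_0$-variety $Y$ together with an $L$-isomorphism $R\colon X \xrightarrow{\sim} Y\times_{L_0}L$ satisfying $R = {^\sigma\! R}\circ f_\sigma$ for every $\sigma \in G$. Equivalently, by the explicit construction of descent data preceding that theorem, this identity reads
\[
f_\sigma = ({^\sigma\! R})^{-1}\circ (\mathrm{Id}_Y\times \sigma)\circ R,
\]
where $\mathrm{Id}_Y \times \sigma \colon Y\times_{L_0}L \to {^\sigma(Y\times_{L_0}L)}$ is the canonical Galois twist acting on the second factor of the base change.

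Second, I would invoke the naturality of base change along $\tilde{\sigma}$ applied to $R$, which yields the commutative square $R\circ \tilde{\sigma} = \tilde{\sigma}\circ {^\sigma\! R}$ between $X$, $Y\times_{L_0}L$ and their $\sigma$-twists. Substituting and telescoping ${^\sigma\! R}\circ ({^\sigma\! R})^{-1}$ gives
\[
R\circ (\tilde{\sigma} f_\sigma) \;=\; \tilde{\sigma}\circ (\mathrm{Id}_Y\times\sigma)\circ R.
\]
A direct ring-level check (using the canonical identification $L\otimes_{L,\sigma} L\cong L$) shows that the composition $\tilde{\sigma}\circ (\mathrm{Id}_Y\times\sigma)$ on $Y\times_{L_0}L$ is precisely the canonical $G$-action $\mathrm{Id}_Y\times \tilde{\sigma}$ on the base change—affine-locally, $\mathrm{Id}_{A_0}\otimes \sigma$ on $A_0\otimes_{L_0} L$. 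Hence $R$ is $G$-equivariant with respect to our action on $X$ and the canonical $G$-action on $Y\times_{L_0} L$.

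Finally, the canonical $G$-action on $Y\times_{L_0}L$ is free with quotient $Y$: the morphism $\Spe(L)\to \Spe(L_0)$ is a Galois $G$-torsor because $L/L_0$ is Galois, and base changing by $Y\to \Spe(L_0)$ preserves the torsor property. Concretely, the invariants compute as $(A_0\otimes_{L_0}L)^G = A_0\otimes_{L_0} L^G = A_0$, and this glues globally via $G$-invariant affine covers (available since $Y$ is quasi-projective, cf.~\cite[Ch.~II, \S 7]{MumfordAbelian}). Transporting through $R$, we conclude that the action of Proposition~\ref{prop: group action by weil descent} on $X$ is free and that $X/G\simeq Y$.

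The main technical subtlety is the bookkeeping of the various Galois twists and identifying the map $\tilde{\sigma}\circ (\mathrm{Id}_Y\times \sigma)$ on $Y\times_{L_0}L$ with the canonical action $\mathrm{Id}_Y\times\tilde{\sigma}$; once the naturality square for $R$ is in place, the rest of the argument reduces the statement to the standard fact that $\Spe(L)\to\Spe(L_0)$ is a $G$-torsor.
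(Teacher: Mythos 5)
Your proof is correct, and at its core it takes the same route as the paper: identify $X$ with $Y\times_{L_0}L$ via Weil's theorem and reduce to the canonical Galois action on the second factor of the base change, whose quotient is $Y$ and which is free because $\Spe(L)\to\Spe(L_0)$ is. The two arguments differ in execution in a way worth noting. The paper passes immediately to the isomorphism $X\simeq Y\times_{L_0}\Spe(L)$ and computes the quotient with the fiber-product formula for quotient stacks, $\bigl[\tfrac{Y\times_{L_0}\Spe(L)}{\{1\}\times G}\bigr]\simeq [Y]\times_{L_0}\Spe(L^G)$, citing Example~\ref{exm:quotaffine} for $\Spe(L)/G=\Spe(L^G)$; it does not explicitly verify that the action $\tilde{\sigma}f_\sigma$ of Proposition~\ref{prop: group action by weil descent} corresponds, under the descent isomorphism $R$, to the product action $\{1\}\times G$. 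Your proof supplies exactly that verification: the identity $R\circ(\tilde{\sigma}f_\sigma)=\tilde{\sigma}\circ(\mathrm{Id}_Y\times\sigma)\circ R$, obtained from $R={^\sigma\!R}\circ f_\sigma$ and the naturality square for base change along $\tilde{\sigma}$, is the precise statement that $R$ is $G$-equivariant, and this is the crux on which the whole reduction rests. You then replace the stack-theoretic quotient computation by the affine-local invariant calculation $(A_0\otimes_{L_0}L)^G=A_0$ and the torsor argument for freeness, which is more elementary and equally valid given the quasi-projectivity used to glue. In short: same strategy, but your write-up makes explicit the equivariance step that the paper leaves implicit, at the cost of slightly more bookkeeping with the twists.
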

\begin{proof}
It is known that for product quotient stacks the following holds
\begin{equation}
\label{eq:fibreProdStacks}
\left[
\frac{X \times_S Y}{G\times H} 
\right]
 \simeq 
 \left[
 \frac{X}{G} 
 \right] \times_S 
 \left[
 \frac{Y}{H} 
 \right],
\end{equation}
for some groups $G$ acting on $X$ and $H$ acting on $Y$ over some base scheme $S$,
see for example \cite{152738}.

In this case, we have $X \simeq Y \times_{L_0} \mathrm{Spec}(L)$ (where the isomorphism is over $L$) and the group $G = \{1\} \times G$ which acts trivially on $Y$, so we have:
\begin{align*}
\left[ \frac{X}{G} \right] &\simeq 
\left[  \frac{Y\times_K \mathrm{Spec}(L_0)}{\{1\}\times G} \right]  \\
& \simeq 
\left[ \frac{Y}{\{1\}} \right]\times_{L_0} \left[ \frac{\mathrm{Spec}(L)}{G} \right] \\
& \simeq [Y] \times_{L_0} \mathrm{Spec}(L^G) \\ 
& \simeq [Y] \times_{L_0} \mathrm{Spec}(L_0) \simeq [Y]. 
\end{align*}

We have used that $\Spe(L)/G = \Spe(L^G)$ of Example \ref{exm:quotaffine} and that $L^G = L_0$, by construction of the action.
Note, all the stacks involved are actually ordinary varieties, since $X$ is quasi-projective variety (since algebraic varieties are quasi-projective) as mentioned in Example \ref{actiononcoh2}. That is $X/G \simeq Y$ in the category of varieties.
Observe also that the action of the Galois group $G=\Gal(L/L_0)$ on $\Spe(L)$ is free, hence, it is free on $X \simeq Y \times_{L_0} \Spe(L)$.  
\end{proof}

  \subsection{Galois Actions on Categories of Sheaves}   
 
The following results are expected and extend the usual quotient varieties to derived categories (and to categories of coherent sheaves). However, we want to finish this article with some ideas regarding de-equivariantization of categories. The next proposition is straight-forward and explains how this action induces an action on $\O_X$-modules.

\begin{proposition}
\label{prop:9}
Let $X$ be an algebraic variety defined over $L$ and $\{f_{\sigma} \}_G$ is a Weil descent datum on $X$, where $G = \Gal(L/L_0)$. Consider the action of $G$ on $X$ of Proposition \ref{prop: group action by weil descent}.  Let $\mathcal{F}$ be a sheaf of $\mathcal{O}_X$-modules on $X$, 
  the map 
  \[
\mathcal{F} \mapsto  
 (\tilde{\sigma } f_\sigma )^*
 \mathcal{F}
 \simeq f_\sigma ^*  \tilde{\sigma}^*  \mathcal{F}
\]
 is a right group  action of $G$ on the $\mathcal{O}_X$-modules of $X$. 
\end{proposition}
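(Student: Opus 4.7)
The plan is to lift the right $G$-action on the scheme $X$ obtained in Proposition~\ref{prop: group action by weil descent} to a right action on $\mathcal{O}_X$-modules via the contravariant pullback functor. Write $\phi_\sigma \coloneqq \tilde{\sigma} f_\sigma$ for the scheme automorphism attached to $\sigma \in G$; the cited proposition gives $\phi_e = \mathrm{Id}_X$ together with the right-action identity
\[
\phi_{\sigma\tau} = \phi_\tau \circ \phi_\sigma \qquad \text{for all } \sigma, \tau \in G.
\]
Each $\phi_\sigma$ is an isomorphism of schemes with inverse $\phi_{\sigma^{-1}}$, so pullback along it is an autoequivalence of the category of $\mathcal{O}_X$-modules. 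A small but relevant remark is that $\phi_\sigma$ is generally not a morphism of $L$-schemes, because $\tilde{\sigma}$ is not; however, pullback is defined for any morphism of locally ringed spaces, so this causes no trouble.

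Following the formalism of Definition~\ref{def:catactions}, I will take the autoequivalence attached to $\sigma$ to be $F_\sigma \coloneqq \phi_\sigma^{*}$, and let the coherence isomorphisms $\theta_{\sigma,\tau}\colon F_\sigma \circ F_\tau \xrightarrow{\simeq} F_{\sigma\tau}$ be the canonical pullback-of-a-composition isomorphism
\[
\phi_\sigma^{*} \circ \phi_\tau^{*} \;\xrightarrow{\simeq}\; (\phi_\tau \circ \phi_\sigma)^{*} \;=\; \phi_{\sigma\tau}^{*},
\]
where the equality uses the right-action identity displayed above. Applying the same pseudo-functorial isomorphism to the factorisation $\phi_\sigma = \tilde{\sigma}\circ f_\sigma$ immediately yields the displayed natural isomorphism $(\tilde{\sigma} f_\sigma)^{*}\mathcal{F} \simeq f_\sigma^{*}\tilde{\sigma}^{*}\mathcal{F}$ appearing in the statement.

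It remains only to check the 2-cocycle coherence condition of Definition~\ref{def:catactions}(iii). For any triple $\sigma, \tau, \rho \in G$, both composite natural isomorphisms in the coherence hexagon are instances of associativity of pullback applied to the composition $\phi_\rho \circ \phi_\tau \circ \phi_\sigma = \phi_{\sigma\tau\rho}$, and they agree by the pseudo-functoriality of $(-)^{*}$ on locally ringed spaces. No step poses a genuine difficulty: the proposition reduces to Proposition~\ref{prop: group action by weil descent} together with the contravariant pseudo-functoriality of the pullback, and the only substantive observation is that passing from scheme automorphisms to their pullbacks converts the right action $\phi_{\sigma\tau} = \phi_\tau \phi_\sigma$ on $X$ into the right action $F_{\sigma\tau} = F_\sigma \circ F_\tau$ on the category of $\mathcal{O}_X$-modules in the sense of Definition~\ref{def:catactions}.
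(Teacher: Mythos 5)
Your proposal is correct and follows essentially the same route as the paper: both arguments lift the right action $\phi_{\sigma\tau}=\phi_\tau\circ\phi_\sigma$ on $X$ from Proposition~\ref{prop: group action by weil descent} through the contravariant pullback, observe that $(\phi_{\sigma\tau})^*\simeq\phi_\sigma^*\circ\phi_\tau^*$ by pseudo-functoriality, and note that each $\phi_\sigma^*$ is an autoequivalence of $\mathcal{O}_X$-modules even though $\tilde\sigma$ is not a morphism of $L$-schemes. Your explicit verification of the 2-cocycle condition is slightly more detailed than the paper's ``readily checked,'' but the substance is the same.
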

\begin{proof}
  Recall that the map $\tilde{\sigma}\colon {^\sigma \! X }\rightarrow X  $ is a moprhism of schemes, but not necessarily a map of $K_s$-schemes. 
  In any case we have that 
  \[
    (\widetilde{\sigma\tau } f_{\sigma\tau} )^*
    =
    (\tilde{\tau } f_\tau  \tilde{\sigma } f_\sigma )^* \simeq
    (\tilde{\sigma } f_\sigma )^* (\tilde{\tau } f_\tau )^*.
    \] 
As in Example \ref{actiononcoh1}, the data of the group action of Definition \ref{def:catactions} are satisfied and are readily checked. Note that we have to actually use the fact that the action is by automorphisms of $X$ since the functor $(\tilde{\sigma}f_\sigma)^*$ is actually an autoequivalence of the category of $\O_X$-modules. 
\end{proof}
This final assertion of the proof is true because of the following observation:
given an {\em open morphism of schemes} $\pi \colon X\rightarrow Y$ and a sheaf $\mathcal{F}$ of $Y$ we can define the sheaf $\pi^{-1}\mathcal{F}$ of $X$ and the $\mathcal{O}_X$-module $\pi^* \mathcal{F}$ as follows:
On an open set $U \subset X$ 
\[
  \pi^{-1} \mathcal{F}(U)= \mathcal{F}(\pi(U))
  \text{ and }
  \pi^{\ast} \mathcal{F}(U) 
  = \pi^{-1} \mathcal{F}(U) \otimes_{\pi^{-1}\mathcal{O}_Y(U)} \mathcal{O}_X(U). 
\] 
In the above construction the definition of a morphism of schemes provides us with a ring homomorphism 
\[
  \pi^{-1}\mathcal{O}_Y(U) \rightarrow \mathcal{O}_X(U)
\]
which allows us the computation of the tensor product, see \cite[II.5]{Hartshorne:77}. Indeed, for any continuous function $\pi\colon X \rightarrow  Y$ we know that $\pi^{-1}$ is a left adjoint of $\pi_{\ast}$, that is there is a natural map 
\[
  \pi^{-1} \pi_{\ast} \mathcal{O}_X \rightarrow \mathcal{O}_X,
\]
and moreover since the map $\pi$ is a morphism of schemes we have $\pi_{\ast} \mathcal{O}_X= \mathcal{O}_Y$, \cite[Exer. II.1.18]{Hartshorne:77}.
This means that $\pi^*$ sends $\O_Y$-modules to $\O_X$-modules. If, moreover, $\pi$ is an equivalence, in particular $\pi^{-1}$ is also an open immersion, then $\pi$ is an equivalence of categories of $\O_X$ modules and $\O_Y$ modules, since $\pi^* (\pi^{-1})^* \simeq (\pi^{-1}\pi)^* = e^* \simeq \mathrm{Id} \simeq (\pi\pi^{-1})^* \simeq  (\pi^{-1})^* \pi^* $

We combine the pieces we have {\em collected} so far to obtain the following:

\begin{proposition}\label{prop:equivariant descent theorem}
Let $X$ be an algebraic variety defined over $L$, $\{f_{\sigma} \}_G$ is a Weil descent datum on $X$, where $G=\Gal(L/ L_0)$, and $Y$ is the $L_0$-variety over which $X$ is definable. There is a well defined $G$-action on $\mathsf{D^b}(X)$ and the equivariant category $\mathsf{D^b}(X)^G$ is isomorphic to $\mathsf{D^b}(Y)$, provided that $|G|$ is invertible in $L_0$.
\end{proposition}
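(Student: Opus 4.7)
The proof will essentially be an assembly of the results developed in the preceding propositions, together with the general machinery from Section \ref{sec:equivCat}. The plan is to chain together four equivalences of categories, namely
\[
\mathsf{D^b}(X)^G \;\simeq\; \mathsf{D^b}(\mathsf{Coh}^G(X)) \;\simeq\; \mathsf{D^b}(\mathsf{Coh}(X/G)) \;\simeq\; \mathsf{D^b}(\mathsf{Coh}(Y)) \;=\; \mathsf{D^b}(Y),
\]
each of which is justified by a single prior result, once the $G$-action is set up correctly.

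First, I will invoke Proposition \ref{prop: group action by weil descent} to obtain the right $G$-action on $X$ by scheme automorphisms given by $\sigma \mapsto \tilde{\sigma}\circ f_\sigma$. Applying Example \ref{actiononcoh1} to this action yields a (left) $G$-action on $\mathsf{Coh}(X)$ by the pullback autoequivalences $(\tilde{\sigma}f_\sigma)^*$; this is precisely the action established in Proposition \ref{prop:9}. Since each $(\tilde{\sigma}f_\sigma)^*$ is exact, it descends componentwise to the bounded derived category, giving a well-defined group action on the additive (indeed triangulated) category $\mathsf{D^b}(X)$, as described in Subsection \ref{subsec:action on derived cats}. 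This establishes the first assertion of the proposition and allows us to form $\mathsf{D^b}(X)^G$.

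Next, because $|G|$ is invertible in $L_0$ and hence in the abelian category $\mathsf{Coh}(X)$, the non-modularity hypothesis in Subsection \ref{subsec:action on derived cats} is satisfied, so we have the canonical equivalence $\mathsf{D^b}(\mathsf{Coh}^G(X)) \simeq \mathsf{D^b}(X)^G$. For the next link, I will appeal to Proposition \ref{prop:actiononvariety}, which tells us two things: the induced action on $X$ is free, and the categorical quotient $X/G$ exists and is isomorphic to $Y$. Freeness together with quasi-projectivity of $X$ (automatic for algebraic varieties) are precisely the hypotheses of Example \ref{actiononcoh2}, giving the equivalence $\mathsf{Coh}^G(X) \simeq \mathsf{Coh}(X/G)$ via pullback along the quotient map. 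Passing to bounded derived categories and using $X/G \simeq Y$ from Proposition \ref{prop:actiononvariety} completes the chain: $\mathsf{D^b}(\mathsf{Coh}^G(X)) \simeq \mathsf{D^b}(\mathsf{Coh}(Y)) = \mathsf{D^b}(Y)$.

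The main subtlety—rather than obstacle—is really bookkeeping: making sure that the $G$-action on $\mathsf{D^b}(X)$ whose equivariant category we compute is literally the one induced from the automorphism action of Proposition \ref{prop: group action by weil descent}, and not some other action one might mistakenly associate to the raw Weil data $\{f_\sigma\}$ alone. This is where the composition with the base-change isomorphisms $\tilde{\sigma}$ is essential, since only $\tilde{\sigma}\circ f_\sigma$ (not $f_\sigma$ by itself) is an automorphism of the underlying scheme $X$ that respects composition in $G$. Once this identification is fixed, the proof reduces to citing the already-proven propositions in the correct order, with the hypothesis $\mathrm{char}(L_0)\nmid |G|$ entering exactly at the Elagin-style equivalence $\mathsf{D^b}(\mathcal{A}^G)\simeq \mathsf{D^b}(\mathcal{A})^G$.
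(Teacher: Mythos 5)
Your proof is correct and follows essentially the same route as the paper's own argument: both obtain the $G$-action on $\mathsf{D^b}(X)$ from the automorphism action $\tilde{\sigma}\circ f_\sigma$ of Propositions \ref{prop: group action by weil descent} and \ref{prop:9}, and then chain the equivalences $\mathsf{D^b}(X)^G \simeq \mathsf{D^b}(\mathsf{Coh}^G(X)) \simeq \mathsf{D^b}(X/G) \simeq \mathsf{D^b}(Y)$ using the non-modularity hypothesis, freeness and quasi-projectivity, and Proposition \ref{prop:actiononvariety}. Your closing remark on why one must use $\tilde{\sigma}\circ f_\sigma$ rather than $f_\sigma$ alone is a useful clarification but does not change the argument.
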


\begin{proof}
The (right) group action of automorphisms of $X$ of Proposition \ref{prop:9} restricts to an action on $\mathsf{Coh}(X)$ and we have that $\mathsf{Coh}^G(X) \simeq \mathsf{Coh}([X/G])$, by Example~\ref{actiononcoh2}. Additionally, $X$ is an algebraic variety (especially quasi-projective) and the action is free by Proposition~\ref{prop:actiononvariety}, thus the quotient stack is actually a scheme, i.e. $[X/G] = X/G$. 
The group action of $G$ on the category $\mathsf{Coh}(X)$ extends on $\mathsf{D^b}(X)$, by Subsection~\ref{subsec:action on derived cats}.
Moreover, by Example~\ref{actiononcoh3}, since $|G|$ is invertible in $L_0$, we have the equivalence 
\[
\mathsf{D^b}(X)^G \simeq \mathsf{D^b}(\mathsf{Coh}^G(X)) \simeq \mathsf{D^b}(X/G) \simeq \mathsf{D^b}(Y).
\]
\end{proof}

\begin{remark}\label{rem: de-equivariantization}
Note that the fact that $X/G \times L \simeq Y \times K \simeq X$ yields the equivalence $\mathsf{D^b}(X/G \times K ) \simeq \mathsf{D^b}(X)$, which suggests that taking extension of scalars for this particular  equivariant category, i.e. $\mathsf{D^b}(X)^G \simeq \mathsf{D^b}(X/G)$, might be a method of {\em de-equivariantization}. However, we merely propose this idea and do not pursue this further.

Note also that the above proposition and hence the de-equivariantization statment could have been formulated for categories of quasi-coherent sheaves or even unbounded derived categories of quasi-coherent sheaves in a similar manner, see for example \cite[Remark 7.1]{karakikes2025equivariantrecollementssingularequivalences}. However, we restrict ourselves in this situation which is better understood and has been examined more. 
\end{remark}

 \def\cprime{$'$}

\end{document}